\documentclass[11pt]{article}

\usepackage{amsmath,amssymb,amsthm,amsfonts,amstext,amsbsy,amscd}
\usepackage[utf8]{inputenc}
\usepackage{dsfont}
\usepackage{color}
\usepackage[colorlinks,citecolor=red,urlcolor=green,linkcolor = blue,hypertexnames=false]{hyperref}
\usepackage{graphicx}
\usepackage{subcaption}
\newcommand{\1}{\mathds{1}}
\newcommand{\paren}[1]{\left( #1 \right)}
\newcommand{\croch}[1]{\left[\, #1 \,\right]}
\newcommand{\acc}[1]{\left\{ #1 \right\}}

\newcommand{\scalH}[1]{\langle #1 \rangle_{\H}}


\renewcommand{\H}[1]{ \mathcal{H}\left( #1 \right)}


%
\newcommand{\R}{\mathbb{R}}

\renewcommand{\P}{\mathbb{P}}


%
%


\newcommand{\X}{\mathcal{X}}

\renewcommand{\H}{\mathcal{H}}


\newcommand{\norm}[1]{\left \lVert #1 \right\rVert}

\providecommand{\normH}[1]{\norm{#1}_{\H}}


\renewcommand{\P}{\mathbb{P}}


\DeclareMathOperator{\tr}{tr}

\renewcommand{\H}{\mathcal{H}}
\renewcommand{\P}{\mathbb{P}}

\newcommand{\Ybf}{\mathbf{Y}}

\newcommand{\bP}{\mathbf{P_\epsilon}}
\newcommand{\f}{\mathbf{f}}
\newcommand{\tauDP}{\tau_{DP}}
\newcommand{\tauSDP}{\tau_{SDP}}

\newcommand{\effdim}{\mathcal{N}}
\newcommand{\geffdim}{\mathcal{N}^g}
\newcommand{\tildengeffdim}{\widetilde{\mathcal{N}}^g_n}
\newcommand{\tildetstar}{\widetilde{\tstar}}
\newcommand{\Ltstar}{\widetilde{t}^*_n}

\usepackage{natbib}
\bibliographystyle{unsrtnat}

\numberwithin{equation}{section}
\theoremstyle{plain}
\newtheorem{thm}{Theorem}

\newtheorem{proposition}{Proposition}

\newtheorem{lemma}{Lemma}
\newtheorem{assumption}{Assumption}

\theoremstyle{definition}
\newtheorem{definition}{Definition}

\theoremstyle{remark}
\newtheorem{remark}{Remark}

\newtheorem{example}{Example}


\newcommand{\AY}{\tilde{\mathbf{Y}}}
\newcommand{\Af}{\tilde{\mathbf{f}}}
\newcommand{\Ae}{\tilde{\boldsymbol{\epsilon}}}

\newcommand{\SDP}{\tau_{SDP}}
\newcommand{\DP}{\tau_{DP}}

\newcommand{\ED}{L}
\newcommand{\subgauss}{\sigma}
\newcommand{\tstar}{t_n^*}


\allowdisplaybreaks[4]

\sloppy


\title{Analyzing the discrepancy principle for kernelized spectral filter learning algorithms\thanks{The research of Martin Wahl has been partially funded by Deutsche Forschungsgemeinschaft (DFG) - SFB1294/1 - 318763901.}}
\author{Alain Celisse\thanks{CNRS-Université de Lille, Inria - Modal Project-team, Lille, France. E-Mail: alain.celisse@math.univ-lille1.fr} \qquad Martin Wahl\thanks{Humboldt-Universit\"{a}t zu Berlin, Germany. E-Mail:
martin.wahl@math.hu-berlin.de}}

\date{}
\begin{document}

\maketitle

\begin{abstract}
We investigate the construction of early stopping rules in the nonparametric regression problem where iterative learning algorithms are used and the optimal iteration number is unknown. More precisely, we study the discrepancy principle, as well as modifications based on smoothed residuals, for kernelized spectral filter learning algorithms including gradient descent. Our main theoretical bounds are oracle inequalities established for the empirical estimation error (fixed design), and for the prediction error (random design). From these finite-sample bounds it follows that the classical discrepancy principle is statistically adaptive for slow rates occurring in the hard learning scenario, while the smoothed discrepancy principles are adaptive over ranges of faster rates (resp. higher smoothness parameters). Our approach relies on deviation inequalities for the stopping rules in the fixed design setting, combined with change-of-norm arguments to deal with the random design setting.
\end{abstract}

\textbf{Key words:} early stopping, discrepancy principle, non-parametric regression, spectral regularization, reproducing kernel Hilbert space, oracle inequality, effective dimension

\section{Introduction}

\subsection{State-of-the-art}

The present work addresses the problem of estimating a regression function in a nonparametric framework by means of iterative learning algorithms, which is an ubiquitous problem in the statistical and machine learning literature.     
Since it is out of the scope of the present introduction to review all of them, let us only mention a few contributions in machine learning such as the boosting strategies aiming at estimating a regression function from a set of weak learners by iteratively re-weighting them \citep{duffy2002boosting,BuhlmannYu2003boosting}, or the more recent use of deep neural networks \citep{AnthonyBartlett2009neural,goodfellow2016deep}, where the iterative stochastic gradient descent algorithm is extensively applied \citep{jastrzebski2018relation,li2018learning}.
Nonparametric regression is the topic of several monographs such as \citep[][]{MR1920390}, \citep[][]{MR2724359}, or the more recent \citep[][]{GineNickl2016mathematical} that provides a detailed account of classical techniques for the theoretical analysis of nonparametric models. 

    \medskip
    
Our theoretical analysis applies to learning algorithms embedded in a reproducing kernel Hilbert space (RKHS) associated with a reproducing kernel \citep{aronszajn1950theory}.
Their use in machine learning traces back to \citep{10.1145/130385.130401} (SVMs), and there is now an extensive literature on this topic. Among others, \citep{CuckerSmale2002mathematical} and \citep{MR2450103} describe the mathematical foundation of learning with reproducing kernels. 
\cite{caponnetto2007optimal} derive optimal convergence rates for the prediction error of the kernelized Tykhonov algorithm, while \cite{Jacot2018neural} connect the properties of a deep neural network during the training to a particular reproducing kernel called the neural tangent kernel (see \cite{10.5555/559923} and \cite{shawe2004kernel} for more applications of reproducing kernels).
    
    \medskip
    
    The class of spectral filter algorithms \citep{MR2297015,MR3833647,lin2018optimal} that is under consideration in the present work can be seen as a subset of the broader family of iterative algorithms. 
    Iterative algorithms become ubiquitous in situations where some regularization is needed \citep{MR3190843}, or if no closed-form expressions are available for the estimator of interest. This typically arises for most of M-estimators \citep{vanderVaart1996weak} for which optimization algorithms such as gradient descent, coordinate descent, or Newton's method are used among others \citep{boyd2004convex}.
    In practice using such iterative algorithms requires the knowledge of the best iteration number at which one should interrupt the process. This optimal iteration number actually reaches a crucial trade-off between the statistical precision output after some iterations and the computational resources induced by them. For instance, interrupting the process too early provides a poor statistical precision, whereas waiting for more iterations induces a higher computational price (and typically even worse performances) \citep[][Fig.~1]{MR3190843}. 
    
    The main focus here is given to the so-called \emph{early stopping rules}, which are data-driven estimators of this usually unknown best iteration number. Designing such rules is all the more important as they are designed to output an efficient estimator while saving the computational resources. For instance, unlike Lepskii's method and similar model selection procedures \citep{MR2657949,blanchard2019lepskii}, early stopping rules avoid all pairwise comparisons between models, which turns out to be highly time consuming.
    The design and study of early stopping rules have received a lot of attention which can be traced back to the empirical work of \cite{prechelt1998early} in the context of neural networks.
    A first line of research leads to \emph{deterministic} stopping rules that only depend on the data through the sample size $n$ and some smoothness parameters (see \cite{zhang2005boosting} for the boosting, followed by \cite{yao2007early} and \cite{lin2018optimal} with spectral filter algorithms).
    A second strategy has been initiated by \cite{MR3190843} and then by \cite{wei2019early}, which mainly relies on upper bounding with high probability the estimation error by means of the Rademacher complexity. The resulting stopping rules enjoy good convergence rates from an asymptotic perspective, but only depend on the data through the points of the design which limits their practical application.
    More recently, a new promising idea has been investigated by \cite{MR3859376,MR3829522} in the context of the Gaussian sequence model where a stopping rule is suggested and analyzed which relies on the one hand on the discrepancy principle, and on the other hand on the estimation (rather than an upper bound) of the approximation error. 
    While the resulting stopping rules still have some drawbacks compared to classical model selection procedures (such as Lepskii's method \citep{BMM19}) in terms of statistical optimality, they achieve good oracle properties in a computationally efficient way.

    \subsection{Contributions}
From a practical perspective, our main contribution is the description of \emph{data-driven} early stopping rules based on the discrepancy principle.
        Unlike previous approaches, the dependence of our stopping rules with respect to the data is not limited to the sample size \citep{yao2007early} nor to the design points \citep{MR3190843,wei2019early}.
        By contrast, the present work rather extends the results of \cite{MR3859376,MR3829522} for inverse problems in the Gaussian sequence setting to the context of reproducing kernels and kernelized spectral filter estimators.

From a theoretical perspective our contributions are two-fold. %
On the one hand, we derive the first non-asymptotic theoretical analysis of these stopping rules applied to spectral filter algorithms combined with reproducing kernels. 
Firstly, this analysis relies on several new concentration inequalities in the fixed-design setting which lead to (non-asymptotic) oracle inequalities for two stopping rules based on the discrepancy principle. Secondly, we use a new change-of-norm argument 
which allow us to transfer these oracle inequalities to the random design setting.
On the other hand, these finite-sample bounds from the random design case lead to establish that: $(i)$ the classical discrepancy principle is statistically adaptive for slow rates occurring in the hard learning scenario (called outer case), and $(ii)$ the smoothing-based discrepancy principles are adaptive over ranges of higher smoothness parameters (called inner case).

\subsection{Outline}
The remainder of the paper is organized as follows.
Next Section \ref{sec.DP.ESR} introduces the main notions used along the papers. It starts by describing the statistical model, the spectral filter learning algorithms, and reviewing previous works on optimal rates in the context of the present paper. 
The early stopping rule based on the discrepancy principle (DP) is then introduced and motivated in Section~\ref{sec.motivation.DP}. 

Our first main theoretical results are discussed in Section~\ref{sec.DP.Oracle.inequality.fixed.design} which focuses on the DP stopping rule in the fixed-design setting. In particular, the main ingredients of the derivation are detailed in Section~\ref{sec.preliminary.results}.
The improved early stopping rule based on the smoothing of the residuals is then introduced and analyzed in Section~\ref{sec.smoothing.residuals} for the fixed-design case, while the random design framework is addressed in Section~\ref{SecRD}.
A short illustration of the behaviour of the different stopping rules is provided in Section~\ref{sec.simulations.experiments} by means of empirical simulations from synthetic data.

Finally, we provide proofs based on a unified analysis for both early stopping rules in Section~\ref{secProofFixedDesign} in the fixed-design, while proofs for the random design case are detailed in Section~\ref{SecProofRandomDesign}.
The appendix collects some background material.

\section{Spectral filters and discrepancy principle}
\label{sec.DP.ESR}

\subsection{Regression model and reproducing kernel}\label{sec.Model}
Let $(X,Y)$ be a pair of random variables satisfying the regression equation
\begin{equation}\label{def.model}
Y=f(X)+\epsilon,
\end{equation}
where $X$ is a random variable taking values in $\mathcal{X}\subseteq \mathbb{R}^d$, $f:\mathcal{X}\rightarrow \mathbb{R}$ is an unknown regression function, and $\epsilon$ is a real-valued random variable such that $\mathbb{E}(\epsilon|X)=0$ and $\mathbb{E}(\epsilon^2|X)=\sigma^2$, with \emph{$\sigma^2>0$ assumed to be known} as in \citep{MR3190843} for instance.
Additionally, we suppose that $\epsilon$ is sub-Gaussian conditional on $X$, cf. \citep{MR3837109}.
\begin{assumption}\label{AssSubGaussError}
There is a constant $A\geq 1$ such that
\begin{align}
    \forall q\geq 1,\qquad q^{-1/2}(\mathbb{E}( |\epsilon|^q|X))^{1/q}\leq A\sigma. \tag{{\bf SubGN}}\label{SubGN}
\end{align}
\end{assumption}
Let $k(\cdot,\cdot)$ be a continuous and positive kernel on $\mathcal{X}\subseteq \mathbb{R}^d$ and $\mathcal{H}$ be the reproducing kernel Hilbert space of $k$. Let also $\langle \cdot,\cdot\rangle_{\mathcal{H}}$ and $\|\cdot\|_{\mathcal{H}}$ denote the inner product in $\mathcal{H}$ and its corresponding norm. In the following, $\rho$ represents the distribution function of $X$, and $L_\rho:L^2(\rho)\rightarrow L^2(\rho), L_\rho g(x)=\int k(x,y)g(y)\,d\rho(y)$ states for the integral operator associated with $k$ and $\rho$. Let $\langle \cdot,\cdot\rangle_{\rho}$ and $\|\cdot\|_\rho$ denote the inner product in $L^2(\rho)$ and its corresponding norm. We also define the $\mathcal{H}$ valued random variable $k_X=k(X,\cdot)$ with values in $\mathcal{H}$ for which we make the following assumption. 
\begin{assumption}\label{AssBoundedk} There is a constant $M>0$ such that 
\begin{align}
 \normH{k_X} \leq M   \quad a.s. \label{BdK} \tag{{\bf BdK}}
\end{align} 
\end{assumption}
For instance, \eqref{BdK} holds true if $\sup_{x\in\X}k(x,x)\leq M^2$ (from the reproducing property). This arises with any kernel and a bounded domain $\X$, or with a bounded kernel and $\X$ unbounded (Gaussian kernel). 

In particular, we can define the covariance operator
\begin{align*}
    \Sigma=\mathbb{E} \croch{k_X\otimes k_X},
\end{align*}
where $a\otimes b \in\mathcal{L}(\H)$ denotes the tensor product between elements $a,b\in \H$ such that $(a\otimes b) u = a \scalH{b,u}$, for every $u\in \H$.
Under Assumption \eqref{BdK} we know that both, $L_\rho$ and $\Sigma$ are positive self-adjoint trace-class operators. Moreover, both operators $L_\rho$ and $\Sigma$ are intimately related, which can be seen by introducing the inclusion operator $S_\rho:\mathcal{H}\rightarrow L^2(\rho)$, mapping $h\in\mathcal{H}$ to its equivalence class in $L^2(\rho)$ ($S_\rho$ is well-defined, because under Assumption \eqref{BdK} every $h\in\mathcal{H}$ is bounded).
Then it is well-known that 
\begin{align*}
    S_\rho S_\rho^* = L_\rho \in \mathcal{L}(L^2(\rho)),\qquad S_\rho^*S_\rho=\Sigma \in \mathcal{L}(\mathcal{H}),
\end{align*}
where $S_\rho^*$ is the adjoint operator of $S_\rho$.
For these and more information on the learning with kernels setting see e.g. \citep{CuckerSmale2002mathematical} and \citep{MR2249842}.
By the spectral theorem, there exists a sequence $\lambda_1\geq \lambda_2\geq\dots> 0$ of positive eigenvalues (which is either finite or converges to zero), together with an orthonormal system $u_1,u_2,\dots$ of eigenvectors of the range of $L_\rho$ such that $\Sigma =\sum_{j\ge 1}\lambda_i u_j\otimes u_j$.

We will assume that $f$ satisfies a polynomial source condition (see Chap.~4 in \cite{MR3114700}) that is, 
\begin{assumption}
For some $r\geq 0$ and $R>0$, we have
\begin{align*}
f = L_\rho^r g ,\quad \mbox{with } g\in L^2(\rho)\ \mbox{and } \norm{g}_\rho\leq R. \tag{{\bf SC}(r,R)} \label{Assume.SC}
\end{align*}
\end{assumption}
Note that such source conditions are often written as $\|L_\rho^{-r}f\|_\rho\leq R$; see e.g. \cite{smale2007learning}.
\begin{remark}[Inner and Outer cases]
On the one hand, if $r\geq 1/2$, then
\begin{align}\label{EqInnerCaseRepr}
    f = L_\rho^r g = S_\rho \Sigma^{r-1/2}  \Sigma^{-1/2}S_\rho^*  g = S_\rho f_\H ,
\end{align}
where $f_\H = \Sigma^{r-1/2} (\Sigma^{-1/2}S_\rho^* g) \in\H$. This means that $f$ (resp. its equivalence class) can be represented (through the inclusion operator $S_\rho$) as a function in $\H$. This case is then called \emph{the inner case}.
Let us mention that one also recovers an alternative formulation of the source condition when $r\geq 1/2$ that is,
\begin{align*}
    f_\H=\Sigma^sh,\quad \mbox{where } h\in\mathcal{H}\ \mbox{and } \norm{h}_{\mathcal{H}}\leq R, 
\end{align*}
with $s=r-1/2\geq 0$ and $ h = \Sigma^{-1/2}S_\rho^* g \in\H$, where $ \norm{h}_\H = \|\Sigma^{-1/2}S_\rho^*  g\|_\H = \norm{g}_\rho \leq R$. These results can be found in \cite{CuckerSmale2002mathematical}, where it is shown how to characterize $\mathcal{H}$ through the eigenvalues of $L_\rho$.

On the other hand, if $r<1/2$, then $f$ can not be represented as a function in $\H$ in general, which justifies referring to this situation as \emph{the outer case}. 

In what follows, the outer and inner cases are respectively considered in Section~\ref{sec.outer.case} and  Section~\ref{sec.SDP.inner.case}. 
\end{remark}

      \medskip
 
We suppose that we observe $n$ independent copies $(X_1,Y_1),\dots,(X_n,Y_n)$ of $(X,Y)$. Let $K_n\in \mathbb{R}^{n\times n}$ be the kernel matrix defined by $(K_n)_{ij}=k(X_i,X_j)/n$ and $\Sigma_n$ be the empirical covariance operator defined by
\begin{align*}
    \Sigma_n=\frac{1}{n}\sum_{i=1}^nk_{X_i}\otimes k_{X_i}.
\end{align*}
Both operators $K_n$ and $\Sigma_n$ are strongly related, as can be seen by introducing the sampling operator $S_n$ defined by $S_n:\mathcal{H}\rightarrow \mathbb{R}^n,h\mapsto (h(X_i))_{i=1}^n$ and its adjoint operator $S_n^*$, where $\mathbb{R}^n$ is endowed with the empirical inner product $\langle\cdot,\cdot\rangle_n$ and its corresponding \emph{empirical norm} $\|\cdot\|_n$ such that $\langle a,b\rangle_n=(1/n) \sum_{i=1}^na_ib_i$ and $\norm{a}_n=\sqrt{\langle a,b \rangle_n}$ for every $a,b\in\R^n$. Then we have
\begin{align*}
    S_nS_n^*=K_n,\qquad S_n^*S_n=\Sigma_n.
\end{align*}
By the spectral theorem, there exists a sequence $\hat\lambda_1\geq \hat\lambda_2\geq\dots\geq \hat \lambda_n$ of non-negative eigenvalues, together with an orthonormal system $\hat u_1,\hat u_2,\dots,\hat u_n$ in $\mathcal{H}$ and an orthonormal basis $\hat v_1,\dots, \hat v_n$ of $(\mathbb{R}^n,\langle\cdot,\cdot\rangle_n)$ such that
\begin{align}\label{EqSVDSamplingOp}
    S_n=\sum_{j=1}^n\hat\lambda_j^{1/2}\hat v_j\otimes \hat u_j.
\end{align}
In particular, we have $\Sigma_n =\sum_{j=1}^n\hat\lambda_i \hat u_j\otimes \hat u_j$ and $K_n=\sum_{j=1}^n \hat\lambda_j \hat v_j\hat v_j^T$. We write $\mathbf{Y}=(Y_1,\dots,Y_n)^T$ and $\boldsymbol{\epsilon}=(\epsilon_1,\dots,\epsilon_n)^T$. Moreover, for a function $f:\mathcal{X}\rightarrow \mathbb{R}$, we abbreviate $\mathbf{f}=(f(X_1),\dots,f(X_n))^T$. In particular, for $h\in\mathcal{H}$, we have $\mathbf{h}=S_nh\in\mathbb{R}^n$.

\subsection{Spectral filter learning algorithms}\label{SecSpectralFilter}
Let us consider the problem of estimating $f$ by means of spectral filter learning algorithms, see e.g. \cite{MR2297015}, \citep{MR3114700}, \citep{MR3833647} and \citep{lin2018optimal}. 
For a function $g:(0,M^2]\times [0,\infty)\rightarrow \mathbb{R}$, let us write $g_t(\lambda)=g(\lambda,t)$. 
\begin{definition}[Regularizer] \label{Def}
A function $g:(0,M^2]\times [0,\infty)\rightarrow \mathbb{R}$ is called a regularizer if $(\lambda,t)\mapsto\lambda g_t(\lambda)$ is non-decreasing in $t$ and $\lambda$, continuous in $t$, with $g_0(\lambda)=0$ and $\lim_{t\to +\infty}\lambda g_t(\lambda)=1$, and if there is a constant $B>0$ such that
\begin{align}
(i) & \mbox{ For all $(\lambda,t)\in(0,M^2]\times [0,\infty)$, we have } 0\leq \lambda g_t(\lambda)\leq 1 , \tag{{\bf BdF}}\label{BdF}\\
(ii) & \mbox{ For all $(\lambda,t)\in(0,M^2]\times [0,\infty)$, we have } g_t(\lambda)\leq Bt.\tag{{\bf LFU}}\label{LFU}
\end{align}
\end{definition}
The above definition is slightly stronger than e.g. Definition 1 in \cite{MR2297015} because we assume the continuity in $t$. This excludes e.g. the spectral cut-off algorithm (corresponding to the choice $g_t(\lambda)=\1_{\paren{\lambda t\geq 1}}/\lambda$) from the present study. 
\begin{definition}[Spectral filter estimators]\label{DefFilter}
For a given regularizer $g:(0,M^2]\times [0,\infty)\rightarrow \mathbb{R}$, a \emph{spectral filter estimator} is an estimator given by
\begin{align*}
    \hat f^{(t)} = g_t(\Sigma_n)S_n^{*}\mathbf{Y},\quad t\geq 0.
\end{align*}
\end{definition}
By \eqref{BdK}, we have that $\max(\lambda_1,\hat\lambda_1)\leq M^2$ almost surely. This implies that the estimators $\hat f^{(t)}$ are indeed well-defined. The following examples provide several choices of spectral filter algorithms and  regularizers.
\begin{example} The choice $g_t(\lambda)=(\lambda+t^{-1})^{-1}$ corresponds to Tikhonov regularization and Definition~\ref{Def} holds with $B=1$.
\end{example}
\begin{example} Gradient descent with constant step size $\eta\in(0,1/M^2)$ (also called Landweber) corresponds to the sequence of iterations
\begin{align*}
 \hat f^{(0)}=0,\quad \hat f^{(t)}=\hat f^{(t-1)}+\eta S_n^*(\mathbf{Y}-S_n\hat f^{(t-1)}),\quad t=1,2,\dots.
\end{align*}
It has the closed-form expression $\hat f^{(t)}=g_t(\Sigma_n)S_n^*\mathbf{Y}$ with $g_t(\lambda)=\lambda^{-1}(1-(1-\eta \lambda)^t)$. Interpolating, we may consider $g_t(\lambda)=\lambda^{-1}(1-(1-\eta \lambda)^t)$ for $t\geq 1$, and $g_t(\lambda)=\eta t$ for $t< 1$. In this case, Definition \ref{Def} holds with $B=\eta$.
\end{example}
\begin{example}
The choice $g_t(\lambda)=\lambda^{-1}(1-e^{-t\lambda })$ corresponds to Showalter's method. In this case, Definition \ref{Def} holds with $B=1$.
\end{example}

At some places, an additional assumption will turn to be useful in the analysis of spectral filter algorithms. It lower bounds the regularizer.
\begin{assumption} There is a constant $b>0$ such that
\begin{align}
& \mbox{ for all $(\lambda,t)\in(0,M^2]\times [0,\infty)$, we have } \lambda g_t(\lambda)\geq b(1\wedge \lambda t) . \tag{{\bf LFL}} \label{LFL}
\end{align}
\end{assumption}
For instance, this latter assumption holds true with Tikhonov regularization, gradient descent and Showalter's method with $b=1/2$.

Finally, when dealing with rates of convergence we will also need the following assumption on the qualification error.
\begin{assumption}\label{QualError}
There are constants $q,Q> 0$ such that
\begin{align}
    \mbox{ for all $(\lambda,t)\in(0,M^2]\times [0,\infty)$, we have } |r_t(\lambda)|\leq Q (\lambda t)^{-q}  \label{Qualif}\tag{{\bf QuErr}} ,
\end{align} 
with $r_t(\lambda)=1-g_t(\lambda)\lambda$.
\end{assumption}
\begin{remark}
Combining \eqref{Qualif} with \eqref{BdF}, we have $r_t(\lambda)\leq 1\wedge Q(t\lambda)^{-q}$ and thus also $r_t(\lambda)\leq 1\wedge Q(t\lambda)^{-p}$ for each $p\leq q$, provided that $Q\geq 1$.
\end{remark}
It is well-known that Tikhonov regularization and gradient descent satisfy \eqref{Qualif} with respectively $q=1$ and $q$ arbitrary; see e.g. \cite{MR3833647} for more discussion.

\medskip

Let us also introduce the $g$-effective dimension, which generalizes the classical notion of effective dimension \citep{zhang2003effective} to the case where $g$ is not limited to the Tikhonov regularization. 
\begin{definition}[$g$-Effective dimension]\label{gDffectiveDim}
For every $t\geq 0$ and any regularizer $g$, the (population) $g$-effective dimension is defined by $\geffdim(t)=\operatorname{tr}(\Sigma g_t(\Sigma))$, while the empirical effective dimension is $\geffdim_n(t)=\operatorname{tr}(\Sigma_n g_t(\Sigma_n))$.
\end{definition}
With Tikhonov regularization, that is $g_t(\lambda) = (\lambda +1/t)^{-1}$, both the population and empirical $g$-effective dimension simply reduce to the usual population and empirical effective dimensions respectively given by $\effdim(t)=\operatorname{tr}(\Sigma(\Sigma+1/t)^{-1})$ and $\effdim_n(t)=\operatorname{tr}(\Sigma_n(\Sigma_n+1/t)^{-1})$. Note that most cited references consider the parameterization $\eta=t^{-1}$, i.e. they write $g_{\eta}(\lambda)$ and $\mathcal{N}(\eta)$ instead of $g_{t}(\lambda)$ and $\mathcal{N}(t)$ in the present paper.
Interestingly, it turns out that the effective and $g$-effective dimensions are closely related up to multiplicative constants as established by the next result.
\begin{lemma}\label{LemEffDimGenDim} 
Let $g$ be a regularizer satisfying \eqref{LFL}. Then for each $t\geq 0$,
\begin{align*}
     b\mathcal{N}_n(t)\leq \geffdim_n(t)\leq2(B\vee 1)\mathcal{N}_n(t).
\end{align*}
\end{lemma}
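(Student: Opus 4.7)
The plan is to diagonalize both quantities in the eigenbasis of $\Sigma_n$ and then reduce the statement to a pointwise inequality on each eigenvalue. Writing the spectral decomposition $\Sigma_n = \sum_{j=1}^n \hat\lambda_j \hat u_j \otimes \hat u_j$, we get
\[
\geffdim_n(t) = \sum_{j=1}^n \hat\lambda_j g_t(\hat\lambda_j), \qquad \effdim_n(t) = \sum_{j=1}^n \frac{\hat\lambda_j t}{1 + \hat\lambda_j t},
\]
so it is enough to prove, for every $\lambda \in (0,M^2]$ and $t\ge 0$,
\[
b\,\frac{\lambda t}{1+\lambda t} \;\le\; \lambda g_t(\lambda) \;\le\; 2(B\vee 1)\,\frac{\lambda t}{1+\lambda t}.
\]

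For the lower bound, I would directly invoke \eqref{LFL}, giving $\lambda g_t(\lambda) \geq b(1 \wedge \lambda t)$, and then use the elementary inequality $1 \wedge x \geq \frac{x}{1+x}$ (valid for $x \geq 0$, as $\frac{x}{1+x}\le 1$ and $\frac{x}{1+x}\le x$). For the upper bound, I would combine \eqref{BdF} and \eqref{LFU} to get $\lambda g_t(\lambda) \le 1 \wedge (B\lambda t)$. A short case analysis on whether $\lambda t \le 1$ or $\lambda t > 1$ yields $1 \wedge (B\lambda t) \le (B\vee 1)(1\wedge \lambda t)$, and then the elementary inequality $1 \wedge x \le 2 \cdot \frac{x}{1+x}$ (checked by the same two cases: if $x\le 1$ then $\frac{x}{1+x}\ge x/2$, and if $x>1$ then $\frac{x}{1+x}>1/2$) finishes the estimate.

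Once the pointwise bound is established, summing over $j = 1,\dots,n$ delivers the lemma. There is essentially no real obstacle here; the only thing to keep in mind is the harmless convention $\lambda g_t(\lambda)/(1+\lambda t) = 0$ when $\lambda = 0$ or $t = 0$, which is consistent with the regularizer property $g_0(\lambda) = 0$ in Definition~\ref{Def}. The lemma is therefore a purely deterministic scalar statement about the filter function $g$, and no probabilistic or operator-theoretic ingredients are needed beyond the functional calculus that is already implicit in the definitions of $\geffdim_n$ and $\effdim_n$.
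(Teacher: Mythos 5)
Your proposal is correct and follows essentially the same argument as the paper: the upper bound combines \eqref{BdF} and \eqref{LFU} into $\lambda g_t(\lambda)\leq (B\vee 1)(1\wedge \lambda t)$ followed by $1\wedge x\leq 2x/(1+x)$, and the lower bound uses \eqref{LFL} together with $x/(1+x)\leq 1\wedge x$. The only cosmetic difference is that you make the pointwise scalar reduction explicit before summing, which the paper does implicitly.
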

\begin{proof}[Proof of Lemma~\ref{LemEffDimGenDim}]
By \eqref{BdF} and \eqref{LFU} we have
\[
\geffdim_n(t)\leq (B\vee 1)\sum_{j=1}^n
1\wedge \hat\lambda_jt\leq 2(B\vee 1)\sum_{j=1}^n \frac{\hat\lambda_jt}{\hat\lambda_jt+1} = 2(B\vee 1)\mathcal{N}_n(t),
\]
which gives the upper bound. The lower bound follows from \eqref{LFL} and the fact that $\lambda t/(\lambda t+1)\leq 1\wedge \lambda t$.
\end{proof}

\subsection{Convergence rates in related works}\label{sec.optimal.rates}

The use of kernel-based spectral regularization in random regression problems (also known as ``learning from examples'') has been extensively studied in the literature; see e.g. \cite{MR2186447,smale2007learning,caponnetto2007optimal} for Tikhonov regularization, \cite{yao2007early,MR3564933} for gradient descent methods and \cite{MR2297015,MR3833647,lin2018optimal} for general spectral regularization schemes. Existing bounds are mostly established for the $L^2(\rho)$-error and the $\mathcal{H}$-error under \eqref{Assume.SC} and a polynomial upper bound on the eigenvalues of $L_\rho$. They are usually used to construct deterministic early stopping rules (depending on the smoothness $r$ and the eigenvalue decay); see e.g. \citep{yao2007early} for gradient descent, \citep{MR3564933} for conjugate gradient descent and \citep{pillaud2018statistical} for stochastic gradient descent. 

\medskip

Surprisingly, while the inner case $r\geq 1/2$ is now well understood \cite{MR3833647,lin2018optimal}, there remain some unsolved issues related to the outer case. The main difficulties arise in case of the so-called hard learning problems for which the optimal rates are achieved for very small regularization parameters (resp. a very large number of iterations, considerably exceeding the number of observations). In this direction, some improvements have been established e.g. in \cite{FS19,pillaud2018statistical}, based on more precise concentration inequalities for the eigenvalues of the kernel matrix (see Theorem~\ref{ThmDPHardProblemsExt}).

\medskip

Progress has also been made in the study of data-driven regularization parameter selection rules. Lepskii's balancing principle has been extended to the learning framework in  \citep{MR2657949,MR3114700,BMM19}. While the estimators from \citep{MR2657949,MR3114700} are only adaptive with respect to the smoothness $r$, the estimator from \citep{BMM19} achieves faster rates by also being adaptive with respect to the eigenvalue decay of the kernel integral operator. In slightly different directions, \citep{PG19} studies the Goldenshluger-Lepskii method in a reproducing kernel framework, and \citep{MR3431429} studies model selection for principal component regression in a functional regression model. While all these methods share good oracle properties (and thus minimax adaption over suitable smoothness classes), they all put no attention on computational issues. In fact, they require that all estimators up to some threshold have to be computed before a parameter with close-to-optimal performance is chosen.

\medskip

In contrast, the question of data-driven early stopping rules remains widely open. \citep{MR3190843} suggest an early stopping rule for gradient descent that is adaptive to the decay rate of the eigenvalues but not to the smoothness $r$ (assumed to be $r=1/2$). They study the solution of a fixed-point equation corresponding to a bias-variance trade-off of the empirical norm and show that this rule leads to optimal rates for the prediction error. These results have been extended in \citep{wei2019early} to the $L^2$-boosting based on different loss functions. Our goal is to develop \emph{data-driven} stopping rules based on the discrepancy principle which are \emph{statistically adaptive} with respect to both the smoothness parameter $r$ and the eigenvalue decay.

\subsection{Early stopping and discrepancy principle: Motivation}\label{sec.motivation.DP}
As explained in the introduction, our goal is to make use of the discrepancy principle to find a value $t$ having small excess risk. 
The discrepancy principle (DP) has been extensively studied in the context of inverse problems with deterministic noise, where it is also called Morozov’s discrepancy principle, see e.g. \cite{MR1408680}. Using $\hat{\mathbf{f}}^{(t)}=S_ng_t(\Sigma_n)S_n^*\mathbf{Y}=K_ng_t(K_n)\mathbf{Y}$ from Definition~\ref{DefFilter} with regularizer $g$ from Definition~\ref{Def}, it is based on a comparison of the empirical risk $\|\mathbf{Y}-\hat{\mathbf{f}}^{(t)}\|_n^2$ (also squared discrepancy, squared residual) with the noise level $\mathbf{E}_\epsilon\|\boldsymbol{\epsilon}\|_n^2=\sigma^2$, where $\mathbf{E}_{\boldsymbol{\epsilon}}(\cdot)=\mathbb{E}(\cdot | X_1,\dots,X_n)$ denotes the expectation with respect to $(X_1,Y_1),\dots,(X_n,Y_n)$ conditional on the design $X_1,\dots,X_n$). It then advocates taking a value $t$ for which both quantities are of comparable size. 

The discrepancy principle can also be motivated by considering the expected empirical risk $\mathbf{E}_\epsilon{ \|\mathbf{Y}-\hat{\mathbf{f}}^{(t)} \|_n^2} = \mathbf{E}_\epsilon\|r_t(K_n)\mathbf{Y}\|_n^2$. 
The first step consists in  noticing that we have the following bias-variance decomposition of the excess risk
\begin{align*}
\mathbf{E}_\epsilon\|\mathbf{f}-\hat{\mathbf{f}}^{(t)}\|_n^2&=\|r_t(K_n)\mathbf{f}\|_n^2 + \frac{\sigma^2}{n}\operatorname{tr}(g_t^2(K_n)K_n^2).
\end{align*}
Using \eqref{BdF} this identity implies
\begin{align}\label{EqBiasVarProxyNonSmooth}
\mathbf{E}_\epsilon\|\mathbf{f}-\hat{\mathbf{f}}^{(t)}\|_n^2&\leq \|r_t(K_n)\mathbf{f}\|_n^2+\frac{\sigma^2}{n}\mathcal{N}_n^g(t)
\end{align}
with $g$-effective dimension $\mathcal{N}_n^g(t)$.

The second step exploits Lemma \ref{EqBasicIneqDP} below, which reveals a close relation to \eqref{EqBiasVarProxyNonSmooth} by showing that
\begin{align}
 &\|r_t(K_n)\mathbf{f}\|_n^2-2\frac{\sigma^2}{n}\mathcal{N}_n^g(t)\nonumber\\
 &\leq \mathbf{E}_\epsilon\|r_t(K_n)\mathbf{Y}\|_n^2-\sigma^2\leq \|r_t(K_n)\mathbf{f}\|_n^2-\frac{\sigma^2}{n}\mathcal{N}_n^g(t).\label{EqBasicIneq}
\end{align}
In particular, by defining $t_0\geq 0$ such that 
\begin{align}\label{expected.ESR.justification}
    t_0 = \inf\acc{ t\geq 0 \mid \mathbf{E}_\epsilon\|r_t(K_n)\mathbf{Y}\|_n^2=\sigma^2 },
\end{align} it follows from \eqref{EqBiasVarProxyNonSmooth} and \eqref{EqBasicIneq} that
\begin{align}\label{illustration.oracle.ineq}
\mathbf{E}_\epsilon\|{\mathbf{f}}-\hat{\mathbf{f}}^{(t_0)}\|_n^2&\leq 3\min_{t\geq 0}\Big\{\|r_t(K_n)\mathbf{f}\|_n^2+\frac{\sigma^2}{n}\mathcal{N}_n^g(t)\Big\},
\end{align} 
where we also used that $\|r_t(K_n)\mathbf{f}\|_n^2$ and $\geffdim_n(t)$ are respectively non-increasing and non-decreasing with respect to $t\geq 0$ (see Figure~\ref{fig.Illustration.ESR}). Let us mention that Ineq. \eqref{illustration.oracle.ineq} is called an \emph{oracle-type} inequality in what follows.
Similarly, we also have the next lower bound
\begin{align*}
&\mathbf{E}_\epsilon\|{\mathbf{f}}-\hat{\mathbf{f}}^{(t_0)}\|_n^2\geq \|r_{t_0}(K_n)\mathbf{f}\|_n^2\geq
\frac{1}{2}\min_{t\geq 0}\Big\{\|r_t(K_n)\mathbf{f}\|_n^2+\frac{\sigma^2}{n}\mathcal{N}_n^g(t)\Big\}.
\end{align*}  

The third step relies on the important consequence that these upper and lower bounds indicate that $t_0$ defined by Eq.~\eqref{expected.ESR.justification} is an optimal choice (up to the proxy variance term and constants) one can make for \emph{stopping early} the estimation process.
In particular, this justifies the introduction of the following early stopping rule based on the discrepancy principle (DP), which should be seen as the empirical counterpart of Eq.~\eqref{expected.ESR.justification}. 
\begin{definition}[DP stopping rule]
For any estimator $\hat f^{(t)}=g_t(\Sigma_n)S_n^{*}\mathbf{Y}$ given by Definition~\ref{DefFilter}, the DP-based stopping rule $\tauDP$ is defined by
\begin{equation}\label{EqDP}
\DP=\DP(\mathbf{Y},\sigma^2,T)=\inf\{t\geq 0:\|\mathbf{Y}-S_n\hat{f}^{(t)}\|_n^2\leq \sigma^2\}\wedge T,
\end{equation}
with the ``emergency stop'' $T\in[0,\infty]$.
\end{definition}
In the context of statistical inverse problems, see also \citet{blanchard2012discrepancy} with the conjugate gradient and \citet{MR3859376} with the spectral cut-off.
The above definition depends on the knowledge of two parameters, the emergency stop $T$ and the true noise level $\sigma^2$. In principle, it is also possible to use an estimator of $\sigma^2$. But such extensions are not pursued here for avoiding further technicalities.
From Definition~\ref{Def}, the fact that $\lim_{t\to +\infty} \lambda g_t(\lambda) = 1$ implies that the empirical risk $\|\mathbf{Y}-S_n\hat{f}^{(t)}\|_n^2=\|r_t(K_n)\mathbf{Y}\|_n^2$ converges to zero as $t\to +\infty$. This entails that the choice $T=\infty$ is admissible as well since we will interrupt the iterations after a finite number of them.

\subsection{Further notation}\label{sec.further.notations}
The abbreviation $\mathbf{E}_{\boldsymbol{\epsilon}}(\cdot)=\mathbb{E}(\cdot | X_1,\dots,X_n)$ denotes the expectation with respect to $(X_1,Y_1),\dots,(X_n,Y_n)$ conditional on the design $X_1,\dots,X_n$. This means a slight abuse of notation because in the present context, the distribution of $\epsilon_i$ is allowed to depend on $X_i$. We also write $\mathbf{P}_{\boldsymbol{\epsilon}}(\cdot)=\mathbb{P}(\cdot | X_1,\dots,X_n)$.

Given a bounded operator $A$ on $\mathcal{H}$ or a matrix $A\in\mathbb{R}^{n\times n}$, we write $\|A\|_{\operatorname{op}}$ for the operator norm. Given a Hilbert-Schmidt operator $A$ on $\mathcal{H}$ or a matrix $A\in\mathbb{R}^{n\times n}$, we write $\|A\|_{\operatorname{HS}}$ for the Hilbert-Schmidt or Frobenius norm. Given a trace class operator $A$ on $\mathcal{H}$ or a matrix $A\in\mathbb{R}^{n\times n}$, we denote the trace of $A$ by $\operatorname{tr}(A)$. 

Throughout the paper, we use the letters $c,C$ for constants that may change from line to line. They are allowed to depend on $A,B,b,Q,R,M$ and $r$. Apart from these dependencies, the constants are absolute and can be made explicit by considering the proofs. In Section \ref{SecRD} they are also allowed to depend on $L,\alpha$ (introduced therein) and $\sigma^2$. 
Finally for any $a,b\in\mathbb{R}$, we write $a\vee b=\max(a,b)$ and $a\wedge b=\min(a,b)$. For $a\geq 0$ we denote by $\lfloor a\rfloor$ the largest natural number that is smaller than or equal to $a$.
%

\section{DP and oracle inequality: Fixed-design}\label{sec.DP.Oracle.inequality.fixed.design}
The goal of this section is to assess the statistical performance of the final estimator $\hat f^{(\DP)}$, where $\DP$ is the early stopping rule defined by Eq.~\eqref{EqDP} and derived from the discrepancy principle (DP). 
We start by introducing new deviation inequalities for $\DP$ and for bias and variance terms (Propositions~\ref{ConcIneqVarDP} and~\ref{ConcIneqBiasDP}), leading then to oracle-type inequalities (Proposition~\ref{ThmCondDesignProxy} and Theorem~\ref{ThmCondDesignOracle}).

\subsection{Preliminary results}\label{sec.preliminary.results}
\subsubsection{Deviation inequalities for DP and main arguments}\label{sec.scetch.proof}
Our main deviation inequalities for the early stopping rules are developed in Section~\ref{secProofFixedDesign}. For the sake of simplifications, let us specialize them to the classical discrepancy principle $\DP$ with $T=\infty$.
For this, we abbreviate the squared bias and the \emph{proxy variance}  as
\begin{align} \label{bias.proxyvariance}
b_t^2=\|r_t(K_n)\mathbf{f}\|_n^2\quad \mbox{and} \quad v_t=\frac{\sigma^2}{n}\geffdim_n(t),
\end{align}
where $\geffdim_n(t)$ denotes the empirical $g$-effective dimension from Definition~\ref{gDffectiveDim}.
Moreover, we introduce the important balancing stopping rule
\begin{align*}
    \tstar=\inf\{t\geq 0:b_{t}^2= v_{t}\}.
\end{align*}
If such a $t$ does not exist, then we set $\tstar=\infty$. This can only happen if $v_t=0$ for every $t\geq 0$ (see the properties below), meaning that we can set $b^2_{\tstar},v_{\tstar}=0$ in this case. We start with a right-deviation inequality for $\DP$ that can be alternatively expressed in terms of the proxy variance $v_t$.
\begin{proposition}\label{ConcIneqVarDP}
If Assumption \eqref{SubGN} holds, then there is a constant $c>0$ depending only on $A$ such that for every $t>\tstar$,
\begin{align*}
    \mathbf{P}_\epsilon(\DP>t)\leq 2\exp\Big(-cn\Big(\frac{y}{\sigma^2}\wedge \frac{y^2}{\sigma^4}\Big)\Big),\qquad y=v_t -
   v_{\tstar}.
\end{align*}
In particular, for every $y>0$ we have
\begin{align*}
    \mathbf{P}_\epsilon(v_{\DP}>v_{\tstar}+y)\leq 2\exp\Big(-cn\Big(\frac{y}{\sigma^2}\wedge \frac{y^2}{\sigma^4}\Big)\Big).
\end{align*}
\end{proposition}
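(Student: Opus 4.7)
My plan for Proposition~\ref{ConcIneqVarDP} proceeds in four steps.

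\medskip

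\emph{Step 1: reduction to a residual tail event.} Since $\lambda g_t(\lambda)\in[0,1]$ is non-decreasing and continuous in $t$, the commuting family $\{r_t(K_n)\}_{t\geq 0}$ lies in $[0,I]$, is non-increasing in $t$, and depends continuously on $t$. Consequently the map $t\mapsto \normn{r_t(K_n)\Ybf}^2$ is continuous and non-increasing, so (since $T=\infty$)
\[
\{\tauDP>t\}=\{\normn{r_t(K_n)\Ybf}^2>\sigma^2\}.
\]

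\emph{Step 2: algebraic decomposition and use of $\tstar$.} Writing $\Ybf=\f+\boldsymbol{\epsilon}$ and introducing the auxiliary $\widetilde v_t=(\sigma^2/n)\tr(I-r_t(K_n)^2)$, a direct expansion yields
\[
\normn{r_t(K_n)\Ybf}^2-\sigma^2 = b_t^2-\widetilde v_t + L_t + Q_t,
\]
with linear and centered-quadratic parts
\[
L_t=2\scaln{r_t(K_n)^2\f,\boldsymbol{\epsilon}},\qquad Q_t=\tfrac{1}{n}\boldsymbol{\epsilon}^T r_t(K_n)^2\boldsymbol{\epsilon}-\mathbf{E}_\epsilon\!\left[\tfrac{1}{n}\boldsymbol{\epsilon}^T r_t(K_n)^2\boldsymbol{\epsilon}\right].
\]
The factorization $I-r_t(K_n)^2=(I-r_t(K_n))(I+r_t(K_n))$ together with $0\leq r_t(K_n)\leq I$ gives $v_t\leq\widetilde v_t\leq 2v_t$. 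For $t>\tstar$, non-increase of $b_\cdot^2$ and non-decrease of $v_\cdot$ combined with $b_{\tstar}^2=v_{\tstar}$ force $b_t^2\leq v_{\tstar}$, hence $\widetilde v_t-b_t^2\geq v_t-v_{\tstar}=y$. A union bound then yields
\[
\mathbf{P}_\epsilon(\tauDP>t)\leq \mathbf{P}_\epsilon(L_t>y/2)+\mathbf{P}_\epsilon(Q_t>y/2).
\]

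\emph{Step 3: concentration of each part.} The constraint $\lambda g_t(\lambda)\leq 1$ forces $\geffdim_n(t)\leq n$, so $v_{\tstar}\leq\sigma^2$ and therefore $b_t^2\leq\sigma^2$ for every $t\geq\tstar$. Under \eqref{SubGN}, conditional on the design, $\boldsymbol{\epsilon}$ has independent coordinates that are sub-Gaussian with parameter of order $A\sigma$; hence $L_t$ is sub-Gaussian with variance proxy at most $(4A^2\sigma^2/n)\normn{r_t(K_n)^2\f}^2\leq (4A^2/n) b_t^2 \sigma^2\leq 4A^2\sigma^4/n$, using the operator-norm bound $\normn{r_t(K_n)^2\f}\leq b_t$. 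This gives $\mathbf{P}_\epsilon(L_t>y/2)\leq \exp(-cny^2/\sigma^4)$. For the quadratic part, I apply a Hanson--Wright inequality for sub-Gaussian vectors to the matrix $A=r_t(K_n)^2/n$; the bounds $\norm{A}_{\operatorname{op}}\leq 1/n$ and $\norm{A}_{\operatorname{HS}}^2\leq 1/n$ (both from $r_t(K_n)^4\leq r_t(K_n)^2\leq I$ and $\tr(r_t(K_n)^2)\leq n$) then produce $\mathbf{P}_\epsilon(Q_t>y/2)\leq \exp(-c\min(ny^2/\sigma^4,ny/\sigma^2))$. Summing the two tails and adjusting $c$ yields the first inequality.

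\emph{Step 4: transfer to $v_{\tauDP}$ and main obstacle.} Since $t\mapsto v_t$ is continuous and non-decreasing, $\{v_{\tauDP}>v_{\tstar}+y\}\subseteq\{\tauDP>s_y\}$ for $s_y=\inf\{s\geq 0:v_s\geq v_{\tstar}+y\}$, at which $v_{s_y}-v_{\tstar}\geq y$ and $s_y>\tstar$; applying the first claim at $s_y$ together with the non-decrease of $u\mapsto \min(u/\sigma^2,u^2/\sigma^4)$ on $\R_+$ delivers the second bound. The most delicate step is the quadratic-form estimate: one must verify that \eqref{SubGN}, stated as moment growth, implies the Orlicz-$\psi_2$ control required to invoke a sub-Gaussian Hanson--Wright inequality conditionally on $X_1,\dots,X_n$, and that the resulting constants depend only on $A$, as claimed in the proposition.
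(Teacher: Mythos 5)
Your proposal is correct and follows essentially the same route as the paper: the paper reduces $\{\tauDP>t\}$ to the residual tail event, uses the basic inequality (your bracketing $v_t\leq\widetilde v_t\leq 2v_t$ is exactly Lemma~\ref{EqBasicIneqDP} with $L_n=I_n$) together with $b_t^2\leq b_{\tstar}^2=v_{\tstar}$, and then applies sub-Gaussian Hoeffding to the linear term and Hanson--Wright to the centered quadratic term (Lemma~\ref{LemResidualConc}), with the same crude bounds $\|r_t^2(K_n)\f\|_n^2\leq b_t^2\leq\sigma^2$ and $\operatorname{tr}(r_t^2(K_n))\leq n$. The only cosmetic difference is that the paper states these steps for a general smoothing matrix $L_n$ and specializes to $L_n=I_n$, whereas you work the identity case directly; the $\psi_2$ equivalence you flag in Step 4 is standard and is what the paper invokes via the cited references.
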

Both deviation inequalities are even equivalent if the proxy variance is strictly increasing. Proposition \ref{ConcIneqVarDP} is a simplified version of Proposition~\ref{ConcIneqVar} below. The proof can be based on exploring Figure~\ref{fig.Illustration.ESR} in combination with concentration inequalities for the empirical risk. Here is an outline of the argument.  Let us also mention that $t\mapsto b_t^2$ is continuous and non-increasing, while $t\mapsto v_t$ is continuous and non-decreasing. The definition of $\DP$ yields $\mathbf{P}_\epsilon(\DP>t)=\mathbf{P}_\epsilon(\|r_t(K_n)\mathbf{Y}\|_n^2>\sigma^2)$. Subtracting $\mathbf{E}_\epsilon\|r_t(K_n)\mathbf{Y}\|_n^2$ on both sides and invoking the upper bound in \eqref{EqBasicIneq}, we arrive at
\begin{align*}
    \mathbf{P}_\epsilon(v_{\DP}>y)\leq \mathbf{P}_\epsilon(\|r_t(K_n)\mathbf{Y}\|_n^2-\mathbf{E}_\epsilon\|r_t(K_n)\mathbf{Y}\|_n^2>v_t-b_t^2).
\end{align*}
By definition we have $v_t =  v_{\tstar}+y$. Moreover, from Figure~\ref{fig.Illustration.ESR} and the assumption on $y$, we get $b_t^2\leq b_{\tstar}^2= v_{\tstar}$. Hence, we conclude that 
\begin{align*}
    \mathbf{P}_\epsilon(v_{\DP}>y)&\leq \mathbf{P}_\epsilon(\|r_t(K_n)\mathbf{Y}\|_n^2-\mathbf{E}_\epsilon\|r_t(K_n)\mathbf{Y}\|_n^2>y),
\end{align*}
and Proposition \ref{ConcIneqVarDP} follows from the Hanson-Wright inequality (see Lemma \ref{LemResidualConc} below) and the fact that $b_t^2\leq v_t\leq \sigma^2$ since $t\geq \tstar$.

\begin{figure}[h!]
\hspace*{-1cm}
\begin{subfigure}{.5\textwidth}
\centering
    \includegraphics[width=\textwidth]{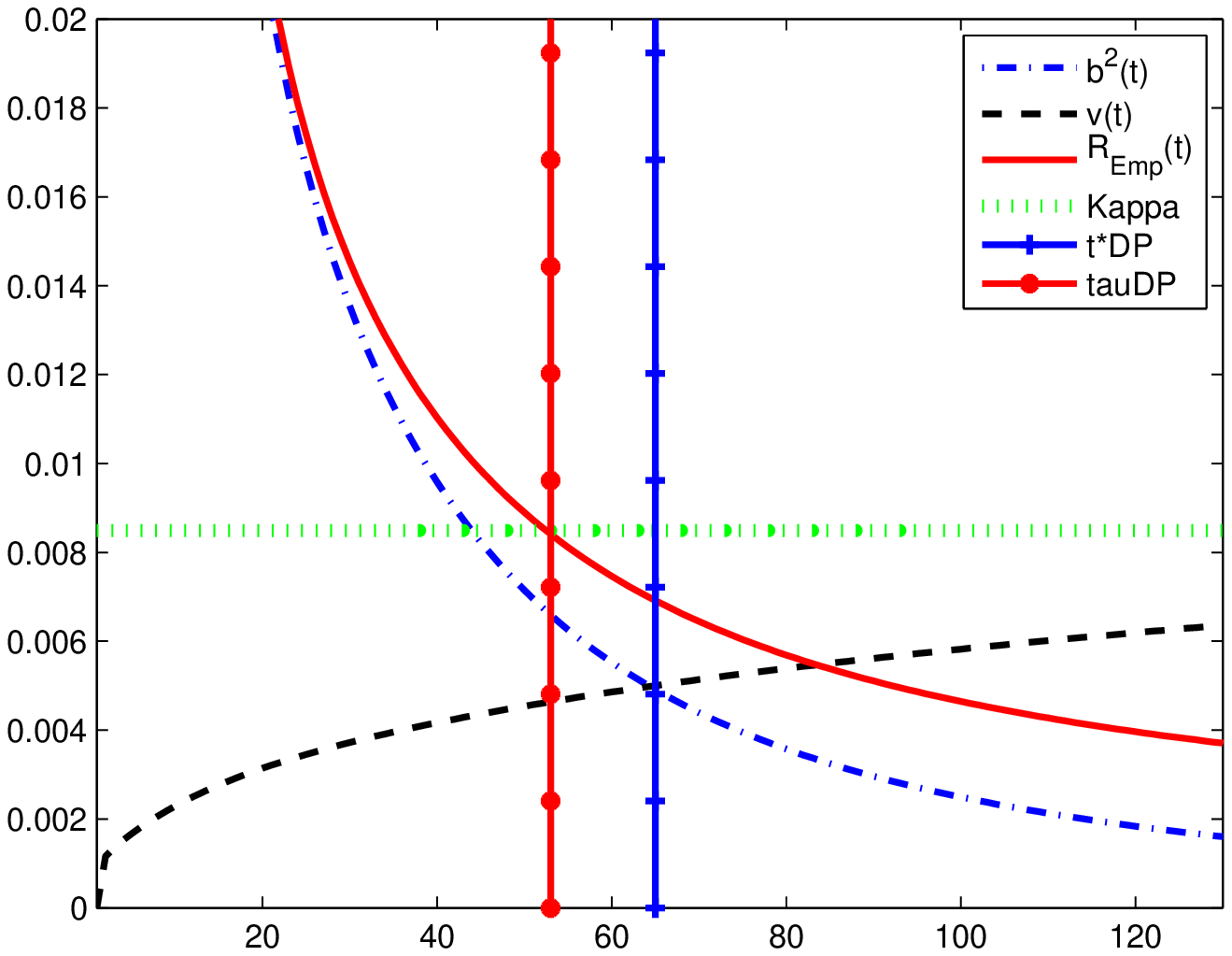}
\caption{The horizontal line defines $\kappa=\sigma^2$. The red plain decreasing curve crosses the horizontal line at $\DP$. The increasing curve crosses the blue dotted-dashed curve of the bias at $\tstar$.}
\label{fig.Illustration.ESR}
\end{subfigure}
\hspace*{1cm}
\begin{subfigure}{.5\textwidth}
\centering    
    \includegraphics[width=\textwidth]{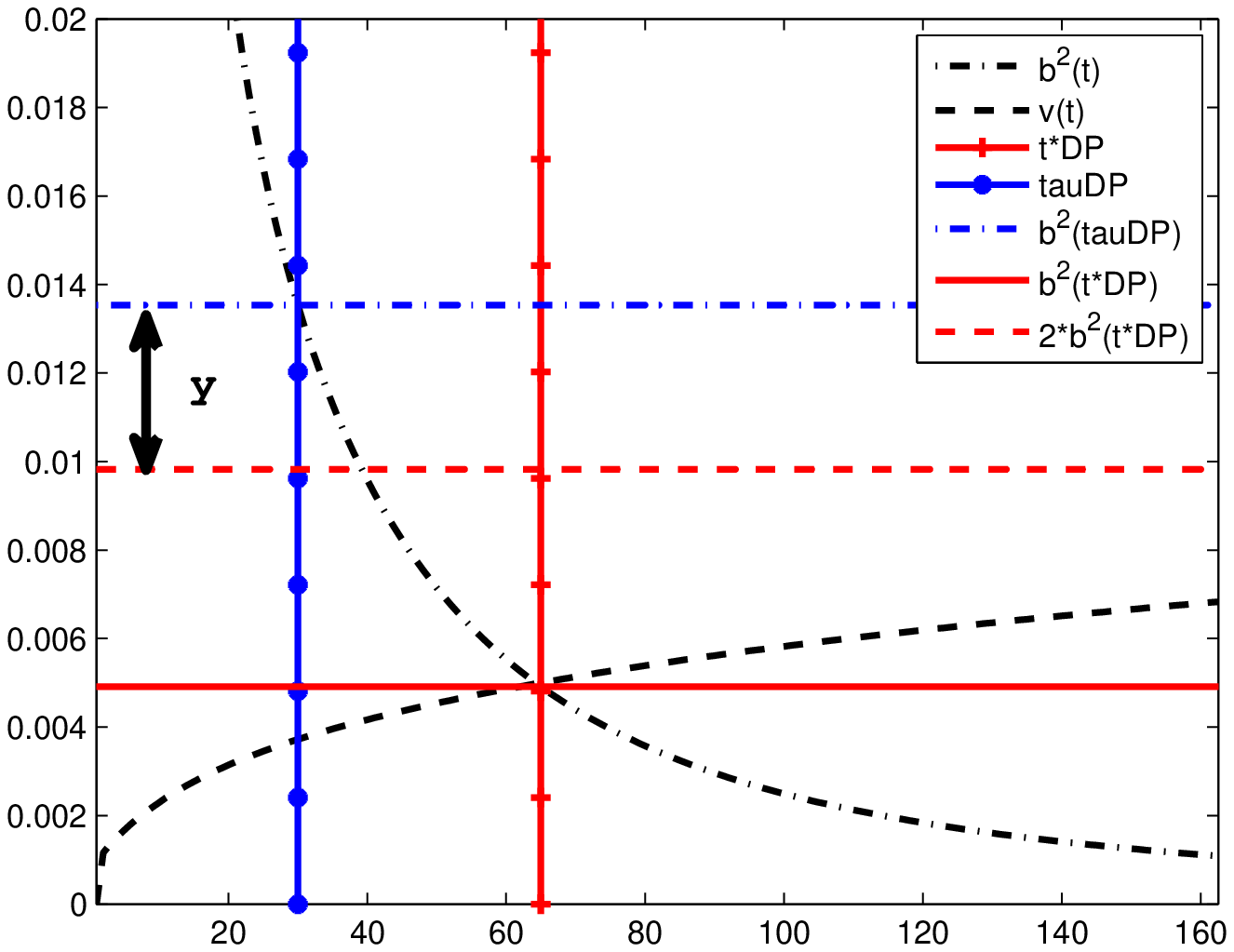}
\caption{Illustration of Proposition~\ref{ConcIneqBiasDP}. The red dashed horizontal line highlights the $2b^2(\tstar)$ threshold to which $b^2(\DP)$ is compared.\hfill~\\}
\label{fig.Deviation.Bias}
\end{subfigure}
\caption{Comparison of $\DP$ and the balancing stopping time $\tstar$.}
\end{figure}
Next, we present a left-deviation inequality for $\DP$ expressed in terms of the squared bias.
\begin{proposition}\label{ConcIneqBiasDP}
Suppose that Assumptions~\eqref{SubGN} and~\eqref{BdK} hold true. Then, for every $y>0$, we have
\begin{align*}
&\mathbf{P}_\epsilon(b_{\DP}^2> 2b_{\tstar}^2+y)\leq 2\exp\Big(-cn\Big(\frac{y}{\sigma^2}\wedge \frac{y^2}{\sigma^4}\Big)\Big),
\end{align*}
where $c>0$ is a constant depending only on $A$.
\end{proposition}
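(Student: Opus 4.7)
The strategy is a monotonicity argument combined with the lower bound in \eqref{EqBasicIneq} and a Hanson--Wright type concentration inequality for the empirical risk (Lemma~\ref{LemResidualConc}). The event $\{b_{\DP}^2 > 2b_{\tstar}^2 + y\}$ forces $\DP$ to be \emph{too small}, i.e.\ to stop before a deterministic time $s$ at which the bias is still large enough. The key is to quantify how far $\mathbf{E}_\epsilon \|r_s(K_n)\mathbf{Y}\|_n^2$ lies above $\sigma^2$ at this deterministic time.

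First, if $2b_{\tstar}^2 + y > b_0^2 = \|\mathbf{f}\|_n^2$ then the event is empty, since $b_{\DP}^2 \leq b_0^2$. Otherwise, by continuity and monotonicity of $t\mapsto b_t^2$ (recall $b_t^2$ is non-increasing, continuous, with $b_0^2 = \|\mathbf{f}\|_n^2$ and $b_t^2\to 0$), we may pick $s\geq 0$ with $b_s^2 = 2b_{\tstar}^2 + y$. Since $b_s^2 > b_{\tstar}^2 = v_{\tstar}$ and $v$ is non-decreasing, we have $s \leq \tstar$ and hence $v_s \leq v_{\tstar} = b_{\tstar}^2$. By monotonicity of $t \mapsto b_t^2$, the inclusion
\[
\{b_{\DP}^2 > 2b_{\tstar}^2 + y\} \subseteq \{\DP \leq s\}
\]
holds, and by monotonicity and continuity of $t\mapsto \|r_t(K_n)\mathbf{Y}\|_n^2$ together with the definition of $\DP$, we further get
\[
\{\DP \leq s\} \subseteq \{\|r_s(K_n)\mathbf{Y}\|_n^2 \leq \sigma^2\}.
\]

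Next, applying the lower bound in \eqref{EqBasicIneq} at the time $s$ yields
\[
\mathbf{E}_\epsilon \|r_s(K_n)\mathbf{Y}\|_n^2 - \sigma^2 \geq b_s^2 - 2 v_s \geq b_s^2 - 2 b_{\tstar}^2 = y,
\]
so that
\[
\mathbf{P}_\epsilon\bigl( \|r_s(K_n)\mathbf{Y}\|_n^2 \leq \sigma^2 \bigr) \leq \mathbf{P}_\epsilon\bigl( \mathbf{E}_\epsilon \|r_s(K_n)\mathbf{Y}\|_n^2 - \|r_s(K_n)\mathbf{Y}\|_n^2 \geq y \bigr).
\]
Expanding $\mathbf{Y} = \mathbf{f} + \boldsymbol{\epsilon}$, the centered random variable inside the probability splits into a quadratic form $\|r_s(K_n)\boldsymbol{\epsilon}\|_n^2 - \mathbf{E}_\epsilon\|r_s(K_n)\boldsymbol{\epsilon}\|_n^2$ and a linear form $2\langle r_s(K_n)\mathbf{f},\, r_s(K_n)\boldsymbol{\epsilon}\rangle_n$. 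Bounding $\|r_s(K_n)^2/n\|_{\operatorname{op}} \leq 1/n$ and $\|r_s(K_n)^2/n\|_{\operatorname{HS}}^2 \leq 1/n$ using \eqref{BdF}, and using $\mathbf{f}^T r_s(K_n)^4 \mathbf{f}\leq n b_s^2 \lesssim \sigma^2$ (which holds on the non-trivial part of the event), the Hanson--Wright type Lemma~\ref{LemResidualConc} under \eqref{SubGN} yields a deviation bound of the form
\[
2\exp\Bigl( -c n \Bigl(\tfrac{y}{\sigma^2} \wedge \tfrac{y^2}{\sigma^4}\Bigr) \Bigr),
\]
which is exactly the claimed inequality.

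The main subtleties I expect are two: (i) handling the possibility that no suitable $s$ exists, or that $\tstar = \infty$ (both of which are absorbed into trivial cases since $b_{\tstar}^2 = v_{\tstar} = 0$ is consistent with the same calculation); and (ii) ensuring that the constants appearing in the Hanson--Wright inequality applied to the matrix $r_s(K_n)^2/n$ do not pick up spurious dependence on $s$—this is where \eqref{BdF} (bounding $r_s$ by $1$) is decisive, since it makes both the operator norm and Hilbert--Schmidt norm uniformly controlled by $1/n$. Assumption~\eqref{BdK} enters through the kernel matrix being bounded, which is used in the supporting deviation lemma rather than in the main argument itself.
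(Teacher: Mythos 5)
Your argument is correct and follows essentially the same route as the paper's own proof (Proposition~\ref{ConcIneqBias} specialized to $L_n=I_n$): pick the deterministic time $s$ with $b_s^2=2b_{\tstar}^2+y$, use monotonicity to reduce the event to $\{\|r_s(K_n)\mathbf{Y}\|_n^2\le\sigma^2\}$, invoke the lower bound in \eqref{EqBasicIneq} together with $v_s\le v_{\tstar}=b_{\tstar}^2$, and conclude with Lemma~\ref{LemResidualConc}. One minor imprecision: the bound actually available for the linear term is $b_s^2=2v_{\tstar}+y\le 2\sigma^2+y$ (your claim $b_s^2\lesssim\sigma^2$ fails for large $y$), but this weaker bound still yields the stated sub-Gaussian/sub-exponential exponent.
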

Proposition \ref{ConcIneqBiasDP} is a simplified version of Proposition~\ref{ConcIneqBias} below, and follows similarly as Proposition \ref{ConcIneqVarDP} by exploiting the lower bound in \eqref{EqBasicIneq} this time. As illustrated in Figure \ref{fig.Deviation.Bias}, let $t<\tstar$ be defined by $b_t^2=2b_{\tstar}^2+y$ (if such a $t$ does not exist, then the claim is trivial). Then the definition of $\DP$ yields $\mathbf{P}_\epsilon(b_{\DP}^2>b_{t}^2)\leq \mathbf{P}_\epsilon(\|r_t(K_n)\mathbf{Y}\|_n^2\leq \sigma^2)$. Subtracting $\mathbf{E}_\epsilon\|r_t(K_n)\mathbf{Y}\|_n^2$ on both sides and invoking the lower bound in \eqref{EqBasicIneq}, we arrive at
\begin{align*}
    \mathbf{P}_\epsilon(b_{\DP}^2>2b_{\tstar}^2+y)\leq \mathbf{P}_\epsilon(\|r_t(K_n)\mathbf{Y}\|_n^2-\mathbf{E}_\epsilon\|r_t(K_n)\mathbf{Y}\|_n^2\leq 2v_t-b_t^2).
\end{align*}
By definition we have $b_t^2=2b_{\tstar}^2+y$. Moreover, from Figure \ref{fig.Deviation.Bias} and the assumption on $y$, we get $v_t\leq v_{\tstar}= b_{\tstar}^2$. Hence, we conclude that 
\begin{align*}
    \mathbf{P}_\epsilon(b_{\DP}^2>2b_{\tstar}^2+y)&\leq \mathbf{P}_\epsilon(\|r_t(K_n)\mathbf{Y}\|_n^2-\mathbf{E}_\epsilon\|r_t(K_n)\mathbf{Y}\|_n^2\leq -y),
\end{align*}
and Proposition \ref{ConcIneqBiasDP} follows from the Hanson-Wright inequality (see Lemma \ref{LemResidualConc} below) and the fact that $b_t^2\leq 2v_{\tstar}+y\leq 2\sigma^2+y$.  

\subsubsection{Non-asymptotic performance quantification}

We are now in position to formulate our first upper bound for the estimation error in the empirical norm. It quantifies the statistical performance of the stopping rule based on the classical discrepancy principle (DP), namely $\tauDP$, in terms of an oracle-type inequality with high probability. 
\begin{proposition} \label{ThmCondDesignProxy} 
Suppose that Assumptions \eqref{SubGN} and \eqref{BdK} hold. Then the early stopping rule $\DP$ based on the standard discrepancy principle \eqref{EqDP} satisfies for each $T\in[0,\infty]$,
\begin{multline*}
\mathbf{P}_{\epsilon}\Big(\|\f-\hat \f^{(\DP)}\|_n^2>C\Big(\min_{0\leq t\leq T}\Big\{\|r_t(K_n)\f\|_n^2+\frac{\sigma^2}{n}\mathcal{N}^g_n(t)\Big\}+\frac{\sigma^2\sqrt{u}} {\sqrt{n}}+\frac{\sigma^2u}{n}\Big)\Big)\\
\leq 5 e^{-u},\quad u>0,
\end{multline*}
where $C$ is a constant depending only on $A$.
\end{proposition}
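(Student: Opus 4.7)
The plan is to decompose the empirical error at the data-driven time $\tauDP$ into a bias and a variance contribution, control each separately by combining the deviation inequalities of Propositions~\ref{ConcIneqVarDP}--\ref{ConcIneqBiasDP} with Hanson--Wright concentration, and finally compare the result to the oracle quantity $\min_{t\leq T}\{b_t^2+v_t\}$ via the balancing time $\tstar$.

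\textbf{Step 1 (error decomposition).} Starting from $\mathbf{Y}=\mathbf{f}+\boldsymbol{\epsilon}$ and $S_n\hat{\mathbf{f}}^{(t)}=g_t(K_n)K_n\mathbf{Y}$, one obtains $\mathbf{f}-S_n\hat{\mathbf{f}}^{(t)}=r_t(K_n)\mathbf{f}-g_t(K_n)K_n\boldsymbol{\epsilon}$. Applying $\|a-b\|_n^2\leq 2\|a\|_n^2+2\|b\|_n^2$ at $t=\tauDP$ gives
\[
\|\mathbf{f}-S_n\hat{\mathbf{f}}^{(\tauDP)}\|_n^2\leq 2\,b_{\tauDP}^2+2\,V_{\tauDP},\qquad V_t:=\|g_t(K_n)K_n\boldsymbol{\epsilon}\|_n^2.
\]

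\textbf{Step 2 (bias at $\tauDP$).} Setting $y_0=c_1(\sigma^2\sqrt{u/n}+\sigma^2 u/n)$ with $c_1$ large enough, Proposition~\ref{ConcIneqBiasDP} yields $b_{\tauDP}^2\leq 2b_{\tstar}^2+y_0$ with probability at least $1-2e^{-u}$. Since $b_{\tstar}^2=v_{\tstar}$, this is the desired size.

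\textbf{Step 3 (variance at $\tauDP$).} Because $\lambda\mapsto\lambda g_t(\lambda)\geq 0$ is non-decreasing in $t$ by Definition~\ref{Def}, so is $(\lambda g_t(\lambda))^2$, hence $t\mapsto V_t$ is non-decreasing. Pick $t^+$ so that $v_{t^+}=v_{\tstar}+y_0$ (taking $t^+=\infty$ or $t^+=T$ if no such value exists forces a separate, easier bound). By Proposition~\ref{ConcIneqVarDP}, $\tauDP\leq t^+$ with probability at least $1-2e^{-u}$, and then $V_{\tauDP}\leq V_{t^+}$. Now $V_{t^+}$ is a fixed quadratic form in $\boldsymbol{\epsilon}$ (conditionally on the design), so a one-sided Hanson--Wright bound (Lemma~\ref{LemResidualConc}) gives, with probability at least $1-e^{-u}$,
\[
V_{t^+}\leq \mathbf{E}_\epsilon V_{t^+}+c_2\Bigl(\sigma^2\sqrt{u/n}+\sigma^2 u/n\Bigr)\leq v_{t^+}+c_2\Bigl(\sigma^2\sqrt{u/n}+\sigma^2 u/n\Bigr),
\]
where the last step uses \eqref{BdF} to bound $\mathbf{E}_\epsilon V_{t^+}=(\sigma^2/n)\tr(g_{t^+}^2(K_n)K_n^2)\leq v_{t^+}$.

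\textbf{Step 4 (combine and compare with the oracle).} A union bound combines the three events above into a set of probability at least $1-5e^{-u}$. On that set, Steps 1--3 give $\|\mathbf{f}-S_n\hat{\mathbf{f}}^{(\tauDP)}\|_n^2\leq 4b_{\tstar}^2+2v_{\tstar}+C(\sigma^2\sqrt{u/n}+\sigma^2u/n)=6v_{\tstar}+C(\ldots)$. The monotonicity arguments $b_t^2\!\downarrow$, $v_t\!\uparrow$ imply $v_{\tstar}\leq \min_{t\geq 0}\{b_t^2+v_t\}\leq 2v_{\tstar}$; when $\tstar\leq T$ the same bound holds for the minimum restricted to $[0,T]$, delivering the claimed inequality. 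The edge case $\tstar>T$ is handled separately: there $\tauDP=T$, so the decomposition of Step~1 together with a direct Hanson--Wright bound on $V_T$ and the trivial bound $b_T^2\leq \min_{t\leq T}\{b_t^2+v_t\}\cdot(1+o(1))$ closes the argument.

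The main obstacle is Step~3: a naive application of Hanson--Wright to $V_{\tauDP}$ is illegitimate because $\tauDP$ depends on $\boldsymbol{\epsilon}$. The key device is the monotonicity of $V_t$ in $t$, which reduces control of $V_{\tauDP}$ to the deterministic quantity $V_{t^+}$ once Proposition~\ref{ConcIneqVarDP} has confined $\tauDP$ to the interval $[0,t^+]$; after that, Hanson--Wright concentration and the $g$-effective-dimension interpretation of $\mathbf{E}_\epsilon V_{t^+}$ do the rest.
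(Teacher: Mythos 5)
Your overall strategy coincides with the paper's: the same decomposition into $2b_{\tauDP}^2+2V_{\tauDP}$, the same device of first confining $\tauDP$ to $[0,t^+]$ via the right-deviation inequality and then applying Hanson--Wright to the monotone variance functional at the \emph{deterministic} time $t^+$ (this is exactly how Proposition~\ref{ConcIneqVar} is proved internally), the same left-deviation control of the bias, and the same comparison with the oracle through $\tstar$, with the same $5e^{-u}$ count. Working with $V_t=\|g_t(K_n)K_n\boldsymbol{\epsilon}\|_n^2$ rather than with $\|K_n^{1/2}g_t^{1/2}(K_n)\boldsymbol{\epsilon}\|_n^2$ as in the paper is immaterial, since both are non-decreasing in $t$ and both have conditional expectation at most $v_t$ by \eqref{BdF}.

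There is, however, a genuine gap for finite $T$, and the proposition is claimed for every $T\in[0,\infty]$. Proposition~\ref{ConcIneqBiasDP} is the specialization to $T=\infty$; for finite $T$ the correct conclusion (obtained from Proposition~\ref{ConcIneqBias}) is $b_{\tauDP}^2\leq 2\,b_{\tstar\wedge T}^2+y_0$ with high probability, not $b_{\tauDP}^2\leq 2\,b_{\tstar}^2+y_0$. When $\tstar>T$ your Step~2 bound is simply false: $\tauDP\leq T$ forces $b_{\tauDP}^2\geq b_T^2$ deterministically, and $b_T^2$ can exceed $2b_{\tstar}^2+y_0$ because $b_{\tstar}^2$ is the bias at a crossing point lying far beyond $T$. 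Your Step~4 patch, ``there $\tauDP=T$'', is also incorrect: the noise can push the empirical risk below $\sigma^2$ strictly before $T$, so $\tauDP<T$ with positive probability and the bias at $\tauDP$ then exceeds $b_T^2$; controlling that probability is precisely the job of the left-deviation inequality. The repair is routine: invoke the $T$-aware deviation bound to get $b_{\tauDP}^2\leq 2\,b_{\tstar\wedge T}^2+y_0$ (its proof uses $v_t\leq v_{\tstar}=b_{\tstar}^2\leq b_{\tstar\wedge T}^2$ for $t<\tstar\wedge T$), and then observe that $b_{\tstar\wedge T}^2\leq\min_{0\leq t\leq T}\{b_t^2+v_t\}$ in both cases (it equals $v_{\tstar}$ if $\tstar\leq T$, and is at most $b_t^2$ for every $t\leq T$ if $\tstar>T$). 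With that substitution the rest of your argument goes through unchanged.
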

A proof of Proposition~\ref{ThmCondDesignProxy} is given in Section~\ref{sec.proof.oracle.FD}. 
The above result is established for spectral filter estimators with regularizer $g$, under mild assumptions on the noise (only required to be sub-Gaussian).
Deriving this result under such mild assumptions has been made possible by introducing the proxy variance $v_t=\sigma^2\mathcal{N}^g_n(t)/n $ (from Eq.~\eqref{bias.proxyvariance}) instead of the more classical variance term in the r.h.s. of the inequality.
Nevertheless, it is still possible to upper bound the proxy-variance by the classical one at the price of an additional assumption as will be done in the next section (Theorem~\ref{ThmCondDesignOracle}).

\subsection{Main oracle inequality}\label{sec.oracle.DP.fixed.design}
As explained earlier, the purpose of the present section is to establish an oracle inequality for $\DP$. Compared with Proposition~\ref{ThmCondDesignProxy}, this is possible at the price of an additional assumption that we first motivate.

The desired derivation is made possible by connecting the proxy variance (that is, the $g$-effective dimension) to the classical variance. The key ingredient is that the $g$-effective dimension is typically dominated by the eigenvalues satisfying $t\hat\lambda_j>1$ as highlighted by the proof of Lemma~\ref{LemProxyVariance}. For such eigenvalues, \eqref{LFL} yields $b\leq \hat \lambda_jg_t(\hat\lambda_j)\leq 1$, which leads to conclude that the proxy and true variances only differ by a constant. 
This argument can be made rigorous by means of the next (sufficient) condition.
\begin{assumption} There is a constant $E>0$ such for each $k\geq 0$ satisfying $\hat\lambda_kT\geq 1$, we have
\begin{align}
 \hat\lambda_{k+1}^{-1}\sum_{j> k}\hat\lambda_j\leq E(k\vee 1) \tag{{\bf EVBound}} \label{Assume.EVBound}.
\end{align} 
\end{assumption}
Considering this ratio between the tail series of eigenvalues and the $k$th largest one has already been made in the literature \citep[see Definition~3 in][for instance where this ratio is named the ``effective rank"]{bartlett2019benign}.  
It is noticeable that \eqref{Assume.EVBound} encompasses two classical assumptions on the decay rate of the eigenvalues, respectively called polynomial \eqref{polyDecayLowUp} and exponential \eqref{expoDecayLowUp} decay. 
The next two examples are provided for illustrative purposes only. A more general result will be proved under milder constraints on the empirical eigenvalues by means of  \eqref{Assume.EffRank} (see Section~\ref{sec.SDP.inner.case} for more details).
%
%
%
\begin{example}[Polynomial eigenvalues decay]
If there exist numeric constants $\ell,L>0$, and $\alpha>1$ such that 
    \begin{align}
\ell j^{-\alpha}    \leq \hat\lambda_j \leq L j^{-\alpha},\qquad 1\leq j\leq n ,\tag{{\bf PolDecTS}} \label{polyDecayLowUp}
    \end{align}
    then \eqref{Assume.EVBound} holds true with $E= 1+2 L\ell^{-1}(\alpha+1)^{-1}$.
\end{example}
\begin{example}[Exponential eigenvalues decay]
If there exist numeric constants $\ell,L>0$, and $\alpha\in]0,1]$ such that 
    \begin{align}
\ell e^{-j^{\alpha}} \leq \hat\lambda_j \leq L e^{-j^{\alpha}},\qquad 1\leq j\leq n ,\tag{{\bf ExpDecTS}} \label{expoDecayLowUp}
    \end{align}
    then \eqref{Assume.EVBound} holds true with $$E = 1+\frac{2L}{\ell\alpha} \int_{0}^\infty\paren{1 + v}^{1/\alpha-1} e^{-v}\, dv .$$
\end{example}

We are now in position to explain how $\geffdim_n(t)$ (resp. the proxy variance) connects to $\operatorname{tr}(g_t^2(K_n)K_n^2)$ (resp. the variance) by means of \eqref{Assume.EVBound}.
\begin{lemma}\label{LemProxyVariance} 
Suppose that Assumptions \eqref{LFL} and \eqref{Assume.EVBound} hold. Then there is a constant $C>0$ depending only on $B,b$ and $E$ such that 
\[
\forall 0\leq t\leq T,\quad \geffdim_n(t)\leq C(\operatorname{tr}(g_t^2(K_n)K_n^2)+1).
\]
\end{lemma}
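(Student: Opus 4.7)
The plan is to prove the bound by splitting the index set according to whether the empirical eigenvalues $\hat\lambda_j$ satisfy $t\hat\lambda_j\geq 1$ or not, and controlling each piece using a different assumption on the regularizer together with \eqref{Assume.EVBound}. Writing $\geffdim_n(t)=\sum_{j=1}^n \hat\lambda_j g_t(\hat\lambda_j)$ and $\operatorname{tr}(g_t^2(K_n)K_n^2)=\sum_{j=1}^n (\hat\lambda_j g_t(\hat\lambda_j))^2$, set $k^{*}=\max\{j\geq 0\,:\, t\hat\lambda_j\geq 1\}$ (with the convention $\hat\lambda_0=+\infty$, so that $k^*=0$ when $t\hat\lambda_1<1$).

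First I would bound the ``low-index'' part $j\leq k^{*}$. Here \eqref{LFL} gives $\hat\lambda_j g_t(\hat\lambda_j)\geq b(1\wedge t\hat\lambda_j)=b$, and \eqref{BdF} gives $\hat\lambda_j g_t(\hat\lambda_j)\leq 1$. Therefore
\[
\sum_{j\leq k^{*}}\hat\lambda_j g_t(\hat\lambda_j)\leq k^{*}\quad\text{and}\quad \sum_{j\leq k^{*}}(\hat\lambda_j g_t(\hat\lambda_j))^2\geq b^2 k^{*},
\]
so the low-index contribution to $\geffdim_n(t)$ is already dominated (up to the factor $b^{-2}$) by $\operatorname{tr}(g_t^2(K_n)K_n^2)$.

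Next I would handle the ``tail'' part $j> k^{*}$ using \eqref{LFU} and \eqref{Assume.EVBound}. Since $t\hat\lambda_j<1$ for $j>k^*$, the bound $g_t(\hat\lambda_j)\leq Bt$ from \eqref{LFU} yields $\hat\lambda_j g_t(\hat\lambda_j)\leq Bt\hat\lambda_j$, hence
\[
\sum_{j>k^{*}}\hat\lambda_j g_t(\hat\lambda_j)\leq Bt\sum_{j>k^{*}}\hat\lambda_j.
\]
Now observe that if $k^{*}\geq 1$, then $\hat\lambda_{k^{*}}T\geq \hat\lambda_{k^{*}}t\geq 1$, so \eqref{Assume.EVBound} applies with $k=k^{*}$, giving $\sum_{j>k^{*}}\hat\lambda_j\leq E k^{*}\hat\lambda_{k^{*}+1}<Ek^{*}/t$; if $k^{*}=0$, the convention $\hat\lambda_0=+\infty$ makes \eqref{Assume.EVBound} applicable with $k=0$, yielding $\sum_{j\geq 1}\hat\lambda_j\leq E\hat\lambda_1<E/t$. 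In either case, $Bt\sum_{j>k^{*}}\hat\lambda_j\leq BE(k^{*}\vee 1)$.

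Combining the two estimates gives $\geffdim_n(t)\leq (1+BE)(k^{*}\vee 1)$. When $k^{*}\geq 1$, the low-index bound yields $k^{*}\leq b^{-2}\operatorname{tr}(g_t^2(K_n)K_n^2)$, and when $k^{*}=0$ we simply pay the additive constant $1+BE$. This produces the conclusion with $C=(1+BE)(1\vee b^{-2})$. The only delicate point is the bookkeeping in the boundary case $k^{*}=0$, which forces the presence of the ``$+1$'' on the right-hand side; otherwise the argument is a direct comparison of two diagonal sums.
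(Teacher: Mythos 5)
Your proposal is correct and follows essentially the same route as the paper's proof: split the spectrum at the index where $t\hat\lambda_j$ crosses $1$, control the tail via \eqref{LFU} together with \eqref{Assume.EVBound} (using $t\leq T$ to verify the hypothesis $\hat\lambda_{k}T\geq 1$), and control the head via \eqref{LFL}. The only cosmetic differences are that you count the head through $k^{*}$ using \eqref{BdF} (yielding the constant $b^{-2}$ where the paper gets $b^{-1}$ from the pointwise bound $x\leq b^{-1}x^{2}$ for $x\geq b$) and that you fold the boundary case $t\hat\lambda_1<1$ into the convention $\hat\lambda_0=+\infty$ rather than treating it separately.
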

For the sake of comparison, let us mention that Lemma~\ref{LemProxyVariance} shows that the constant $C_{l^1,l^2}$ from Proposition~2.5 in \citep{MR3829522} does exist under mild assumptions on the decay rate of the eigenvalues.

\begin{proof}[Proof of Lemma~\ref{LemProxyVariance}]
If $t\hat\lambda_1<1$, then \eqref{LFU} and \eqref{Assume.EVBound} imply
\begin{align*}
    \geffdim_n(t)\leq Bt\sum_{j\geq 1}\hat\lambda_j\leq B\hat\lambda_1^{-1}\sum_{j\geq 1}\hat\lambda_j\leq BE,
\end{align*}
giving the claim with $C=BE$. On the other hand, if $t\hat\lambda_1\geq 1$, then let $k\geq 1$ be defined by $t\hat\lambda_{k+1} < 1\leq t\hat\lambda_{k}$.
Applying \eqref{LFU}, we have
\begin{align}
\geffdim_n(t)&=\sum_{j=1}^n\hat\lambda_jg_t(\hat\lambda_j)=\sum_{j\leq k}\hat\lambda_jg_t(\hat\lambda_j)+\sum_{j> k}\hat\lambda_jg_t(\hat\lambda_j)\nonumber\\
&\leq \sum_{j\leq k}\hat\lambda_jg_t(\hat\lambda_j)+Bt\sum_{j> k}\hat\lambda_j.\label{EqVarBound}
\end{align}
Now by the definition of $k$, \eqref{Assume.EVBound} and \eqref{LFL}, we have
\begin{align*}
t\sum_{j> k}\hat\lambda_j\leq \hat\lambda_{k+1}^{-1}\sum_{j> k}\hat\lambda_j\leq Ek\leq Eb^{-1}\sum_{j\leq k}\hat\lambda_jg_t(\hat\lambda_j).
\end{align*}
Inserting this into \eqref{EqVarBound}, we get
\begin{align*}
\sum_{j=1}^n\hat\lambda_jg_t(\hat\lambda_j)\leq C\sum_{j\leq k}\hat\lambda_jg_t(\hat\lambda_j)\leq b^{-1}C\sum_{j=1}^n\hat\lambda_j^2g_t^2(\hat\lambda_j)
\end{align*}
with $C=(1+b^{-1}BE)$.
\end{proof}

Combining \eqref{Assume.EVBound} and Lemma~\ref{LemProxyVariance} illustrates the way Proposition~\ref{ThmCondDesignProxy} can be transferred into a classical oracle inequality that is, involving bias and variance terms in the r.h.s., which is achieved by the next result.
\begin{thm} \label{ThmCondDesignOracle} Suppose that Assumptions~\eqref{SubGN},~\eqref{BdK} and \eqref{Assume.EVBound} hold and that the regularizer $g$ satisfies \eqref{LFL}. Then the early stopping rule $\DP$ based on the standard discrepancy principle \eqref{EqDP} satisfies for every $u>1$ the bound
\begin{align*}
\mathbf{P}_{\epsilon}\Big(\|\f-\hat \f^{(\DP)}\|_n^2>C\Big(\min_{0\leq t\leq T}\mathbf{E}_\epsilon\|\f-\hat \f^{(t)}\|_n^2+\frac{\sigma^2\sqrt{u}} {\sqrt{n}}+\frac{\sigma^2u}{n}\Big)\Big) \leq 5e^{-u},
\end{align*}
where $C$ is a constant depending only on $A$, $b$, $B$ and $E$.
\end{thm}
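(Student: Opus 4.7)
The plan is to combine the proxy-variance oracle inequality of Proposition \ref{ThmCondDesignProxy} with the comparison Lemma \ref{LemProxyVariance} between the empirical $g$-effective dimension and the true variance term, along with the bias-variance identity for the empirical prediction error. No new probabilistic ingredient is required; the theorem is essentially a corollary of these two earlier results.

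First, I would invoke Proposition \ref{ThmCondDesignProxy}, which, under Assumptions \eqref{SubGN} and \eqref{BdK}, yields with probability at least $1-5e^{-u}$ the bound
\begin{align*}
\|\f-\hat\f^{(\DP)}\|_n^2 \leq C\Big(\min_{0\leq t\leq T}\Big\{\|r_t(K_n)\f\|_n^2+\tfrac{\sigma^2}{n}\geffdim_n(t)\Big\}+\tfrac{\sigma^2\sqrt{u}}{\sqrt{n}}+\tfrac{\sigma^2 u}{n}\Big).
\end{align*}
The only difference from the stated conclusion is that the variance proxy $(\sigma^2/n)\geffdim_n(t)$ must be replaced by $(\sigma^2/n)\operatorname{tr}(g_t^2(K_n)K_n^2)$ so as to reconstruct $\mathbf{E}_\epsilon\|\f-\hat\f^{(t)}\|_n^2$.

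Second, I would apply Lemma \ref{LemProxyVariance}, whose hypotheses \eqref{LFL} and \eqref{Assume.EVBound} are precisely those added in the theorem, to obtain uniformly in $0\leq t\leq T$,
\begin{align*}
\tfrac{\sigma^2}{n}\geffdim_n(t)\leq C\tfrac{\sigma^2}{n}\operatorname{tr}(g_t^2(K_n)K_n^2)+C\tfrac{\sigma^2}{n}.
\end{align*}
Combining this with the standard bias-variance decomposition
\begin{align*}
\mathbf{E}_\epsilon\|\f-\hat\f^{(t)}\|_n^2=\|r_t(K_n)\f\|_n^2+\tfrac{\sigma^2}{n}\operatorname{tr}(g_t^2(K_n)K_n^2),
\end{align*}
I would deduce that for every $0\leq t\leq T$,
\begin{align*}
\|r_t(K_n)\f\|_n^2+\tfrac{\sigma^2}{n}\geffdim_n(t)\leq C\,\mathbf{E}_\epsilon\|\f-\hat\f^{(t)}\|_n^2+C\tfrac{\sigma^2}{n}.
\end{align*}
Taking the minimum over $t\in[0,T]$ and inserting this into the inequality from Proposition \ref{ThmCondDesignProxy} yields the claim, after absorbing the leftover term $C\sigma^2/n$ into $C\sigma^2 u/n$ (which is valid because $u>1$).

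There is no genuine obstacle: the theorem is a bookkeeping step. The only subtlety worth checking is that the constant $C$ in the conclusion depends solely on $A$, $b$, $B$, and $E$, which follows since Proposition \ref{ThmCondDesignProxy} introduces a constant depending on $A$ only, while Lemma \ref{LemProxyVariance} introduces a constant depending on $B$, $b$, and $E$. The constants $M$, $R$, $r$ mentioned in Section \ref{sec.further.notations} do not enter explicitly because neither the proxy oracle inequality nor the lemma requires any source condition.
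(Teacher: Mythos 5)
Your proof is correct and follows exactly the route the paper intends: Theorem~\ref{ThmCondDesignOracle} is obtained by inserting Lemma~\ref{LemProxyVariance} into Proposition~\ref{ThmCondDesignProxy} via the bias-variance identity, with the leftover $C\sigma^2/n$ absorbed using $u>1$. Your write-up in fact supplies more detail than the paper's one-line proof, including the correct tracking of the constant's dependence on $A$, $b$, $B$ and $E$.
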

The  proof of Theorem~\ref{ThmCondDesignOracle} is deferred to Section~\ref{sec.proof.oracle.FD}.
Theorem~\ref{ThmCondDesignOracle} yields a non-asymptotic result, which contrasts for instance with the one of \cite{MR3564933} where conjugate gradient descent and minimum discrepancy principle are analyzed.
The above inequality is established with high probability, and it provides the precise sub-Gaussian and sub-exponential factors. This is a technical improvement compared to existing approaches where similar oracle inequalities in expectation are derived \citep{MR3859376,MR3829522}. 

The oracle performance in the r.h.s. of Theorem~\ref{ThmCondDesignOracle} is given through the expected excess risk (rather than the excess risk). This could be made at the price of an additional $\log T$ term, accounting for the uniform control of the discrepancy between the excess risk and its expectation over the first $T$ iterations. 

Let us also notice that Theorem~\ref{ThmCondDesignOracle} does not depend on any smoothness assumption on $f$. Making additional smoothness assumptions would immediately lead to a specific bound on $\min_{0\leq t\leq T}\mathbf{E}_\epsilon\|\f-\hat \f^{(t)}\|_n^2$ expressed in terms of convergence rate. This will be done in the random design framework in Section~\ref{sec.outer.case}, where it is shown that the classical discrepancy principle leads to optimal convergence rates whenever the latter rate is slower than the $n^{-1/2}$-rate. Such a situation can happen in the outer case $r<1/2$.

In contrast, the $1/\sqrt{n}$-rate is not negligible whenever the minimal bias-variance trade-off is smaller than (or of same order as)  $1/\sqrt{n}$. This holds true e.g. in the inner case $r\geq 1/2$. Compared to \citep{MR3859376} and \citep{MR3829522}, the term $\sigma^2/\sqrt{n}$ corresponds to their term $\sqrt{D}\delta^2$ (with the analogy noise level $\delta^2=\sigma^2/n$ and discretization dimension $D=n$). Moreover, in \citep{MR3859376} it has been shown for the specific case of spectral cut-off that such terms can not be avoided for early stopping rules based on the residual filtration. Hence, we conclude that the classical minimum discrepancy principle turns out to be useless when estimating smooth functions. This motivates considering smoothing-based strategies in Section~\ref{sec.smoothing.residuals}.

\subsection{Discussion}
\label{subsec.discussion}
As earlier emphasized, the $\sigma^2/\sqrt{n}$ term in Theorem~\ref{ThmCondDesignOracle} cannot be improved.
The reason for this term is the high variability in the stopping rule $\DP$ and the empirical risk (see Figure~\ref{fig.Perf.Sobolev.inner.histo.DP}). 
To illustrate this further, let us consider the deviation inequality for $\DP$ from Proposition \ref{ConcIneqVarDP} applied with $t$ satisfying $\geffdim_n(t)= (1+\delta)\geffdim_n(\tstar)$ with $\delta>1$, leading to
\begin{align}\label{EqConcTauDiscussion}
&\mathbf{P}_\epsilon(\DP>t) \leq 2\exp\Big(-c\Big( \delta\geffdim_n(\tstar) \wedge \frac{(\delta\geffdim_n(\tstar))^2}{n}\Big)\Big).
\end{align}
If, for instance, \eqref{polyDecayLowUp} and \eqref{Assume.SC} hold, then $\mathcal{N}^g_n(\tstar)$ is typically of order $n^{1/(2\alpha r+1)}$, meaning that the above (non-improvable) concentration bound becomes vacuous for $n^{1/(2\alpha r+1)}\ll n^{1/2}$. This is the case if $r$ is larger than $1/(2\alpha)$. In such settings, the classical discrepancy principle will typically lead to stopping times that are too large with high probability. 
Interestingly, we prove in the random-design context of Section~\ref{sec.outer.case} that the discrepancy principle can nevertheless achieve state-of-the-art rates under the condition $r\leq 1/(2\alpha)$.

The limitation of $\DP$ in the present context can be also interpreted as the consequence of trying to estimate a part of the signal that is smaller than the level of noise $\sigma$. This can be easily observed by computing the singular value decomposition (SVD) of the normalized Gram matrix $K_n$, and by computing the residuals in this new basis.
Then a natural idea to overcome this problem is the smoothing of the residuals, then reducing the contribution of these ``small coordinates'' to the (smoothed) residuals. This strategy has been already explored in the literature (see for instance \citet{MR3564933} for the CGD).
Studying how $\DP$ can be improved when combined with the smoothing of the residuals is the purpose
of Section~\ref{sec.smoothing.residuals}.
%

\section{SDP and oracle inequality: Fixed-design}
\label{sec.smoothing.residuals}

We now turn to a modification of the discrepancy principle based on the smoothing of the residuals that is, on the smoothed empirical risk.

\subsection{Smoothing-based discrepancy principle}
As discussed in Section~\ref{subsec.discussion}, the main drawback of the discrepancy principle-based rule $\tauDP$ results from the large variance of the empirical risk, leading to the $\sigma^2/\sqrt{n}$ error term in Theorem~\ref{ThmCondDesignOracle}. 

The purpose of the present section is to show how this error term can be avoided by considering a modified stopping rule called $\tauSDP$ based on the smoothing of residuals that is, the smoothed empirical risk. 
This can be encoded by considering the so-called smoothed empirical risk $\|L_n(\mathbf{Y}-S_n\hat f^{(t)})\|_n^2$ for some (smoothing) matrix $L_n\in\mathbb{R}^{n\times n}$. In what follows, we will restrict ourselves to the case where $L_n=\tilde{g}^{1/2}_T(K_n)K_n^{1/2}$ with regularizer $\tilde g$ (satisfying Definition~\ref{Def}) and consider 
\begin{align}\label{EqSDP}
\SDP=\inf\Big\{t\geq 0:\|\tilde{g}^{1/2}_T(K_n)K_n^{1/2}(\mathbf{Y}-S_n\hat f^{(t)})\|_n^2\leq \frac{\sigma^2\mathcal{N}_n^{\tilde{g}}(T)}{n}\Big\}\wedge T
\end{align}
with $T>0$. 
the choice $\tilde{g}_T(\lambda)=(\lambda+T^{-1})^{-1}$ as Tikhonov regularization results in the early stopping rule earlier studied in \cite{blanchard2012discrepancy} in the statistical inverse problem setting. Different choices for $L_n$ include $L_n=K_n^{s/2}$, $s\leq 1$, have been studied in \citep{B19} for the spectral cut-off filter algorithm.

Then the goal in what follows is to assess the statistical performance of the final estimator $\hat f^{(\tauSDP)}$, where $\tauSDP$ is obtained by the so-called \emph{smoothed discrepancy principle} (SDP).

\subsection{Main results}

The present section follows the same structure as above Section~\ref{sec.DP.Oracle.inequality.fixed.design} with firstly describing key deviation inequalities for $\SDP$ and the related smoothed bias and variance terms, and secondly formulating our main improved oracle inequality for $\SDP$.

\subsubsection{Deviation inequalities for the smoothed stopping rule}
Let us now explain how deviation inequalities in the case of the classical DP (Section~\ref{sec.scetch.proof}) can be extended to smoothed case. 
For simplicity of the present exposition, we restrict ourselves to $\SDP$ applied with the Tikhonov smoothing $\tilde g_t(\lambda)=(\lambda+t^{-1})^{-1}$. However, the next results are not limited to this choice.

Following the analysis of the classical discrepancy principle in Section~\ref{sec.scetch.proof}, it is easy to see that the expected smoothed empirical risk satisfies a basic inequality similar to \eqref{EqBasicIneq}. 
In fact introducing the \emph{smoothed $g$-effective dimension} $\tildengeffdim(t) = \operatorname{tr}((K_n+T^{-1})^{-1}K_ng_t(K_n)K_n)$, we have
\begin{align*}
 &\|r_t(K_n)\Af\|_n^2-2\frac{\sigma^2}{n}\tildengeffdim(t)\\
 &\leq \mathbf{E}_\epsilon\|r_t(K_n)\AY\|_n^2-\frac{\sigma^2}{n}\mathcal{N}_n(T) \leq \|r_t(K_n)\Af\|_n^2-\frac{\sigma^2}{n}\tildengeffdim(t),\quad t\geq 0,
\end{align*}
where $\tilde{\mathbf{a}}=(K_n+T^{-1})^{-1/2}K_n^{-1/2}\mathbf{a}$, for every $\mathbf{a}\in\mathbb{R}^n$.
This allows us to carry out the same basic comparison between $\SDP$ and the \emph{smoothed balancing stopping rule} 
\begin{align}\label{Deftildetstar}
    \tildetstar=\inf\Big\{t\geq 1:\|r_t(K_n)\Af\|_n^2\leq\frac{\sigma^2}{n}\tildengeffdim(t)\Big\}
\end{align}
(with $\tildetstar=\infty$ if such a $t$ does not exist).
Our first result in this line is the next deviation inequality for $\SDP$, which should be seen as the smoothing-based counterpart of Proposition~\ref{ConcIneqVarDP}.
\begin{proposition}\label{cor.tau.SDPy} 
If \eqref{SubGN} holds, then there is a constant $c>0$ depending only on $A$ such that for every $t>\tildetstar$, 
\begin{align*}
    \mathbf{P}_\epsilon(\SDP>t)\leq 2\exp\Big(-c\Big( y\wedge \frac{y^2}{\effdim_n(T)}\Big)\Big),\qquad y=\tildengeffdim(t)-\tildengeffdim(\tildetstar).
\end{align*}
In particular, for every $y>0$, we have
\begin{align*}
&\mathbf{P}_\epsilon(\tildengeffdim(\SDP)>\tildengeffdim(\tildetstar)+y) \leq 2\exp\Big(-c\Big( y \wedge \frac{y^2}{\effdim_n(T)}\Big)\Big).
\end{align*}
\end{proposition}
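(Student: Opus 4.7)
The plan is to mirror the sketch used for Proposition~\ref{ConcIneqVarDP}, swapping the classical residuals, proxy variance, and balancing rule for their smoothed counterparts. First I would record two monotonicities that follow directly from Definition~\ref{Def}: $t \mapsto \|r_t(K_n)\Af\|_n^2$ is continuous and non-increasing (because $\lambda g_t(\lambda)$ is non-decreasing in $t$), while $t \mapsto \tildengeffdim(t) = \operatorname{tr}((K_n+T^{-1})^{-1}K_n g_t(K_n) K_n)$ is continuous and non-decreasing. For $t > T$ the claim $\mathbf{P}_\epsilon(\SDP > t) = 0$ is trivial, so I restrict to $\tildetstar < t \leq T$. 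Then by definition \eqref{EqSDP},
\[
\mathbf{P}_\epsilon(\SDP > t) \leq \mathbf{P}_\epsilon\Big(\|r_t(K_n)\AY\|_n^2 > \frac{\sigma^2 \effdim_n(T)}{n}\Big).
\]
Subtracting $\mathbf{E}_\epsilon\|r_t(K_n)\AY\|_n^2$ on both sides and plugging in the upper half of the basic inequality recalled just before Proposition~\ref{cor.tau.SDPy}, the threshold gap on the right-hand side satisfies
\[
\frac{\sigma^2 \effdim_n(T)}{n} - \mathbf{E}_\epsilon\|r_t(K_n)\AY\|_n^2 \;\geq\; \frac{\sigma^2}{n}\tildengeffdim(t) - \|r_t(K_n)\Af\|_n^2 \;\geq\; \frac{\sigma^2 y}{n},
\]
where the last step uses monotonicity together with the definition of $\tildetstar$ from \eqref{Deftildetstar} to get $\|r_t(K_n)\Af\|_n^2 \leq \sigma^2 \tildengeffdim(\tildetstar)/n$.

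The second step is to apply Lemma~\ref{LemResidualConc} (Hanson--Wright) to the centered quadratic form $\|r_t(K_n)\AY\|_n^2 - \mathbf{E}_\epsilon\|r_t(K_n)\AY\|_n^2 = n^{-1}(\mathbf{Y}^T M_t \mathbf{Y} - \mathbf{E}_\epsilon[\mathbf{Y}^T M_t \mathbf{Y}])$, with
\[
M_t = r_t(K_n)^2 \, K_n (K_n + T^{-1})^{-1}.
\]
Two ingredients are needed: the operator norm bound $\|M_t\|_{\operatorname{op}} \leq 1$, which follows because $r_t \leq 1$ by \eqref{BdF} and $K_n(K_n+T^{-1})^{-1} \preceq I$; and the Hilbert--Schmidt bound $\operatorname{tr}(M_t^2) \leq \effdim_n(T)$, which follows from $r_t \leq 1$ together with $(K_n(K_n+T^{-1})^{-1})^2 \preceq K_n(K_n+T^{-1})^{-1}$. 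Inserting these into the quadratic-form tail and rescaling the deviation by $s = \sigma^2 y/n$ yields
\[
\mathbf{P}_\epsilon\Big(\|r_t(K_n)\AY\|_n^2 - \mathbf{E}_\epsilon\|r_t(K_n)\AY\|_n^2 > \tfrac{\sigma^2 y}{n}\Big) \leq 2\exp\Big(-c\Big(y \wedge \tfrac{y^2}{\effdim_n(T)}\Big)\Big),
\]
which is the stated bound. The \emph{in particular} statement follows by applying what was just proved at $t^{\dagger} = \inf\{s \geq 0 : \tildengeffdim(s) \geq \tildengeffdim(\tildetstar) + y\}$, together with continuity and monotonicity of $\tildengeffdim$: on the event $\{\tildengeffdim(\SDP) > \tildengeffdim(\tildetstar) + y\}$ one has $\SDP > t^{\dagger}$.

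The one delicate point, which I expect to be the main obstacle, is the linear cross-term that arises when expanding $\mathbf{Y} = \mathbf{f} + \boldsymbol{\epsilon}$: the centered quadratic form picks up a $2\mathbf{f}^T M_t \boldsymbol{\epsilon}$ contribution whose sub-Gaussian variance proxy is $4 A^2 \sigma^2 \|M_t \mathbf{f}\|^2$. Controlling this proxy by the quadratic-form bound requires $\|M_t \mathbf{f}\|^2 \leq \|M_t\|_{\operatorname{op}} \, \mathbf{f}^T M_t \mathbf{f} = n \|M_t\|_{\operatorname{op}} \|r_t(K_n)\Af\|_n^2 \leq \sigma^2 \tildengeffdim(\tildetstar) \leq \sigma^2 \effdim_n(T)$, using $\tildengeffdim(\cdot) \leq \effdim_n(T)$ (a consequence of \eqref{BdF}). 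Once this is in place the linear term produces a sub-Gaussian tail in $y^2/\effdim_n(T)$ that is dominated by the quadratic-form tail, so both contributions combine (after an absolute adjustment of the constant $c$) into the announced deviation bound.
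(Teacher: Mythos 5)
Your proposal is correct and follows essentially the same route as the paper: the definition of $\SDP$ plus the basic inequality and the definition of $\tildetstar$ reduce the event to a deviation of size $\sigma^2 y/n$ for the centered smoothed residual, which is then controlled by Hanson--Wright for the quadratic part and a sub-Gaussian bound for the cross term, with both variance proxies dominated by $\operatorname{tr}(L_nL_n^T)=\effdim_n(T)$ exactly as in the paper's Lemma~\ref{LemResidualConc} and Proposition~\ref{prop.deviation.SDP}. The "delicate point" you flag (the cross term $2\mathbf{f}^TM_t\boldsymbol{\epsilon}$) is handled the same way in the paper, via $\|r_t(K_n)\Af\|_n^2\leq \sigma^2 n^{-1}\operatorname{tr}(L_nL_n^T)$ for $t>\tildetstar$.
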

This is a simplified version of the deviation bound established in Proposition~\ref{prop.deviation.SDP}. 

Let us make a few comments mainly emphasizing the differences with Proposition~\ref{ConcIneqVarDP} established for $\DP$. 
Firstly, the former $n$ at the denominator of the exponent is now replaced by the empirical effective dimension $\effdim_n(T)$, which allows for taking into account the decay rate of the eigenvalues of $K_n$. In particular, the condition for having this probability meaningful (that is, close to 0) is no longer $\sqrt{n}\ll y$ but instead $\sqrt{\effdim_n(T)}\ll y$, which is typically much weaker if one can exploit some knowledge on the decay rate of the eigenvalues.
Secondly, the $g$-effective dimension in Proposition~\ref{ConcIneqVarDP} is now replaced by its smoothed version $\tildengeffdim(t)$. Since $\tildengeffdim(t)\leq \mathcal{N}_n^g(t)$, this leads to a slightly weaker deviations in terms of $y$.

\medskip

Let us emphasize that this deviation inequality of the $\tildengeffdim(t)$ serves for controlling the variance of $\hat{f}^{(t)}$. This results from the key observation that the term $\operatorname{tr}(g_t^2(K_n)K_n^2)$ (appearing in the variance of $\hat f^{(t)}$) can be bounded by a constant times $\tildengeffdim(t)$ (while in Section~\ref{sec.motivation.DP}, we only used that it is bounded by the $g$-effective dimension).

Similarly, the squared bias $\|r_t(K_n)\mathbf{f}\|_n^2$ can be also related to its smoothed version $\|r_t(K_n)\Af\|_n^2$, where the latter term is dealt with in the following simplified version of the deviation bound in Proposition~\ref{ConcIneqBias}.
\begin{proposition}\label{cor.tau.bias.SDP} 
Suppose that Assumptions \eqref{SubGN} and \eqref{BdK} hold. Then, for every $y>0$ such that $2\|r_{\Ltstar}(K_n)\Af\|_n^2+\sigma^2n^{-1}y>\|r_{T}(K_n)\Af\|_n^2$, we have
\begin{align*}
&\mathbf{P}_\epsilon\Big(\|r_{\SDP}(K_n)\Af\|_n^2> 2\|r_{\Ltstar}(K_n)\Af\|_n^2+\frac{\sigma^2}{n}y\Big)\leq 2\exp\Big(-c\Big(y\wedge \frac{y^2}{\effdim_n(T)}\Big)\Big).
\end{align*}
\end{proposition}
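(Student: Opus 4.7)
The plan is to parallel the argument sketched for Proposition~\ref{ConcIneqBiasDP}, replacing the empirical risk by its smoothed counterpart $\|r_t(K_n)\AY\|_n^2$ and the basic inequality \eqref{EqBasicIneq} by the smoothed two-sided bound displayed just above \eqref{Deftildetstar}. First, since $t\mapsto\|r_t(K_n)\Af\|_n^2$ is continuous and non-increasing in $t$ (by Definition~\ref{Def}), the hypothesis $2\|r_{\Ltstar}(K_n)\Af\|_n^2+\sigma^2 y/n>\|r_T(K_n)\Af\|_n^2$ guarantees, via the intermediate value theorem, the existence of some $t\in[0,T)$ with $t<\Ltstar$ at which $\|r_t(K_n)\Af\|_n^2=2\|r_{\Ltstar}(K_n)\Af\|_n^2+\sigma^2 y/n$ (in the degenerate case where the target value exceeds $\|r_0(K_n)\Af\|_n^2=\|\Af\|_n^2$ the event in the statement is empty and the bound is trivial). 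Monotonicity of the bias and \eqref{EqSDP} then yield the chain of inclusions
\[
\{\|r_{\SDP}(K_n)\Af\|_n^2>2\|r_{\Ltstar}(K_n)\Af\|_n^2+\sigma^2 y/n\}\subseteq\{\SDP\leq t\}=\{\|r_t(K_n)\AY\|_n^2\leq \sigma^2\mathcal{N}_n(T)/n\}.
\]

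Next, subtracting $\mathbf{E}_\epsilon\|r_t(K_n)\AY\|_n^2$ from both sides in the last event and invoking the smoothed lower bound
\[
\mathbf{E}_\epsilon\|r_t(K_n)\AY\|_n^2-\frac{\sigma^2}{n}\mathcal{N}_n(T)\geq \|r_t(K_n)\Af\|_n^2-\frac{2\sigma^2}{n}\tildengeffdim(t),
\]
one notes that $t\leq\Ltstar$ combined with monotonicity of $\tildengeffdim$ and the defining identity $\sigma^2\tildengeffdim(\Ltstar)/n=\|r_{\Ltstar}(K_n)\Af\|_n^2$ (by continuity at the infimum) gives $\sigma^2\tildengeffdim(t)/n\leq\|r_{\Ltstar}(K_n)\Af\|_n^2$. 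Substituting this bound together with the chosen value of $\|r_t(K_n)\Af\|_n^2$ reduces the target probability to the lower-deviation event $\mathbf{P}_\epsilon(\|r_t(K_n)\AY\|_n^2-\mathbf{E}_\epsilon\|r_t(K_n)\AY\|_n^2\leq -\sigma^2 y/n)$.

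The final step applies the Hanson--Wright type inequality of Lemma~\ref{LemResidualConc} to the quadratic form $\mathbf{Y}^T N_t\mathbf{Y}/n$ with $N_t=r_t(K_n)K_n\tilde g_T(K_n)r_t(K_n)$. Applying \eqref{BdF} to both $g$ and $\tilde g$ gives $\|N_t\|_{\mathrm{op}}\leq 1$ and, via $r_t(\hat\lambda_j)\leq 1$, $\|N_t\|_{\mathrm{HS}}^2\leq \operatorname{tr}(K_n\tilde g_T(K_n))=\mathcal{N}_n(T)$; for the linear-in-$\boldsymbol{\epsilon}$ cross term one uses $\mathbf{f}^T N_t^2\mathbf{f}\leq\mathbf{f}^T N_t\mathbf{f}=n\|r_t(K_n)\Af\|_n^2\leq 2\sigma^2\mathcal{N}_n(T)+\sigma^2 y$, where the last inequality follows from $\|r_{\Ltstar}(K_n)\Af\|_n^2\leq\sigma^2\mathcal{N}_n(T)/n$ (a consequence of $\tildengeffdim(\Ltstar)\leq\mathcal{N}_n(T)$ thanks to \eqref{BdF}). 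Combining the resulting sub-exponential control of the quadratic part with the sub-Gaussian control of the cross term under \eqref{SubGN} and \eqref{BdK} produces the claimed tail $2\exp(-c(y\wedge y^2/\mathcal{N}_n(T)))$.

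The main obstacle I anticipate is the bookkeeping of smoothing-related effective dimensions: the factor $n$ in the exponent of Proposition~\ref{ConcIneqBiasDP} must be sharpened to $\mathcal{N}_n(T)$, which is strictly smaller once the kernel eigenvalues decay. This forces one to verify that $\|N_t\|_{\mathrm{HS}}^2$ is dominated by $\mathcal{N}_n(T)$ (rather than by $n$, or by the coarser $\tildengeffdim(t)$), and that the hypothesis $2\|r_{\Ltstar}\Af\|_n^2+\sigma^2 y/n>\|r_T\Af\|_n^2$ is exactly what guarantees the intermediate comparison time $t\in[0,T]$ used in the first step.
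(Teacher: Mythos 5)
Your proposal is correct and follows essentially the same route as the paper: the paper proves the general deviation bound (Proposition~\ref{ConcIneqBias}) for an arbitrary smoothing matrix $L_n$ with $\|L_n\|_{\operatorname{op}}\leq 1$ — defining the comparison time $t<\Ltstar$ by the bias equation, invoking the lower bound of Lemma~\ref{EqBasicIneqDP}, and applying Lemma~\ref{LemResidualConc} — and then specializes to $L_n=\tilde g_T^{1/2}(K_n)K_n^{1/2}$, for which $\operatorname{tr}(L_nL_n^T)=\mathcal{N}_n(T)$. Your version simply inlines that specialization, and your explicit norm computations ($\|N_t\|_{\operatorname{op}}\leq 1$, $\|N_t\|_{\operatorname{HS}}^2\leq\mathcal{N}_n(T)$, and the bound on $\|r_t(K_n)\Af\|_n^2$) are exactly the ones carried out inside Lemma~\ref{LemResidualConc} and the final step of the paper's proof.
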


\subsubsection{Improved oracle inequality}\label{sec.oracle.SDP.fixed.design}
We are now in position to state an improved oracle inequality for the inner case that holds for the smoothed discrepancy principle (SDP), namely $\SDP$.
\begin{thm} \label{ThmCondDesignProxySmoothedNorm} Suppose that \eqref{SubGN}, \eqref{BdK}, \eqref{Assume.EVBound} and \eqref{Assume.SC} hold with $s=r-1/2\geq 0$ and the the regularizer $g$ satisfies \eqref{LFL}. Moreover, suppose that $\|(\Sigma+T^{-1})^{-1/2}(\Sigma_n-\Sigma)(\Sigma+T^{-1})^{-1/2}\|_{\operatorname{op}}\leq 1/2$ holds. Then early stopping rule $\SDP$ based on the smoothed discrepancy principle from \eqref{EqSDP} with regularizer $\tilde g$ such that \eqref{LFL} holds satisfies the bound
\begin{multline*}
\mathbf{P}_{\epsilon}\Big(\|\mathbf{f}-\hat {\mathbf{f}}^{(\tauSDP)}\|_n^2>C\Big(\min_{0\leq t\leq T}\mathbf{E}_\epsilon\|\f-\hat \f^{(t)}\|_n^2+\frac{\sigma^2\sqrt{u\mathcal{N}_n(T)}} {n} +\frac{\sigma^2u}{n}\\
+T^{-(1+2s)}+T^{-1}\|\Sigma_n-\Sigma\|_{\operatorname{op}}^{2\wedge 2s}\Big)\Big)\leq 5e^{-u},\quad u>1,
\end{multline*}
where the term $T^{-1}\|\Sigma_n-\Sigma\|_{\operatorname{op}}^{2\wedge 2s}$ can be dropped if $s\leq 1/2$.
\end{thm}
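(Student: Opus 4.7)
The plan is to follow the template of the proof of Theorem~\ref{ThmCondDesignOracle}, substituting Propositions~\ref{cor.tau.SDPy} and~\ref{cor.tau.bias.SDP} for Propositions~\ref{ConcIneqVarDP} and~\ref{ConcIneqBiasDP}, and inserting a change-of-smoothing step at the end to translate the natural SDP oracle (expressed in terms of $\Af$ and $\tildengeffdim$) into the desired unsmoothed oracle. Concretely, I would first apply both smoothed deviation inequalities with $y=c(\sqrt{u\effdim_n(T)}+u)$, chosen so that the exponent $y\wedge y^2/\effdim_n(T)$ is of order $u$. Combining the resulting bounds with the defining inequality~\eqref{Deftildetstar} of $\tildetstar$ and the smoothed analogue of~\eqref{EqBasicIneq} yields, on an event of probability at least $1-4e^{-u}$,
\begin{align*}
\|r_{\SDP}(K_n)\Af\|_n^2+\frac{\sigma^2}{n}\tildengeffdim(\SDP)\leq C\min_{0\leq t\leq T}\Big\{\|r_t(K_n)\Af\|_n^2+\frac{\sigma^2}{n}\tildengeffdim(t)\Big\}+\frac{C\sigma^2}{n}\big(\sqrt{u\effdim_n(T)}+u\big).
\end{align*}

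Next I would decompose $\|\mathbf{f}-\hat{\mathbf{f}}^{(\SDP)}\|_n^2\leq 2\|r_{\SDP}(K_n)\mathbf{f}\|_n^2+2\|g_{\SDP}(K_n)K_n\boldsymbol{\epsilon}\|_n^2$. The stochastic term is treated by the Hanson--Wright inequality of Lemma~\ref{LemResidualConc}, applied with a discretization-plus-union-bound argument in the spirit of the proof of Theorem~\ref{ThmCondDesignOracle}, yielding $\|g_{\SDP}(K_n)K_n\boldsymbol{\epsilon}\|_n^2\leq C\sigma^2n^{-1}(\operatorname{tr}(g_{\SDP}^2(K_n)K_n^2)+u)$ on an event of probability at least $1-e^{-u}$. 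The variance proxy $\operatorname{tr}(g_t^2(K_n)K_n^2)$ is then bounded by a constant times $\tildengeffdim(t)+1$: splitting the trace at the threshold $\hat\lambda_{k+1}<T^{-1}\leq\hat\lambda_k$, the large-eigenvalue block is dominated by $2\tildengeffdim(t)$ (using $(K_n+T^{-1})^{-1}K_n\geq 1/2$ together with $\hat\lambda_j g_t(\hat\lambda_j)\leq 1$), while the tail block is absorbed via~\eqref{LFU}, $t\leq T$, and \eqref{Assume.EVBound} as in Lemma~\ref{LemProxyVariance}.

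The main obstacle is the bias transfer, i.e.\ bounding $\|r_t(K_n)\mathbf{f}\|_n^2$ by $\|r_t(K_n)\Af\|_n^2$ plus approximation-type terms independent of $t$. Using the algebraic identity $I-(K_n+T^{-1})^{-1}K_n=T^{-1}(K_n+T^{-1})^{-1}$ one gets $\|r_t(K_n)\mathbf{f}\|_n^2\leq 2\|r_t(K_n)\Af\|_n^2+2T^{-1}\|r_t(K_n)(K_n+T^{-1})^{-1/2}\mathbf{f}\|_n^2$. Writing $\mathbf{f}=S_n f_\H$ with $f_\H=\Sigma^s h$, $\|h\|_\H\leq R$ (permitted since $r=s+1/2\geq 1/2$), and using the functional-calculus identity $S_n^*\phi(K_n)S_n=\phi(\Sigma_n)\Sigma_n$ that follows from~\eqref{EqSVDSamplingOp}, the remainder reduces to an $\H$-inner product involving $r_t^2(\Sigma_n)\Sigma_n(\Sigma_n+T^{-1})^{-1}$ acting on $\Sigma^s h$. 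The hypothesis $\|(\Sigma+T^{-1})^{-1/2}(\Sigma_n-\Sigma)(\Sigma+T^{-1})^{-1/2}\|_{\operatorname{op}}\leq 1/2$ gives the two-sided equivalence $(1/2)(\Sigma+T^{-1})\preceq\Sigma_n+T^{-1}\preceq (3/2)(\Sigma+T^{-1})$ of positive self-adjoint operators, which lets me replace $\Sigma_n+T^{-1}$ by $\Sigma+T^{-1}$. After this swap, an operator-monotone inequality of Heinz--L\"owner type applied to $\lambda\mapsto\lambda^{1+2s}(\lambda+T^{-1})^{-1}$ (needed for non-integer $s$) provides a control of order $R^2T^{-(1+2s)}$; the H\"older-type correction incurred by the $\Sigma_n\leftrightarrow\Sigma$ passage accounts for the additional $T^{-1}\|\Sigma_n-\Sigma\|_{\operatorname{op}}^{2\wedge 2s}$ term and is dominated by $T^{-(1+2s)}$ in the regime $s\leq 1/2$, consistent with the parenthetical remark in the statement.

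Combining these three steps and using the bias-variance identity $\mathbf{E}_\epsilon\|\mathbf{f}-\hat{\mathbf{f}}^{(t)}\|_n^2=\|r_t(K_n)\mathbf{f}\|_n^2+\sigma^2n^{-1}\operatorname{tr}(g_t^2(K_n)K_n^2)$, together with the bound $\tildengeffdim(t)\leq \mathcal{N}_n^g(t)\leq C(\operatorname{tr}(g_t^2(K_n)K_n^2)+1)$ from Lemma~\ref{LemProxyVariance}, yields the claimed oracle-type bound on an event of probability at least $1-5e^{-u}$. The only essentially new ingredient beyond Theorem~\ref{ThmCondDesignOracle} is therefore the bias-transfer step of the previous paragraph, everything else being a straightforward adaptation of the unsmoothed argument with the empirical $\effdim_n(T)$ replacing $n$ in the variance-type deviation term.
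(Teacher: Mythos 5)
Your overall architecture is the right one and matches the paper's (smoothed deviation inequalities for the stopping rule, plus a transfer between smoothed and unsmoothed bias/variance), but the step you yourself single out as the crux — the bias transfer — is resolved incorrectly. After your algebraic identity the remainder is
$T^{-1}\sum_j \hat\lambda_j(\hat\lambda_j+T^{-1})^{-1}r_t^2(\hat\lambda_j)\langle f,\hat u_j\rangle^2$, and with $\langle f,\hat u_j\rangle^2\approx \hat\lambda_j^{2s}\langle h,\hat u_j\rangle^2$ the relevant scalar function is $T^{-1}\lambda^{1+2s}(\lambda+T^{-1})^{-1}r_t^2(\lambda)$. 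This is of order $T^{-(1+2s)}$ only on $\{\lambda\lesssim T^{-1}\}$; for $\lambda\gg T^{-1}$ it equals $T^{-1}\lambda^{2s}r_t^2(\lambda)$, which without any qualification assumption on $g$ (and Theorem~\ref{ThmCondDesignProxySmoothedNorm} does not assume \eqref{Qualif}) is only $O(T^{-1})$ uniformly in $t$. So the "Heinz--L\"owner control of order $R^2T^{-(1+2s)}$" is false as stated, and your remainder would be $O(T^{-1})$ rather than $O(T^{-(1+2s)})$, which destroys the adaptation claims downstream. The fix is exactly what the paper's Lemma~\ref{LemChangeBias} does: split the spectrum at $\hat\lambda_jT=1$, re-absorb the head ($\hat\lambda_jT>1$) into $b^{-1}\|r_t(K_n)\Af\|_n^2$ using $\hat\lambda_j\tilde g_T(\hat\lambda_j)\geq b$, and pay $T^{-1}\cdot T^{-2s}$ only on the tail, where $\hat\lambda_j^{2s}\leq T^{-2s}$ holds automatically. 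Your decomposition can be completed this way, but the split-and-reabsorb step is the missing idea, not a routine operator-monotonicity bound.

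A second, smaller gap is your treatment of the stochastic variance at the random time: a genuine discretization-plus-union-bound over $t\in[0,T]$ costs an extra $\log$-factor that is absent from the statement (the paper explicitly notes this after Theorem~\ref{ThmCondDesignOracle}), and it is not "in the spirit of" that proof, which avoids any union bound over $t$. The paper instead bounds $\|K_ng_{\SDP}(K_n)\boldsymbol{\epsilon}\|_n^2$ by $C\|K_n^{1/2}g_{\SDP}^{1/2}(K_n)\tilde{\boldsymbol{\epsilon}}\|_n^2$ using $\lambda g_T(\lambda)\leq (B\vee 1)b^{-1}\lambda\tilde g_T(\lambda)$, and then invokes Proposition~\ref{ConcIneqVar}, which exploits the monotonicity of $t\mapsto\|K_n^{1/2}g_t^{1/2}(K_n)\tilde{\boldsymbol{\epsilon}}\|_n^2$ together with the right-deviation of $\SDP$ at a single well-chosen deterministic $t$; this also makes your trace comparison $\operatorname{tr}(g_{\SDP}^2(K_n)K_n^2)\lesssim \tildengeffdim(\SDP)+1$ unnecessary (and note that in that comparison the spectral threshold must be taken at $t$, not at $T$, for the tail to be absorbed via \eqref{Assume.EVBound} and \eqref{LFL}).
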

A proof of Theorem~\ref{ThmCondDesignProxySmoothedNorm} is given in Section \ref{sec.proof.oracle.FD}. Comparing this result to the oracle inequality in Theorem~\ref{ThmCondDesignOracle}, we see that we replaced the term $\sigma^2/\sqrt{n}$ by $\sigma^2\sqrt{\mathcal{N}_n(T)}/n$. Under \eqref{polyDecayLowUp}, for instance, we have $\mathcal{N}_n(T) \leq C T^{1/\alpha}$, meaning that $\sqrt{\mathcal{N}_n(T)}/n \leq 1/\sqrt{n}$ as long as $T \leq n^{\alpha}$. 

The event $\|(\Sigma+T^{-1})^{-1/2}(\Sigma_n-\Sigma)(\Sigma+T^{-1})^{-1/2}\|_{\operatorname{op}}\leq 1/2$ is needed to apply the source condition \eqref{Assume.SC} (formulated in terms of the population covariance operator)  in the empirical world. It can be further weakened (there is e.g. no event in the case $s=0$; see the proof of Lemma \ref{LemChangeBias}), but in its present form it is exactly the event needed to transfer the results from the fixed to the random design framework. This is the purpose of the next section. In particular, we will turn the above oracle inequality into a rate optimality statement, showing that the smoothed discrepancy principle is adaptive over a certain range of smoothness parameters and polynomial decay rates.
%

\section{The random design framework}\label{SecRD}

In this section we transfer our oracle inequalities from the fixed to the random design framework by means of a change-of-norm (or change of measure) argument exposed in Section~\ref{sec.change.norm}. 
The purpose of Section~\ref{sec.outer.case} is the analysis of the stopping rule based on the discrepancy principle (DP) in the \emph{outer case}, while Section~\ref{sec.SDP.inner.case} rather addresses its smoothed version (SDP) in the \emph{inner case}.

To keep the exposition as simple as possible in what follows, we focus on results given in terms of expectations from now on. Similar results expressed ``with high probability'' can be derived from the technical material developed in Sections~\ref{secProofFixedDesign} and~\ref{SecProofRandomDesign}, but at the price of more involved expressions.

\subsection{Change of norm argument}\label{sec.change.norm}

The first step in our analysis is a change of norm argument formulated by the next result, which controls the difference between the $L^2(\rho)$-norm ($\norm{\cdot}_\rho$) and its empirical version, namely the $n$-th norm ($\norm{\cdot}_n$). 
\begin{lemma}\label{LemChangeNorm1.main} 
Let $\delta\in(0,1)$ and $T> 0$. Then we have
\begin{align*}
\forall h\in\mathcal{H},\quad |\|S_n h\|_n^2-\|S_\rho h\|_{\rho}^2|\leq \delta (\|S_\rho h\|_{\rho}^2+T^{-1}\|h\|_{\mathcal{H}}^2)
\end{align*}
if and only if 
\begin{align*}
    \|(\Sigma+T^{-1})^{-1/2}(\Sigma_n-\Sigma)(\Sigma+T^{-1})^{-1/2}\|_{\operatorname{op}}\leq \delta.
\end{align*}
\end{lemma}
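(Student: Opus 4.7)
The plan is to rewrite both statements as quadratic form inequalities on $\mathcal{H}$, and then use the change of variable $h = (\Sigma+T^{-1})^{-1/2}u$ to convert one into the other. First I would recall from Section~\ref{sec.Model} the identities $S_n^*S_n=\Sigma_n$ and $S_\rho^*S_\rho=\Sigma$, which allow writing, for any $h\in\mathcal{H}$,
\begin{align*}
\|S_n h\|_n^2-\|S_\rho h\|_\rho^2 &= \langle h,(\Sigma_n-\Sigma)h\rangle_{\mathcal{H}}, \\
\|S_\rho h\|_\rho^2+T^{-1}\|h\|_{\mathcal{H}}^2 &= \langle h,(\Sigma+T^{-1})h\rangle_{\mathcal{H}}.
\end{align*}
Hence the left-hand inequality of the lemma is precisely $|\langle h,(\Sigma_n-\Sigma)h\rangle_{\mathcal{H}}|\leq \delta\,\langle h,(\Sigma+T^{-1})h\rangle_{\mathcal{H}}$ for every $h\in\mathcal{H}$.

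Next, since $\Sigma$ is a bounded positive self-adjoint operator on $\mathcal{H}$, the operator $\Sigma+T^{-1}\mathrm{Id}$ satisfies $\Sigma+T^{-1}\mathrm{Id}\succeq T^{-1}\mathrm{Id}$, and so $(\Sigma+T^{-1})^{1/2}$ is a bijective bounded self-adjoint operator on $\mathcal{H}$ with bounded self-adjoint inverse $(\Sigma+T^{-1})^{-1/2}$ (via the Borel functional calculus). Applying the bijection $h = (\Sigma+T^{-1})^{-1/2}u$ and using self-adjointness, the quadratic form inequality above is equivalent to
\begin{align*}
|\langle u, A\, u\rangle_{\mathcal{H}}|\leq \delta\,\|u\|_{\mathcal{H}}^2\quad\text{for all } u\in\mathcal{H},
\end{align*}
where $A:=(\Sigma+T^{-1})^{-1/2}(\Sigma_n-\Sigma)(\Sigma+T^{-1})^{-1/2}$.

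Finally, $A$ is self-adjoint on $\mathcal{H}$ because $\Sigma_n-\Sigma$ is self-adjoint and the symmetric sandwich preserves this. For any bounded self-adjoint operator, the spectral theorem yields $\|A\|_{\operatorname{op}}=\sup_{\|u\|_{\mathcal{H}}=1}|\langle u,Au\rangle_{\mathcal{H}}|$, so the displayed inequality is equivalent to $\|A\|_{\operatorname{op}}\leq \delta$, which is exactly the right-hand condition of the lemma. This closes both implications at once.

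There is no real obstacle here; the only point demanding a small amount of care is to verify that $(\Sigma+T^{-1})^{-1/2}$ is a genuine bijection of $\mathcal{H}$ onto itself (not only from some dense subspace), which is why one needs $T^{-1}>0$: it ensures a strictly positive spectral lower bound and hence a bounded inverse, so the substitution is allowed for arbitrary $u\in\mathcal{H}$ without any density argument.
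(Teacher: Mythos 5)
Your proof is correct and follows essentially the same route as the paper's: rewrite both quantities as quadratic forms via $S_n^*S_n=\Sigma_n$ and $S_\rho^*S_\rho=\Sigma$, substitute $h=(\Sigma+T^{-1})^{-1/2}u$, and invoke the identity $\|A\|_{\operatorname{op}}=\sup_{\|u\|_{\mathcal{H}}=1}|\langle u,Au\rangle_{\mathcal{H}}|$ for self-adjoint $A$. The only difference is that you spell out the invertibility of $(\Sigma+T^{-1})^{1/2}$, which the paper treats as immediate.
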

Lemma~\ref{LemChangeNorm1.main} establishes the equivalence between the uniform control of the difference between the squared $\rho$-~and $n$-th norms and deriving an upper bound on the operator norm of the normalized difference between the empirical and population covariance operators.
In particular if one of the assertion holds, then 
    \begin{align*}
\forall h\in\mathcal{H},\quad \norm{S_\rho h}_\rho^2 \leq \frac{1}{1-\delta}\norm{S_n h}_n^2 + \frac{\delta}{1-\delta}  \frac{\normH{h}^2}{T} 
    \end{align*} 
    gives rise to a natural strategy for upper bounding the $\rho$-norm of any function in $\H$. It consists first in upper bounding its $n$-th norm (which was the purpose of Sections~\ref{sec.oracle.DP.fixed.design} and~\ref{sec.oracle.SDP.fixed.design}), and then in controlling its $\H$-norm.

\medskip

\begin{proof}[Proof of Lemma~\ref{LemChangeNorm1.main}]
Using the identities $\|S_n h\|_n^2-\|S_\rho h\|_{\rho}^2=\langle (\Sigma_n-\Sigma) h,h \rangle_{\mathcal{H}}$ and $\|S_\rho h\|_{\rho}^2+T^{-1}\|h\|_{\mathcal{H}}^2=\|(\Sigma+T^{-1})^{1/2} h\|_{\mathcal{H}}^2$, the first assertion is equivalent to 
\begin{align*}
 \forall h\in\mathcal{H},\quad   |\langle (\Sigma_n-\Sigma) h,h \rangle_{\mathcal{H}}|\leq \delta\|(\Sigma+T^{-1})^{1/2} h\|_{\mathcal{H}}^2.
\end{align*}
Since $(\Sigma+T^{-1})^{1/2}$ is self-adjoint and strictly positive definite, this is the case if and only if 
\begin{align*}
 \forall h\in\mathcal{H},\quad   |\langle (\Sigma+T^{-1})^{-1/2}(\Sigma_n-\Sigma)(\Sigma+T^{-1})^{-1/2} h,h \rangle_{\mathcal{H}}|\leq \delta\| h\|_{\mathcal{H}}^2.
\end{align*}
This gives the claim.
\end{proof}

\subsection{DP performance: Outer case}
\label{sec.outer.case}

\subsubsection{Main result}
We now turn to the classical discrepancy principle for which we formulate a result in the outer case.
\begin{thm}\label{ThmDPHardProblems}
Suppose that \eqref{SubGN} and \eqref{BdK} hold. Suppose that the source condition \eqref{Assume.SC} holds with $r<1/2$ and that $f$ is bounded. Moreover, suppose that the regularizer $g$ satisfies \eqref{Qualif} with $q\geq r$. Then there are constants $c,C>0$ such that the standard discrepancy principle $\tauDP$ with emergency stop $T=cn/\log n$, $n\geq 2$, satisfies
\begin{align*}
    \mathbb{E}\|f-S_\rho\hat f^{(\tau_{DP})}\|_\rho^2\leq C\Big(\min_{0< t\leq c\frac{n}{\log n}}\Big\{t^{-2r}+\frac{\mathcal{N}(t)}{n}\Big\}+n^{-1/2}\Big).
\end{align*}
\end{thm}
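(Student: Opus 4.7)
The plan is to combine the fixed-design oracle inequality of Theorem~\ref{ThmCondDesignOracle} with the change-of-norm argument of Lemma~\ref{LemChangeNorm1.main}, transferred through a population regularized reference function. Integrating the high-probability statement of Theorem~\ref{ThmCondDesignOracle} in $u$ gives, conditionally on the design,
$$
\mathbf{E}_\epsilon \|\f - \hat\f^{(\tauDP)}\|_n^2
\le C\Big(\min_{0\le t\le T}\bigl\{\|r_t(K_n)\f\|_n^2 + \tfrac{\sigma^2}{n}\geffdim_n(t)\bigr\}+\tfrac{\sigma^2}{\sqrt n}\Big),
$$
in which the $\sigma^2/\sqrt n$ will produce the $n^{-1/2}$ remainder in the claim.

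For each candidate $t\in(0,T]$, introduce the population regularized reference $h^\star_t:= g_t(\Sigma)S_\rho^* f\in\H$. Using the intertwining $S_\rho^* L_\rho^r=\Sigma^r S_\rho^*$ with \eqref{Assume.SC} and \eqref{Qualif} applied at $p=r\le q$, together with \eqref{BdF}--\eqref{LFU}, one checks $\|f-S_\rho h^\star_t\|_\rho^2\le Q^2R^2\, t^{-2r}$ and $\|h^\star_t\|_\H^2\le C\, t^{1-2r}$. The splitting $\|f-S_\rho\hat f^{(\tauDP)}\|_\rho^2\le 2\|f-S_\rho h^\star_t\|_\rho^2+2\|S_\rho(h^\star_t-\hat f^{(\tauDP)})\|_\rho^2$ reduces matters to the second summand. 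To control it, I work on the concentration event $\mathcal{E}:=\{\|(\Sigma+T^{-1})^{-1/2}(\Sigma_n-\Sigma)(\Sigma+T^{-1})^{-1/2}\|_{\operatorname{op}}\le 1/2\}$, which by a Bernstein-type inequality for operator-valued sums applied to $\Sigma_n-\Sigma$ (using \eqref{BdK}) holds with probability at least $1-n^{-2}$ whenever $T=cn/\log n$ with $c>0$ small enough. On $\mathcal{E}$, Lemma~\ref{LemChangeNorm1.main} yields
$$
\|S_\rho(h^\star_t-\hat f^{(\tauDP)})\|_\rho^2\le 2\|\mathbf{h}^\star_t-\hat\f^{(\tauDP)}\|_n^2+T^{-1}\|h^\star_t-\hat f^{(\tauDP)}\|_\H^2.
$$
The empirical term splits by the triangle inequality into $\|\mathbf{h}^\star_t-\f\|_n^2$ (with expectation $\|S_\rho h^\star_t-f\|_\rho^2\le Ct^{-2r}$) and $\|\f-\hat\f^{(\tauDP)}\|_n^2$, bounded by the fixed-design inequality above. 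To convert the resulting empirical oracle to its population counterpart $t^{-2r}+\effdim(t)/n$, I would reuse $\mathcal{E}$ together with Lemma~\ref{LemEffDimGenDim}.

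The main obstacle is the weighted $\H$-norm term $T^{-1}\|h^\star_t-\hat f^{(\tauDP)}\|_\H^2$: while $T^{-1}\|h^\star_t\|_\H^2\le Ct^{-2r}$ is already fine (using $t\le T$ and $r<1/2$), the crude bound $\|\hat f^{(\tauDP)}\|_\H^2\le B\,\tauDP\,\|\Ybf\|_n^2\lesssim T$ yields only an $O(1)$ contribution after multiplication by $T^{-1}$. To circumvent this I would expand
$$
h^\star_t-\hat f^{(\tauDP)}=\bigl(g_t(\Sigma)-g_{\tauDP}(\Sigma_n)\bigr)S_\rho^*f+g_{\tauDP}(\Sigma_n)\bigl(S_\rho^*f-S_n^*\f\bigr)-g_{\tauDP}(\Sigma_n)S_n^*\boldsymbol{\epsilon},
$$
and bound each piece separately on $\mathcal{E}$: the first by a resolvent-type perturbation argument using $\tauDP\le T$ and \eqref{Qualif}; the second by a Bernstein-in-Hilbert-space bound $\|(\Sigma+T^{-1})^{-1/2}(S_\rho^*f-S_n^*\f)\|_\H\lesssim\sqrt{\effdim(T)/n}$ combined with $\|g_{\tauDP}(\Sigma_n)(\Sigma_n+T^{-1})^{1/2}\|_{\operatorname{op}}\lesssim T^{1/2}$; the third by the identity $\mathbf{E}_\epsilon\|g_{\tauDP}(\Sigma_n)S_n^*\boldsymbol{\epsilon}\|_\H^2\le \sigma^2 B\,\tauDP\,\geffdim_n(\tauDP)/n$ together with the concentration of $\geffdim_n(\tauDP)$ around $\geffdim_n(\tstar)$ coming from Proposition~\ref{ConcIneqVarDP}. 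After applying the $T^{-1}$ weight, each of the three contributions is bounded by $Ct^{-2r}+C\effdim(T)/n$ (absorbed into the oracle term) up to an $o(n^{-1/2})$ remainder. Treating $\mathcal{E}^c$ (of probability $\le n^{-2}$) via the trivial bounds $\|S_\rho\hat f^{(\tauDP)}\|_\rho^2\lesssim T$ and $\|f\|_\infty<\infty$, then taking expectation over the design and using the $\min$ already present in the fixed-design inequality, yields the stated bound.
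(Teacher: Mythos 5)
Your overall architecture (fixed-design oracle inequality, change of norm on the event $\mathcal{E}_T$, a population reference function, trivial treatment of $\mathcal{E}_T^c$) matches the paper's, but the decomposition you choose for the weighted $\H$-norm term breaks down at the second piece, and this is a genuine gap rather than a detail. You bound $T^{-1}\|g_{\tauDP}(\Sigma_n)(S_\rho^*f-S_n^*\f)\|_{\H}^2$ by $T^{-1}\cdot\|g_{\tauDP}(\Sigma_n)(\Sigma+T^{-1})^{1/2}\|_{\operatorname{op}}^2\cdot\|(\Sigma+T^{-1})^{-1/2}(S_\rho^*f-S_n^*\f)\|_{\H}^2\lesssim T^{-1}\cdot T\cdot \effdim(T)/n=\effdim(T)/n$, and you assert this is "absorbed into the oracle term". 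It is not: $\effdim(\cdot)$ is non-decreasing, so $\effdim(T)/n$ sits at the right endpoint $T=cn/\log n$ and is in general much larger than $\min_{0<t\le T}\{t^{-2r}+\effdim(t)/n\}+n^{-1/2}$. For instance, under \eqref{EqPolDecUB} with matching lower bounds, $\effdim(T)/n\asymp n^{1/\alpha-1}$ up to logs, which for $\alpha<2$ and $r>(\alpha-1)/(2\alpha)$ dominates both $n^{-2r/(2r+1/\alpha)}$ and $n^{-1/2}$ (e.g.\ $\alpha=3/2$, $r=0.4$ gives $n^{-1/3}$ versus the claimed $n^{-1/2}$). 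The obstruction is structural: $S_\rho^*f-S_n^*\f$ is a population-versus-empirical fluctuation whose $(\Sigma+T^{-1})^{-1/2}$-weighted size is $\sqrt{\effdim(T)/n}$, and the filter $g_{\tauDP}(\Sigma_n)$ then costs a full factor $\sqrt{T}$. The paper's proof (Lemma~\ref{LemChangeNormOC}) avoids this entirely by taking the Tikhonov reference $f^{(t_1)}=(\Sigma+t_1^{-1})^{-1}S_\rho^*f$ at the single scale $t_1=T$ and comparing $g_{\DP}(\Sigma_n)S_n^*\f$ with $g_{\DP}(\Sigma_n)\Sigma_nf^{(t_1)}=g_{\DP}(\Sigma_n)S_n^*\mathbf{f}^{(t_1)}$: the difference is $g_{\DP}(\Sigma_n)S_n^*(\mathbf{f}^{(t_1)}-\f)$, an empirical object, whose squared $\H$-norm is at most $BT\|\mathbf{f}^{(t_1)}-\f\|_n^2$; after the $T^{-1}$ weighting this leaves $B\|\mathbf{f}^{(t_1)}-\f\|_n^2$, with expectation $B\|f-S_\rho f^{(t_1)}\|_\rho^2\le BR^2T^{-2r}$, which \emph{is} dominated by the oracle minimum. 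Your first piece $(g_t(\Sigma)-g_{\tauDP}(\Sigma_n))S_\rho^*f$ can in fact be salvaged (not by a "resolvent perturbation", which does not make sense for two different times and a general $g$, but by bounding each summand separately via operator-power inequalities on $\mathcal{E}_T$, using $r<1/2$), and your third piece is essentially the variance term again; but the second piece as decomposed cannot deliver the stated rate.

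Two smaller points. First, you invoke Theorem~\ref{ThmCondDesignOracle}, whose hypotheses \eqref{Assume.EVBound} and \eqref{LFL} are not assumed in Theorem~\ref{ThmDPHardProblems}; the bound you actually write (with the proxy variance $\geffdim_n(t)$) is Proposition~\ref{ThmCondDesignProxy}, which is the correct tool here. Second, converting the empirical oracle $\|r_t(K_n)\f\|_n^2+\geffdim_n(t)/n$ into the population quantity $t^{-2r}+\effdim(t)/n$ is not just a matter of Lemma~\ref{LemEffDimGenDim}: one needs both the concentration of $\effdim_n(t)$ around $\effdim(t)$ and a bound of the form $\mathbb{E}\1_{\mathcal{E}_T}\|r_t(K_n)\f\|_n^2\le C(t^{-2r}+\effdim(t)/n)$, which is a separate, nontrivial argument (Lemma~\ref{LemBoundBiasOC} in the paper) requiring again a change of norm and weighted Bernstein bounds.
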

The proof of Theorem \ref{ThmDPHardProblems} can be found in Section~\ref{sec.proof.DP.outer.case.random.design}. 
Unlike the results from Sections~\ref{sec.oracle.DP.fixed.design} and~\ref{sec.oracle.SDP.fixed.design} in the fixed design case, there is an additional constraint on the emergency stop $T$ that has to be smaller than $c n/\log n$. This constraint is related to the control of the probability of the event $\{\| (\Sigma+T^{-1})^{-1/2} (\Sigma_n-\Sigma)(\Sigma+T^{-1})^{-1/2}\|_{\operatorname{op}}\leq 1/2 \}$.

Without any further assumption on the decay rate of the eigenvalues, the effective dimension $\effdim(t)$ can be upper bounded by $M^2 t$; see e.g. Appendix \ref{appendix.effective.dim}. Theorem \ref{ThmDPHardProblems} thus gives
\begin{align*}
&\mathbb{E}\|f-S_\rho\hat f^{(\DP)}\|_\rho^2\leq C\max\Big(n^{-\frac{2r}{2r+1}},\Big(\frac{\log n}{n}\Big)^{2r},n^{-1/2}\Big)\leq Cn^{-\frac{2r}{2r+1}}.
\end{align*}
As a consequence, the classical discrepancy principle leads to optimal rates of convergence throughout the whole range $r\in(0,1/2)$ of the outer case (cf. \cite{FS19}). 

\subsubsection{Discussion and extensions for polynomial decay}\label{SecExample}
For some $\ED>0$ and $ \alpha>1$, suppose that
\begin{align}\label{EqPolDecUB}
\forall j\geq 1,\qquad \lambda_j\leq \ED j^{-\alpha}.\tag{{\bf PolDec}} 
\end{align}
By Lemma \ref{EqEffDim}(i) we have $\mathcal{N}(t)\leq Ct^{1/\alpha}$ for all $t>0$. Specialized to \eqref{EqPolDecUB}, Theorem \ref{ThmDPHardProblems} thus gives
\begin{align*}
&\mathbb{E}\|f-S_\rho\hat f^{(\DP)}\|_\rho^2\leq C\max\Big(n^{-\frac{2r}{2r+1/\alpha}}, \Big(\frac{\log n}{n}\Big)^{2r},n^{-1/2}\Big).
\end{align*}
In other words, we obtain up to some additional $\log n$ factors the following rates of convergence:
\begin{align*}
\begin{cases}
    n^{-\frac{2r}{2r+1/\alpha}},&\quad  \text{ if } 2r+1/\alpha>1,r\leq 1/(2\alpha),\\
    n^{-2r},&\quad \text{ if } 2r+1/\alpha\leq 1, r\leq 1/4,\\
    n^{-1/2},&\quad \text{ if } r>1/4,r>1/(2\alpha).
    \end{cases} 
\end{align*}
We see that the classical discrepancy principle achieves the optimal rates of convergence in the hard learning scenario if $1/2-1/(2\alpha)< r\leq 1/(2\alpha)$. 

In what follows we compare these rates to results from the literature, and we show how Theorem \ref{ThmDPHardProblems} can be improved under an additional condition on the kernel. Ignoring $\log n$ factors, the rate
\begin{align}\label{EqRates}
    \begin{cases}
         n^{-\frac{2r}{2r+1/\alpha}},&\quad 2r+1/\alpha \geq 1,\\
         n^{-2r},&\quad 2r+1/\alpha \leq  1.
    \end{cases} 
\end{align}
is the state-of-the-art result for the outer case assuming only \eqref{BdK}; see e.g. Corollary 4.4 in \citep{lin2018optimal}. There are possible improvements under stronger boundedness assumptions. In fact, if there is a $\mu< 1$ such that $\|\Sigma^{\mu/2-1/2}k_X\|_{\mathcal{H}}\leq C_\mu M$ almost surely, then one can achieve up to $\log n$ factors the rate
\begin{align}\label{EqRatesExt}
    \begin{cases}
         n^{-\frac{2r}{2r+1/\alpha}},&\quad 2r+1/\alpha \geq \mu,\\
         n^{-\frac{2r}{\mu}},&\quad 2r+1/\alpha \leq  \mu,
    \end{cases} 
\end{align}
see e.g. \citep{pillaud2018statistical} and \citep{FS19}. Such improvements are also possible in our case, which is the purpose of the next result proved in Section~\ref{sec.proof.DP.outer.case.random.design}.
\begin{thm}\label{ThmDPHardProblemsExt}
Suppose that \eqref{SubGN}, \eqref{BdK}, \eqref{Assume.SC} and \eqref{EqPolDecUB} holds with $r<1/2$ and that $f$ is bounded. Suppose that there is a $\mu\in[0,1)$ and a constant $C_\mu>0$ such that $\|\Sigma^{\mu/2-1/2}k_X\|_{\mathcal{H}}\leq C_\mu M$. Finally, suppose that the regularizer $g$ satisfies \eqref{Qualif} with $q\geq r$. Then there are constants $c,C>0$ such that the standard discrepancy principle $\tauDP$ with emergency stop $T=c_1(n/\log n)^{1/\mu}$, $n\geq 2$, satisfies
\begin{align*}
    \mathbb{E}\|f-S_\rho\hat f^{(\DP)}\|_\rho^2\leq C\Big(\min_{0<t\leq c(\frac{n}{\log n})^{1/\mu}}\Big\{t^{-2r}+\frac{t^{1/\alpha}}{n}\Big\}+n^{-1/2}\Big).
\end{align*}
\end{thm}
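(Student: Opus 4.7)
The plan is to mimic the proof of Theorem~\ref{ThmDPHardProblems}: apply the fixed-design oracle inequality of Theorem~\ref{ThmCondDesignOracle} conditional on the design, then transfer it to the $L^2(\rho)$-error by the change-of-norm Lemma~\ref{LemChangeNorm1.main} on the event
\[
\mathcal{E}_T = \bigl\{\|(\Sigma+T^{-1})^{-1/2}(\Sigma_n-\Sigma)(\Sigma+T^{-1})^{-1/2}\|_{\operatorname{op}}\leq 1/2\bigr\}.
\]
The only essential difference with the proof of Theorem~\ref{ThmDPHardProblems} is a sharper control of $\mathbb{P}(\mathcal{E}_T^c)$ made possible by the improved boundedness $\|\Sigma^{\mu/2-1/2}k_X\|_{\mathcal{H}}\leq C_\mu M$; this enables enlarging the admissible emergency stop from $cn/\log n$ to $c(n/\log n)^{1/\mu}$, which is exactly the oracle window appearing in the statement.

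Conditional on the design, Theorem~\ref{ThmCondDesignOracle} gives, with probability at least $1-5e^{-u}$,
\[
\|\mathbf{f}-\hat{\mathbf{f}}^{(\tauDP)}\|_n^2 \leq C\Bigl(\min_{0\leq t\leq T}\mathbf{E}_\epsilon\|\mathbf{f}-\hat{\mathbf{f}}^{(t)}\|_n^2 + \frac{\sigma^2\sqrt{u}}{\sqrt{n}} + \frac{\sigma^2 u}{n}\Bigr),
\]
the eigenvalue condition \eqref{Assume.EVBound} required by Theorem~\ref{ThmCondDesignOracle} being inherited on $\mathcal{E}_T$ from the population decay \eqref{EqPolDecUB}. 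On $\mathcal{E}_T$, Lemma~\ref{LemChangeNorm1.main} turns this into a $\rho$-norm bound modulo a $T^{-1}\|\cdot\|_{\mathcal{H}}^2$-penalty; the source condition \eqref{Assume.SC} together with the qualification $q\geq r$ yields, via a change-of-bias argument in the spirit of Lemma~\ref{LemChangeBias}, that the empirical squared bias is bounded by $Ct^{-2r}$ on $\mathcal{E}_T$, while the decay \eqref{EqPolDecUB} and Lemma~\ref{EqEffDim}(i) give $\mathcal{N}(t)\leq Ct^{1/\alpha}$, so that Lemma~\ref{LemProxyVariance} bounds the empirical variance by $Ct^{1/\alpha}/n$ on the same event.

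The main obstacle, and the only genuinely new ingredient, is the concentration estimate for $\mathcal{E}_T$ at the enlarged scale $T=c(n/\log n)^{1/\mu}$. I would apply an operator-valued Bernstein inequality to the centered i.i.d.\ self-adjoint operators
\[
Z_i = (\Sigma+T^{-1})^{-1/2}(k_{X_i}\otimes k_{X_i}-\Sigma)(\Sigma+T^{-1})^{-1/2}.
\]
The decisive spectral bound is
\[
\|(\Sigma+T^{-1})^{-1/2}k_X\|_{\mathcal{H}}^2 \leq \|(\Sigma+T^{-1})^{-1/2}\Sigma^{1/2-\mu/2}\|_{\operatorname{op}}^2\,\|\Sigma^{\mu/2-1/2}k_X\|_{\mathcal{H}}^2 \leq (C_\mu M)^2\,T^{\mu},
\]
where one uses the elementary estimate $\sup_{\lambda\in(0,M^2]}\lambda^{1-\mu}/(\lambda+T^{-1})\leq T^{\mu}$. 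This is sharper by a factor $T^{\mu-1}$ than the naive bound $TM^2$ available under \eqref{BdK} alone, and is exactly what permits the scaling $T^{\mu}\leq cn/\log n$ instead of $T\leq cn/\log n$. The Bernstein variance proxy is in turn controlled through $\operatorname{tr}((\Sigma+T^{-1})^{-1}\Sigma)=\mathcal{N}(T)\leq CT^{1/\alpha}$, and enters only through logarithmic factors absorbed into the constant $c_1$.

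Putting everything together, on $\mathcal{E}_T$ intersected with the oracle event of Theorem~\ref{ThmCondDesignOracle}, the bounds above combine to
\[
\|f - S_\rho\hat f^{(\tauDP)}\|_\rho^2 \leq C\Bigl(\min_{0<t\leq T}\{t^{-2r} + t^{1/\alpha}/n\} + \sigma^2\sqrt{u}/\sqrt{n} + \sigma^2u/n\Bigr),
\]
and integrating in $u$ produces the advertised $n^{-1/2}$ remainder. On the complement $\mathcal{E}_T^c$, the boundedness of $f$ together with the uniform bound on $\hat f^{(\tauDP)}$ (following from \eqref{BdK}, \eqref{BdF} and the definition of $\tauDP$) keeps $\|f - S_\rho\hat f^{(\tauDP)}\|_\rho^2$ bounded, so that the residual contribution is at most $C\,\mathbb{P}(\mathcal{E}_T^c)\leq C n^{-2}$ for a large enough log-constant in $T$, which is absorbed in the $n^{-1/2}$ remainder. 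Taking expectation concludes the proof.
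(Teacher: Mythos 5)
You have correctly isolated the one genuinely new ingredient: the bound $\|(\Sigma+T^{-1})^{-1/2}k_X\|_{\mathcal{H}}^2\leq C_\mu^2M^2T^{\mu}$, obtained exactly as you say from $\sup_{\lambda}\lambda^{1-\mu}/(\lambda+T^{-1})\leq T^{\mu}$, and this is indeed what the paper uses to sharpen Lemma~\ref{LemConcIneqBoundedK} to $\mathbb{P}(\mathcal{E}_T^c)\leq C_1T^{\mu}\exp(-c_1n/T^{\mu})$ and Lemma~\ref{LemConcEffDim} accordingly, permitting $T=c(n/\log n)^{1/\mu}$. However, the reduction you propose around this ingredient has genuine gaps, because you are importing inner-case machinery into the outer case $r<1/2$. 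First, Lemma~\ref{LemChangeNorm1.main} only compares $\|S_nh\|_n^2$ and $\|S_\rho h\|_\rho^2$ for $h\in\mathcal{H}$; since $f\notin\mathcal{H}$ when $r<1/2$, you cannot transfer the fixed-design bound on $\|\f-\hat\f^{(\DP)}\|_n^2$ to the $\rho$-norm by a direct change of norm. The paper's proof of Theorem~\ref{ThmDPHardProblems} (which this theorem follows) instead introduces the Tikhonov approximation $f^{(t_1)}=(\Sigma+t_1^{-1})^{-1}S_\rho^*f\in\mathcal{H}$, splits the error into three terms $I_1,I_2,I_3$, and applies the change of norm only to $S_\rho f^{(t_1)}-S_\rho g_{\DP}(\Sigma_n)S_n^*\f$, which lives in $\mathcal{H}$; this intermediate approximation is absent from your argument. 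Second, the ``change-of-bias argument in the spirit of Lemma~\ref{LemChangeBias}'' is unavailable: that lemma requires $s=r-1/2\geq 0$. The outer-case bias bound $\mathbb{E}\1_{\mathcal{E}_T}\|r_t(K_n)\f\|_n^2\leq C(t^{-2r}+\mathcal{N}(t)/n)$ is the content of the separate Lemma~\ref{LemBoundBiasOC}, holds only in expectation, and requires its own operator decomposition plus a second application of the improved bound $\|(\Sigma+t^{-1})^{-1/2}k_X\|_{\mathcal{H}}^2\leq C_\mu M^2t^{\mu}$ at its end — a step your proposal does not account for.

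A further, smaller issue: you invoke Theorem~\ref{ThmCondDesignOracle}, whose hypothesis \eqref{Assume.EVBound} you claim is ``inherited on $\mathcal{E}_T$ from \eqref{EqPolDecUB}.'' But \eqref{EqPolDecUB} is only a one-sided upper bound $\lambda_j\leq Lj^{-\alpha}$ and does not control the ratio $\hat\lambda_{k+1}^{-1}\sum_{j>k}\hat\lambda_j$; neither \eqref{Assume.EVBound} nor its population analogue \eqref{Assume.EffRank} is assumed in Theorem~\ref{ThmDPHardProblemsExt}. The paper's outer-case argument avoids this entirely by working with the proxy-variance deviation inequalities behind Proposition~\ref{ThmCondDesignProxy} (i.e.\ \eqref{EqThmCondDesignProxy2}--\eqref{EqThmCondDesignProxy3}) together with Lemma~\ref{LemEffDimGenDim}, which only needs \eqref{LFL}. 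So while your concentration step and the final bookkeeping on $\mathcal{E}_T^c$ are sound, the core fixed-to-random transfer needs to be rebuilt along the lines of Lemmas~\ref{LemChangeNormOC} and~\ref{LemBoundBiasOC} rather than via Theorem~\ref{ThmCondDesignOracle} and Lemma~\ref{LemChangeBias}.
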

Let us first notice that introducing the stronger assumption involving the parameter $0\leq \mu<1$ allows to enlarge the emergency stop $T$ and thus the range of values of $t$ over which the minimum in the r.h.s. is computed since $1/\mu>1$.
By the arguments from above we also see that the classical discrepancy principle achieves the optimal rates of convergence in the hard learning scenario if $2r+1/\alpha \geq \mu$ and $\mu/2-1/(2\alpha)< r\leq 1/(2\alpha)$. In the setting of Sobolev spaces any $\mu>1/\alpha$ is admissible (see Example 2 in \citep{pillaud2018statistical}), leading to the adaptation interval $r\in (0,1/(2\alpha)]$.

\subsection{SDP performance: Inner case}
\label{sec.SDP.inner.case}

In the present section, we establish two inequalities in the inner case for $\SDP$. 
The main difference between these results lies in the use of different emergency stopping times $T$. In the first one (Theorem~\ref{CorExtThmSDPBoundedK}), a deterministic emergency stop of size at most $n/\log n$ is used, while the second result (Theorem~\ref{CorExtThmSDPBoundedKEmpT}) allows for using a more sophisticated \emph{data-driven} emergency stop defined as the solution of a fixed-point equation, which gives rise to an optimal (leading to statistical adaptivity) early stopping rule that can be applied in practice.

\subsubsection{Main result}
\label{sec.main.result.random.design.innercase}

The transfer from the fixed design to the random design cases requires first an additional assumption on the effective rank, which is the population version of the former \eqref{Assume.EVBound} assumption earlier introduced in the fixed design case.
\begin{assumption} There exists a constant $E'>0$ such that, for each $k\geq 0$, we have
\begin{align}
 \lambda_{k+1}^{-1}\sum_{j> k}\lambda_j\leq E'(k\vee 1). \tag{{\bf EffRank}} \label{Assume.EffRank}
\end{align} 
\end{assumption}
This assumption is a population version of \eqref{Assume.EVBound} and Lemma~\ref{lem.EffRank.RandomEmpirical} specifies an event on which it indeed implies \eqref{Assume.EVBound}. Similarly as in Section~\ref{sec.oracle.DP.fixed.design}, \eqref{Assume.EVBound} is needed to bound the proxy variance term in terms of the smoothed proxy variance term (cf. Lemma~\ref{trace.g.effective.dimension.relaxed.EVBound}).
Under this additional assumption the smoothed discrepancy principle from Section \ref{sec.smoothing.residuals} achieved the following bound.
\begin{thm}\label{CorExtThmSDPBoundedK}
Suppose that Assumptions \eqref{SubGN}, \eqref{Assume.SC}, \eqref{BdK} and \eqref{Assume.EffRank} hold with $s=r-1/2\geq 0$. Moreover, suppose that the regularizer $g$ satisfies \eqref{Qualif} with $q\geq r$. Then there are constants $c,C>0$ such that the smoothed discrepancy principle $\tauSDP$ from \eqref{EqSDP} with $\tilde{g}_t(\lambda)=(\lambda+t^{-1})^{-1}$ and $T\leq cn/(\log n)$, $n\geq 2$, achieves the bound
\begin{align*}
    \mathbb{E}\|f-S_\rho\hat f^{(\tauSDP)}\|_\rho^2
\leq C\Big(\min_{0< t\leq T}\Big\{\frac{1}{t^{2r}}+\frac{\mathcal{N}(t)}{n}\Big\}+ \frac{\sqrt{\mathcal{N}(T)}}{n}\Big).
\end{align*}
\end{thm}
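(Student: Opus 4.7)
The plan is to condition on the ``good'' event
\[
\mathcal{E}_T = \Big\{ \|(\Sigma+T^{-1})^{-1/2}(\Sigma_n-\Sigma)(\Sigma+T^{-1})^{-1/2}\|_{\operatorname{op}}\leq 1/2 \Big\},
\]
apply the fixed-design oracle inequality of Theorem \ref{ThmCondDesignProxySmoothedNorm} on $\mathcal{E}_T$, transfer from the empirical to the $L^2(\rho)$-norm via the change-of-norm Lemma \ref{LemChangeNorm1.main}, and finally handle the complement $\mathcal{E}_T^c$ by direct deterministic bounds. The constraint $T\leq cn/\log n$ will come exactly from controlling $\mathbb{P}(\mathcal{E}_T^c)$ by an operator Bernstein/Tropp inequality for the self-adjoint sum $\Sigma_n-\Sigma=n^{-1}\sum_i (k_{X_i}\otimes k_{X_i}-\Sigma)$; under \eqref{BdK} this gives $\mathbb{P}(\mathcal{E}_T^c)\leq c\,e^{-c'n/(\mathcal{N}(T))}$ (up to log factors), which is summable as long as $\mathcal{N}(T)\lesssim n/\log n$, and in particular for $T\leq cn/\log n$ using the trivial bound $\mathcal{N}(T)\leq M^2T$.

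On $\mathcal{E}_T$, since $r\geq 1/2$, the source condition puts us in the inner case (Eq.~\eqref{EqInnerCaseRepr}): $f=S_\rho f_\H$ with $f_\H=\Sigma^s h$, $\|h\|_\H\leq R$. Write $v=f_\H-\hat f^{(\tauSDP)}\in\H$. Lemma \ref{LemChangeNorm1.main} with $\delta=1/2$ yields
\[
\|f-S_\rho\hat f^{(\tauSDP)}\|_\rho^2 \leq 2\|S_n v\|_n^2 + 2\,T^{-1}\|v\|_\H^2 .
\]
The first summand equals $\|\mathbf{f}-\hat{\mathbf{f}}^{(\tauSDP)}\|_n^2$ and Theorem \ref{ThmCondDesignProxySmoothedNorm} (whose hypotheses are precisely those we assume, with \eqref{Assume.EVBound} obtained from \eqref{Assume.EffRank} on $\mathcal{E}_T$ via an appropriate concentration lemma) gives, after integration in $u$, the bound
\[
\mathbb{E}\bigl[\|\mathbf{f}-\hat{\mathbf{f}}^{(\tauSDP)}\|_n^2\,\1_{\mathcal{E}_T}\bigr]\leq C\Big(\min_{0<t\leq T}\mathbf{E}_\epsilon\|\mathbf{f}-\hat{\mathbf{f}}^{(t)}\|_n^2 + \tfrac{\sigma^2\sqrt{\mathcal{N}_n(T)}}{n}+T^{-(1+2s)}+T^{-1}\|\Sigma_n-\Sigma\|_{\operatorname{op}}^{2\wedge 2s}\Big).
\]
The bias--variance decomposition together with the source condition (valid in the empirical world on $\mathcal{E}_T$ up to constants, using $\Sigma^r\approx \Sigma_n^r$ under the operator inequality defining $\mathcal{E}_T$ and standard perturbation bounds for fractional powers) and Lemma \ref{LemEffDimGenDim} bound the minimum by $\min_{0<t\leq T}\{t^{-2r}+\mathcal{N}_n(t)/n\}$ up to constants; in expectation, $\mathcal{N}_n(t)$ can be replaced by $\mathcal{N}(t)$ on $\mathcal{E}_T$ (using \eqref{Assume.EffRank} to compare empirical and population effective dimensions). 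The extra terms $T^{-(1+2s)}$ and $T^{-1}\mathbb{E}\|\Sigma_n-\Sigma\|_{\operatorname{op}}^{2\wedge 2s}\lesssim T^{-1}n^{-(1\wedge s)}$ are dominated by the oracle rate.

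To handle the second summand $T^{-1}\|v\|_\H^2$, I would use that on $\mathcal{E}_T$ the smoothed discrepancy principle keeps $\|\hat f^{(\tauSDP)}\|_\H$ controlled. A direct expansion $\hat f^{(\tauSDP)} = g_{\tauSDP}(\Sigma_n)\Sigma_n f_\H+g_{\tauSDP}(\Sigma_n)S_n^*\boldsymbol\epsilon$ combined with \eqref{BdF}, the source condition and the Hanson-Wright type variance bound (the same ingredient underlying Proposition \ref{cor.tau.SDPy}) yields $\|\hat f^{(\tauSDP)}\|_\H^2\lesssim R^2+\sigma^2 T \mathcal{N}_n(T)/n$, so $T^{-1}\|v\|_\H^2\lesssim T^{-1}+\sigma^2\mathcal{N}(T)/n$, again dominated by the target rate.

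On $\mathcal{E}_T^c$, one uses a crude deterministic bound $\|f-S_\rho\hat f^{(\tauSDP)}\|_\rho^2\leq \|f\|_\rho^2+\|S_\rho g_T(\Sigma_n)S_n^*\mathbf{Y}\|_\rho^2\leq C(1+T\|\mathbf{Y}\|_n^2)\leq C(1+Tn)$ (using \eqref{LFU}) and the exponentially small probability of $\mathcal{E}_T^c$ produced by the operator concentration step. The main obstacle is the delicate interplay in the second paragraph: transferring the source condition between $\Sigma$ and $\Sigma_n$ for fractional exponents $s$, and controlling $\|v\|_\H$ uniformly in the random stopping time $\tauSDP$ — both hinge on operator perturbation bounds valid precisely on $\mathcal{E}_T$, which is why the change-of-norm assumption in Theorem \ref{ThmCondDesignProxySmoothedNorm} is designed to coincide with $\mathcal{E}_T$.
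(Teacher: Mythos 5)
Your overall architecture (restrict to a good event, invoke the fixed-design oracle inequality, change norms via Lemma~\ref{LemChangeNorm1.main}, handle the complement by a crude deterministic bound times a polynomially small probability) is the same as the paper's, and your treatment of $\|\mathbf{f}-\hat{\mathbf{f}}^{(\tauSDP)}\|_n^2$, of $\mathbb{P}(\mathcal{E}_T^c)$, and of the complement is essentially correct (the paper additionally intersects $\mathcal{E}_T$ with the trace-concentration event of Lemma~\ref{LemConcIneqTraces}, which is the ``appropriate concentration lemma'' you allude to for passing from \eqref{Assume.EffRank} to the empirical \eqref{Assume.EVBound}). The genuine gap is in your treatment of the residual term $T^{-1}\|v\|_{\H}^2$ created by applying the change of norm \emph{once, globally}, to $v=f_{\H}-\hat f^{(\tauSDP)}$. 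Your bound $T^{-1}\|v\|_{\H}^2\lesssim T^{-1}+\sigma^2\mathcal{N}(T)/n$ is correctly derived but is \emph{not} dominated by the claimed rate: under \eqref{EqPolDecUB} with $T=cn/\log n$, for instance, $\mathcal{N}(T)/n\asymp n^{1/\alpha-1}$ (up to logs) exceeds both $\sqrt{\mathcal{N}(T)}/n\asymp n^{1/(2\alpha)-1}$ and the oracle rate $n^{-2r\alpha/(2r\alpha+1)}$ for every $r\geq 1/2$; likewise $T^{-1}$ exceeds $T^{-2r}$ as soon as $r>1/2$ and can exceed the whole right-hand side for moderate $T$ (e.g.\ $T=\sqrt{n}$). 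The loss comes from (i) bounding the stochastic part of $\|\hat f^{(\tauSDP)}\|_{\H}^2$ by its value at the emergency stop $T$ via monotonicity, instead of at the balancing time, and (ii) bounding the deterministic part by $\|f_\H\|_\H^2$ instead of exploiting the residual operator $r_{\tauSDP}(\Sigma_n)$.

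The paper closes exactly this gap by applying the change of norm \emph{separately} to the two components of $f_\H-\hat f^{(\tauSDP)}=r_{\tauSDP}(\Sigma_n)f_\H-g_{\tauSDP}(\Sigma_n)S_n^*\boldsymbol{\epsilon}$. For the variance component, \eqref{LFU} with $\tauSDP\leq T$ gives $T^{-1}g_{\tauSDP}^2(\lambda)\leq B\,g_{\tauSDP}(\lambda)$, hence $T^{-1}\|g_{\tauSDP}(\Sigma_n)S_n^*\boldsymbol{\epsilon}\|_{\H}^2\leq B\|K_n^{1/2}g_{\tauSDP}^{1/2}(K_n)\boldsymbol{\epsilon}\|_n^2$ (see \eqref{eq.variance.term.random}); the deviation inequalities then control this quantity at level $\sigma^2\tildengeffdim(\tildetstar)/n+C\sigma^2(\sqrt{u\,\effdim_n(T)}+u)/n$, i.e.\ at the balancing time rather than at $T$. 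For the bias component, the sum $\sum_j(2\hat\lambda_j+T^{-1})\,r_{\tauSDP}^2(\hat\lambda_j)\langle f,\hat u_j\rangle^2$ is split at $\hat\lambda_jT=1$: above the threshold $T^{-1}\leq\hat\lambda_j$ is absorbed into the smoothed empirical bias, and below it the source condition yields $\sum_{\hat\lambda_jT\leq 1}\langle f,\hat u_j\rangle^2\lesssim T^{-2s}+\|\Sigma_n-\Sigma\|_{\operatorname{op}}^{2\wedge 2s}$ (Lemma~\ref{LemChangeBias}), so the contribution is of order $T^{-1-2s}=T^{-2r}$ rather than $T^{-1}$. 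You would need this component-wise version of the change-of-norm step to obtain the stated bound.
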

The proof of Theorem \ref{CorExtThmSDPBoundedK} can be found in Section~\ref{proof.Theorem.oracle.random.T.Deterministic}. Note that the condition $q\geq r$ on the qualification error of $g$ can be dropped by introducing slower rates depending also on $q$.

\medskip

Without any further assumption on the decay rate of the eigenvalues (except of \eqref{Assume.EffRank}), Theorem \ref{CorExtThmSDPBoundedK} gives
\begin{align*}
&\mathbb{E}\|f-S_\rho\hat f^{(\DP)}\|_\rho^2\leq C\max\Big(n^{-\frac{2r}{2r+1}},T^{-2r},\frac{\sqrt{T}}{n}\Big).
\end{align*}
Let us now assume that a lower bound $r_0\geq 1/2$ is known on the smoothness parameter $r$, which means that \eqref{Assume.SC} holds with $r\geq r_0$. Then using this side information, the choice $T=n^{1/(2r_0+1)}$ (that becomes smaller than $n/\log n$ as $n$ grows) leads to 
\begin{align*}
&\mathbb{E}\|f-S_\rho \hat f^{(\tauSDP)}\|_\rho^2\leq C\max\Big(n^{-\frac{2r}{2r+1}},n^{-\frac{4r_0+1}{4r_0+2}}\Big).
\end{align*}
This entails that the smoothed discrepancy principle $\SDP$ reaches optimal rates of convergence throughout the range
\begin{align*}
    r\in\Big[r_0,2r_0+\frac{1}{2}\Big].
\end{align*}
For instance with $r_0=1/2$ (that is the inner case without additional smoothness information), $\SDP$ is optimal over the range $r\in\croch{1/2,3/2}$.

Instead of choosing $T=n^{1/(2r_0+1)}$, one might also define $T$ as the solution to the fixed-point equation $ c_0T^{-2r_0} =\mathcal{N}(T)/n$ with $c_0=1$, which corresponds to a bias-variance trade-off in the case $r=r_0$. This would lead to the same adaptation interval $[r_0,2r_0+1/2]$. 
Such and related fixed-point equations play a central role in empirical risk minimization problems; see e.g. \citep{MR2166554} and \citep{MR2329442}, and it is easy to see, using the proof of Lemma \ref{LemEffDimGenDim} and Proposition 3.3 in \citep{MR2829871}, that the effective dimension $\mathcal{N}(t)$ can be bounded from below and above in terms of local Rademacher averages.

\medskip

With an additional assumption such as a polynomial eigenvalue decay, the previous analysis can be further applied. If \eqref{EqPolDecUB} and \eqref{Assume.SC} hold with $r\geq r_0$, then the choice $T^{2r_0}\mathcal{N}(T)= n$ leads to
\begin{align*}
&\mathbb{E}\|f-S_\rho \hat f^{(\tauSDP)}\|_\rho^2\leq C\max\Big(n^{-\frac{2r\alpha}{2r\alpha+1}},n^{-\frac{4\alpha r_0+1}{4\alpha r_0+2}}\Big),
\end{align*}
meaning that the smoothed discrepancy principle leads to optimal rates of convergence throughout the range
 \begin{align}\label{EqRangeS0}
r\in\Big[r_0,2r_0+\frac{1}{2\alpha}\Big].
\end{align}
Let us emphasize that this range of values is narrower than the previous one derived without any assumption on the eigenvalue decay ($\alpha>1$). This owes to the fact that, by specifying an eigenvalue decay assumption (that is, by choosing a given kernel), we restrict the smoothness of the functions in the induced Hilbert space that can be well approximated.

\subsubsection{Improvement towards data-driven emergency stops}
In previous Section~\ref{sec.main.result.random.design.innercase}, we have chosen a (deterministic) $T$ as the solution of the equation $t^{2r_0}\mathcal{N}(t)=  c_0n$ by taking advantage of the prior knowledge of a lower bound $r_0$ on the smoothness parameter. Without such an a priori knowledge on $r$, the equation $T\mathcal{N}(T)=  c_0n$ provides a natural choice for $T$. Yet, such a choice is not achievable in practice since $\mathcal{N}(t)$ is not known.

In this section we show that similar bounds hold true if $T=T(X_1,\dots,X_n)$ is allowed to depend on the covariates $X_1,\dots,X_n$ (but not on the responses). This is possible since all results established in the fixed design case continue to hold. The following result focuses on the choice $T\mathcal{N}_n(T)= c_0n$.
\begin{thm}\label{CorExtThmSDPBoundedKEmpT}
Suppose that Assumptions \eqref{SubGN}, \eqref{Assume.SC}, \eqref{BdK} and \eqref{Assume.EffRank} hold with $s=r-1/2\geq 0$. Moreover, suppose that the regularizer $g$ satisfy \eqref{Qualif} with $q\geq r$. Let $\hat T>0$ be the solution of $\hat T\mathcal{N}_n(\hat T)=n$ (set $\hat T=\infty$ if such a solution does not exist). 
Then the smoothed discrepancy principle $\tauSDP$ from \eqref{EqSDP} with $\tilde{g}_t(\lambda)=(\lambda+t^{-1})^{-1}$ and $T=\min(\hat T,cn/\log n)$, $n\geq 2$ and $c>0$ sufficiently small, achieves the bound
\begin{align*}
  &\mathbb{E}\|f-S_\rho\hat f^{(\tauSDP)}\|_\rho^2\\
  &\leq C\Big(\min_{t>0}\Big\{t^{-2r}+\frac{\effdim(t)}{n}\Big\}+\sqrt{\frac{1}{n}\min_{t>0}\Big\{ t^{-1}+\frac{\mathcal{N}(t)}{n}\Big\}}+\frac{\log n}{n}\Big).
\end{align*}
\end{thm}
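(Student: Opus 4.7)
The plan is to combine the fixed-design oracle inequality Theorem~\ref{ThmCondDesignProxySmoothedNorm} with the change-of-norm Lemma~\ref{LemChangeNorm1.main} and the data-dependent balancing encoded in the fixed-point equation $\hat T\mathcal{N}_n(\hat T)=n$. The crucial observation is that $\hat T$, and hence $T=\min(\hat T,cn/\log n)$, is measurable with respect to $X_1,\dots,X_n$, so conditional on the design $T$ is deterministic and the fixed-design theory applies verbatim.

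First I would condition on the design and invoke Theorem~\ref{ThmCondDesignProxySmoothedNorm}, yielding for every $u>1$ a bound of the form
\[
\|\mathbf{f}-\hat{\mathbf{f}}^{(\tauSDP)}\|_n^2\lesssim \min_{0\leq t\leq T}\mathbf{E}_\epsilon\|\mathbf{f}-\hat{\mathbf{f}}^{(t)}\|_n^2+\frac{\sigma^2\sqrt{u\mathcal{N}_n(T)}}{n}+\frac{\sigma^2 u}{n}+T^{-(1+2s)}+T^{-1}\|\Sigma_n-\Sigma\|_{\operatorname{op}}^{2\wedge 2s}
\]
on the event $\mathcal{A}_1=\{\|(\Sigma+T^{-1})^{-1/2}(\Sigma_n-\Sigma)(\Sigma+T^{-1})^{-1/2}\|_{\operatorname{op}}\leq 1/2\}$. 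Operator Bernstein inequalities applied to the whitened deviation of $\Sigma_n$ give $\mathbb{P}(\mathcal{A}_1^c)\lesssim n^{-2}$ as soon as $T\leq cn/\log n$ with $c$ small enough; since $f$ is bounded this event contributes at most $O((\log n)/n)$ to the expectation.

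Next I would transfer the bound from $\|\cdot\|_n$ to $\|\cdot\|_\rho$ via Lemma~\ref{LemChangeNorm1.main}, which on $\mathcal{A}_1$ gives $\|S_\rho h\|_\rho^2\leq 2\|S_nh\|_n^2+2T^{-1}\|h\|_{\mathcal{H}}^2$ for every $h\in\mathcal{H}$. Applied to $h=f_{\mathcal{H}}-g_t(\Sigma_n)S_n^*\mathbf{Y}$ (using $f=S_\rho f_{\mathcal{H}}$ since $s=r-1/2\geq 0$), the extra $T^{-1}\|h\|_{\mathcal{H}}^2$ term is absorbed into $T^{-(1+2s)}$ together with a variance contribution, via source condition \eqref{Assume.SC} and qualification error \eqref{Qualif}. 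Then on a high-probability event $\mathcal{A}_2$ I replace empirical by population quantities: a uniform concentration bound yields $\mathcal{N}_n(t)\asymp \mathcal{N}(t)$ and, by a lemma transferring \eqref{Assume.EffRank} into \eqref{Assume.EVBound}, the proxy-variance inequality Lemma~\ref{LemProxyVariance} is available. The defining equation $\hat T\mathcal{N}_n(\hat T)=n$ then forces $T^{-1}\asymp \mathcal{N}(T)/n$ on $\mathcal{A}_1\cap\mathcal{A}_2$, so
\[
\frac{\sqrt{\mathcal{N}_n(T)}}{n}\lesssim\sqrt{\frac{1}{n}\Big(\frac{1}{T}+\frac{\mathcal{N}(T)}{n}\Big)}\leq \sqrt{\frac{1}{n}\min_{t>0}\Big\{\frac{1}{t}+\frac{\mathcal{N}(t)}{n}\Big\}},
\]
since $T$ is, by construction, an approximate minimizer of $t\mapsto t^{-1}+\mathcal{N}(t)/n$. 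Similarly, passing $\min_{t\leq T}\mathbf{E}_\epsilon\|\mathbf{f}-\hat{\mathbf{f}}^{(t)}\|_n^2$ through the change-of-norm produces $\min_{t\leq T}\{t^{-2r}+\mathcal{N}(t)/n\}$, and the constraint $t\leq T$ can be dropped because $T\gtrsim n^{1/(2r+1)}$ on $\mathcal{A}_2$, so the unconstrained minimum is attained below $T$.

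The main obstacle is the simultaneous control of the data-dependent $T$ and the implicit population fixed point $T^\star$ defined by $T^\star\mathcal{N}(T^\star)\asymp n$: one must establish $\mathcal{N}_n(t)\asymp\mathcal{N}(t)$ \emph{uniformly} over a range of $t$ essentially covering $[1,n]$, and then deduce $\hat T\asymp T^\star$ by the monotonicity of $t\mapsto t\mathcal{N}(t)$. Once this uniform concentration is in hand, the final step is to integrate the sub-Gaussian and sub-exponential tails in $u$ from Theorem~\ref{ThmCondDesignProxySmoothedNorm} and to absorb $\mathbb{P}(\mathcal{A}_1^c\cup\mathcal{A}_2^c)$ into the $(\log n)/n$ remainder using the boundedness of $f$, which closes the argument.
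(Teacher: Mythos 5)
Your overall architecture coincides with the paper's: $T=\min(\hat T,cn/\log n)$ is measurable with respect to the design, so the fixed-design machinery (Theorem~\ref{ThmCondDesignProxySmoothedNorm}, resp.\ Propositions~\ref{prop.variance.term.random} and~\ref{lem.bias.term.random}) applies conditionally, the change-of-norm Lemma~\ref{LemChangeNorm1.main} transfers the bound to $\|\cdot\|_\rho$, and the fixed-point equation $\hat T\mathcal{N}_n(\hat T)=n$ converts $\sqrt{\mathcal{N}_n(T)}/n$ into $\sqrt{n^{-1}\min_t\{t^{-1}+\mathcal{N}_n(t)/n\}}$ because $t\mapsto t^{-1}$ decreases, $t\mapsto\mathcal{N}_n(t)/n$ increases, and they cross at $\hat T$. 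All of this is exactly what the paper does.

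The genuine gap is the step you yourself flag as ``the main obstacle'': you want a \emph{two-sided, uniform} comparison $\mathcal{N}_n(t)\asymp\mathcal{N}(t)$ over $t\in[1,n]$ in order to deduce $\hat T\asymp T^\star$ for a population fixed point $T^\star$. Nothing in the paper (nor in your sketch) supplies the lower deviation bound $\mathcal{N}_n(t)\gtrsim\mathcal{N}(t)$, let alone uniformly in $t$; Lemma~\ref{LemConcEffDim} is one-sided and pointwise in $t$. The paper circumvents this entirely: all comparisons involving the random $\hat T$ are carried out between \emph{empirical} quantities only, using the monotonicity of $t\mapsto t\,\mathcal{N}_n(t)$ and $t\mapsto t^{2r}\mathcal{N}_n(t)$ (the case analysis around \eqref{EqEqtstarEmpTMain}), so the oracle bound is first obtained with $\mathcal{N}_n(t)$ and $\min_{t>0}$ unconstrained; only then does one take the expectation over the design and use $\mathbb{E}\min_t\{\cdot\}\leq\min_t\mathbb{E}\{\cdot\}$ together with the one-sided Lemma~\ref{LemConcEffDim} at each fixed $t$ to replace $\mathcal{N}_n$ by $\mathcal{N}$. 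You should restructure your argument this way, since as written the key comparison rests on an unproved concentration statement. Two smaller points: on the complement of the good event the crude bound is of order $T\|\boldsymbol{\epsilon}\|_n^2\sim n$, so after Cauchy--Schwarz you need the exceptional probability to be $O(n^{-4})$, not $O(n^{-2})$, to absorb it into $Cn^{-1}$; and the $\mathcal{H}$-norm of the stochastic part after the change of norm is controlled by $T^{-1}\|g_{\tau}(\Sigma_n)S_n^*\boldsymbol{\epsilon}\|_{\mathcal{H}}^2\leq B\|K_n^{1/2}g_\tau^{1/2}(K_n)\boldsymbol{\epsilon}\|_n^2$ via \eqref{LFU}, i.e.\ by the same variance functional already present, rather than being ``absorbed into $T^{-(1+2s)}$''.
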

The proof of Theorem \ref{CorExtThmSDPBoundedKEmpT} can be found in Section~\ref{proof.Theorem.oracle.random.T.Random}.
Compared to the statement in Theorem~\ref{CorExtThmSDPBoundedK}, the term $\sqrt{\mathcal{N}(T)}/n$ has disappeared. Actually it has been replaced by the square-root on the r.h.s. of the above inequality due to the control of $\sqrt{\mathcal{N}(T)}$ with the present (random) choice of $T=\min(\hat T,cn/\log n)$. As can be easily checked from the proof, the control of this term is also responsible for the additional $(\log n)/n$, which does not really influence our conclusion regarding convergence rates.

Let us also remark that the above definition $\hat T$ with $c_0=1$ does not take into account constants such as the variance $\sigma^2$ or $\norm{f}_{\H}$ for instance that should arise from the upper bounds on the variance or bias terms. Obviously introducing these constants in the fixed-point equation would not modify our conclusion regarding the convergence rates and the statistical adaptivity property, which is the main achievement of the present analysis. In practice, one could replace these constants in the upper bound on the bias term by upper bounds with high probability derived from the empirical risk evaluated at $0$.

\paragraph{Illustration on two classical eigenvalue decay assumptions}

Since the interpretation in terms of convergence rates is not easy from the statement in Theorem~\ref{CorExtThmSDPBoundedKEmpT}, let us now consider two illustrative examples allowing for drawing further insightful conclusions.

\begin{example}[Polynomial decay]
Under the assumptions of Theorem \ref{CorExtThmSDPBoundedKEmpT} and \eqref{EqPolDecUB}, we get \begin{align}\label{EqChoiceTInnerCase}
&\mathbb{E}\|f-S_\rho\hat f^{(\tauSDP)}\|_\rho^2\leq C\max\Big(n^{-\frac{2\alpha r}{2\alpha r+1}},n^{-\frac{2\alpha+1}{2\alpha+2}}\Big).
\end{align}  
This means that, by including the \emph{data-driven} choice of $\hat T$, the smoothed discrepancy principle $\SDP$ still reaches statistical adaptivity (that is, automatically enjoys optimal rates of convergence) throughout the range $r\in[1/2,1+1/(2\alpha)]$. 

Note that this choice for $\hat T$ corresponds to the stopping rule defined by Eq.~(6) in \citep{MR3190843}. 
The striking remark is that \citep{MR3190843} establishes the rate $n^{-\alpha/(\alpha+1)}$, while we obtain an estimator automatically achieving the optimal rate $n^{-\paren{2\alpha r}/\paren{2\alpha r+1}}$ throughout $r\in[1/2,1+1/(2\alpha)]$ and the rate $n^{-(\alpha+1/2)/(\alpha+1)}$ otherwise.
This proves that $\SDP$ is uniformly better than the stopping rule of \citep{MR3190843} in the inner case ($r\geq 1/2$) and under a polynomial decay assumption.
\end{example}

\medskip

\begin{example}[Exponential decay]
For some $\ED>0$ and $ \alpha>1$, suppose that $\lambda_j\leq  e^{- \ED j^\alpha}$ for every $j\geq 1$. Applying Theorem \ref{CorExtThmSDPBoundedKEmpT} and  Lemma \ref{EqEffDim}(ii), we get
\begin{align*}
&\mathbb{E}\|f-S_\rho\hat f^{(\SDP)}\|_\rho^2\leq C\frac{(\log n)^{1/\alpha}}{n}.
\end{align*}
\end{example}

\section{Simulation experiments}\label{sec.simulations.experiments}
The goal of the present section is to illustrate the main behaviors of the stopping rules under consideration, as predicted from a theoretical perspective, respectively in Sections~\ref{sec.outer.case} and~\ref{sec.SDP.inner.case}.
\subsection{Simulation design: Generating synthetic data}
The present simulation experiments are carried out with the Landweber algorithm (that is, gradient descent with constant step-size $\eta>0$ along the iterations) as described in Section~\ref{SecSpectralFilter}.
The sample size $n$ varies within $\acc{200,400,600,800,1\,000}$ and the number of replicates in all the experiments is $N=200$. 
In all the simulation experiments, when applying the smoothed discrepancy principle rule $\SDP$ (see Eq.~\eqref{EqSDP}).

The data are drawn from the model described by \eqref{def.model} with the variance $\sigma^2$ of the Gaussian noise to be equal to 1, and where the deterministic vector $(x_1,\ldots,x_n)$ is defined by $x_i=i/n$ for $1\leq i\leq n$. 
Two distinct settings have been considered with specific tuning of the related parameters.
\begin{itemize}
    \item Outer case (see also Section~\ref{sec.outer.case}):
The regression function $f$ to be estimated is given for all $x\in[0,1]$ by
    \begin{align*}
    f(x)  = 2 \1_{[0.15,0.3[}(x) - \1_{[0.3,0.5[}(x) + \1_{[0.5, 0.85[}(x) - \1_{[0.85, 1[}(x).
    \end{align*}
    \sloppy
The results are only reported for the Sobolev kernel ($k_S(x,y)=\min(x,y)$, for $x,y\in[0,1]$). The maximum number of iterations, called $T_{\max}$ is respectively equal to 500 if $n\leq 400$, 1\,000 if $n=600$, 2\,000 if $n=800$, and 3\,000 if $n=1\,000$. The step-size of the Landweber algorithm is $\eta=2.4$, and the emergency stopping time $T$ is chosen such that $T = 2 n/\log n$  for $\SDP$ (see Theorem~\ref{ThmDPHardProblems}), and $T=T_{\max}$ for $\DP$.  
    
    \item Inner case (see also Section~\ref{sec.SDP.inner.case}):
    \begin{align*}
    f(x)  = \frac{1+x}{2} \sin(2\pi x(1+x)) .
    \end{align*}
For the inner case, two reproducing kernels are used: the Sobolev kernel (see above) and the Gaussian kernel ($k_G(x,y) = \exp\paren{ (x-y)^2/w^2}$, with bandwidth $w=0.02$). The maximum number of iterations is $T_{\max}=500$. The step-size of the Landweber algorithm is respectively set at $\eta=2.4$ for the Sobolev kernel, and $\eta=0.5$ for the Gaussian kernel. The emergency stopping time $T$ is chosen such that $T = 4 \sqrt{n}$ for $\SDP$ (see the discussion following Theorem~\ref{CorExtThmSDPBoundedK} with $r_0=1/2$) and $T=T_{\max}$ for $\DP$. 
\end{itemize}

For any given stopping rule $\hat{t}$, its performance is measured by means of the squared empirical norm $\|\mathbf{f} - \hat{\mathbf{f}}^{(\hat{t})}\|_n^2$ averaged over the $N=200$ replications, which is called the (averaged) ``loss'' for short in what follows.

\subsection{The outer case}

Figure~\ref{fig.example.outercase} displays an example of signal generated from the outer case framework.
The piecewise-constant regression function (red curve) makes the estimation problem a difficult task as long as one is limited to using functions from the reproducing kernel Hilbert space (RKHS) generated by the Sobolev kernel $k_S$. This justifies calling this situation the outer case. 
\begin{figure}[h!]
\hspace*{-1cm}
\begin{subfigure}{.5\textwidth}
\centering
    \includegraphics[width=\textwidth]{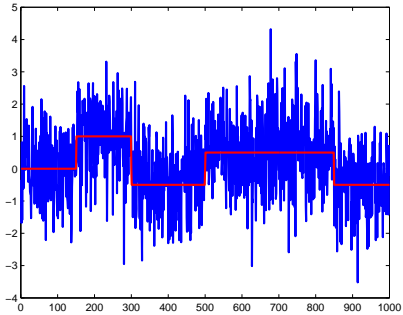}
\caption{Realization of the Outer case model.\hfill ~\\}
\label{fig.example.outercase}
\end{subfigure}
\hspace*{1cm}
\begin{subfigure}{.5\textwidth}
\centering    
    \includegraphics[width=\textwidth]{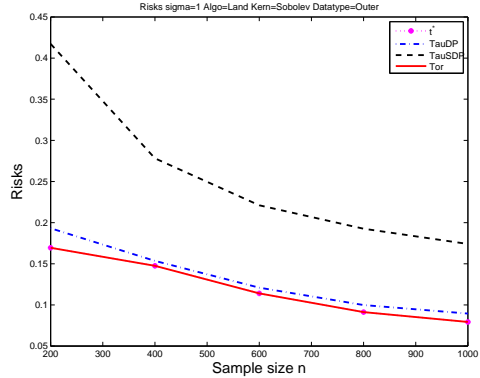}
\caption{Averaged losses of $t_{or}$, $\DP$, and $\SDP$ in the Outer case. The number of replications is $N=200$.}
\label{fig.Perf.outercase}
\end{subfigure}
\caption{Left: Instance of signal generated from the Outer case model. Right: averaged loss performances versus the increasing sample size.}
\end{figure}
For increasing sample sizes, Figure~\ref{fig.Perf.outercase} displays the empirical performances (measured in terms of the averaged loss) of several stopping rules, namely $\DP$, and $\SDP$. They are compared to the performance of the so-called \emph{oracle stopping rule} denoted by $t_{or}$ and defined as a global minimum location of the risk that is,
\begin{align}\label{oracle.stopping.rule}
    t_{or} = \operatorname{argmin}_{0<t\leq T_{\max}}\limits \mathbf{E}_{\epsilon}\| \mathbf{f} - \hat{\mathbf{f}}^{(t)}\|_n^2.
\end{align}

Although all the performances improve as the sample size grows, the performance of $\DP$ still remains uniformly closer to that of $t_{or}$, than the one of $\SDP$.
Keeping in mind that $\SDP$ is known to improve upon $\DP$ in the case of smooth regression functions (inner case that is, $r\geq 1/2$), it confirms that the present situation is by contrast a true instance of an outer case ($r<1/2$), meaning  that $f$ is outside the RKHS.

More precisely, since $f$ lies outside the RKHS, the expected number of iterations required for achieving a reliable estimator of $f$ is large. This is what we observe with the oracle stopping rule $t_{or}$ which remains close to the maximum number of iterations $T_{\max}$ as $n$ grows.
One main feature in designing $\SDP$ is the smoothing of the residuals as a means for avoiding too large values of the stopping rule (compared to $\DP$). Therefore the present situation is one typical instance where the trend of $\DP$ to take large values (unlike $\SDP$) makes this stopping rule a better candidate.

\subsection{The inner case}

Figure~\ref{fig.example.innercase} displays an example of signal generated in the inner case.
By contrast with the previous example (outer case), the smoothness of the regression function $f$ allows for using both the Gaussian and the Sobolev kernels, respectively denoted by $k_G$ and $k_S$. Their respective performance are summarized in Figures~\ref{fig.Perf.Sobolev.inner} and~\ref{fig.Perf.Gaus.inner}, where the different curves display the averaged loss for several stopping rules, namely $\tstar$, $\DP$, $\SDP$, and the oracle stopping rule $t_{or}$ (see Eq.~\eqref{oracle.stopping.rule}). 
\begin{figure}[h!]
\hspace*{3cm}\begin{subfigure}{.5\textwidth}
\centering
\includegraphics[width=\textwidth]{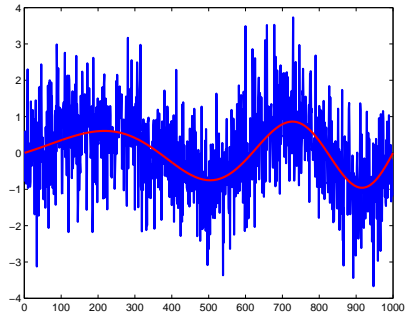}
\caption{Realization of the Inner case model.}    
\label{fig.example.innercase}
\end{subfigure}\\
\hspace*{-1cm}
\begin{subfigure}{.5\textwidth}
\centering
    \includegraphics[width=\textwidth]{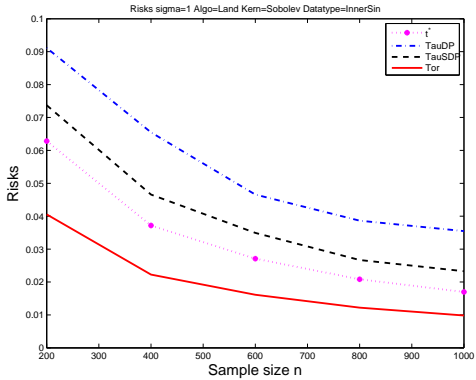}
\caption{Sobolev kernel.}
\label{fig.Perf.Sobolev.inner}
\end{subfigure}
\hspace*{1cm}
\begin{subfigure}{.5\textwidth}
\centering    
    \includegraphics[width=\textwidth]{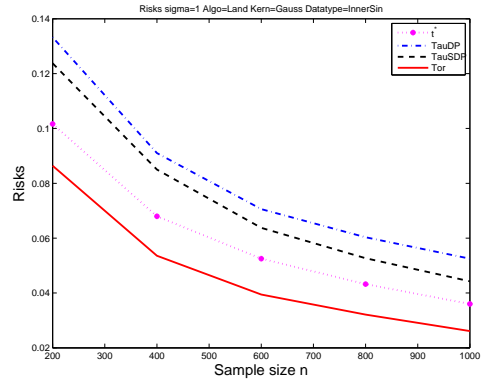}
\caption{Gaussian kernel.}
\label{fig.Perf.Gaus.inner}
\end{subfigure}
\caption{Averaged losses of $t_{or}$, $\tstar$, $\DP$, and $\SDP$ in the Inner case. The number of replications is $N=200$.}
\end{figure}

All the curves from Figures~\ref{fig.Perf.Sobolev.inner} and~\ref{fig.Perf.Gaus.inner} decrease as $n$ grows.
The best performance is uniformly achieved by $\tstar$, which is the stopping rule reaching the trade-off between the bias and the (proxy-)variance term (see also Figure~\ref{fig.Illustration.ESR}). From an asymptotic perspective, this is the best choice one can make in the present early stopping context. In particular, the data-drive stopping rules such as $\DP$ and $\SDP$ are estimating $\tstar$. It is then consistent that their respective performances are worse than that of $\tstar$.

For both the kernels $k_G$ and $k_S$, the worst performance is achieved by $\DP$. This sub-optimal behaviour in terms of averaged loss results from the higher variability of $\DP$ compared to $\SDP$, as it can be observed from the histograms of Figures~\ref{fig.Perf.Sobolev.inner.histo.DP} and ~\ref{fig.Perf.Sobolev.inner.histo.SDP} obtained with $n=800$ and $T_{\max}=500$.
In particular, this emphasizes that the residual smoothing encoded within the $\SDP$ stopping rule induces a considerable variance reduction, which avoids stopping too late (and then wasting time).
\begin{figure}[h!]
\hspace*{-1cm}
\begin{subfigure}{.5\textwidth}
\centering
    \includegraphics[width=\textwidth]{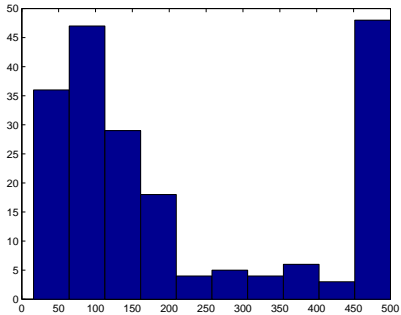}
\caption{Empirical distribution of $\DP$.}
\label{fig.Perf.Sobolev.inner.histo.DP}
\end{subfigure}
\hspace*{1cm}
\begin{subfigure}{.5\textwidth}
\centering    
    \includegraphics[width=\textwidth]{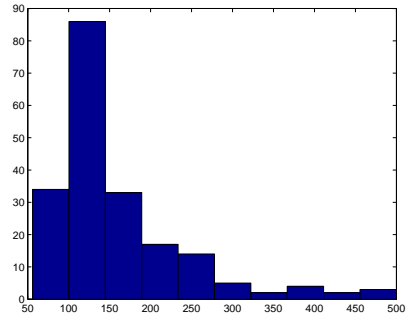}
\caption{Empirical distribution of $\SDP$.}
\label{fig.Perf.Sobolev.inner.histo.SDP}
\end{subfigure}
\caption{Empirical distribution of $\DP$ and $\SDP$ over $N=200$ replications for the Sobolev kernel with $n=800$ in the Inner case.}
\end{figure}

\section{Proofs for fixed-design results}\label{secProofFixedDesign}
In this section, we analyze the discrepancy principle conditional on the design. 

\subsection{A unified framework}
\label{preliminary.results}
A linearly transformed model is now introduced for simultaneously dealing with the smoothed and non-smoothed cases.
\[\AY=L_n\Ybf=L_n\mathbf{f}+L_n\boldsymbol{\epsilon}=\Af+\Ae,\qquad L_n\in\mathbb{R}^{n\times n},\]
with $L_n$ satisfying $\|L_n\|_{\operatorname{op}}\leq 1$. The new noise variable $\Ae$ is mean-zero and has covariance matrix $\sigma^2L_nL_n^T$. 
For a regularizer $g$ in the sense of Definition~\ref{Def}, our main goal is to analyze the stopping rule $\tau$ defined by
\begin{align}\label{EqDPPreSmooth}
    \tau=\inf\Big\{t\geq 0:\|\AY -K_ng_t(K_n)\AY\|_n^2=\|r_t(K_n)\AY\|_n^2\leq \frac{\sigma^2\operatorname{tr}(L_nL_n^T)}{n}\Big\}\wedge T
\end{align}
with $T\in(0,\infty]$. For $L_n=I_n$ the stopping rule in \eqref{EqDPPreSmooth} coincides with $\DP$ from \eqref{EqDP}, while for $L_n=\tilde g_T^{1/2}(K_n)K_n^{1/2}$ with regularizer $\tilde{g}$ it coincides with $\SDP$ from \eqref{EqSDP}. 

Moreover, the stopping rule \eqref{EqDPPreSmooth} can be interpreted as applying the classical discrepancy principle to the smoothed data $\AY$ and the class of estimators $K_ng_t(K_n)\AY=S_ng_t(\Sigma_n)S_n^*\AY$ where spectral regularization is applied to the smoothed data.
\begin{definition}\label{gDffectiveDimSmoothed}
For every $t\geq 0$ and every regularizer $g$, we define the smoothed $g$-effective dimension by
\begin{align*}
    \tildengeffdim(t)=\operatorname{tr}(L_nL_n^TK_n g_t(K_n)).
\end{align*}
\end{definition}
\begin{lemma}[Basic inequality]\label{EqBasicIneqDP}
Assumption~\eqref{BdF} yields, for every $t\geq 0$,
\begin{align*}
& \|r_t(K_n)\Af\|_n^2-2\frac{\sigma^2}{n}\tildengeffdim(t) \\
&\leq \mathbf{E}_{\epsilon}\|r_t(K_n)\AY\|_n^2-\frac{\sigma^2}{n}\operatorname{tr}(L_nL_n^T) \leq \|r_t(K_n)\Af\|_n^2-\frac{\sigma^2}{n}\tildengeffdim(t).
\end{align*}
\end{lemma}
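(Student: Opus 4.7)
The plan is to expand the expected squared residual using the decomposition $\AY = \Af + \Ae$ (with $\Ae = L_n\boldsymbol{\epsilon}$ mean-zero conditional on the design and with covariance $\sigma^2 L_n L_n^T$), reduce everything to a single trace computation, and then apply \eqref{BdF} via a pair of operator inequalities. The heart of the argument is rewriting
\[
\mathbf{E}_\epsilon \|r_t(K_n)\AY\|_n^2 - \frac{\sigma^2}{n}\operatorname{tr}(L_nL_n^T) = \|r_t(K_n)\Af\|_n^2 + \frac{\sigma^2}{n}\bigl(\operatorname{tr}(r_t(K_n)^2 L_n L_n^T)-\operatorname{tr}(L_nL_n^T)\bigr),
\]
so the two bounds in the lemma reduce to showing
\[
-2\,\tildengeffdim(t)\ \leq\ \operatorname{tr}(r_t(K_n)^2 L_n L_n^T)-\operatorname{tr}(L_nL_n^T)\ \leq\ -\tildengeffdim(t).
\]

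First I would expand $\|r_t(K_n)\AY\|_n^2 = \|r_t(K_n)\Af\|_n^2 + 2\langle r_t(K_n)\Af,r_t(K_n)\Ae\rangle_n + \|r_t(K_n)\Ae\|_n^2$. Taking $\mathbf{E}_\epsilon$, the cross term vanishes since $\mathbf{E}_\epsilon\Ae = 0$, and the quadratic term yields $\mathbf{E}_\epsilon\|r_t(K_n)\Ae\|_n^2 = \frac{1}{n}\mathbf{E}_\epsilon[\Ae^T r_t(K_n)^2 \Ae] = \frac{\sigma^2}{n}\operatorname{tr}(r_t(K_n)^2 L_n L_n^T)$, using that $r_t(K_n)$ is symmetric (as a function of the symmetric matrix $K_n$) and the standard quadratic-form trace identity.

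Next I would substitute $r_t(K_n) = I - g_t(K_n) K_n$, so that $r_t(K_n)^2 = I - 2\,g_t(K_n)K_n + (g_t(K_n)K_n)^2$. Writing $D:=L_nL_n^T$ (which is positive semidefinite) and using cyclic invariance of the trace, $\operatorname{tr}(g_t(K_n)K_n \cdot D) = \operatorname{tr}(D K_n g_t(K_n)) = \tildengeffdim(t)$ by Definition~\ref{gDffectiveDimSmoothed}. Hence
\[
\operatorname{tr}(r_t(K_n)^2 D) - \operatorname{tr}(D) = -2\,\tildengeffdim(t) + \operatorname{tr}\!\bigl((g_t(K_n)K_n)^2 D\bigr).
\]

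The decisive step is to bound the leftover term $\operatorname{tr}((g_t(K_n)K_n)^2 D)$. By \eqref{BdF} the operator $A:=g_t(K_n)K_n$ satisfies $0\preceq A \preceq I$, so $0\preceq A^2\preceq A$ (functional calculus on the commuting symmetric operator). Conjugating by $D^{1/2}$ preserves the order, so $0\preceq D^{1/2}A^2D^{1/2}\preceq D^{1/2}AD^{1/2}$, and taking traces yields $0\leq \operatorname{tr}(A^2 D)\leq \operatorname{tr}(AD)=\tildengeffdim(t)$. Plugging the two sides of this chain into the previous display gives exactly the claimed lower and upper bounds. The only care needed is this operator-monotonicity argument used to pass from an operator inequality to a trace inequality against the PSD weight $D=L_nL_n^T$; all other steps are just expansion and the definition of $\tildengeffdim(t)$.
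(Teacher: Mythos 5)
Your proof is correct and follows essentially the same route as the paper: expand the square, kill the cross term by $\mathbf{E}_\epsilon\Ae=0$, reduce the quadratic term to $\frac{\sigma^2}{n}\operatorname{tr}(r_t^2(K_n)L_nL_n^T)$, and then bound the leftover term $\operatorname{tr}((g_t(K_n)K_n)^2L_nL_n^T)$ between $0$ and $\tildengeffdim(t)$ using \eqref{BdF}. Your explicit operator-monotonicity justification of the trace inequality is a slightly more careful write-up of the step the paper dispatches in one line, but the argument is identical.
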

Since $g$ is a regularizer, the term
\[
\|r_t(K_n)\Af\|_n^2=\sum_{j=1}^nr_t^2(\hat\lambda_j)\langle \hat v_j,  \Af\rangle_n^2
\]
is continuous and non-increasing in $t\geq 0$, while the term
\begin{align*}
\frac{\sigma^2}{n} \tildengeffdim(t)=\frac{\sigma^2}{n}\operatorname{tr}(L_nL_n^Tg_t(K_n)K_n)   = \frac{\sigma^2}{n} \sum_{j=1}^n\| L_n^T\hat v_j\|_2^2\hat\lambda_jg_t(\hat\lambda_j)
\end{align*}
is continuous and non-decreasing in $t\geq 0$. Moreover, by Definition~\ref{Def}, the term $\|r_t(K_n)\Af\|_n^2$ converges to zero as $t\to +\infty$, while the term $\sigma^2n^{-1}\tildengeffdim(t)$ is equal to zero for $t=0$.
Hence, we can define the following balancing stopping rule
\begin{align}\label{DeftstarL}
    \Ltstar=\inf\Big\{t\geq 0:\|r_t(K_n)\Af\|_n^2=\frac{\sigma^2}{n}\tildengeffdim(t)\Big\}.
\end{align}
If such a $t$ does not exist, then we set $\tildetstar=\infty$. By the above properties, this can only happen if $\tildengeffdim(t)=0$ for every $t\geq 0$ meaning that we can set $\|r_{\tildetstar}(K_n)\Af\|_n^2=0$ and $\sigma^2n^{-1}\tildengeffdim(\tildetstar)=0$ in this case.

\begin{proof}[Proof of Lemma~\ref{EqBasicIneqDP}]
We have 
\begin{align*}
\|r_t(K_n)\AY\|_n^2=\|r_t(K_n)\Af+r_t(K_n)\Ae\|_n^2
\end{align*}
and thus, using $\Ae=L_n\boldsymbol{\epsilon}$ and $r_t(K_n)=I-K_ng_t(K_n)$,
\begin{align*}
\mathbf{E}_{\epsilon}\|r_t(K_n)\AY\|_n^2 &=\|r_t(K_n)\Af\|_n^2+\frac{\sigma^2}{n}\operatorname{tr}(L_nL_n^T(I-K_ng_t(K_n))^2)\\
&=\|r_t(K_n)\Af\|_n^2+\frac{\sigma^2}{n}\operatorname{tr}(L_nL_n^T)\\
&-2\frac{\sigma^2}{n}\operatorname{tr}(L_nL_n^Tg_t(K_n)K_n)+\frac{\sigma^2}{n}\operatorname{tr}(L_nL_n^Tg_t^2(K_n)K_n^2).
\end{align*}
The lower bound follows from the fact that the last term is non-negative, while the upper bound follows from \eqref{BdF}.
\end{proof}

\subsection{Deviation inequality for the variance and bias parts}\label{sec.deviation.inequalities}
The results of this section are improvements over previous results from \citep{MR3859376} and \citep{MR3829522}, providing more precise sub-Gaussian and sub-exponential factors. Surprisingly, these improvements result from different arguments based on a more basic comparison between the discrepancy principle and its reference balancing stopping rule $\tildetstar$.

\subsubsection{Deviation inequality for the variance part}
Our first main result is a deviation inequality for $\tau$ from \eqref{EqDPPreSmooth}.
\begin{proposition}\label{prop.deviation.SDP}
Suppose that Assumptions \eqref{SubGN} and \eqref{BdF} hold.
Then, for every $t> \Ltstar$, we have
\begin{align*}
    \mathbf{P}_\epsilon(\tau>t)\leq 2\exp\Big(-c\Big( y \wedge \frac{y^2}{\tr\paren{ L_nL_n^\top}}\Big)\Big),\qquad y=\tildengeffdim(t) - \tildengeffdim(\Ltstar),
\end{align*}
where $c>0$ is a constant depending only on $A$.
\end{proposition}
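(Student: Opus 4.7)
}

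The plan is to mimic the sketch given in Section~\ref{sec.scetch.proof} for $\tauDP$, but applied to the smoothed data $\AY=L_n\Ybf$ and the unified stopping rule from \eqref{EqDPPreSmooth}. First, by the definition of $\tau$ and since $t > \tildetstar$, monotonicity (continuity and non-increase of $s\mapsto \|r_s(K_n)\Af\|_n^2$, continuity and non-decrease of $s\mapsto \sigma^2 n^{-1}\tildengeffdim(s)$) gives
\begin{equation*}
\mathbf{P}_\epsilon(\tau>t)\leq \mathbf{P}_\epsilon\!\left(\|r_t(K_n)\AY\|_n^2>\frac{\sigma^2\operatorname{tr}(L_nL_n^T)}{n}\right).
\end{equation*}
Centering by $\mathbf{E}_\epsilon\|r_t(K_n)\AY\|_n^2$ and invoking the upper bound of Lemma~\ref{EqBasicIneqDP}, this is further bounded by
\begin{equation*}
\mathbf{P}_\epsilon\!\left(\|r_t(K_n)\AY\|_n^2-\mathbf{E}_\epsilon\|r_t(K_n)\AY\|_n^2>\frac{\sigma^2}{n}\tildengeffdim(t)-\|r_t(K_n)\Af\|_n^2\right).
\end{equation*}
Since $t > \tildetstar$, the monotonicities used to define $\Ltstar$ in \eqref{DeftstarL} yield $\|r_t(K_n)\Af\|_n^2 \le \sigma^2 n^{-1}\tildengeffdim(\tildetstar)$, so the r.h.s. of the deviation threshold is at least $\sigma^2 y/n$ with $y=\tildengeffdim(t)-\tildengeffdim(\tildetstar)$.

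Next, I would expand $\AY=\Af+\Ae$ with $\Ae=L_n\boldsymbol{\epsilon}$ and split the centered empirical risk as
\begin{equation*}
\|r_t(K_n)\AY\|_n^2-\mathbf{E}_\epsilon\|r_t(K_n)\AY\|_n^2 = \frac{1}{n}\bigl(\boldsymbol{\epsilon}^T M\boldsymbol{\epsilon}-\mathbf{E}_\epsilon[\boldsymbol{\epsilon}^T M\boldsymbol{\epsilon}]\bigr) + \frac{2}{n}\langle w,\boldsymbol{\epsilon}\rangle,
\end{equation*}
where $M = L_n^T r_t^2(K_n) L_n$ and $w = L_n^T r_t^2(K_n)\Af$. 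The Hanson--Wright type inequality of Lemma~\ref{LemResidualConc} (combined with \eqref{SubGN}) then controls both pieces simultaneously in terms of $\|M\|_{\operatorname{op}}$, $\|M\|_{\operatorname{HS}}$, and $\|w\|_2$. Using $\|L_n\|_{\operatorname{op}}\le 1$ and $\|r_t(K_n)\|_{\operatorname{op}}\le 1$ from \eqref{BdF}, I obtain $\|M\|_{\operatorname{op}}\le 1$, $\|M\|_{\operatorname{HS}}^2\le \operatorname{tr}(L_nL_n^T)$ (via $\operatorname{tr}(M^2)\le\|M\|_{\operatorname{op}}\operatorname{tr}(M)\le \operatorname{tr}(L_nL_n^T)$), and the linear-term contribution is absorbed in the sub-Gaussian factor after noting $\|w\|_2^2\le \operatorname{tr}(L_nL_n^T)\cdot \|r_{t}(K_n)\Af\|_n^2 \lesssim \sigma^2\operatorname{tr}(L_nL_n^T)/n$ on the event $t>\tildetstar$ (this last bound uses again $\|r_t(K_n)\Af\|_n^2\le\sigma^2\tildengeffdim(\tildetstar)/n\le \sigma^2\operatorname{tr}(L_nL_n^T)/n$).

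Plugging these norm bounds and the threshold $\sigma^2 y/n$ into the Hanson--Wright inequality yields exactly the announced sub-Gaussian/sub-exponential bound
\begin{equation*}
2\exp\!\left(-c\bigl(y\wedge y^2/\operatorname{tr}(L_nL_n^T)\bigr)\right),
\end{equation*}
with constant $c$ depending only on the sub-Gaussian parameter $A$. The main obstacle I anticipate is the careful bookkeeping to make sure that, after centering and after bounding the linear stochastic term, one still obtains the exponent in terms of $y$ (and not, say, $y/\sigma^2$ with an extra $\sigma^{-2}$ prefactor), and that the Hilbert--Schmidt norm really produces $\operatorname{tr}(L_nL_n^T)$ rather than a larger quantity; this is precisely where $\|L_n\|_{\operatorname{op}}\le 1$ together with \eqref{BdF} must be exploited. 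Once these technical points are handled, the rest of the argument is purely deterministic and follows directly from Lemma~\ref{EqBasicIneqDP} and the definition of $\tildetstar$.
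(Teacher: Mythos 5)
Your proposal follows essentially the same route as the paper: reduce $\mathbf{P}_\epsilon(\tau>t)$ to a deviation of the centered smoothed empirical risk via Lemma~\ref{EqBasicIneqDP} and the definition of $\tildetstar$, show the deviation threshold is at least $\sigma^2 y/n$, and conclude with the Hanson--Wright/Hoeffding bounds packaged in Lemma~\ref{LemResidualConc} together with $\|r_t(K_n)\Af\|_n^2\leq \sigma^2 n^{-1}\tildengeffdim(t)\leq\sigma^2 n^{-1}\operatorname{tr}(L_nL_n^T)$. The only slip is in your bound for the linear term: the correct chain is $\|w\|_2^2\leq n\|r_t(K_n)\Af\|_n^2\leq \sigma^2\operatorname{tr}(L_nL_n^T)$ (not $\operatorname{tr}(L_nL_n^T)\cdot\|r_t(K_n)\Af\|_n^2\lesssim \sigma^2\operatorname{tr}(L_nL_n^T)/n$), and this still yields exactly the sub-Gaussian factor $\exp(-cy^2/\operatorname{tr}(L_nL_n^T))$.
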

\begin{proof}[Proof of Proposition~\ref{prop.deviation.SDP}]
Inserting the definition of the discrepancy principle in \eqref{EqDPPreSmooth}, we have
\begin{align}
\mathbf{P}_\epsilon(\tau> t)\leq\mathbf{P}_\epsilon\Big(\|r_t(K_n)\AY\|_n^2> \frac{\sigma^2}{n}\operatorname{tr}(L_nL_n^T)\Big).\label{ProofConcIneqVar}
\end{align}
By Lemma \ref{EqBasicIneqDP}, we have
\begin{align*}
    &\frac{\sigma^2}{n}\operatorname{tr}(L_nL_n^T)-\mathbf{E}_{\epsilon}\|r_t(K_n)\AY\|_n^2\geq \frac{\sigma^2}{n}\tildengeffdim(t)-\|r_t(K_n)\Af\|_n^2.
\end{align*}
Since $t>\Ltstar$ implies that
\[
\|r_t(K_n)\Af\|_n^2\leq \|r_{\Ltstar}(K_n)\Af\|_n^2= \frac{\sigma^2}{n}\tildengeffdim(\Ltstar),
\]
we arrive at
\begin{align*}
&\frac{\sigma^2}{n}\operatorname{tr}(L_nL_n^T)-\mathbf{E}_\epsilon\|r_t(K_n)\AY\|_n^2\geq  \frac{\sigma^2}{n}\tildengeffdim(t)-\frac{\sigma^2}{n}\tildengeffdim(\Ltstar) = \frac{\sigma^2}{n}y.
\end{align*}
Inserting this into \eqref{ProofConcIneqVar}, we get
\begin{align*}
\mathbf{P}_\epsilon(\tau> t)\leq \mathbf{P}_\epsilon\Big(\|r_t(K_n)\AY\|_n^2-\mathbf{E}_\epsilon\|r_t(K_n)\AY\|_n^2> \frac{\sigma^2}{n}y\Big).
\end{align*}
Applying Lemma \ref{LemResidualConc}, using also that $t> \Ltstar$ and \eqref{BdF} imply $\|r_t(K_n)\Af\|_n^2\leq \sigma^2n^{-1}\tildengeffdim(t) \leq \sigma^2n^{-1} \operatorname{tr}(L_nL_n^T)$, the claim follows.
\end{proof}

Our next main result is a deviation inequality
for the variance part.
\begin{proposition}\label{ConcIneqVar} Suppose that \eqref{SubGN} holds true. Then, for every $y>0$, we have
\begin{align*}
      &\mathbf{P}_\epsilon\Big(\|K_n^{1/2}g_{\tau}^{1/2}(K_n)\Ae\|^2_n>\frac{\sigma^2}{n}\tildengeffdim(\Ltstar)+\frac{\sigma^2}{n}2y\Big)\Big)\nonumber\\
      &\leq 3\exp\Big(-c\Big( y \wedge \frac{y^2}{\tr\paren{ L_nL_n^\top}}\Big)\Big)
\end{align*}
with constant $c>0$ depending only on $A$.
\end{proposition}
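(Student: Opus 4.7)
The plan is to reduce this deviation inequality to a combination of the already-established tail bound on $\tau$ (Proposition~\ref{prop.deviation.SDP}) and a deterministic Hanson--Wright estimate applied after ``freezing'' $\tau$ at a suitable level. The bridge between the two is the L\"owner-order monotonicity of $t\mapsto K_ng_t(K_n)$, which follows from Definition~\ref{Def} by spectral calculus on the symmetric matrix $K_n$.

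First I would introduce a deterministic threshold $t_y\geq \Ltstar$. Since $\lambda g_t(\lambda)$ is continuous and non-decreasing in $t$, the map $t\mapsto\tildengeffdim(t)=\tr(L_nL_n^TK_ng_t(K_n))$ is continuous and non-decreasing with limit $\tr(L_nL_n^T)$ as $t\to\infty$. Whenever $\tildengeffdim(\Ltstar)+y\leq \tr(L_nL_n^T)$, the intermediate value theorem supplies some $t_y\geq\Ltstar$ with $\tildengeffdim(t_y)=\tildengeffdim(\Ltstar)+y$. In the complementary regime, $y\geq \tr(L_nL_n^T)-\tildengeffdim(\Ltstar)$, so $y\wedge y^2/\tr(L_nL_n^T)\asymp y$, and the claimed bound $3e^{-cy}$ can be obtained directly from the crude pointwise estimate $\|K_n^{1/2}g_\tau^{1/2}(K_n)\Ae\|_n^2\leq \|\Ae\|_n^2\leq n^{-1}\|\epsilon\|_2^2$ combined with sub-exponential concentration of $\|\epsilon\|_2^2$ under \eqref{SubGN}.

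In the main regime I would split the event $\mathcal{E}$ of interest along whether $\tau$ exceeds $t_y$:
\begin{align*}
\mathbf{P}_\epsilon(\mathcal{E})\leq \mathbf{P}_\epsilon(\tau>t_y)+\mathbf{P}_\epsilon\bigl(\mathcal{E}\cap\{\tau\leq t_y\}\bigr).
\end{align*}
Proposition~\ref{prop.deviation.SDP} bounds the first summand by $2\exp(-c(y\wedge y^2/\tr(L_nL_n^T)))$ directly, since $\tildengeffdim(t_y)-\tildengeffdim(\Ltstar)=y$. On $\{\tau\leq t_y\}$, the L\"owner inequality $K_ng_\tau(K_n)\preceq K_ng_{t_y}(K_n)$ gives $\|K_n^{1/2}g_\tau^{1/2}(K_n)\Ae\|_n^2\leq n^{-1}\epsilon^T A\,\epsilon$ with the deterministic matrix $A=L_n^T K_n g_{t_y}(K_n) L_n$. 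Its mean is $\mathbf{E}_\epsilon[n^{-1}\epsilon^T A\epsilon]=\sigma^2 n^{-1}\tr(A)=\sigma^2 n^{-1}(\tildengeffdim(\Ltstar)+y)$, so the second summand reduces to an upper-tail deviation of $n^{-1}\epsilon^T A\epsilon$ by at least $\sigma^2 y/n$ above its mean.

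The last step is to apply the Hanson--Wright inequality of Lemma~\ref{LemResidualConc}. The relevant norms satisfy $\|A\|_{\operatorname{op}}\leq \|L_n\|_{\operatorname{op}}^2\cdot\|K_ng_{t_y}(K_n)\|_{\operatorname{op}}\leq 1$ thanks to $\|L_n\|_{\operatorname{op}}\leq 1$ together with \eqref{BdF}, and $\|A\|_{\operatorname{HS}}^2\leq \|A\|_{\operatorname{op}}\tr(A)\leq \tildengeffdim(t_y)\leq \tr(L_nL_n^T)$. Plugged into the one-sided sub-Gaussian/sub-exponential tail of Lemma~\ref{LemResidualConc}, this yields a bound $\exp(-c(y\wedge y^2/\tr(L_nL_n^T)))$ for the second summand, and summing the two contributions produces the announced prefactor $3$. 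The main obstacle is purely bookkeeping: calibrating the constants so that the sub-Gaussian and sub-exponential regimes of the two terms align and treating the edge case where $\tildengeffdim$ saturates before $\tildengeffdim(\Ltstar)+y$ can be reached; both are benign once the two-step decomposition above is in place.
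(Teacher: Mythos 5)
Your main-regime argument is essentially the paper's proof: define $t_y>\Ltstar$ by $\tildengeffdim(t_y)=\tildengeffdim(\Ltstar)+y$, split on $\{\tau\leq t_y\}$ versus $\{\tau>t_y\}$, control the second event by Proposition~\ref{prop.deviation.SDP}, and on the first event use monotonicity of $t\mapsto K_ng_t(K_n)$ to freeze the quadratic form at $t_y$ and apply Hanson--Wright to $n^{-1}\epsilon^TL_n^TK_ng_{t_y}(K_n)L_n\epsilon$. One bookkeeping slip: the Hanson--Wright estimate you need is Lemma~\ref{LemVarHansonWright} (the bound for $\|K_n^{1/2}g_t^{1/2}(K_n)\Ae\|_n^2$ centered at $\tildengeffdim(t)$), not Lemma~\ref{LemResidualConc}, which concerns the residual $\|r_t(K_n)\AY\|_n^2$ and carries an extra linear term; the operator/HS-norm computation you sketch is, however, exactly the content of the former lemma, so this is only a citation error.

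The genuine gap is in your edge case. When $\tildengeffdim(\Ltstar)+y>\tr(L_nL_n^T)$ you propose the crude bound $\|K_n^{1/2}g_\tau^{1/2}(K_n)\Ae\|_n^2\leq n^{-1}\|\boldsymbol{\epsilon}\|_2^2$ and concentration of $\|\boldsymbol{\epsilon}\|_2^2$. This discards $L_n$ entirely: $n^{-1}\|\boldsymbol{\epsilon}\|_2^2$ has mean $\sigma^2$, whereas the threshold you must beat is $\frac{\sigma^2}{n}(\tildengeffdim(\Ltstar)+2y)$, which in the smoothed setting ($L_n=\tilde g_T^{1/2}(K_n)K_n^{1/2}$, so $\tr(L_nL_n^T)=\mathcal{N}_n^{\tilde g}(T)\ll n$) can sit far below $\sigma^2$; the probability is then close to one while the claimed right-hand side is small, so the step fails precisely in the regime the proposition is designed for. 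The fix is to keep $L_n$ in the quadratic form: bound by $\|\Ae\|_n^2=n^{-1}\epsilon^TL_n^TL_n\epsilon$, whose mean is $\sigma^2n^{-1}\tr(L_nL_n^T)\leq\sigma^2n^{-1}(\tildengeffdim(\Ltstar)+y)$, and apply the same Hanson--Wright bound with $\|L_n^TL_n\|_{\operatorname{op}}\leq 1$ and $\|L_n^TL_n\|_{\operatorname{HS}}^2\leq\tr(L_nL_n^T)$. The paper does the equivalent thing by exploiting $\tau\leq T$ and reducing to the quadratic form at $t=T$ via Lemma~\ref{LemVarHansonWright}; note also that the natural saturation level is $\tildengeffdim(T)$ rather than $\tr(L_nL_n^T)$, since $\tau$ never exceeds $T$.
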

\begin{proof}[Proof of Proposition \ref{ConcIneqVar}]  
By Definition~\ref{Def}, the term $\|K_n^{1/2}g_t^{1/2}(K_n)\Ae\|^2_n$ is non-decreasing in $t\geq 0$.
Now, if 
\begin{align}\label{EqSimpleCase}
    \tildengeffdim(\Ltstar)+y>\tildengeffdim(T),
\end{align}
then
\begin{align*}
    &\mathbf{P}_\epsilon\Big(\|K_n^{1/2}g_{\tau}^{1/2}(K_n)\Ae\|^2_n>\frac{\sigma^2}{n}\tildengeffdim(\Ltstar)+2\frac{\sigma^2}{n}y\Big)\\
    &\leq \mathbf{P}_\epsilon\Big(\|K_n^{1/2}g_{T}^{1/2}(K_n)\Ae\|^2_n>\frac{\sigma^2}{n}\tildengeffdim(T)+\frac{\sigma^2}{n}y\Big),
\end{align*}
and the claim follows from Lemma \ref{LemVarHansonWright}. On the other hand, if \eqref{EqSimpleCase} does not hold, then we can define $\Ltstar<t\leq T$ by 
\begin{align}\label{EqDefy}
\tildengeffdim(t)=\tildengeffdim(\Ltstar)+y.
\end{align}
In this case we have
\begin{align*}
   &\mathbf{P}_\epsilon\Big(\|K_n^{1/2}g_{\tau}^{1/2}(K_n)\Ae\|^2_n>\frac{\sigma^2}{n}\tildengeffdim(\Ltstar)+2\frac{\sigma^2}{n}y\Big)\\
   &\leq \mathbf{P}_\epsilon\Big(\{\tau\leq t\}\cap \Big\{\|K_n^{1/2}g_{\tau}^{1/2}(K_n)\Ae\|^2_n>\frac{\sigma^2}{n}\tildengeffdim(\Ltstar)+2\frac{\sigma^2}{n}y\Big\}\Big)+\mathbf{P}_\epsilon(\tau> t)\\
   &\leq \mathbf{P}_\epsilon\Big(\|K_n^{1/2}g_{t}^{1/2}(K_n)\Ae\|^2_n>\frac{\sigma^2}{n}\tildengeffdim(t)+\frac{\sigma^2}{n}y\Big)+\mathbf{P}_\epsilon(\tau> t),
\end{align*}
and the claim follows from applying Lemma \ref{LemVarHansonWright} to the second last term and Proposition \ref{prop.deviation.SDP} to the last term, using that $t>\tildetstar$ and $y=\tildengeffdim(t)-\tildengeffdim(\Ltstar)$.
\end{proof}

\subsubsection{Deviation inequality for the bias part}
\begin{proposition}\label{ConcIneqBias} Suppose that Assumptions~\eqref{SubGN} and~\eqref{BdK} hold true. Then, for every $y>0$ such that $2\|r_{\Ltstar }(K_n)\Af\|_n^2+\sigma^2n^{-1}y>\|r_{T}(K_n)\Af\|_n^2$, we have
\begin{align}\label{EqConcIneqBias}
&\mathbf{P}_\epsilon\Big(\|r_\tau(K_n)\Af\|_n^2> 2\|r_{\Ltstar}(K_n)\Af\|_n^2+\frac{\sigma^2}{n}y\Big)\nonumber\\
&\leq 2\exp\Big(-c\Big(\frac{y^2}{\operatorname{tr}(L_nL_n^T)}\wedge y\Big)\Big)
\end{align}
with constant $c>0$ depending only on $A$.
\end{proposition}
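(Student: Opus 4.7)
The strategy parallels the sketch of Proposition~\ref{ConcIneqBiasDP} in Section~\ref{sec.preliminary.results}, but carried out in the smoothed framework of Section~\ref{preliminary.results}. First, assuming $\kappa:=2\|r_{\Ltstar}(K_n)\Af\|_n^2+\sigma^2 y/n\leq \|\Af\|_n^2$ (otherwise the event in \eqref{EqConcIneqBias} is vacuous), the continuity and monotonicity of the bias $s\mapsto\|r_s(K_n)\Af\|_n^2$ granted by Definition~\ref{Def}, combined with the hypothesis $\kappa>\|r_T(K_n)\Af\|_n^2$, yield some $t\in[0,T)$ with $\|r_t(K_n)\Af\|_n^2=\kappa$. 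Because $\kappa\geq\|r_{\Ltstar}(K_n)\Af\|_n^2$ and the bias is non-increasing, one has $t\leq\Ltstar$ and thus $\tildengeffdim(t)\leq\tildengeffdim(\Ltstar)$.

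Second, monotonicity of both $s\mapsto\|r_s(K_n)\Af\|_n^2$ and $s\mapsto\|r_s(K_n)\AY\|_n^2$ (Definition~\ref{Def}), together with $t<T$ and the definition of $\tau$ in \eqref{EqDPPreSmooth}, produce the chain of inclusions
\[
\{\|r_\tau(K_n)\Af\|_n^2>\kappa\}\;\subseteq\;\{\tau<t\}\;\subseteq\;\Big\{\|r_t(K_n)\AY\|_n^2\leq\frac{\sigma^2}{n}\operatorname{tr}(L_nL_n^T)\Big\}.
\]
Subtracting $\mathbf{E}_\epsilon\|r_t(K_n)\AY\|_n^2$ from both sides of the last inequality and invoking the lower bound of Lemma~\ref{EqBasicIneqDP}, the defining identity $\sigma^2\tildengeffdim(\Ltstar)/n=\|r_{\Ltstar}(K_n)\Af\|_n^2$ and $\tildengeffdim(t)\leq\tildengeffdim(\Ltstar)$, a short computation shows
\[
\frac{\sigma^2}{n}\operatorname{tr}(L_nL_n^T)-\mathbf{E}_\epsilon\|r_t(K_n)\AY\|_n^2\;\leq\;\frac{2\sigma^2}{n}\tildengeffdim(t)-\|r_t(K_n)\Af\|_n^2\;\leq\;-\frac{\sigma^2}{n}y,
\]
so that the probability in \eqref{EqConcIneqBias} is bounded by $\mathbf{P}_\epsilon(\|r_t(K_n)\AY\|_n^2-\mathbf{E}_\epsilon\|r_t(K_n)\AY\|_n^2\leq -\sigma^2 y/n)$.

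The third step applies the Hanson--Wright-type concentration of Lemma~\ref{LemResidualConc} to this centred residual, which splits into a linear part $2\langle r_t(K_n)\Af,r_t(K_n)\Ae\rangle_n$ in $\boldsymbol{\epsilon}$ and a centred quadratic part associated with the matrix $M=n^{-1}L_n^T r_t(K_n)^2 L_n$. The assumptions $\|L_n\|_{\operatorname{op}}\leq 1$, $\|r_t(K_n)\|_{\operatorname{op}}\leq 1$ (from \eqref{BdF}) and $\operatorname{tr}((L_nL_n^T)^2)\leq \operatorname{tr}(L_nL_n^T)$ entail $\|M\|_{\operatorname{op}}\leq 1/n$ and $\|M\|_{\operatorname{HS}}^2\leq n^{-2}\operatorname{tr}(L_nL_n^T)$, while $\kappa\leq 2\sigma^2\operatorname{tr}(L_nL_n^T)/n+\sigma^2 y/n$ (using $\tildengeffdim(\Ltstar)\leq\operatorname{tr}(L_nL_n^T)$) controls the bias factor in the linear term. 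Combining these ingredients produces the announced bound $2\exp(-c(y^2/\operatorname{tr}(L_nL_n^T)\wedge y))$. The main obstacle is this final bookkeeping: one must check that the $y$-dependent portion of $\kappa$ entering the linear contribution is absorbed without weakening either the subgaussian exponent $y^2/\operatorname{tr}(L_nL_n^T)$ or the subexponential exponent $y$, so that only a constant depending on the subgaussian parameter $A$ from Assumption~\eqref{SubGN} appears in the final bound.
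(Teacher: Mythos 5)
Your proposal is correct and follows essentially the same route as the paper's proof: define $t$ by $\|r_t(K_n)\Af\|_n^2=2\|r_{\Ltstar}(K_n)\Af\|_n^2+\sigma^2y/n$ (handling the trivial boundary cases first), pass from $\{\tau<t\}$ to a lower-deviation event for the smoothed empirical risk via the lower bound of Lemma~\ref{EqBasicIneqDP} and $\tildengeffdim(t)\leq\tildengeffdim(\Ltstar)$, and conclude with Lemma~\ref{LemResidualConc} using $\|r_t(K_n)\Af\|_n^2\leq 2\sigma^2\operatorname{tr}(L_nL_n^T)/n+\sigma^2y/n$. The final ``bookkeeping'' you flag does indeed work out, since the $y$-dependent part of that bias bound only produces the sub-exponential exponent $y$ already present in the claim.
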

\begin{proof}[Proof of Proposition \ref{ConcIneqBias}]
From $2\|r_{\Ltstar}(K_n)\Af\|_n^2+\sigma^2n^{-1}y>\|r_{T}(K_n)\Af\|_n^2$ it follows that, under the event considered in \eqref{EqConcIneqBias}, the stopping rule $\tau$ has to be smaller than $T$. This means that in the definition of $\tau$ in \eqref{EqDPPreSmooth}, we can ignore the minimum with $T$ in what follows. 

If $\|r_0(K_n)\Af\|_n^2<2\|r_{\Ltstar}(K_n)\Af\|_n^2+\sigma^2n^{-1}y$, then the claim is clear because the probability on the left-hand side of \eqref{EqConcIneqBias} is equal to zero. Otherwise, we define $0\leq t< \Ltstar$ by 
\begin{align*}
2\|r_{\Ltstar}(K_n)\Af\|_n^2+\frac{\sigma^2}{n}y=\|r_t(K_n)\Af\|_n^2,
\end{align*}
leading to
\begin{align}
&\mathbf{P}_\epsilon\Big(\|r_\tau(K_n)\Af\|_n^2> 2\|r_{\Ltstar}(K_n)\Af\|_n^2+\frac{\sigma^2}{n}y\Big)\nonumber\\
&\leq \mathbf{P}_\epsilon(\tau < t)\leq\mathbf{P}_\epsilon\Big(\|r_t(K_n)\AY\|_n^2\leq \frac{\sigma^2}{n}\operatorname{tr}(L_nL_n^T)\Big).\label{ProofConcIneqBias}
\end{align}
By Lemma \ref{EqBasicIneqDP}, we have
\begin{align*}
    &\frac{\sigma^2}{n}\operatorname{tr}(L_nL_n^T)-\mathbf{E}_{\epsilon}\|r_t(K_n)\AY\|_n^2\leq 2\frac{\sigma^2}{n}\tildengeffdim(t)-\|r_t(K_n)\Af\|_n^2.
\end{align*}
Since $t< \Ltstar$, \eqref{DeftstarL} implies 
\begin{align*}
2\frac{\sigma^2}{n}\tildengeffdim(t)&\leq 2\frac{\sigma^2}{n}\tildengeffdim(\Ltstar)=2\|r_{\Ltstar}(K_n)\Af\|_n^2.
\end{align*}
Thus we get
\begin{align*}
&\frac{\sigma^2}{n}\operatorname{tr}(L_nL_n^T)-\mathbf{E}_\epsilon\|r_t(K_n)\AY\|_n^2\leq 2\|r_{\Ltstar}(K_n)\Af\|_n^2-\|r_t(K_n)\Af\|_n^2=-\frac{\sigma^2}{n}y.
\end{align*}
Inserting this into \eqref{ProofConcIneqBias}, we get
\begin{align*}
\mathbf{P}_\epsilon\Big(\|r_\tau(K_n)\Af\|_n^2> \frac{\sigma^2}{n}y\Big)\leq \mathbf{P}_\epsilon\Big(\|r_t(K_n)\AY\|_n^2-\mathbf{E}_\epsilon\|r_t(K_n)\AY\|_n^2\leq  -\frac{\sigma^2}{n}y\Big),
\end{align*}
Using that 
\begin{align*}
    \|r_t(K_n)\Af\|_n^2&=2\|r_{\Ltstar}(K_n)\Af\|_n^2+\frac{\sigma^2}{n}y\\
    &=2\frac{\sigma^2}{n}\tildengeffdim(\Ltstar)+\frac{\sigma^2}{n}y\leq 2\frac{\sigma^2}{n}\operatorname{tr}(L_nL_n^T)+\frac{\sigma^2}{n}y,
\end{align*}
the claim now follows from Lemma \ref{LemResidualConc}.
\end{proof}

\subsection{Proofs of oracle inequalities (fixed-design)}\label{sec.proof.oracle.FD}
The present section gathers proofs of main oracle inequalities established in the fixed-design setting. They mainly follow from the results from Section~\ref{sec.deviation.inequalities}. In each of these proofs, notations are used according to the context where the theorem has been stated.
\begin{proof}[Proof of Proposition~\ref{ThmCondDesignProxy}]
The proof follows from Sections~\ref{preliminary.results} and~\ref{sec.deviation.inequalities} applied with $L_n=I_n$, in which case $\DP$ coincides with $\tau$ from \eqref{EqDPPreSmooth} and $\tstar$ coincides with $\tildetstar$ from \eqref{DeftstarL}.

By \eqref{BdF} and using that $(a+b)^2\leq 2a^2+2b^2$, we have
\begin{align}
\|\mathbf{f}-\hat{\mathbf{f}}^{(\DP)}\|_n^2 & \leq 2\|r_{\DP}(K_n)\mathbf{f}\|_n^2
+2\|K_ng_{\DP}(K_n)\boldsymbol{\epsilon}\|_n^2\nonumber\\
&\leq 2\|r_{\DP}(K_n)\mathbf{f}\|_n^2
+2\|K_n^{1/2}g_{\DP}^{1/2}(K_n)\boldsymbol{\epsilon}\|_n^2.\label{EqThmCondDesignProxy1}
\end{align}
Proposition~\ref{ConcIneqVar} with $L_n=I_n$ yields that, for every $u>0$,
\begin{multline}
    \mathbf{P}_\epsilon\Big(\|K_n^{1/2}g^{1/2}_{\DP}(K_n)\boldsymbol{\epsilon}\|_n^2>\frac{\sigma^2}{n}\mathcal{N}_n^g(\tstar)+C\Big(\frac{\sigma^2\sqrt{u}}{\sqrt{n}}+\frac{\sigma^2u}{n}\Big)\Big)\leq 3e^{-u}. \label{EqThmCondDesignProxy3}
\end{multline}
On the other hand, from Proposition~\ref{ConcIneqBias} with $L_n=I_n$, it follows that 
\begin{align}
    &\mathbf{P}_\epsilon\Big(\|r_{\DP}(K_n)\mathbf{f}\|_n^2>2\|r_{\tstar\wedge T}(K_n)\mathbf{f}\|_n^2+C\Big(\frac{\sigma^2\sqrt{u}}{\sqrt{n}}+\frac{\sigma^2u}{n}\Big)\Big)\leq 2e^{-u}.\label{EqThmCondDesignProxy2}
\end{align}
By the definition of $\tildetstar$, we have 
\begin{align}
&\|r_{\tildetstar\wedge T}(K_n)\mathbf{f}\|_n^2+\frac{\sigma^2}{n}\mathcal{N}_n^g(\tstar)\leq 2\min_{0\leq t\leq T}\Big\{\|r_t(K_n)\mathbf{f}\|_n^2+\frac{\sigma^2}{n}\mathcal{N}_n^g(t)\Big\}\label{Eqtstarbound1}.
\end{align}
Using \eqref{EqThmCondDesignProxy1} and \eqref{Eqtstarbound1} combined with \eqref{EqThmCondDesignProxy3} and \eqref{EqThmCondDesignProxy2}, and the union bound, the claim now follows.
\end{proof}

\begin{proof}[Proof of Theorem \ref{ThmCondDesignOracle}]
The claim follows from inserting Lemma~\ref{LemProxyVariance} into Theorem \ref{ThmCondDesignOracle}.
\end{proof}

\begin{proof}[Proof of Theorem \ref{ThmCondDesignProxySmoothedNorm}]
The result follows from Sections~\ref{preliminary.results} and~\ref{sec.deviation.inequalities} applied with $L_n=\tilde{g}_T^{1/2}(K_n)K_n^{1/2}$, in which case $\SDP$ from~\eqref{EqSDP} coincides with $\tau$ from \eqref{EqDPPreSmooth}. 

A key remark is that, since the regularizer $\tilde{g}$ satisfies \eqref{LFL}, we have $\lambda g_T(\lambda)\leq (B\vee 1)b^{-1}\lambda\tilde{g}_T(\lambda)$. Thus $\SDP\leq T$ implies
\begin{align}
&\|\mathbf{f}-\hat{\mathbf{f}}^{(\SDP)}\|_n^2\nonumber\\
&\leq 2\|r_{\SDP}(K_n)\mathbf{f}\|_n^2+2\|K_ng_{\SDP}(K_n)\boldsymbol{\epsilon}\|_n^2\nonumber\\
&\leq 2\|r_{\SDP}(K_n)\mathbf{f}\|_n^2
+ 2 (B\vee 1)b^{-1}\|K_n^{1/2}g_{\SDP}^{1/2}(K_n)K_n^{1/2}\tilde{g}_T^{1/2}(K_n)\boldsymbol{\epsilon}\|_n^2\nonumber\\
&=2\|r_{\SDP}(K_n)\mathbf{f}\|_n^2 + 2(B\vee 1)b^{-1}\|K_n^{1/2}g_{\SDP}^{1/2}(K_n)\Ae\|_n^2,\label{EqThmCondDesignProxySmoothedNorm}
\end{align} 
where $\boldsymbol{\epsilon}$ has been replaced by $\Ae$ in the last inequality. Invoking the first claim of Lemma~\ref{LemChangeBias} we get
\begin{align}
&\|\mathbf{f}-\hat{\mathbf{f}}^{(\SDP)}\|_n^2\label{EqThmCondDesignProxySmoothedNorm1}\\&\leq C\Big(\|r_{\SDP}(K_n)\Af\|_n^2 + \|K_n^{1/2}g_{\SDP}^{1/2}(K_n)\Ae\|_n^2+\frac{1}{T^{2s+1}}+\frac{\|\Sigma_n-\Sigma\|_{\operatorname{op}}^{2\wedge 2s}}{T}\Big),\nonumber
\end{align} 
where the last term $CT^{-1}\|\Sigma_n-\Sigma\|_{\operatorname{op}}^{2\wedge 2s}$ can be dropped if $s\leq 1/2$.
On the one hand, Proposition~\ref{ConcIneqBias} with $L_n=\tilde{g}_T^{1/2}(K_n)K_n^{1/2}$ and Lemma  yields that
\begin{multline}
    \mathbf{P}_\epsilon\Big(\|r_{\SDP}(K_n)\Af\|_n^2>2\|r_{\tildetstar\wedge T}(K_n)\Af\|_n^2+C\frac{\sigma^2}{n}\Big(\sqrt{u\effdim^{\tilde g}_n(T)}+u\Big)\Big)\\\leq 2e^{-u},\quad u>0.\label{EqThmCondDesignProxySmoothedNorm2}
\end{multline}
On the other hand, from Proposition \ref{ConcIneqVar} with $L_n=\tilde{g}_T^{1/2}(K_n)K_n^{1/2}$, we get
\begin{multline}
    \mathbf{P}_\epsilon\Big(\|K_n^{1/2}g^{1/2}_{\SDP}(K_n)\Ae\|_n^2 >\frac{\sigma^2}{n} \tildengeffdim(\tildetstar)+C\frac{\sigma^2}{n}\Big(\sqrt{u\effdim^{\tilde g}_n(T)}+u\Big)\Big)\\\leq 3e^{-u},\quad u>0.\label{EqThmCondDesignProxySmoothedNorm3}
\end{multline}
The definition of $\tildetstar$ and \eqref{BdF} lead to
\begin{align*}
&\|r_{\tildetstar\wedge T}(K_n)\Af\|_n^2+\frac{\sigma^2}{n}\tildengeffdim(\tildetstar)\leq 2\min_{0\leq t\leq T}\Big\{\|r_t(K_n)\Af\|_n^2+\frac{\sigma^2}{n}\tildengeffdim(t)\Big\}.
\end{align*}
Now, using \eqref{BdF}, we have $ \tildengeffdim(t) \leq \geffdim_n(t)$ and $\|r_t(K_n)\Af\|_n^2 \leq \norm{r_t(K_n)\mathbf{f}}_n^2$.
Thus combining everything together yields
\begin{align}
\|r_{\tildetstar\wedge T}(K_n)\Af\|_n^2+\frac{\sigma^2}{n}\tildengeffdim(\tildetstar) 
    \leq 2\min_{0\leq t \leq T}\Big\{\|r_t(K_n)\mathbf{f}\|_n^2+\frac{\sigma^2}{n}\mathcal{N}^{ g}_n(t)\Big\}\label{Eqtstarbound2}.
\end{align}
Using \eqref{EqThmCondDesignProxySmoothedNorm1} and \eqref{Eqtstarbound2} combined with \eqref{EqThmCondDesignProxySmoothedNorm2} and \eqref{EqThmCondDesignProxySmoothedNorm3}, and the union bound, we get for every $u>0$
\begin{multline}\label{EqCondDesignProxySmoothedNorm}
\mathbf{P}_{\epsilon}\Big(\|\mathbf{f}-\hat {\mathbf{f}}^{(\tauSDP)}\|_n^2>C\Big(\min_{0< t\leq T}\Big\{\|r_t(K_n)\mathbf{f}\|_n^2+\frac{\sigma^2}{n}\mathcal{N}_n(t)\Big\}\\
+\frac{\sigma^2\sqrt{u\mathcal{N}_n^{\tilde{g}}(T)}} {n} +\frac{\sigma^2u}{n}+\frac{1}{T^{2s+1}}+\frac{\|\Sigma_n-\Sigma\|_{\operatorname{op}}^{2\wedge 2s}}{T}\Big)\Big)\leq 5e^{-u}
\end{multline}
The desired inequality now follows from inserting Lemmas \ref{LemEffDimGenDim} and \ref{LemProxyVariance}.
\end{proof}

\subsection{Key technical results}
In order to prove Propositions~\ref{ConcIneqVar} and~\ref{ConcIneqBias}, we need the following two concentration inequalities, namely Lemmas~\ref{LemResidualConc} and~\ref{LemVarHansonWright}.

\begin{lemma}\label{LemResidualConc} Suppose that Assumption \eqref{SubGN} holds. Then, for every $t\geq 0$ and every $y>0$, we have
\begin{align*}
&\mathbf{P}_\epsilon(\|r_t(K_n)\AY\|_n^2-\mathbf{E}_\epsilon\|r_t(K_n)\AY\|_n^2>  y)\\
&\leq \exp\Big(-c\Big(\frac{n^2y^2}{\subgauss^4\operatorname{tr}(L_nL_n^T)}\wedge\frac{ny}{\subgauss^2}\Big)\Big)+\exp\bigg(-\frac{cny^2}{\subgauss^2\|r_t(K_n)\Af\|_n^2}\bigg)
\end{align*}
and the same upper bound holds for $\mathbf{P}_\epsilon(\|r_t(K_n)\AY\|_n^2-\mathbf{E}_\epsilon\|r_t(K_n)\AY\|_n^2<-  y)$.
\end{lemma}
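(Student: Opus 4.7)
The plan is to decompose the deviation into a linear and a centered quadratic form in the sub-Gaussian vector $\boldsymbol{\epsilon}$, and then apply standard concentration tools (sub-Gaussian Hoeffding and Hanson--Wright) to each piece. Write $\AY=\Af+\Ae$ with $\Ae=L_n\boldsymbol{\epsilon}$. Since $\mathbf{E}_\epsilon\Ae=0$ and $r_t(K_n)$ is symmetric, expanding the square gives
\begin{align*}
\|r_t(K_n)\AY\|_n^2 - \mathbf{E}_\epsilon\|r_t(K_n)\AY\|_n^2
&= \underbrace{\tfrac{2}{n}\langle L_n^T r_t^2(K_n)\Af,\boldsymbol{\epsilon}\rangle_2}_{=:Z_1}\\
&\quad+\underbrace{\boldsymbol{\epsilon}^T M \boldsymbol{\epsilon}-\mathbf{E}_\epsilon \boldsymbol{\epsilon}^T M \boldsymbol{\epsilon}}_{=:Z_2},
\end{align*}
where $M=\tfrac{1}{n}L_n^T r_t^2(K_n) L_n$ is symmetric positive semidefinite. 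The event $\{Z_1+Z_2>y\}$ is contained in $\{Z_1>y/2\}\cup\{Z_2>y/2\}$, so it suffices to bound each piece separately; the factor $1/2$ can be absorbed in the constant $c$.

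For the linear piece $Z_1=\langle w,\boldsymbol{\epsilon}\rangle_2$ with $w=\tfrac{2}{n}L_n^T r_t^2(K_n)\Af$, I would use the standard sub-Gaussian Hoeffding bound under \eqref{SubGN}. Using $\|L_n\|_{\operatorname{op}}\leq 1$, $\|r_t(K_n)\|_{\operatorname{op}}\leq 1$ from \eqref{BdF}, and $\|a\|_2^2=n\|a\|_n^2$, I compute
\[
\|w\|_2^2\leq \tfrac{4}{n^2}\|L_n\|_{\operatorname{op}}^2\|r_t(K_n)\|_{\operatorname{op}}^2\|r_t(K_n)\Af\|_2^2\leq \tfrac{4}{n}\|r_t(K_n)\Af\|_n^2,
\]
so that the sub-Gaussian tail yields exactly the second term in the stated bound, namely $\exp(-cny^2/(\sigma^2\|r_t(K_n)\Af\|_n^2))$.

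For the quadratic piece $Z_2$, I would apply the Hanson--Wright inequality for sub-Gaussian vectors to bound
\[
\mathbf{P}_\epsilon(|Z_2|>y/2)\leq 2\exp\!\left(-c\min\!\left(\tfrac{y^2}{\sigma^4\|M\|_{\operatorname{HS}}^2},\,\tfrac{y}{\sigma^2\|M\|_{\operatorname{op}}}\right)\right).
\]
Using $\|M\|_{\operatorname{op}}\leq 1/n$ and, by cyclicity of the trace together with $r_t^2(K_n)\preceq I$,
\[
\|M\|_{\operatorname{HS}}^2=\tfrac{1}{n^2}\operatorname{tr}\!\bigl((L_nL_n^T r_t^2(K_n))^2\bigr)\leq \tfrac{1}{n^2}\operatorname{tr}\!\bigl((L_nL_n^T)^2\bigr)\leq \tfrac{1}{n^2}\operatorname{tr}(L_nL_n^T),
\]
the bound reduces to $\exp(-c(n^2y^2/(\sigma^4\operatorname{tr}(L_nL_n^T))\wedge ny/\sigma^2))$, matching the first term. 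Combining the two via the union bound (and adjusting $c$) produces the upper tail; the lower tail follows identically since both Hoeffding and Hanson--Wright are two-sided.

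The only mildly delicate step is checking that the Hanson--Wright inequality is available in the form we use: the statement in \eqref{SubGN} controls absolute moments conditionally on $X$, which implies the Orlicz $\psi_2$ norm bound $\|\epsilon_i\|_{\psi_2}\leq cA\sigma$ that standard references (e.g.\ Rudelson--Vershynin) require. Once this is in place, the rest is bookkeeping of constants and applying the union bound.
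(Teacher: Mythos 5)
Your proposal is correct and follows essentially the same route as the paper: the same decomposition into a linear cross term plus a centered quadratic form, with sub-Gaussian Hoeffding applied to the former and Hanson--Wright to the latter, and the same operator-norm and Hilbert--Schmidt bounds $\|L_n^Tr_t^2(K_n)L_n\|_{\operatorname{op}}\leq 1$ and $\|L_n^Tr_t^2(K_n)L_n\|_{\operatorname{HS}}^2\leq \operatorname{tr}(L_nL_n^T)$. Your explicit $y/2$ splitting and the closing remark on the $\psi_2$-norm equivalence are fine refinements of what the paper leaves implicit.
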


\begin{proof}[Proof of Lemma \ref{LemResidualConc}]
We have 
\begin{align*}
&\|r_t(K_n)\AY\|_n^2=\|r_t(K_n)\Af\|_n^2+\langle r_t(K_n)\Af,r_t(K_n)\Ae\rangle_n+\|r_t(K_n)\Ae\|_n^2
\end{align*}
and thus
\begin{align*}
&\|r_t(K_n)\AY\|_n^2-\mathbf{E}_\epsilon\|r_t(K_n)\AY\|_n^2\\
&=\langle L_n^Tr_t^2(K_n)\Af,\boldsymbol{\epsilon}\rangle_n+\|r_t(K_n)L_n\boldsymbol{\epsilon}\|_n^2-\mathbf{E}_\epsilon\|r_t(K_n)L_n\boldsymbol{\epsilon}\|_n^2.
\end{align*}
By \eqref{SubGN} and a general Hoeffding inequality for sub-Gaussian random variables (cf. \cite[Theorem 2.6.3]{MR3837109}), we have for all $y>0$, 
\begin{align*}
\mathbf{P}_\epsilon(\langle L_n^Tr_t^2(K_n)\Af,\boldsymbol{\epsilon}\rangle_n>y)&\leq \exp\bigg(-\frac{cn^2y^2}{\sigma^2\|L_n^Tr_t^2(K_n)\Af\|_2^2}\bigg)\\
&\leq \exp\bigg(-\frac{cny^2}{\sigma^2\|r_t(K_n)\Af\|_n^2}\bigg),
\end{align*}
where we used the fact that $\|L_n^T\|_{\operatorname{op}}=\|L_n\|_{\operatorname{op}}\leq 1$ and \eqref{BdF} in the second inequality.
Moreover, an application of the Hanson-Wright inequality (cf. \cite[Theorem 6.2.1]{MR3837109}) gives for all $y>0$,
\begin{align*}
&\mathbf{P}_\epsilon(\|r_t(K_n)L_n\boldsymbol{\epsilon}\|_n^2-\mathbf{E}_\epsilon\|r_t(K_n)L_n\boldsymbol{\epsilon}\|_n^2>y)\\
&\leq \exp\bigg(-c\bigg(\frac{n^2y^2}{\sigma^4\|L_n^Tr_t^2(K_n)L_n\|_{\operatorname{HS}}^2}\wedge\frac{ny}{\sigma^2\|L_n^Tr_t^2(K_n)L_n\|_{\operatorname{op}}}\bigg)\bigg).
\end{align*}
By Assumption \eqref{BdF} and the fact that $\|L_n\|_{\operatorname{op}}=\|L_n^T\|_{\operatorname{op}}\leq 1$, we have 
\begin{align*}
    \|L_n^Tr_t^2(K_n)L_n\|_{\operatorname{op}}\leq 1\quad\text{and}\quad\|L_n^Tr_t^2(K_n)L_n\|_{\operatorname{HS}}^2\leq \|L_n\|_{\operatorname{HS}}^2=\operatorname{tr}(L_nL_n^T).
\end{align*} 
We thus obtain that
\begin{align*}
&\mathbf{P}_\epsilon(\|r_t(K_n)L_n\boldsymbol{\epsilon}\|_n^2-\mathbf{E}_\epsilon\|r_t(K_n)L_n\boldsymbol{\epsilon}\|_n^2>y)\\
&\leq \exp\Big(-c\Big(\frac{n^2y^2}{\subgauss^4\operatorname{tr}(L_nL_n^T)}\wedge\frac{ny}{\subgauss^2}\Big)\Big).
\end{align*}
This completes the proof of the right-deviation inequality. The left-deviation inequality follows analogously.
\end{proof}

\medskip

\begin{lemma}\label{LemVarHansonWright}
Suppose that Assumption~\eqref{SubGN} holds. Then, for every $t\geq 0$ and every $y>0$, we have
\begin{align*}
    \mathbf{P}_\epsilon(\|K_n^{1/2}g_t^{1/2}(K_n)\Ae\|^2_n>\tildengeffdim(t)+y)
    &\leq \exp\Big(-c\Big(\frac{n^2y^2}{\subgauss^4\tildengeffdim(t)}\wedge\frac{ny}{\subgauss^2}\Big)\Big) \\
    &\leq\exp\Big(-c\Big(\frac{n^2y^2}{\subgauss^4\operatorname{tr}(L_nL_n^T)}\wedge\frac{ny}{\subgauss^2}\Big)\Big).
\end{align*}
\end{lemma}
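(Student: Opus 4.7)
The plan is to express the quantity as a quadratic form in the original (unsmoothed) noise vector and then invoke the Hanson--Wright inequality. First, writing $\Ae = L_n \boldsymbol{\epsilon}$ and cycling the trace, one gets the identity
\[
\|K_n^{1/2}g_t^{1/2}(K_n)\Ae\|_n^2 = \tfrac{1}{n}\,\boldsymbol{\epsilon}^{T} M \boldsymbol{\epsilon},\qquad M := L_n^{T} K_n g_t(K_n) L_n,
\]
and $M$ is positive semidefinite since $0\le K_n g_t(K_n)\le I_n$ by \eqref{BdF}. By the cyclic property of the trace and Definition \ref{gDffectiveDimSmoothed}, the expectation of this quadratic form equals $\tfrac{\sigma^{2}}{n}\operatorname{tr}(M) = \tfrac{\sigma^{2}}{n}\tildengeffdim(t)$.

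Next I would invoke the Hanson--Wright inequality for sub-Gaussian vectors (e.g.\ \cite[Theorem 6.2.1]{MR3837109}), which under \eqref{SubGN} yields
\[
\mathbf{P}_\epsilon\Bigl(\tfrac{1}{n}\boldsymbol{\epsilon}^{T}M\boldsymbol{\epsilon} - \tfrac{\sigma^{2}}{n}\tildengeffdim(t) > y\Bigr) \le \exp\Bigl(-c\Bigl(\tfrac{y^{2}}{\sigma^{4}\|M/n\|_{\operatorname{HS}}^{2}} \wedge \tfrac{y}{\sigma^{2}\|M/n\|_{\operatorname{op}}}\Bigr)\Bigr).
\]
The remaining step is to control the two matrix norms of $M/n$. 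Using $\|L_n\|_{\operatorname{op}}\le 1$ together with \eqref{BdF} one gets $\|M/n\|_{\operatorname{op}}\le 1/n$; and since $M/n$ is PSD, the standard estimate $\|A\|_{\operatorname{HS}}^{2} = \operatorname{tr}(A^{2}) \le \|A\|_{\operatorname{op}}\operatorname{tr}(A)$ combined with $\operatorname{tr}(M/n) = \tildengeffdim(t)/n$ gives $\|M/n\|_{\operatorname{HS}}^{2} \le \tildengeffdim(t)/n^{2}$. Substituting these two bounds into the Hanson--Wright tail delivers the first inequality of the lemma (after relabelling $y$ by the appropriate multiple of $\sigma^{2}y/n$ that fits the stated normalisation).

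The second inequality is then immediate from the first together with the bound $\tildengeffdim(t) = \operatorname{tr}(L_n L_n^{T} K_n g_t(K_n)) \le \operatorname{tr}(L_n L_n^{T})$, which also follows from $\|K_n g_t(K_n)\|_{\operatorname{op}}\le 1$ by \eqref{BdF}: replacing $\tildengeffdim(t)$ in the sub-Gaussian denominator by the larger $\operatorname{tr}(L_n L_n^{T})$ only weakens the bound. I do not anticipate any serious obstacle here --- the proof is essentially plug-and-play with Hanson--Wright, and the only non-trivial ingredient is the standard PSD estimate $\|A\|_{\operatorname{HS}}^{2}\le \|A\|_{\operatorname{op}}\operatorname{tr}(A)$ applied to $A = M/n$.
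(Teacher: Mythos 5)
Your proposal is correct and follows essentially the same route as the paper: write the quantity as the quadratic form $\tfrac{1}{n}\boldsymbol{\epsilon}^T M\boldsymbol{\epsilon}$ with $M=L_n^T K_n g_t(K_n)L_n$, apply the Hanson--Wright inequality, and control the two matrix norms via $\|M\|_{\operatorname{op}}\le 1$ and $\|M\|_{\operatorname{HS}}^2\le\|M\|_{\operatorname{op}}\operatorname{tr}(M)=\tildengeffdim(t)\le\operatorname{tr}(L_nL_n^T)$. Your remark about relabelling $y$ by the appropriate multiple of $\sigma^2/n$ is also apt, since the lemma is indeed applied elsewhere with deviations on the scale $\sigma^2 y/n$.
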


\begin{proof}[Proof of Lemma \ref{LemVarHansonWright}]
First, note that $\tildengeffdim(t)=\mathbf{E}_\epsilon\|K_n^{1/2}g_t^{1/2}(K_n)\Ae\|^2_n$. Moreover, by the Hanson-Wright inequality (cf. \cite[Theorem 6.2.1]{MR3837109}), we have for all $y>0$,
\begin{align*}
    &\mathbf{P}_\epsilon\Big(\|K_n^{1/2}g_t^{1/2}(K_n)L_n\boldsymbol{\epsilon}\|^2_n>\mathbf{E}_\epsilon\|K_n^{1/2}g_t^{1/2}(K_n)L_n\boldsymbol{\epsilon}\|^2_n+y\Big)\\
    &\leq \exp\Big(-c\Big(\frac{n^2y^2}{\subgauss^4\|L_n^TK_ng_t(K_n)L_n\|_{\operatorname{HS}}^2}\wedge\frac{ny}{\subgauss^2\|L_n^TK_ng_t(K_n)L_n\|_{\operatorname{op}}} \Big)\Big).
\end{align*}
The claims now follow from inserting $\|L_n^TK_ng_t(K_n)L_n\|_{\operatorname{op}}\leq 1$ as well as $\|L_n^TK_ng_t(K_n)L_n\|_{\operatorname{HS}}^2\leq \operatorname{tr}(L_nL_n^TK_ng_t(K_n))\leq \operatorname{tr}(L_nL_n^T)$.
\end{proof}

\medskip

\begin{lemma}\label{LemChangeBias}
Let $L_n=\tilde{g}_T^{1/2}(K_n)K_n^{1/2}$ with regularizer $\tilde{g}$ satisfying \eqref{LFL}. If \eqref{Assume.SC} holds with $s=r-1/2\geq 0$ and if $\|(\Sigma+T^{-1})^{-1/2}(\Sigma_n-\Sigma)(\Sigma+T^{-1})^{-1/2}\|_{\operatorname{op}}\leq 1/2$, then there is a constant $C>0$ depending only on $s$, $R$ and $M$ such that for every $0<t\leq T$,
\begin{align*}
    \|r_t(K_n)\mathbf{f}\|_n^2\leq  \frac{1}{b}\|r_t(K_n)\Af\|_n^2+C\Big(\frac{1}{T^{2s+1}}+\frac{\|\Sigma_n-\Sigma\|_{\operatorname{op}}^{2\wedge 2s}}{T}\Big),
\end{align*}
where the last term in the upper bound  $CT^{-1}\|\Sigma_n-\Sigma\|_{\operatorname{op}}^{2\wedge 2s}$ can be dropped if $s\leq 1/2$. Moreover, if \eqref{Assume.SC} and \eqref{Qualif} hold with $s=r-1/2\geq 0$ and $r\geq q$ and if $\|(\Sigma+T^{-1})^{-1/2}(\Sigma_n-\Sigma)(\Sigma+T^{-1})^{-1/2}\|_{\operatorname{op}}\leq 1/2$, then we have for every $0<t\leq T$,
\begin{align*}
    \|r_t(K_n)\mathbf{f}\|_n^2\leq  C\Big(\frac{1}{t^{2s+1}}+\frac{\|\Sigma_n-\Sigma\|_{\operatorname{op}}^{2\wedge 2s}}{t}\Big),
\end{align*}
where the second term in the upper bound can be dropped if $s\leq 1/2$.
\end{lemma}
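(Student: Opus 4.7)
The plan is to reduce $\|r_t(K_n)\mathbf{f}\|_n^2$ to a residual supported on the low-eigenvalue part of $\Sigma_n$, then control that residual via the source condition, paying for the non-commutativity of $\Sigma$ and $\Sigma_n$ with either an operator-H\"{o}lder estimate (general $s$) or operator monotonicity (when $2s\le 1$). Step one: diagonalising in the eigenbasis $(\hat v_j)$ of $K_n$ and applying \eqref{BdF} together with \eqref{LFL} for $\tilde g$, the coefficient $1-b^{-1}\hat\lambda_j\tilde g_T(\hat\lambda_j)$ is $\le 0$ on $\{\hat\lambda_j\ge T^{-1}\}$ and $\le 1$ on $\{\hat\lambda_j< T^{-1}\}$; combined with $r_t^2(\hat\lambda_j)\le 1$ this yields
\[
\|r_t(K_n)\mathbf{f}\|_n^2 \le b^{-1}\|r_t(K_n)\tilde{\mathbf{f}}\|_n^2 + \sum_{j:\hat\lambda_j<T^{-1}}\langle \hat v_j,\mathbf{f}\rangle_n^2.
\]
Since $r=s+1/2\ge 1/2$, the source condition gives $f=S_\rho f_{\mathcal{H}}$ with $f_{\mathcal{H}}=\Sigma^s h_0$ and $\|h_0\|_{\mathcal{H}}\le R$, so $\mathbf{f}=S_n f_{\mathcal{H}}$; the SVD \eqref{EqSVDSamplingOp} then rewrites the residual sum as $\|\Sigma_n^{1/2}P_n\Sigma^s h_0\|_{\mathcal{H}}^2$ with $P_n=\mathbf{1}_{\Sigma_n<T^{-1}}$ the spectral projector of $\Sigma_n$.

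Step two bounds this residual by splitting $\Sigma^s=\Sigma_n^s+(\Sigma^s-\Sigma_n^s)$. The first piece is immediate from $\|\Sigma_n^{s+1/2}P_n\|_{\operatorname{op}}\le T^{-(s+1/2)}$, producing the $T^{-(2s+1)}R^2$ term. For the cross piece, I combine $\|\Sigma_n^{1/2}P_n\|_{\operatorname{op}}\le T^{-1/2}$ with the operator-H\"{o}lder estimate $\|\Sigma^s-\Sigma_n^s\|_{\operatorname{op}}\le c_s\|\Sigma_n-\Sigma\|_{\operatorname{op}}^{s\wedge 1}$ (L\"{o}wner--Heinz for $s\le 1$; an integral-representation bound using $\|\Sigma\|_{\operatorname{op}},\|\Sigma_n\|_{\operatorname{op}}\le M^2$ for $s>1$), producing the $CT^{-1}\|\Sigma_n-\Sigma\|_{\operatorname{op}}^{2\wedge 2s}R^2$ term. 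This yields Part 1 in general.

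Step three removes the perturbation term when $s\le 1/2$ via operator monotonicity. The hypothesis rewrites as $\Sigma\preceq 2(\Sigma_n+T^{-1})$, and since $x\mapsto x^{2s}$ is operator monotone for $2s\le 1$, this gives $\Sigma^{2s}\preceq 2^{2s}(\Sigma_n+T^{-1})^{2s}$, equivalently $\|(\Sigma_n+T^{-1})^{-s}\Sigma^s\|_{\operatorname{op}}\le 2^s$. Factoring
\[
\Sigma_n^{1/2}P_n\Sigma^s=\bigl[\Sigma_n^{1/2}P_n(\Sigma_n+T^{-1})^s\bigr]\bigl[(\Sigma_n+T^{-1})^{-s}\Sigma^s\bigr],
\]
and noting that on the range of $P_n$ one has $\hat\lambda_j+T^{-1}\le 2T^{-1}$ and $\hat\lambda_j^{1/2}\le T^{-1/2}$, the first bracket has operator norm $\le 2^s T^{-(s+1/2)}$, giving the clean bound $CT^{-(2s+1)}R^2$ without any perturbation term.

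Part 2 follows the same scheme with $r_t(\Sigma_n)$ in place of $P_n$: writing $\|r_t(K_n)\mathbf{f}\|_n^2=\|\Sigma_n^{1/2}r_t(\Sigma_n)\Sigma^s h_0\|_{\mathcal{H}}^2$, splitting $\Sigma^s$ as above, using $\|r_t(\Sigma_n)\Sigma_n^{s+1/2}\|_{\operatorname{op}}\le Ct^{-(s+1/2)}$ (from \eqref{Qualif} with $q\ge r$ via the standard case split on $\lambda t\lessgtr 1$) for the main piece and $\|\Sigma_n^{1/2}r_t(\Sigma_n)\|_{\operatorname{op}}\le Ct^{-1/2}$ for the cross piece. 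The operator-monotonicity refinement carries over because $t\le T$ gives $\Sigma\preceq 2(\Sigma_n+T^{-1})\preceq 2(\Sigma_n+t^{-1})$, so the same factorisation with $(\Sigma_n+t^{-1})^s$ eliminates the perturbation term when $s\le 1/2$. The main obstacle throughout is the non-commutativity of $\Sigma$ and $\Sigma_n$: the source condition measures smoothness by powers of $\Sigma$ while the residual and filter live on $\Sigma_n$, and the asymmetric $s\le 1/2$ vs.\ $s>1/2$ clause of the lemma reflects exactly where operator monotonicity of $x^{2s}$ lets one avoid paying an operator-H\"{o}lder penalty.
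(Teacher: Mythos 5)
Your proof is correct and follows essentially the same route as the paper's: the same eigenbasis decomposition using \eqref{LFL} to peel off $b^{-1}\|r_t(K_n)\Af\|_n^2$ plus a low-eigenvalue residual, the same splitting $\Sigma^s=\Sigma_n^s+(\Sigma^s-\Sigma_n^s)$ with the operator-H\"{o}lder bounds \eqref{EqPowerOperatorNorm1}--\eqref{EqPowerOperatorNorm2} for general $s$, and the same mechanism for $s\le 1/2$ (your direct L\"{o}wner--Heinz monotonicity argument is equivalent to the paper's use of \eqref{EqPowerOperatorNorm3} together with the resolvent identity \eqref{EqChangeBias3}). The treatment of Part 2 via the qualification bound $\|\Sigma_n^{s+1/2}r_t(\Sigma_n)\|_{\operatorname{op}}\le Ct^{-(s+1/2)}$ likewise matches the paper.
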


\begin{proof}
Using the identity $\mathbf{f}=S_nf$ and the singular value decomposition in \eqref{EqSVDSamplingOp}, we have
\begin{align*}
    \|r_{t}(K_n)\mathbf{f}\|_n^2 & =\sum_{j\geq 1}\hat\lambda_j r_{t}^2(\hat\lambda_j)\langle f,\hat u_j\rangle^2\\
   &\leq \frac{1}{b}\sum_{\hat\lambda_jT> 1} \lambda_j\tilde{g}_T(\hat\lambda_j)\hat\lambda_jr_{t}^2(\hat\lambda_j)\langle f,\hat u_j\rangle^2+\frac{1}{T}\sum_{\hat\lambda_jT\leq 1}\langle f,\hat u_j\rangle^2\\
   &= \frac{1}{b}\|r_t(K_n)\Af\|_n^2 + \frac{1}{T}\sum_{\hat\lambda_jT\leq 1}\langle f,\hat u_j\rangle^2,
\end{align*}
where we applied \eqref{LFL} and \eqref{BdF} in the inequality. To see the first claim, we have show that
\begin{align}\label{EqChangeBias2}
    \sum_{\hat\lambda_jT\leq 1}\langle f,\hat u_j\rangle^2\leq C(T^{-2s}+\|\Sigma_n-\Sigma\|_{\operatorname{op}}^{2\wedge 2s}),
\end{align}
where the second term $\|\Sigma_n-\Sigma\|_{\operatorname{op}}^{2\wedge 2s}$ can be dropped if $s\leq 1/2$.
By assumption $\|\Sigma-\Sigma_n\|_{\operatorname{op}}\leq (\lambda_1+T^{-1})/2$. By assumption, we have $f=\Sigma^sg$ with $\|g\|_{\mathcal{H}}\leq R$ and $s=r-1/2\geq 0$. Hence,
\begin{align*}
\sum_{\hat\lambda_jT\leq 1}\langle f,\hat u_j\rangle^2&\leq 2\sum_{\hat\lambda_jT< 1}\langle  \Sigma_n^sg,\hat u_j\rangle^2+2\sum_{\hat\lambda_jT< 1}\langle (\Sigma^s-\Sigma_n^s)g,\hat u_j\rangle^2\\
&\leq 2\sum_{\hat\lambda_jT< 1}\hat\lambda_j^{2s}\langle  g,\hat u_j\rangle^2+2\|( \Sigma^s-\Sigma_n^s)g\|_{\mathcal{H}}^2\\
&\leq 2T^{-2s}\|g\|_{\mathcal{H}}^2+C\|\Sigma-\Sigma_n\|_{\operatorname{op}}^{2\wedge 2s}\|g\|_{\mathcal{H}}^2,
\end{align*}
where we applied \eqref{EqPowerOperatorNorm1} and \eqref{EqPowerOperatorNorm2} in the last inequality and where $C>0$ is a constant depending only on $s$ and $M$. If $s\leq 1/2$, then we have
\begin{align*}
    &\sum_{\hat\lambda_jT\leq 1}\langle f,\hat u_j\rangle^2= \sum_{\hat\lambda_jT< 1}\langle (\Sigma_n+T^{-1})^{s}(\Sigma_n+T^{-1})^{-s}\Sigma^{s} g,\hat u_j\rangle^2\\
    &\leq (2T^{-1})^{2s}\|(\Sigma_n+T^{-1})^{-s}\Sigma^{s} \|_{\operatorname{op}}^2R^2\leq (2T^{-1})^{2s}\|(\Sigma_n+T^{-1})^{-1/2}\Sigma^{1/2} \|_{\operatorname{op}}^{2s}R^2,
\end{align*}
where we applied \eqref{EqPowerOperatorNorm3} in the last inequality and where $C>0$ is a constant depending only on $s$, $M$ and $R$. Hence, the second part of the claim follows from 
\begin{align}
   &\|(\Sigma_n+T^{-1})^{-1/2}\Sigma^{1/2} \|_{\operatorname{op}}^2\leq\|(\Sigma_n+T^{-1})^{-1/2}(\Sigma+T^{-1})^{1/2}\|_{\operatorname{op}}^2\nonumber\\
   &= \|(\Sigma+T^{-1})^{1/2}(\Sigma_n+T^{-1})^{-1}(\Sigma+T^{-1})^{1/2}\|_{\operatorname{op}}\nonumber\\
   &=\|((\Sigma+T^{-1})^{-1/2}(\Sigma_n-\Sigma)(\Sigma+T^{-1})^{-1/2}+1)^{-1}\|_{\operatorname{op}}\leq 2\label{EqChangeBias3}.
\end{align}
The proof of the last claim is very similar. Using \eqref{Qualif} with $r\geq q$, \eqref{EqPowerOperatorNorm1} and \eqref{EqPowerOperatorNorm2}, we get
\begin{align*}
   \|\Sigma_n^{1/2}r_t(\Sigma_n)f\|_{\mathcal{H}}^2 &\leq 2\|\Sigma_n^{1/2}r_t(\Sigma_n)\Sigma_n^sg\|_{\mathcal{H}}^2+2\|\Sigma_n^{1/2}r_t(\Sigma_n)(\Sigma_n^s-\Sigma^s)g\|_{\mathcal{H}}^2 \\
   &\leq C(t^{-1-2s}+t^{-1}\|\Sigma-\Sigma_n\|_{\operatorname{op}}^{2\wedge 2s}),
\end{align*}
and the second part of the last claim follows. On the other hand, if $s\leq 1/2$, then we have
\begin{align*}
    &\|\Sigma_n^{1/2}r_t(\Sigma_n)f\|_{\mathcal{H}}^2\leq \|\Sigma_n^{1/2}r_t(\Sigma_n)(\Sigma_n+T^{-1})^{s}(\Sigma_n+T^{-1})^{-s}\Sigma^{s} g\|_{\mathcal{H}}^2\\ &\leq C_1\|\Sigma_n^{1/2}r_t(\Sigma_n)(\Sigma_n+t^{-1})^{s}\|_{\operatorname{op}}^2\|(\Sigma_n+t^{-1})^{-1/2}(\Sigma+t^{-1})^{1/2}\|_{\operatorname{op}}^{2s} \leq C_2t^{-1-2s},
\end{align*}
where we applied \eqref{Qualif} and \eqref{EqChangeBias3}.
\end{proof}

\section{Proofs for random design results}\label{SecProofRandomDesign}

\subsection{Concentration inequalities}
In this section, we provide concentration and deviation inequalities needed to transfer our results from the fixed to the random design setting.
We start with a deviation inequality dealing with the change of norm event from Lemma \ref{LemChangeNorm1.main}. 
The next lemma follows from an extension of \citet{T15} obtained in \citet{MR3648301} and further simplified by \citet{MR3629418} (see Lemma~\ref{LemConcIneqTropp}).
\begin{lemma}\label{LemConcIneqBoundedK}
Suppose that \eqref{BdK} holds. For $t>0$, let $\mathcal{E}_t$ be the event defined by
\begin{align*}
    \mathcal{E}_t=\{\|(\Sigma+t^{-1})^{-1/2}(\Sigma_n-\Sigma)(\Sigma+t^{-1})^{-1/2}\|_{\operatorname{op}}\leq 1/2\}.
\end{align*} 
Then there are constants $c_1,c_2,C_1>0$ depending only on $M$ such that, for every $0< t \leq c_2n$, 
\begin{align*}
&\mathbb{P}(\mathcal{E}_t^c)\leq C_1t\exp(-c_1n/t).
\end{align*}

\end{lemma}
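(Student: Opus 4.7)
The plan is to apply a matrix Bernstein inequality with intrinsic dimension (the cited Lemma~\ref{LemConcIneqTropp}) to the sum of i.i.d.\ self-adjoint operators
\[
Z_i := A_t\bigl(k_{X_i}\otimes k_{X_i}-\Sigma\bigr) A_t,\qquad A_t:=(\Sigma+t^{-1})^{-1/2},
\]
so that the operator in question satisfies $A_t(\Sigma_n-\Sigma)A_t=\frac{1}{n}\sum_{i=1}^{n}Z_i$ and the Bernstein deviation bound directly gives the probability of $\mathcal E_t^c$. The two quantities to feed into Bernstein are the a.s.\ operator-norm bound on $Z_i$ and the operator-norm/trace of $\mathbb E[Z_i^2]$.

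For the a.s.\ bound I would use that $A_t(k_X\otimes k_X)A_t=(A_tk_X)\otimes(A_tk_X)$ is rank one of norm $\|A_tk_X\|_{\mathcal H}^2\le t\|k_X\|_{\mathcal H}^2\le tM^2$ by \eqref{BdK}, and that $A_t\Sigma A_t\preceq I$, so $\|Z_i\|_{\operatorname{op}}\le tM^2+1\le 2tM^2$ (after absorbing the harmless case $tM^2<1$ into the constants). For the variance I would use the identity $(k_X\otimes k_X)B(k_X\otimes k_X)=\langle k_X,Bk_X\rangle_{\mathcal H}(k_X\otimes k_X)$ with $B=A_t^2$ to get, after dropping the negative cross term $\Sigma A_t^2\Sigma$,
\[
\mathbb E[Z_i^2]\preceq \mathbb E\bigl[\langle k_X,A_t^2k_X\rangle_{\mathcal H}\,A_t(k_X\otimes k_X)A_t\bigr]\preceq tM^2\,A_t\Sigma A_t.
\]
Hence $\|\mathbb E[Z_i^2]\|_{\operatorname{op}}\le tM^2$ and $\operatorname{tr}\mathbb E[Z_i^2]\le tM^2\mathcal N(t)$, where the trivial bound $\mathcal N(t)=\operatorname{tr}(A_t^2\Sigma)\le M^2t$ (which follows from $A_t^2\preceq t I$ and $\operatorname{tr}\Sigma\le M^2$) gives an intrinsic dimension bounded by $M^2t$.

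Plugging these three inputs into Lemma~\ref{LemConcIneqTropp} at the level $u=1/2$ yields, for some absolute $c_0>0$,
\[
\mathbb P(\mathcal E_t^c)\le C\,M^2t\cdot\exp\!\left(-\frac{c_0 n/4}{tM^2+\frac{1}{3}\cdot 2tM^2}\right)=C\,M^2t\cdot\exp(-c_1n/t),
\]
which is the announced bound (with $C_1=CM^2$ and $c_1=c_0/(3M^2)$, both depending only on $M$). The constraint $t\le c_2n$ enters only to ensure that the intrinsic-dimension prefactor on the right-hand side is meaningful and that the exponent dominates; it arises naturally from the Bernstein assumption that the bounded increment contribution $u\|Z_i\|_{\operatorname{op}}/n\lesssim tM^2/n$ stays small compared to the variance term.

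The only non-routine step is checking that the intrinsic-dimension version of matrix Bernstein (as opposed to the dimension-dependent form) is indeed applicable to trace-class operators on the separable Hilbert space $\mathcal H$, which is exactly the content of the extension of \citep{T15} obtained in \citep{MR3648301} and simplified in \citep{MR3629418}; once that is granted (it is packaged into Lemma~\ref{LemConcIneqTropp}) the proof reduces to the above two-line moment computation.
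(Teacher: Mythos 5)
Your proposal is correct and follows essentially the same route as the paper: the same centered rank-one operators $(\Sigma+t^{-1})^{-1/2}(k_{X_i}\otimes k_{X_i}-\Sigma)(\Sigma+t^{-1})^{-1/2}$, the same bounds $R\lesssim M^2t$, $V\leq M^2t$, $D=\mathcal{N}(t)\leq M^2t$, and the same application of Lemma~\ref{LemConcIneqTropp} at level $u=1/2$, with the constraint $t\leq c_2 n$ arising from the admissibility condition $V^{1/2}n^{-1/2}+(3n)^{-1}R\leq 1/2$. No gaps.
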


\begin{proof}[Proof of Lemma~\ref{LemConcIneqBoundedK}]
The proof consists in checking the assumptions of Lemma~\ref{LemConcIneqTropp} from the Appendix. This justifies introducing constants $R$, $V$, and $D$ from Lemma~\ref{LemConcIneqTropp}.
In particular
\[
\xi_i=(\Sigma+t^{-1})^{-1/2}k_{X_i}\otimes (\Sigma+t^{-1})^{-1/2}k_{X_i}-(\Sigma+t^{-1})^{-1}\Sigma.
\]
Then $\|\xi_1\|_{\operatorname{op}}\leq 2\|(\Sigma+t^{-1})^{-1/2}k_{X_1}\|_{\mathcal{H}}^2 \leq 2M^2t = R$. 
Moreover, we have
\begin{align*}
    \|\mathbb{E}\xi_1^2\|_{\operatorname{op}}&\leq \Big\|\mathbb{E}\Big((\Sigma+t^{-1})^{-1/2}k_{X}\otimes (\Sigma+t^{-1})^{-1/2}k_{X}\Big)^2\Big\|_{\operatorname{op}}\\
    &\leq \Big\|\mathbb{E}\langle (\Sigma+t^{-1})^{-1}k_{X_1},k_{X}\rangle_{\mathcal{H}}(\Sigma+t^{-1})^{-1/2}k_{X}\otimes (\Sigma+t^{-1})^{-1/2}k_{X}\Big\|_{\operatorname{op}}\\
    &\leq tM^2\Big\|\mathbb{E}(\Sigma+t^{-1})^{-1/2}k_{X}\otimes (\Sigma+t^{-1})^{-1/2}k_{X}\Big\|_{\operatorname{op}}\\
    &=tM^2\Big\|(\Sigma+t^{-1})^{-1}\Sigma\Big\|_{\operatorname{op}}\leq tM^2 = V.
\end{align*}
Similarly with $D = \mathcal{N}(t)$, we have 
\begin{align*}
    \operatorname{tr}(\mathbb{E}\xi_1^2)&\leq tM^2\operatorname{tr}((\Sigma+t^{-1})^{-1}\Sigma)=tM^2 \mathcal{N}(t) = V \cdot D .
\end{align*}
%
%
Then, for every $t>0$ such that $V^{1/2}n^{-1/2} + (3n)^{-1}R\leq 1/2$,
\begin{align*}
\P\croch{ \norm{\frac{1}{n}\sum_{i=1}^n \xi_i}_{\operatorname{op}} \geq \frac{1}{2} } 
& \leq 4 \mathcal{N}(t) \exp\croch{ -\frac{n}{8 \paren{ M^2 + (2/6)M^2 } t } } \\
& \leq 4 M^2 t \exp\croch{ -\frac{n}{(32/3)M^2 t } },
\end{align*}
where the last inequality results from \eqref{BdK}, which yields the claim with $C_1=4M^2$, $c_1=(32/3)M^2$, and $c_2 = (3/4)^2(\sqrt{7/3}-1)^2/M^2$.
\end{proof}
Next, we establish a concentration inequality for the empirical effective dimension. Interestingly, the event $\mathcal{E}_T$ again plays a key role.

\begin{lemma}\label{LemConcEffDim}
Suppose that \eqref{BdK} holds. Then there is a constant $C$ depending only on $M$ and $\lambda_1^{-1}$ such that, for every $1\leq t \leq T$, 
\begin{align*}
     \mathbb{P}\Big(\mathcal{E}_T\cap\Big\{\mathcal{N}_n(t)> C\mathcal{N}(t)\Big\}\Big)\leq e^{-n/t}.
\end{align*}
In particular, for every $1\leq t \leq T$, we have
\begin{align*}
    \mathbb{E}\1_{\mathcal{E}_T}\mathcal{N}_n(t)\leq C\mathcal{N}(t)+ne^{-n/t}.
\end{align*}
\end{lemma}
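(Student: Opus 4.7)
The plan is to decouple the empirical effective dimension into two pieces: an operator-inequality piece (which reduces $\Sigma_n$-resolvents to $\Sigma$-resolvents on $\mathcal{E}_T$) and a Bernstein-type concentration piece (for the remaining empirical trace).

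First I would exploit the event $\mathcal{E}_T$ to obtain the two-sided sandwich $\tfrac{1}{2}(\Sigma+T^{-1})\preceq \Sigma_n+T^{-1}\preceq \tfrac{3}{2}(\Sigma+T^{-1})$. Since $t\leq T$ implies $t^{-1}-T^{-1}\geq 0$, adding $(t^{-1}-T^{-1})I$ to both sides of the lower bound and simplifying yields $\Sigma_n+t^{-1}\succeq \tfrac{1}{2}(\Sigma+t^{-1})$, and hence $(\Sigma_n+t^{-1})^{-1}\preceq 2(\Sigma+t^{-1})^{-1}$. Conjugating by $\Sigma_n^{1/2}$ and taking traces gives
\begin{align*}
\mathcal{N}_n(t)=\operatorname{tr}\bigl(\Sigma_n^{1/2}(\Sigma_n+t^{-1})^{-1}\Sigma_n^{1/2}\bigr)\leq 2\operatorname{tr}\bigl(\Sigma_n(\Sigma+t^{-1})^{-1}\bigr)\qquad\text{on }\mathcal{E}_T.
\end{align*}

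Second, I would control the right-hand side by Bernstein's inequality applied to the i.i.d.\ random variables $Z_i=\langle k_{X_i},(\Sigma+t^{-1})^{-1}k_{X_i}\rangle_\H$, which satisfy $\operatorname{tr}(\Sigma_n(\Sigma+t^{-1})^{-1})=n^{-1}\sum_{i=1}^n Z_i$ and $\mathbb{E} Z_1=\mathcal{N}(t)$. Using \eqref{BdK} and $\|(\Sigma+t^{-1})^{-1}\|_{\operatorname{op}}\leq t$, we have $0\leq Z_1\leq M^2 t$ almost surely, and
\begin{align*}
\mathbb{E}(Z_1^2)\leq M^2 t\,\mathbb{E}(Z_1)=M^2 t\,\mathcal{N}(t).
\end{align*}
Bernstein then yields, for every $x>0$,
\begin{align*}
\mathbb{P}\bigl(\operatorname{tr}(\Sigma_n(\Sigma+t^{-1})^{-1})> \mathcal{N}(t)+x\bigr)\leq \exp\Bigl(-\frac{nx^2/2}{M^2 t\mathcal{N}(t)+M^2 t x/3}\Bigr).
\end{align*}
Setting $x=c_0\,\mathcal{N}(t)$ for a sufficiently large $c_0$, I would absorb the denominator and bound the exponent by $-n\mathcal{N}(t)/(c_1 M^2 t)$. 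Finally, using that $\mathcal{N}(t)\geq \lambda_1 t/(1+\lambda_1 t)\geq \lambda_1/(1+\lambda_1)$ for $t\geq 1$, the exponent becomes $-n/t$ up to multiplicative constants, after enlarging $c_0$ (equivalently $C$) depending only on $M$ and $\lambda_1^{-1}$. Combining with the operator bound from the first step gives the first claim with constant $C=2(1+c_0)$.

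For the in-expectation statement, I would use the crude deterministic bound $\mathcal{N}_n(t)\leq n$ (since each of the $n$ eigenvalue ratios in $\mathcal{N}_n(t)=\sum_{j=1}^n\hat\lambda_j/(\hat\lambda_j+t^{-1})$ is at most $1$), splitting
\begin{align*}
\mathbb{E}\bigl[\1_{\mathcal{E}_T}\mathcal{N}_n(t)\bigr]\leq C\mathcal{N}(t)+n\,\mathbb{P}\bigl(\mathcal{E}_T\cap\{\mathcal{N}_n(t)>C\mathcal{N}(t)\}\bigr)\leq C\mathcal{N}(t)+ne^{-n/t}.
\end{align*}
The main obstacle is the calibration step: ensuring that the Bernstein exponent genuinely reduces to $-n/t$ uniformly over $1\leq t\leq T$ without introducing a dependence on $T$ in the constant $C$. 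The lower bound $\mathcal{N}(t)\gtrsim \lambda_1/(1+\lambda_1)$ (valid as soon as $t\geq 1$) is exactly what makes this work and is the reason the constant is allowed to depend on $\lambda_1^{-1}$.
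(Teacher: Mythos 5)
Your proposal is correct and follows essentially the same route as the paper: reduce the $\Sigma_n$-resolvent to the $\Sigma$-resolvent on $\mathcal{E}_T$ at the cost of a factor $2$, then apply Bernstein's inequality to $\operatorname{tr}(\Sigma_n(\Sigma+t^{-1})^{-1})$ (the paper centers this quantity as $\operatorname{tr}(A_t)$, but it is the same sum $n^{-1}\sum_i Z_i$), and calibrate $u=n/t$ using $\mathcal{N}(t)\geq \lambda_1/(1+\lambda_1)$ for $t\geq 1$. The only cosmetic difference is in the first step, where the paper uses the identity $(\Sigma_n+t^{-1})^{-1}=(\Sigma+t^{-1})^{-1/2}(I+A_t)^{-1}(\Sigma+t^{-1})^{-1/2}$ together with $\|(I+A_t)^{-1}\|_{\operatorname{op}}\leq 2$ and the von Neumann trace inequality, whereas you use the equivalent operator sandwich and monotonicity of inversion.
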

\begin{remark}\label{RemConcEffDim}
Lemma \ref{LemConcEffDim} deals only with the case $t\geq 1$. The reason for this is that for $0<t\leq 1$, the trivial bound $\mathcal{N}_n(t)\leq M^2t\leq M^2$ will be sufficient for our purposes.
\end{remark}
\begin{proof}[Proof of Lemma~\ref{LemConcEffDim}]
Setting 
\begin{align}\label{EqDefAT}
    A_{t}=(\Sigma+t^{-1})^{-1/2}(\Sigma_n-\Sigma)(\Sigma+t^{-1})^{-1/2},
\end{align}
we have
\begin{align*}
   &(\Sigma_n+t^{-1})^{-1}=(\Sigma+t^{-1})^{-1/2}(I+A_{t})^{-1}(\Sigma+t^{-1})^{-1/2}.
\end{align*}
Hence, 
\begin{align*}
        \mathcal{N}_n(t) & = \operatorname{tr}( \Sigma_n(\Sigma_n+t^{-1})^{-1}) \\
        & = \operatorname{tr}\croch{ \Sigma_n (\Sigma+t^{-1})^{-1/2}(I+A_{t})^{-1}(\Sigma+t^{-1})^{-1/2} }  \\
        & = \operatorname{tr}\croch{ (\Sigma+t^{-1})^{-1/2} \Sigma_n (\Sigma+t^{-1})^{-1/2}(I+A_{t})^{-1} }.
\end{align*}
Since $\mathcal{E}_T$ holds and $t\leq T$, we have $\norm{A_t}_{\operatorname{op}}\leq \norm{A_T}_{\operatorname{op}}\leq 1/2$
by using 
\begin{align*}
    A_{t}
    &=(\Sigma+t^{-1})^{-1/2}(\Sigma+T^{-1})^{1/2}A_T(\Sigma+T^{-1})^{1/2}(\Sigma+t^{-1})^{-1/2},
\end{align*}
which implies that $\|(I+A_t)^{-1}\|_{\operatorname{op}}\leq 2$.

Then, the von Neumann trace inequality applied to non-negative symmetric operators on the event $\mathcal{E}_T$ leads to
\begin{align}
        \mathcal{N}_n(t) & \leq \norm{(I+A_{t})^{-1}}_{\operatorname{op}} \operatorname{tr}\croch{ (\Sigma+t^{-1})^{-1/2} \Sigma_n (\Sigma+t^{-1})^{-1/2} }\nonumber\\
        & \leq 2 \operatorname{tr}\croch{ (\Sigma+t^{-1})^{-1/2} \Sigma_n (\Sigma+t^{-1})^{-1/2} }\nonumber\\
        & \leq 2 \croch{ \mathcal{N}(t) + \operatorname{tr}(A_{t}) }.\label{EqBoundEffDimAT}
\end{align}
Using the definition of the empirical covariance operator, we have
\begin{align*}
   \operatorname{tr}(A_{t})=\frac{1}{n}\sum_{i=1}^n\|(\Sigma+t^{-1})^{-1/2}k_{X_i}\|_{\mathcal{H}}^2-\mathbb{E}\|(\Sigma+t^{-1})^{-1/2}k_X\|_{\mathcal{H}}^2.
\end{align*}
In addition since $\|(\Sigma+t^{-1})^{-1/2}k_{X_1}\|_{\mathcal{H}}^2\leq M^2t$, and $\mathbb{E}\|(\Sigma+t^{-1})^{-1/2}k_{X_1}\|_{\mathcal{H}}^4\leq M^2t\mathcal{N}(t)$, Bernstein's inequality (cf. Theorem 2.10 in \cite{MR3185193})  yields
\begin{align*}
    \mathbb{P}\Big( \operatorname{tr}(A_{t})>\sqrt{ \frac{2uM^2t\mathcal{N}(t)}{n}}+\frac{M^2}{3}\frac{ut}{ n}\Big)\leq e^{-u}.
\end{align*}
Inserting
\begin{align}\label{EqSimplerConcIneq}
\sqrt{\frac{2uM^2t\mathcal{N}(t)}{n}}\leq \mathcal{N}(t)+M^2\frac{tu}{n},
\end{align}
we get for every $u>0$,
\begin{align*}
    \mathbb{P}\Big( \operatorname{tr}(A_{t})>\mathcal{N}(t)+\frac{4M^2}{3}\frac{ut}{ n}\Big)\leq e^{-u}.
\end{align*}
Finally setting $u=n/t$ and using $\mathcal{N}(t)\geq \lambda_1/(\lambda_1+1)$ for $t\geq 1$, it results
\begin{align*}
    \mathbb{P}\Big( \operatorname{tr}(A_{t})>\Big(1+\frac{4M^2}{3}\Big(1+\frac{1}{\lambda_1}\Big)\Big)\mathcal{N}(t)\Big)\leq e^{-n/t}.
\end{align*}
Combining this with \eqref{EqBoundEffDimAT}, the first claim follows with $C=4(1+2(1+\lambda_1^{-1})M^2/3)$. The second claim follows from the first one, using also that $\mathcal{N}_n(t)\leq n$.
\end{proof}
Finally, we establish the following deviation bound for remainder traces.
\begin{lemma}\label{LemConcIneqTraces}
Suppose that \eqref{BdK} holds. Then, for each $u>0$ and any $0\leq k \leq n$, we have
\begin{align*}
\mathbb{P}\Big(\sum_{j>k}\hat\lambda_j>2\sum_{j>k}\lambda_j+2M^2\frac{u}{n}\Big)\leq e^{-u}.
\end{align*}
In particular, defining
\begin{align*}
\mathcal{A}(t,K)=\Big\{\forall 0\leq k\leq K:\sum_{j>k}\hat\lambda_j\leq 2\sum_{j>k}\lambda_j+2M^2\Big(\frac{1}{t}+\frac{\log (K+1)}{n}\Big)\Big\}
\end{align*}
with $0\leq K \leq n$ and $t>0$, we have
\begin{align*}
    \mathbb{P}(\mathcal{A}(t,K))\geq 1-e^{-n/t}.
\end{align*}
\end{lemma}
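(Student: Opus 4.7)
The plan is to reduce the tail-sum of empirical eigenvalues to a sum of i.i.d.\ bounded real random variables via Ky~Fan's variational principle, then apply Bernstein's inequality, and finally conclude the second claim via a union bound.

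\emph{Step 1 (variational bound).} Fix $0\le k\le n$ and let $V=\operatorname{span}(u_1,\dots,u_k)$ be the $k$-dimensional subspace of $\mathcal{H}$ spanned by the top eigenvectors of $\Sigma$ (with $V=\{0\}$ if $k=0$). By Ky~Fan's maximum principle applied to the trace-class, self-adjoint, nonnegative operator $\Sigma_n$, one has $\sum_{j\le k}\hat\lambda_j=\max_{\dim W=k}\operatorname{tr}(P_W\Sigma_n P_W)\ge \operatorname{tr}(P_V\Sigma_n P_V)$, and therefore
\[
\sum_{j>k}\hat\lambda_j \;\le\; \operatorname{tr}(P_{V^\perp}\Sigma_n P_{V^\perp}) \;=\; \frac{1}{n}\sum_{i=1}^n Z_i, \qquad Z_i:=\|P_{V^\perp}k_{X_i}\|_{\mathcal{H}}^2.
\]
Since $V$ is deterministic, the $Z_i$ are i.i.d., bounded by $\|k_{X_1}\|_{\mathcal{H}}^2\le M^2$ by \eqref{BdK}, and have mean $\mathbb{E}[Z_1]=\operatorname{tr}(P_{V^\perp}\Sigma P_{V^\perp})=\sum_{j>k}\lambda_j$.

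\emph{Step 2 (Bernstein).} Because $Z_1\in[0,M^2]$, we have $\operatorname{Var}(Z_1)\le \mathbb{E}[Z_1^2]\le M^2\mathbb{E}[Z_1]$. Bernstein's inequality (Theorem~2.10 in \cite{MR3185193}) then yields, for every $u>0$,
\[
\mathbb{P}\Big(\tfrac{1}{n}\sum_{i=1}^n Z_i > \mathbb{E}[Z_1]+\sqrt{\tfrac{2uM^2\mathbb{E}[Z_1]}{n}}+\tfrac{M^2 u}{3n}\Big) \le e^{-u}.
\]
Applying the AM--GM simplification $\sqrt{2uM^2\mathbb{E}[Z_1]/n}\le \mathbb{E}[Z_1]+M^2u/(2n)$ (as in \eqref{EqSimplerConcIneq}), the deviation on the right is bounded by $2\mathbb{E}[Z_1]+2M^2u/n$. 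Combined with Step~1, this yields the first claim.

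\emph{Step 3 (union bound).} Applying the first claim to each $0\le k\le K$ with $u$ replaced by $u+\log(K+1)$ and union-bounding over $k$ gives $\mathbb{P}(\mathcal{A}(t,K)^c)\le (K+1)e^{-u-\log(K+1)}=e^{-u}$. Choosing $u=n/t$ gives the stated bound.

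I do not foresee any serious obstacle: the only mild subtlety is verifying Ky~Fan's principle in the (potentially infinite-dimensional) RKHS, but it applies verbatim since $\Sigma_n$ is compact, self-adjoint, and nonnegative.
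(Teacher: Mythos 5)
Your proof is correct and follows essentially the same route as the paper's: a variational/Ky--Fan bound reducing the empirical tail trace to an i.i.d.\ sum of the bounded variables $\|\Pi_k k_{X_i}\|_{\mathcal H}^2$, Bernstein's inequality with the second-moment bound $M^2\,\mathbb{E}[Z_1]$, the same AM--GM simplification, and a union bound with $u=n/t+\log(K+1)$. No gaps.
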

\begin{proof}[Proof of Lemma~\ref{LemConcIneqTraces}]
Let $\Pi_k$ be the orthogonal projection from $\mathcal{H}$ onto the span of the (population) eigenvectors $(u_j:j>k)$. Then, by the variational characterization of partial traces, we have $\sum_{j>k}\lambda_j= \operatorname{tr}(\Pi_k\Sigma)$ and $\sum_{j>k}\hat\lambda_j\leq \operatorname{tr}(\Pi_k\hat\Sigma)$. We conclude that
\begin{align*}
\sum_{j>k}\hat\lambda_j-\sum_{j>k}\lambda_j\leq \operatorname{tr}(\Pi_k(\hat\Sigma -\Sigma)\Pi_k)=\frac{1}{n}\sum_{i=1}^n\|\Pi_kk_{X_i}\|_{\mathcal{H}}^2-\mathbb{E}\|\Pi_kk_{X}\|_{\mathcal{H}}^2.
\end{align*}
Since $\|\Pi_k k_{X_i}\|_{\mathcal{H}}^2\leq \|k_{X_i}\|_{\mathcal{H}}^2\leq M^2$, and $\mathbb{E}\|\Pi_k k_{X_i}\|_{\mathcal{H}}^4\leq M^2\mathbb{E}\|\Pi_k k_{X_i}\|_{\mathcal{H}}^2=M^2\sum_{j>k}\lambda_j$, Bernstein's inequality yields
\begin{align*}
\mathbb{P}\Big(\sum_{j>k}\hat\lambda_j>\sum_{j>k}\lambda_j+\sqrt{\frac{2uM^2(\sum_{j>k}\lambda_j)}{n}} +\frac{M^2}{n}u\Big)\leq e^{-u}.
\end{align*}
Inserting
\begin{align*}
\sqrt{\frac{2uM^2(\sum_{j>k}\lambda_j)}{n}}\leq \sum_{j>k}\lambda_j+\frac{M^2}{n}u,
\end{align*}
the first claim follows. The second claim follows from the first one with $u=n/t+\log(K+1)$ in combination with the union bound.
\end{proof}

\medskip

\subsection{Bounds for the variance and bias parts}
We also use the notation of Section \ref{secProofFixedDesign} with $L_n=\tilde{g}^{1/2}_T(K_n)K_n^{1/2}$. In particular, we abbreviate $\Af=\tilde{g}^{1/2}_T(K_n)K_n^{1/2}\mathbf{f}$ and $\Ae=\tilde{g}^{1/2}_T(K_n)K_n^{1/2}\boldsymbol{\epsilon}$. Moreover, we write $\tildengeffdim(t)=\operatorname{tr}(\tilde{g}_T(K_n)K_ng_{t}(K_n)K_n)$ for the smoothed $g$-effective dimension and $\tildetstar=\inf\{t\geq 1:\|r_t(K_n)\Af\|_n^2\leq\sigma^2n^{-1}\tildengeffdim(t)\}$ for the smoothed balancing stopping rule.

\subsubsection{A bound for the variance part}
\begin{proposition}\label{prop.variance.term.random}
Under the assumptions of Theorem~\ref{CorExtThmSDPBoundedK}, we have on the event $\mathcal{E}_T\cap \mathcal{A}(T,\lfloor M^2T\rfloor)$,
\begin{align*}
    \mathbf{P}_{\boldsymbol{\epsilon}}(\|S_\rho g_{\SDP}(\Sigma_n)S_n^*\boldsymbol{\epsilon}\|_\rho^2> y(u)) \leq 3e^{-u},\quad u>0,
\end{align*}
with 
\begin{align*}
    y(u) =C\frac{\sigma^2}{n}(\tildengeffdim(\tildetstar)+\sqrt{u\mathcal{N}_n(T)}+u+1).
\end{align*}
\end{proposition}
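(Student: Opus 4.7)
The plan is to transfer the empirical-norm bound of Proposition~\ref{ConcIneqVar} (applied with $L_n=\tilde g_T^{1/2}(K_n)K_n^{1/2}$, so that $\operatorname{tr}(L_nL_n^T)=\mathcal{N}_n(T)$) to the $L^2(\rho)$ norm on the event $\mathcal{E}_T\cap\mathcal{A}(T,\lfloor M^2T\rfloor)$.

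First I would use $\|S_\rho h\|_\rho^2=\|\Sigma^{1/2}h\|_{\mathcal{H}}^2$ together with Lemma~\ref{LemChangeNorm1.main} (whose hypothesis is precisely the defining event $\mathcal{E}_T$), applied to $h=g_{\SDP}(\Sigma_n)S_n^*\boldsymbol{\epsilon}$, which yields
\[
\|S_\rho g_{\SDP}(\Sigma_n)S_n^*\boldsymbol{\epsilon}\|_\rho^2 \leq 2\|K_ng_{\SDP}(K_n)\boldsymbol{\epsilon}\|_n^2 + T^{-1}\langle K_ng_{\SDP}^2(K_n)\boldsymbol{\epsilon},\boldsymbol{\epsilon}\rangle_n,
\]
where I used $S_n\phi(\Sigma_n)S_n^*=K_n\phi(K_n)$. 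Assumption~\eqref{BdF} simplifies the first term to $\|K_n^{1/2}g_{\SDP}^{1/2}(K_n)\boldsymbol{\epsilon}\|_n^2$, and the Tikhonov identity $\hat\lambda_j=(\hat\lambda_j^2+T^{-1}\hat\lambda_j)\tilde g_T(\hat\lambda_j)$ produces the additive decomposition
\[
\|K_n^{1/2}g_{\SDP}^{1/2}(K_n)\boldsymbol{\epsilon}\|_n^2 = \|K_n^{1/2}g_{\SDP}^{1/2}(K_n)\Ae\|_n^2 + T^{-1}\langle K_ng_{\SDP}(K_n)\tilde g_T(K_n)\boldsymbol{\epsilon},\boldsymbol{\epsilon}\rangle_n,
\]
so that everything is reduced to the smoothed variance plus two Hanson--Wright-type residual quadratic forms in $\boldsymbol{\epsilon}$.

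Next I would invoke Proposition~\ref{ConcIneqVar} to bound $\|K_n^{1/2}g_{\SDP}^{1/2}(K_n)\Ae\|_n^2$ by $\sigma^2 n^{-1}(\tildengeffdim(\tildetstar)+C\sqrt{u\mathcal{N}_n(T)}+Cu)$ with probability $\geq 1-3e^{-u}$, accounting for the $\tildengeffdim(\tildetstar)$, $\sqrt{u\mathcal{N}_n(T)}$, and $u$ terms of $y(u)$. A Hanson--Wright bound (Lemma~\ref{LemVarHansonWright}) provides the concentration of the two residual quadratic forms around their expectations $\sigma^2 n^{-1}T^{-1}\operatorname{tr}(K_ng_{\SDP}^2(K_n))$ and $\sigma^2 n^{-1}T^{-1}\operatorname{tr}(K_ng_{\SDP}(K_n)\tilde g_T(K_n))$, at the same scale as the deviation terms already present; a union bound combines all three probabilistic estimates.

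The key technical step is bounding these two residual expectations by $O(\sigma^2/n)$, which produces the ``$+1$'' in $y(u)$. The naive bound using only $\lambda g_{\SDP}(\lambda)\leq 1$ and the monotonicity $g_{\SDP}\leq g_T$ yields traces of order $\mathcal{N}_n(T)$ or even $\operatorname{rank}(K_n)\leq n$, which are too large; the reduction to a constant requires all three available ingredients simultaneously. I would combine the operator closeness $\Sigma\asymp\Sigma_n+T^{-1}$ on $\mathcal{E}_T$ (from Lemma~\ref{LemChangeNorm1.main}), the empirical tail bound $\sum_{j>k}\hat\lambda_j\leq 2\sum_{j>k}\lambda_j+O(T^{-1}+(\log T)/n)$ from $\mathcal{A}(T,\lfloor M^2T\rfloor)$ applied at the Markov cutoff $k^*=\#\{j:\hat\lambda_j\geq T^{-1}\}\leq M^2T\leq\lfloor M^2T\rfloor$, and the population tail bound $\sum_{j>k}\lambda_j\leq E'k\lambda_{k+1}$ from~\eqref{Assume.EffRank}, to show that the contribution from empirical eigenvalues below $T^{-1}$ reduces to a constant absorbed in $C$. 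This residual trace control is the main obstacle; the rest of the argument is routine bookkeeping.
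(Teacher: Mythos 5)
Your overall architecture matches the paper's: change of norm on $\mathcal{E}_T$ via Lemma~\ref{LemChangeNorm1.main}, reduction to an empirical quadratic form in $\boldsymbol{\epsilon}$, then Hanson--Wright plus the deviation inequality for $\SDP$, with the effective-rank event handling the small eigenvalues. But the step you yourself identify as the key technical one is quantitatively false: the two residual traces are \emph{not} $O(1)$. Take Tikhonov for both $g$ and $\tilde g$ and bound the residual terms at $t=T$ (which is what monotonicity in $t$ forces you to do if you do not re-invoke the deviation bound for $\SDP$): then
$T^{-1}\operatorname{tr}(K_ng_T^2(K_n))=\sum_j \hat\lambda_j T/(\hat\lambda_j T+1)^2$, which is of order $\mathcal{N}_n(T)$, and the same holds for $T^{-1}\operatorname{tr}(K_ng_T(K_n)\tilde g_T(K_n))$. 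The three ingredients you list do not rescue this: the effective-rank bounds control the tail $T\sum_{j>k^*}\hat\lambda_j$ by $O(k^*)$ with $k^*=\#\{j:\hat\lambda_jT\geq 1\}$, not by $O(1)$, and the head contribution $\sum_{j\le k^*}1/(\hat\lambda_jT)$ is itself of order $k^*\asymp\mathcal{N}_n(T)$ (e.g.\ $\hat\lambda_j\asymp j^{-\alpha}$ gives $k^*\asymp T^{1/\alpha}$ for both head and tail). So the ``$+1$'' in $y(u)$ cannot come from the residual expectations being constant.

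The paper's route shows where the $+1$ actually comes from and how to avoid your impasse. It never splits off the residuals: using \eqref{LFU} with $\SDP\leq T$ one has $T^{-1}g^2_{\SDP}(\lambda)\leq Bg_{\SDP}(\lambda)$, so the entire right-hand side of the change-of-norm bound collapses to $(2+B)\|K_n^{1/2}g_{\SDP}^{1/2}(K_n)\boldsymbol{\epsilon}\|_n^2$, i.e.\ the \emph{unsmoothed} quadratic form whose mean at a fixed time $t$ is $\sigma^2\geffdim_n(t)/n$. The point is then Lemma~\ref{trace.g.effective.dimension.relaxed.EVBound}: on $\mathcal{E}_T\cap\mathcal{A}(T,\lfloor M^2T\rfloor)$ one has $\geffdim_n(t)\leq C_1\tildengeffdim(t)+C_2$ for all $1\leq t\leq T$, with $C_2=BE$ covering the regime $t\hat\lambda_1<1$; this additive constant is the source of the ``$+1$''. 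One then compares $\SDP$ with the deterministic time $t$ defined by $\tildengeffdim(t)=\tildengeffdim(\tildetstar)+y$ (Proposition~\ref{prop.deviation.SDP}) and applies Lemma~\ref{LemVarHansonWright} at that $t$, so the ``residual'' part is absorbed into a constant multiple of $\tildengeffdim(\tildetstar)+y$ rather than into a constant. Your decomposition is an exact identity and could be repaired the same way --- bound the sum of the residual traces at the reference time $t$ by $C\tildengeffdim(t)+C$ rather than by $C$ --- but as written the claim ``$O(\sigma^2/n)$'' is wrong, and the additional appeal to the $\SDP$-deviation inequality needed to evaluate the increasing residual terms at the random time $\SDP$ is also missing from your bookkeeping.
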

The proof of Proposition~\ref{prop.variance.term.random} will be based on a series of lemmas successively detailed in what follows.

\medskip

The following lemma provides a version of Assumption \eqref{Assume.EVBound} that is implied by the population variant \eqref{Assume.EffRank}.
\begin{lemma}\label{lem.EffRank.RandomEmpirical}
Suppose that \eqref{Assume.EffRank} and \eqref{BdK} hold. Let $T>0$ be such that $T\log(\lfloor M^2T\rfloor+1)\leq n$. Then, on the event $\mathcal{E}_T\cap \mathcal{A}(T,\lfloor M^2T\rfloor)$, we have
\begin{align*}
  \forall 0< t\leq T,\qquad  t\sum_{j:t\hat\lambda_j< 1}\hat\lambda_j\leq E(|\{j:t\hat\lambda_j\geq 1\}|\vee 1).
\end{align*}
with $E=6E'+4M^2$.
\end{lemma}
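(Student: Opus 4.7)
The plan is to convert the empirical tail sum $t\sum_{j>k}\hat\lambda_j$ into a population tail via the event $\mathcal{A}(T,\lfloor M^2T\rfloor)$, then bound the population tail by \eqref{Assume.EffRank}, and close the loop by comparing $\lambda_{k+1}$ with $\hat\lambda_{k+1}$ on the event $\mathcal{E}_T$.

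Fix $0<t\leq T$ and set $k=|\{j:t\hat\lambda_j\geq 1\}|$. First I would verify that $\mathcal{A}(T,\lfloor M^2T\rfloor)$ is applicable at index $k$: since $\hat\lambda_j\geq 1/t$ for $1\leq j\leq k$, summation gives $k/t\leq \operatorname{tr}(\Sigma_n)\leq M^2$ by \eqref{BdK}, hence $k\leq M^2t\leq \lfloor M^2T\rfloor$. Invoking $\mathcal{A}(T,\lfloor M^2T\rfloor)$ at this index, together with $t\leq T$ and the hypothesis $T\log(\lfloor M^2T\rfloor+1)\leq n$, I obtain
\begin{align*}
t\sum_{j>k}\hat\lambda_j\leq 2t\sum_{j>k}\lambda_j+2M^2\frac{t}{T}+2M^2\frac{t\log(\lfloor M^2T\rfloor+1)}{n}\leq 2t\sum_{j>k}\lambda_j+4M^2.
\end{align*}

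The next step is to apply \eqref{Assume.EffRank} in the form $\sum_{j>k}\lambda_j\leq E'(k\vee 1)\lambda_{k+1}$, which reduces matters to showing $t\lambda_{k+1}\leq 3$. This is the only place where the event $\mathcal{E}_T$ genuinely enters: it translates into $-\tfrac{1}{2}(\Sigma+T^{-1}I)\preceq \Sigma_n-\Sigma\preceq \tfrac{1}{2}(\Sigma+T^{-1}I)$, so in particular $\Sigma\preceq 2\Sigma_n+T^{-1}I$, and Weyl's monotonicity principle gives $\lambda_{k+1}\leq 2\hat\lambda_{k+1}+T^{-1}$. By the definition of $k$ we have $\hat\lambda_{k+1}<1/t$, so $t\lambda_{k+1}<2+t/T\leq 3$.

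Assembling these estimates yields $t\sum_{j>k}\hat\lambda_j\leq 6E'(k\vee 1)+4M^2\leq (6E'+4M^2)(k\vee 1)$, which is the announced bound with $E=6E'+4M^2$. I do not expect any serious obstacle here; the only mildly delicate point is the edge case $k=0$, but it is handled uniformly by writing $k\vee 1$ throughout, since the bound $t\lambda_{k+1}\leq 3$ remains valid at $k=0$ because Weyl's inequality still yields $\lambda_1\leq 2\hat\lambda_1+T^{-1}<3/t$ on $\mathcal{E}_T$.
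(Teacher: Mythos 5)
Your proof is correct and follows essentially the same route as the paper: convert the empirical tail to the population tail via $\mathcal{A}(T,\lfloor M^2T\rfloor)$, apply \eqref{Assume.EffRank}, and control $t\lambda_{k+1}\leq 3$ using the eigenvalue comparison on $\mathcal{E}_T$ (the paper cites Lemma~\ref{LemEVConc}, which is exactly your Weyl-monotonicity step). The minor variations (bounding $k\leq M^2t$ by summing $k/t\leq\operatorname{tr}(\Sigma_n)$ rather than via $\hat\lambda_k\leq M^2/k$) are immaterial.
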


\begin{proof}[Proof of Lemma~\ref{lem.EffRank.RandomEmpirical}]
Firstly by \eqref{BdK} we have $k\hat\lambda_k\leq \sum_{j\leq k}\hat\lambda_j\leq\operatorname{tr}(\hat\Sigma)\leq M^2$ and thus $\hat\lambda_k\leq M^2k^{-1}$ for every $k\geq 1$. 

For $0< t\leq T$ define now $k\geq 0$ such that $t\hat\lambda_k\geq 1 > t\hat\lambda_{k+1}$ (with the convention that $k=0$ if $t\hat\lambda_1<1$). Then it follows from the above that $k\leq \lfloor M^2T\rfloor$.
Let us now consider the event $\mathcal{A}(T,\lfloor M^2T\rfloor)\cap \mathcal{E}_T$. We have
\begin{align}
t\sum_{j>k}\hat \lambda_j&\leq  2t\sum_{j>k}\lambda_j+2M^2+\frac{2M^2T\log(\lfloor M^2T\rfloor+1)}{n}\nonumber\\
&\leq 2tE'\lambda_{k+1}(k\vee 1)+4M^2,\label{Eq.EffRank.RandomEmpirical}
\end{align}
where we applied \eqref{Assume.EffRank} and $T\log(\lfloor M^2T\rfloor+1)\leq n$ in the second inequality. Using the lower bound in Lemma~\ref{LemEVConc}, we have $\lambda_{k+1}\leq 2\hat\lambda_{k+1}+1/T$. Inserting this into \eqref{Eq.EffRank.RandomEmpirical}, we get
\begin{align*}
    t\sum_{j>k}\hat \lambda_j\leq 4E'(k\vee 1)+ 2E'(k\vee 1)+4M^2\leq (6E'+4M^2)(k\vee 1),
\end{align*}
and the claim follows with $E=6E'+4M^2$.
\end{proof}

\medskip

\begin{lemma}\label{trace.g.effective.dimension.relaxed.EVBound} 
Suppose that \eqref{Assume.EffRank} and \eqref{BdK} hold. Let $T>0$ be such that $T\log(\lfloor M^2T\rfloor+1)\leq n$. Then, on the event $\mathcal{E}_T\cap \mathcal{A}(T,\lfloor M^2T\rfloor)$, we have
\begin{align*}
\forall 1\leq t\leq T,\qquad \geffdim_n(t)\leq C_1 \tildengeffdim(t)+C_2
\end{align*}
with $C=\tilde{b}^{-1}(1+b^{-1}EB)$, $C_2=BE$ and $E=6E'+4M^2$.
\end{lemma}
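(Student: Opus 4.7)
The plan is to mimic the proof of Lemma~\ref{LemProxyVariance}, with the crucial difference that the deterministic eigenvalue assumption \eqref{Assume.EVBound} is replaced by its empirical consequence obtained from the population assumption \eqref{Assume.EffRank}. More precisely, on the event $\mathcal{E}_T\cap\mathcal{A}(T,\lfloor M^2T\rfloor)$, Lemma~\ref{lem.EffRank.RandomEmpirical} already furnishes the tail bound $t\sum_{j:t\hat\lambda_j<1}\hat\lambda_j\leq E(k\vee 1)$ with $E=6E'+4M^2$, where $k=|\{j:t\hat\lambda_j\geq 1\}|$. Throughout the argument we work on this event, so \eqref{Assume.EVBound}-type controls are available for every $0<t\leq T$.

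First, I would split $\mathcal{N}^g_n(t)=\sum_{j\leq k}\hat\lambda_j g_t(\hat\lambda_j)+\sum_{j>k}\hat\lambda_j g_t(\hat\lambda_j)$ according to whether $t\hat\lambda_j\geq 1$ or $t\hat\lambda_j<1$. In the degenerate case $t\hat\lambda_1<1$ (i.e.\ $k=0$), applying \eqref{LFU} and Lemma~\ref{lem.EffRank.RandomEmpirical} immediately yields $\mathcal{N}^g_n(t)\leq Bt\sum_{j\geq 1}\hat\lambda_j\leq BE$, giving the constant $C_2$.

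In the generic case $k\geq 1$, the key step is a two-sided comparison on the block of large eigenvalues. Using \eqref{LFL} for $\tilde g$ together with $t\leq T$, we get $\hat\lambda_j\tilde g_T(\hat\lambda_j)\geq \tilde b(1\wedge \hat\lambda_jT)=\tilde b$ for every $j\leq k$. Consequently
\begin{align*}
\sum_{j\leq k}\hat\lambda_j g_t(\hat\lambda_j)\leq \tilde b^{-1}\sum_{j\leq k}\tilde g_T(\hat\lambda_j)\hat\lambda_j^2 g_t(\hat\lambda_j)\leq \tilde b^{-1}\widetilde{\mathcal{N}}^g_n(t).
\end{align*}
Applying now \eqref{LFL} for $g$ on the same block (since $\hat\lambda_jt\geq 1$, $\hat\lambda_j g_t(\hat\lambda_j)\geq b$) one deduces $bk\leq \sum_{j\leq k}\hat\lambda_j g_t(\hat\lambda_j)\leq \tilde b^{-1}\widetilde{\mathcal{N}}^g_n(t)$, so that $k\leq (b\tilde b)^{-1}\widetilde{\mathcal{N}}^g_n(t)$.

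To handle the tail $\sum_{j>k}\hat\lambda_j g_t(\hat\lambda_j)$, I would combine \eqref{LFU} with Lemma~\ref{lem.EffRank.RandomEmpirical}, yielding
\begin{align*}
\sum_{j>k}\hat\lambda_j g_t(\hat\lambda_j)\leq Bt\sum_{j>k}\hat\lambda_j\leq BEk\leq \frac{BE}{b\tilde b}\widetilde{\mathcal{N}}^g_n(t).
\end{align*}
Adding the two blocks gives $\mathcal{N}^g_n(t)\leq \tilde b^{-1}(1+b^{-1}BE)\widetilde{\mathcal{N}}^g_n(t)$ whenever $k\geq 1$, and collecting with the degenerate case $k=0$ produces the stated bound $\mathcal{N}^g_n(t)\leq C_1\widetilde{\mathcal{N}}^g_n(t)+C_2$. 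No genuine obstacle arises; the only point requiring care is verifying that $\hat\lambda_jT\geq 1$ on the block $j\leq k$ so that both \eqref{LFL} bounds yield constants independent of $t$, which is immediate from $t\leq T$.
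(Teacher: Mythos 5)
Your proposal is correct and follows essentially the same route as the paper: split the spectrum at $k=|\{j:t\hat\lambda_j\geq 1\}|$, control the tail via \eqref{LFU} and Lemma~\ref{lem.EffRank.RandomEmpirical}, bound $k$ through \eqref{LFL} for $g$, and convert the head sum into $\tildengeffdim(t)$ using \eqref{LFL} for $\tilde g$ at $T$. The only difference is the order in which the last two bounds are chained, and you arrive at the same constants.
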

\begin{proof}[Proof of Lemma~\ref{trace.g.effective.dimension.relaxed.EVBound}]
If $t\hat\lambda_1<1$, then \eqref{LFU} and Lemma \ref{lem.EffRank.RandomEmpirical} imply
\begin{align*}
    \geffdim_n(t)\leq Bt\sum_{j\geq 1}\hat\lambda_j\leq BE,
\end{align*}
yielding the claim in this case. On the other hand, if $t\hat\lambda_1\geq 1$, then let $k\geq 1$ be defined by $t\hat\lambda_{k+1} < 1\leq t\hat\lambda_{k}$. By \eqref{EqVarBound}, we have
\begin{align}\label{EqChangeVariance}
\geffdim_n(t)\leq \sum_{j\leq k}\hat\lambda_jg_t(\hat\lambda_j)+Bt\sum_{j> k}\hat\lambda_j.
\end{align}
Now by the definition of $k$, Lemma \ref{lem.EffRank.RandomEmpirical} and \eqref{LFL}, we have
\begin{align*}
&t\sum_{j> k}\hat\lambda_j\leq Ek\leq Eb^{-1}\sum_{j\leq k}\hat\lambda_jg_t(\hat\lambda_j).
\end{align*}
Inserting this into \eqref{EqChangeVariance}, we get
\begin{align*}
\sum_{j=1}^n\hat\lambda_jg_t(\hat\lambda_j)&\leq C\sum_{j\leq k}\hat\lambda_jg_t(\hat\lambda_j)\leq \tilde{b}^{-1} C\sum_{j=1}^ng_t(\hat\lambda_j)\hat\lambda_j\tilde g_T(\hat\lambda_j)\hat\lambda_j
\end{align*}
with $C=(1+b^{-1}BE)$. 
\end{proof}

\begin{lemma}\label{deviation.ineq.variance.term.change.norm.SDP}
Suppose that \eqref{Assume.EffRank} and \eqref{BdK} hold. Let $T>0$ be such that $T\log(\lfloor M^2T\rfloor+1)\leq n$. Then, on the event $\mathcal{E}_T\cap \mathcal{A}(T,\lfloor M^2T\rfloor)$, we have
\begin{align*}
\bP\Big( \|K_n^{1/2}g^{1/2}_{\SDP}(K_n)\boldsymbol{\epsilon}\|_n^2 > \frac{\sigma^2}{n}\Lambda(y)\Big)\leq 3\exp\Big(-c\Big( y \wedge \frac{y^2}{\effdim_n(T)}\Big)\Big),\quad y>0,
\end{align*}
with
\begin{align*}
    \Lambda(y) & = C \tildengeffdim(\tildetstar) + 2y +BE,
\end{align*}
where $C=2(1+b^{-1}BE)$, $E=6E'+4M^2$ and $c>0$ is a constant depending only on $A$.
\end{lemma}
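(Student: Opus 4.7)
The plan is to adapt the proof of Proposition \ref{ConcIneqVar} in two ways: work with the raw noise $\boldsymbol{\epsilon}$ (and the $g$-effective dimension $\geffdim_n$) instead of $\Ae$ (and $\tildengeffdim$) inside the quadratic form, and then translate $\geffdim_n$ into $\tildengeffdim$ via Lemma \ref{trace.g.effective.dimension.relaxed.EVBound} so as to match the announced bound $\Lambda(y)=C\tildengeffdim(\tildetstar)+2y+BE$. The key preliminary computations are $\operatorname{tr}(L_nL_n^T)=\operatorname{tr}(\tilde g_T(K_n)K_n)=\effdim_n(T)$ for the Tikhonov choice $\tilde g_T(\lambda)=(\lambda+T^{-1})^{-1}$, which produces exactly the $\effdim_n(T)$ denominator appearing in the sub-exponential tail through Proposition \ref{prop.deviation.SDP}, and the crude bound $\geffdim_n(t)\leq \geffdim_n(T)\leq 2(B\vee 1)\effdim_n(T)$ from \eqref{BdF} together with the Tikhonov lower bound $\lambda\tilde g_T(\lambda)\geq (1/2)(1\wedge \lambda T)$, which ensures the Hanson--Wright sub-Gaussian denominator is also $\effdim_n(T)$ up to a constant.

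Concretely, I would first apply Lemma \ref{LemVarHansonWright} with $L_n=I_n$ at any fixed $t\in[0,T]$ to obtain
\begin{align*}
\bP\Big(\|K_n^{1/2}g_t^{1/2}(K_n)\boldsymbol{\epsilon}\|_n^2>\tfrac{\sigma^2}{n}(\geffdim_n(t)+z)\Big)\leq \exp\Big(-c\bigl(z\wedge z^2/\effdim_n(T)\bigr)\Big),
\end{align*}
where, on the event $\mathcal{E}_T\cap\mathcal{A}(T,\lfloor M^2T\rfloor)$, the mean $\geffdim_n(t)$ is further bounded by $C\tildengeffdim(t)+BE$ via Lemma \ref{trace.g.effective.dimension.relaxed.EVBound}. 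I would then follow the two-case split of Proposition \ref{ConcIneqVar}. In Case~1, $\tildengeffdim(\tildetstar)+y>\tildengeffdim(T)$: since $\SDP\leq T$ and $t\mapsto \|K_n^{1/2}g_t^{1/2}(K_n)\boldsymbol{\epsilon}\|_n^2$ is non-decreasing (by \eqref{BdF}), the claim reduces to the Hanson--Wright step at $t=T$ combined with $\geffdim_n(T)\leq C(\tildengeffdim(\tildetstar)+y)+BE$. In Case~2, pick $t\in(\tildetstar,T]$ with $\tildengeffdim(t)=\tildengeffdim(\tildetstar)+y$ and decompose
\begin{align*}
\bP\bigl(\|K_n^{1/2}g_\SDP^{1/2}(K_n)\boldsymbol{\epsilon}\|_n^2>\tfrac{\sigma^2}{n}\Lambda(y)\bigr)\leq \bP(\SDP>t)+\bP\bigl(\|K_n^{1/2}g_t^{1/2}(K_n)\boldsymbol{\epsilon}\|_n^2>\tfrac{\sigma^2}{n}\Lambda(y)\bigr),
\end{align*}
bounding the first probability by Proposition \ref{prop.deviation.SDP} applied to $L_n=\tilde g_T^{1/2}(K_n)K_n^{1/2}$, which delivers precisely the $y\wedge y^2/\effdim_n(T)$ exponential tail since $\operatorname{tr}(L_nL_n^T)=\effdim_n(T)$ and $\tildengeffdim(t)-\tildengeffdim(\tildetstar)=y$, and the second by the Hanson--Wright step at this fixed $t$.

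The main obstacle will be the careful bookkeeping of constants after the substitution $\geffdim_n(t)\leq C\tildengeffdim(\tildetstar)+Cy+BE$: the Hanson--Wright step naturally contributes an extra additive $z$ in the threshold, so one must align the resulting coefficient on $y$ with the announced value $2$ inside $\Lambda(y)$. This is handled by rescaling the Hanson--Wright deviation parameter $z$ proportionally to $y$ and absorbing the excess multiplicative factors into the universal constant $c>0$ (which the statement only requires to depend on $A$), after which the union bound across the two probability terms of Case~2 produces the announced $3\exp(\cdot)$ tail.
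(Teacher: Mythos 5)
Your proposal is correct and follows essentially the same route as the paper's proof: the same two-case split on whether $\tildengeffdim(\tildetstar)+y$ exceeds $\tildengeffdim(T)$, the same use of Lemma~\ref{LemVarHansonWright} with $L_n=I_n$ combined with Lemma~\ref{LemEffDimGenDim} to get the $\effdim_n(T)$ denominator, the conversion $\geffdim_n(t)\leq C\tildengeffdim(t)+BE$ via Lemma~\ref{trace.g.effective.dimension.relaxed.EVBound} on the stated event, and Proposition~\ref{prop.deviation.SDP} with $\operatorname{tr}(L_nL_n^T)=\effdim_n(T)$ for the $\bP(\SDP>t)$ term. The constant bookkeeping you flag is indeed where the paper itself is loosest, and your resolution (absorbing factors into $c$) matches what the paper implicitly does.
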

\begin{proof}[Proof of Lemma~\ref{deviation.ineq.variance.term.change.norm.SDP}]
If \eqref{EqSimpleCase} holds, that is if $\tildengeffdim(\tildetstar)+y>\tildengeffdim(T)$, then Lemma~\ref{trace.g.effective.dimension.relaxed.EVBound} implies that on $\mathcal{E}_T\cap \mathcal{A}(n/T,K)$,
\begin{align*}
    \Lambda(y)  \geq C \tildengeffdim(T) + y +BE \geq \geffdim_n(T) + y.
\end{align*}
Hence,
\begin{align*}
    &\bP\Big( \|K_n^{1/2}g^{1/2}_{\SDP}(K_n)\boldsymbol{\epsilon}\|_n^2 > \frac{\sigma^2}{n}\Lambda(y)\Big)\\
    &\leq \bP\Big( \|K_n^{1/2}g^{1/2}_{T}(K_n)\boldsymbol{\epsilon}\|_n^2 > \frac{\sigma^2}{n}\tildengeffdim(T) + \frac{\sigma^2}{n}y\Big)
\end{align*}
and the claim follows from Lemma \ref{LemVarHansonWright} and Lemma \ref{LemEffDimGenDim}. On the other hand, if \eqref{EqSimpleCase} does not hold, then we can define $\tildetstar<t\leq T$ by    $\tildengeffdim(t) = \tildengeffdim(\tildetstar)+y$.
On $\mathcal{E}_T\cap \mathcal{A}(n/T,K)$, Lemma~\ref{trace.g.effective.dimension.relaxed.EVBound} implies 
\begin{align*}
    \Lambda(y) & = C \tildengeffdim(t) + y +BE \geq \geffdim_n(t) + y.
\end{align*}
Hence,
\begin{align*}
& \bP\Big( \|K_n^{1/2}g^{1/2}_{\SDP}(K_n)\boldsymbol{\epsilon}\|_n^2 > \frac{\sigma^2}{n}\Lambda(y)\Big)\\
& \leq \bP\Big( \|K_n^{1/2}g^{1/2}_{t}(K_n)\boldsymbol{\epsilon}\|_n^2 > \frac{\sigma^2}{n}\Lambda(y)\Big)+\bP(\SDP > t)\\
&\leq \bP\Big( \|K_n^{1/2}g^{1/2}_{t}(K_n)\boldsymbol{\epsilon}\|_n^2 > \frac{\sigma^2}{n} \geffdim_n(t) + \frac{\sigma^2}{n} y\Big)+\bP(\SDP > t),
\end{align*}
and the claim follows from applying Lemma \ref{LemVarHansonWright} and Lemma \ref{LemEffDimGenDim} to the second last term and Proposition \ref{prop.deviation.SDP} and Lemma \ref{LemEffDimGenDim} to the last term, using that $t>\tildetstar$ and $y=\tildengeffdim(t)-\tildengeffdim(\Ltstar)$.
\end{proof}

\medskip

\begin{proof}[Proof of Proposition~\ref{prop.variance.term.random}]
First, by Lemma \ref{LemChangeNorm1.main}, we have on the event $\mathcal{E}_T$,
\begin{align*}
    \|S_\rho g_{\SDP}(\Sigma_n)S_n^*\boldsymbol{\epsilon}\|_\rho^2\leq 2\|S_n g_{\SDP}(\Sigma_n)S_n^*\boldsymbol{\epsilon}\|_n^2 + T^{-1}\|g_{\SDP}(\Sigma_n)S_n^*\boldsymbol{\epsilon}\|_{\mathcal{H}}^2
\end{align*}
Applying \eqref{LFU} and the fact that $\SDP\leq T$, and then \eqref{BdF}, we get
\begin{align}
    \|S_\rho g_{\SDP}(\Sigma_n)S_n^*\boldsymbol{\epsilon}\|_\rho^2&\leq  2 \|S_n g_{\SDP}(\Sigma_n)S_n^*\boldsymbol{\epsilon}\|_n^2+T^{-1}\|g_{\SDP}(\Sigma_n)S_n^*\boldsymbol{\epsilon}\|_{\mathcal{H}}^2\nonumber\\
    &\leq 2 \|S_n g_{\SDP}(\Sigma_n)S_n^*\boldsymbol{\epsilon}\|_n^2+B\|g^{1/2}_{\SDP}(\Sigma_n)S_n^*\boldsymbol{\epsilon}\|_{\mathcal{H}}^2\nonumber\\
    &=2 \|K_ng_{\SDP}(K_n)\boldsymbol{\epsilon}\|_n^2+B\|K_n^{1/2}g^{1/2}_{\SDP}(K_n)\boldsymbol{\epsilon}\|_n^2\nonumber\\
    &\leq (2+B)\|K_n^{1/2}g^{1/2}_{\SDP}(K_n)\boldsymbol{\epsilon}\|_n^2.\label{eq.variance.term.random}
\end{align}
Hence, on the event $\mathcal{E}_T$,
\begin{align*}
    \mathbf{P}_{\boldsymbol{\epsilon}}(\|S_\rho g_{\SDP}(\Sigma_n)S_n^*\boldsymbol{\epsilon}\|_\rho^2>y(u))\leq \mathbf{P}_{\boldsymbol{\epsilon}}((2+B)\|K_n^{1/2}g^{1/2}_{\SDP}(K_n)\boldsymbol{\epsilon}\|_n^2>y(u)),
\end{align*}
and the claim follows from Lemma \ref{deviation.ineq.variance.term.change.norm.SDP} applied with $y=C(\sqrt{\effdim_n(T)u}+u)$ and the fact that the assumption $T \leq cn/(\log n)$ with $c$ small enough implies that $T\log(\lfloor M^2T\rfloor+1)\leq n$.
\end{proof}

\subsubsection{A bound for the bias part}\label{SecRDBiasPart}
\begin{proposition}\label{lem.bias.term.random}
Under the assumptions of Theorem~\ref{CorExtThmSDPBoundedK}, we have on the event $\mathcal{E}_T\cap \mathcal{A}(T,\lfloor M^2T\rfloor)$,
\begin{align*}
    \mathbf{P}_{\boldsymbol{\epsilon}}(\|S_\rho r_{\SDP}(\Sigma_n)f\|_\rho^2> z(u))\leq 2e^{-u},\quad,u>0,
\end{align*}
with 
\begin{align*}
    z(u)=C\Big(\|r_{\tildetstar\wedge T}(K_n)\tilde{\mathbf{f}}\|_n^2+\frac{\sqrt{u\mathcal{N}_n(T)}+u}{n}+\frac{1}{T^{1+2s}}+\frac{\|\Sigma-\Sigma_n\|_{\operatorname{op}}^{2\wedge 2s}}{T}\Big),
\end{align*}
If $s\leq 1/2$, then the last term in the definition of $z(u)$ can be dropped.
\end{proposition}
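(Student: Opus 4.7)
The plan is to combine a change-of-norm argument with the fixed-design bias tools, namely Lemma \ref{LemChangeBias} and Proposition \ref{ConcIneqBias}. Identifying $f$ with its $\H$-representer $f_\H$ (which exists since the inner case $r\geq 1/2$ is assumed), the target becomes $\|S_\rho r_{\SDP}(\Sigma_n)f\|_\rho^2 = \|\Sigma^{1/2} r_{\SDP}(\Sigma_n) f_\H\|_\H^2$. Taking quadratic forms in Lemma \ref{LemChangeNorm1.main} gives, on $\mathcal{E}_T$, the positive semidefinite inequality $\Sigma \leq 2\Sigma_n + T^{-1}I$, so that, using $\|\Sigma_n^{1/2} h\|_\H = \|S_n h\|_n$ for every $h\in\H$,
\begin{align*}
\|S_\rho r_{\SDP}(\Sigma_n) f\|_\rho^2 \leq 2\|r_{\SDP}(K_n)\mathbf{f}\|_n^2 + T^{-1}\|r_{\SDP}(\Sigma_n) f_\H\|_\H^2.
\end{align*}

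The second term is delicate: the trivial estimate $\|r_{\SDP}(\Sigma_n) f_\H\|_\H^2 \leq \|f_\H\|_\H^2$ yields only $O(T^{-1})$, whereas the claim demands $O(T^{-(1+2s)})$. The key step is to split the spectral expansion $\|r_{\SDP}(\Sigma_n)f_\H\|_\H^2 = \sum_j r_{\SDP}^2(\hat\lambda_j)\langle f_\H,\hat u_j\rangle^2$ according to whether $\hat\lambda_j T\leq 1$ or $\hat\lambda_j T>1$. On indices in the second regime one has $1\leq T\hat\lambda_j$, which can be inserted to reintroduce a $\hat\lambda_j$ weight, producing
\begin{align*}
\|r_{\SDP}(\Sigma_n)f_\H\|_\H^2 \leq \sum_{\hat\lambda_j T\leq 1}\langle f_\H,\hat u_j\rangle^2 + T\|r_{\SDP}(K_n)\mathbf{f}\|_n^2.
\end{align*}
The tail sum is handled exactly as in the proof of Lemma \ref{LemChangeBias}: writing $f_\H = \Sigma^s h$ with $\|h\|_\H\leq R$ and decomposing $\Sigma^s = \Sigma_n^s + (\Sigma^s - \Sigma_n^s)$, the operator-power estimates from the appendix yield $\sum_{\hat\lambda_j T\leq 1}\langle f_\H,\hat u_j\rangle^2 \leq C(T^{-2s} + \|\Sigma_n-\Sigma\|_{\operatorname{op}}^{2\wedge 2s})$, with the second term droppable when $s\leq 1/2$.

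Inserting this back into the previous inequality, the residual contribution $T\cdot T^{-1}\|r_{\SDP}(K_n)\mathbf{f}\|_n^2$ gets absorbed and we arrive on $\mathcal{E}_T$ at
\begin{align*}
\|S_\rho r_{\SDP}(\Sigma_n) f\|_\rho^2 \leq 3\|r_{\SDP}(K_n)\mathbf{f}\|_n^2 + C\bigl(T^{-(1+2s)} + T^{-1}\|\Sigma_n - \Sigma\|_{\operatorname{op}}^{2\wedge 2s}\bigr).
\end{align*}
Applying now Lemma \ref{LemChangeBias} replaces $\|r_{\SDP}(K_n)\mathbf{f}\|_n^2$ by $b^{-1}\|r_{\SDP}(K_n)\tilde{\mathbf{f}}\|_n^2$ plus remainders of the same form. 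Finally, Proposition \ref{ConcIneqBias} applied with $L_n = \tilde g_T^{1/2}(K_n)K_n^{1/2}$, inverting the Bernstein-type tail $y\wedge y^2/\operatorname{tr}(L_nL_n^T) = u$ and using $\operatorname{tr}(L_nL_n^T)\leq 2\mathcal{N}_n(T)$ from Lemma \ref{LemEffDimGenDim} (applied to $\tilde g$), yields
\begin{align*}
\|r_{\SDP}(K_n)\tilde{\mathbf{f}}\|_n^2 \leq 2\|r_{\tildetstar\wedge T}(K_n)\tilde{\mathbf{f}}\|_n^2 + C\frac{\sigma^2}{n}\bigl(\sqrt{u\,\mathcal{N}_n(T)} + u\bigr)
\end{align*}
with $\mathbf{P}_{\boldsymbol{\epsilon}}$-probability at least $1-2e^{-u}$, producing exactly the claimed form of $z(u)$.

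The main obstacle is Step 2: the change-of-norm introduces a cost of the form $T^{-1}\|r_{\SDP}(\Sigma_n)f_\H\|_\H^2$, which is an $\H$-norm quantity with no \emph{a priori} decay in $T$, so the naive estimate is too weak by a factor of $T^{2s}$. The eigenvalue-splitting trick supplies the missing $T^{-2s}$ gain, and can be viewed as a random-design analogue of the key algebraic manoeuvre used to prove Lemma \ref{LemChangeBias} in the fixed-design setting.
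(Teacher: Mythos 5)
Your proof is correct and follows essentially the same route as the paper's: the change-of-norm bound from Lemma~\ref{LemChangeNorm1.main} on $\mathcal{E}_T$, the eigenvalue split at $\hat\lambda_jT=1$ combined with the source-condition tail bound \eqref{EqChangeBias2} to gain the missing $T^{-2s}$, and finally the fixed-design deviation inequality of Proposition~\ref{ConcIneqBias} with $L_n=\tilde g_T^{1/2}(K_n)K_n^{1/2}$. The only (harmless) difference is that you pass through the unsmoothed empirical bias and then invoke Lemma~\ref{LemChangeBias}, whereas the paper performs a single split that produces the smoothed bias $\|r_{\SDP}(K_n)\tilde{\mathbf{f}}\|_n^2$ directly via \eqref{LFL} applied to $\tilde g$.
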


\begin{proof}[Proof of Proposition~\ref{lem.bias.term.random}]
First, note that under $s=r-1/2\geq 0$ the regression function $f$ can be represented as a function in $\mathcal{H}$. By Lemma \ref{LemChangeNorm1.main}, we have on the event $\mathcal{E}_T$,
\begin{align*}
     \|S_\rho r_{\SDP}(\Sigma_n)f\|_\rho^2\leq 2\|S_n r_{\SDP}(\Sigma_n) f\|_n^2 + T^{-1}\|r_{\SDP}(\Sigma_n)f\|_{\mathcal{H}}^2.
\end{align*}
Using this and \eqref{BdF}, we get
\begin{align*}
    & \|S_\rho r_{\SDP}(\Sigma_n)f\|_\rho^2
   \leq \sum_{j\geq 1}(2\hat\lambda_j+T^{-1})r_{\SDP}^2(\hat\lambda_j)\langle f,\hat u_j\rangle^2\\
   &\leq 3b^{-1}\sum_{\hat\lambda_jT > 1}\hat\lambda_jr_{\SDP}^2(\hat\lambda_j)\tilde{g}_T(\hat\lambda_j)\hat\lambda_j\langle f,\hat u_j\rangle^2 + 3T^{-1}\sum_{\hat\lambda_jT\leq 1}\langle f,\hat u_j\rangle^2.
\end{align*}
Using \eqref{EqChangeBias2}, we get on $\mathcal{E}_T$,
\begin{align*}
    \|S_\rho r_{\SDP}(\Sigma_n)f\|_\rho^2\leq 3b^{-1}\|r_{\SDP}(K_n)\Af\|_n^2+ z(u)/2,
\end{align*}
provided that the constant $C$ in the definition of $z(u)$ is six times as big as the constant in \eqref{EqChangeBias2}. Hence, on the event $\mathcal{E}_T$,
\begin{align*}
    \mathbf{P}_{\boldsymbol{\epsilon}}(\|S_\rho r_{\SDP}(\Sigma_n)f\|_\rho^2 > z(u)) & \leq \mathbf{P}_{\boldsymbol{\epsilon}}(6b^{-1}\|r_{\SDP}(K_n)\Af\|_n^2 > z(u)),
\end{align*}
and the claim follows from \eqref{EqThmCondDesignProxySmoothedNorm3}, provided that $C$ in the definition of $z(u)$ is chosen large enough. 
\end{proof}

\subsection{Proofs of oracle inequalities (inner case)}\label{sec.proofs.oracle ineq.inner.case}
\subsubsection{Proof of Theorem \ref{CorExtThmSDPBoundedK}}
\label{proof.Theorem.oracle.random.T.Deterministic}
Since $s=r-1/2\geq 0$, $f$ can be represented as a function in $\mathcal{H}$. In particular, we can write $\mathbf{Y}=S_nf+\boldsymbol{\epsilon}$, leading to
\begin{align*}
   f-\hat f^{(\SDP)}&=f-g_{\SDP}(\Sigma_n)\Sigma_n f-g_{\SDP}(\Sigma_n)S_n^*\boldsymbol{\epsilon}\\
   &=r_{\SDP}(\Sigma_n)f-g_{\SDP}(\Sigma_n)S_n^*\boldsymbol{\epsilon}.
\end{align*}
Hence,
\begin{align*}
    \|S_\rho(f-\hat f^{(\SDP)})\|_\rho^2&\leq 2\|S_\rho r_{\SDP}(\Sigma_n)f\|_\rho^2+2\|S_\rho g_{\SDP}(\Sigma_n)S_n^*\boldsymbol{\epsilon}\|_\rho^2.
\end{align*}
The last but one term is addressed by Lemma~\ref{lem.bias.term.random} and the last one by Proposition~\ref{prop.variance.term.random}. Combining these estimates with \eqref{Eqtstarbound2}, introducing the event $\Omega_T=\mathcal{E}_T\cap \mathcal{A}(T,\lfloor M^2T\rfloor)$, we get on the event $\Omega_T$,
\begin{align*}
    \mathbf{P}_{\boldsymbol{\epsilon}}(\|S_\rho(f-\hat f^{(\SDP)})\|_\rho^2> x(u))\leq 5e^{-u},\quad u>0,
\end{align*}
with 
\begin{align*}
    x(u)=C\Big(&\min_{0< t\leq T}\Big\{\|r_t(K_n)\mathbf{f}\|_n^2+\frac{\mathcal{N}_n^g(t)}{n}\Big\}\\&+\frac{\sqrt{u\mathcal{N}_n(T)}+u+1}{n}+\frac{1}{T^{1+2s}}+\frac{\|\Sigma-\Sigma_n\|_{\operatorname{op}}^{2\wedge 2s}}{T}\Big),
\end{align*}
where the last term in the definition of $x(u)$ can be dropped if $s\leq 1/2$. 
Invoking the last claim in Lemma~\ref{LemChangeBias} and Lemma~\ref{LemEffDimGenDim}, we get on the event $\Omega_T$,
\begin{align*}
    \mathbf{P}_{\boldsymbol{\epsilon}}(\|S_\rho(f-\hat f^{(\SDP)})\|_\rho^2> \tilde x(u))\leq 5e^{-u},\quad u>0.
\end{align*}
with 
\begin{align*}
    \tilde x(u)=C\Big(&\min_{0< t\leq T}\Big\{\frac{1}{t^{1+2s}}+\frac{\mathcal{N}_n(t)}{n}+\frac{\|\Sigma-\Sigma_n\|_{\operatorname{op}}^{2\wedge 2s}}{t}\Big\}+\frac{\sqrt{u\mathcal{N}_n(T)}+u}{n}\Big),
\end{align*}
where the last term in the curly brackets in the definition of $\tilde x(u)$ can be dropped if $s\leq 1/2$.
Integrating this inequality on the event $\Omega_T$, we get
\begin{align*}
&\mathbb{E}\1_{\Omega_T}\|S_\rho(f-\hat f^{(\SDP)})\|_\rho^2 = \mathbb{E}\1_{\Omega_T} \mathbf{E}_{\boldsymbol{\epsilon}}\|S_\rho(f-\hat f^{(\SDP)})\|_\rho^2\\
&\leq C\Big(\min_{1\leq t\leq T}\Big\{\frac{1}{t^{1+2s}}+\frac{\mathbb{E}\1_{\Omega_T}\mathcal{N}_n(t)}{n} + \frac{\mathbb{E}\|\Sigma-\Sigma_n\|_{\operatorname{op}}^{2\wedge 2s}}{t}\Big\}+\frac{\mathbb{E}\1_{\Omega_T}\sqrt{\mathcal{N}_n(T)}+1}{n}\Big),
\end{align*}
where the last term in the curly brackets can be dropped if $s\leq 1/2$. Here, we have replaced the minimum over $0<t\leq T$ by $1\leq t\leq T$ since the range $t\in(0,1]$ does not yield any improvement.
Focusing now on $\mathbb{E}\1_{\Omega_T}\|\Sigma-\Sigma_n\|_{\operatorname{op}}^{2\wedge 2s}$, this latter term can be tackled by first
\begin{align*}
    \mathbb{E}\|\Sigma-\Sigma_n\|_{\operatorname{op}}^{2\wedge 2s}\leq (\mathbb{E}\|\Sigma-\Sigma_n\|_{\operatorname{op}}^2)^{1\wedge s}\leq (\mathbb{E}\|\Sigma-\Sigma_n\|_{\operatorname{HS}}^2)^{1\wedge s}.
\end{align*}
Then, since the random variables $k_{X_i}\otimes k_{X_i}-\Sigma$ are centered and independent, we have
\begin{align}\label{EqOrderCovOp}
\mathbb{E}\|\Sigma-\Sigma_n\|_{\operatorname{HS}}^2\leq \frac{1}{n}\mathbb{E}\|k_{X}\otimes k_{X}\|_{\operatorname{HS}}^2=\frac{1}{n}\mathbb{E}\|k_{X}\|_{\mathcal{H}}^4\leq \frac{M^4}{n}.
\end{align}
Using the Cauchy-Schwarz inequality, the second claim in Lemma \ref{LemConcEffDim} and the previous bound, we get
\begin{align*}
&\mathbb{E}\1_{\Omega_T}\|S_\rho(f-\hat f^{(\SDP)})\|_\rho^2\\
&\leq C\Big(\min_{1\leq t\leq T}\Big\{\frac{1}{t^{1+2s}}+\frac{1}{tn^{1\wedge s}}+\frac{\mathcal{N}(t)+ne^{-n/t}}{n}\Big\}
+ \frac{\sqrt{\mathcal{N}(T)+ne^{-n/T}}}{n}\Big),
\end{align*}
where the second term $t^{-1}n^{-(1\wedge s)}$ is only present for $s>1/2$. 

We now show that this term can also be dropped for $s>1/2$. If $s\geq 1$, this is clear using $t^{-1}n^{-1\wedge s}\leq n^{-1}$. Assume now that $s\in(1/2,1)$. If $t\leq \sqrt{n}$, then $t^{-1}n^{-s}\leq t^{-1-2s}$, while if $t> \sqrt{n}$ then $t^{-1}n^{-s}\leq n^{-1/2-s}\leq n^{-1}$. Moreover, the terms $n e^{-n/t}$ and $n e^{-n/T}$ can also be dropped using the condition $1\leq t\leq T\leq c_1n/(\log n)$ with $c_1$ small enough. We thus get 
\begin{align*}
&\mathbb{E}\1_{\Omega_T}\|S_\rho(f-\hat f^{(\SDP)})\|_\rho^2\leq C\Big(\min_{1\leq t\leq T}\Big\{\frac{1}{t^{1+2s}}+\frac{\mathcal{N}(t)}{n}\Big\}
+ \frac{\sqrt{\mathcal{N}(T)}}{n}\Big).
\end{align*}

The last part of the proof consists in analyzing the prediction error on the complement of the event $\Omega_T$.
Since $\|\hat f^{(t)}\|_{\mathcal{H}}^2$ is non-decreasing in $t\geq 1$, we have $\|\hat f^{(\SDP)}\|_{\mathcal{H}}^2\leq \|\hat f^{(T)}\|_{\mathcal{H}}^2$. Moreover, applying \eqref{BdF} and \eqref{LFU}, we get $\|\hat f^{(T)}\|_{\mathcal{H}}^2\leq B T\|\mathbf{Y}\|_n^2$. Hence,
$\|S_\rho\hat f^{(\SDP)}\|_{\rho}^2\leq \lambda_1B T\|\mathbf{Y}\|_n^2$ and
\begin{align}
    &\|S_\rho (f-\hat f^{(\SDP)})\|_\rho^2\leq 2\|S_\rho f\|_\rho^2+2\lambda_1B T\|\mathbf{Y}\|_n^2\nonumber\\
    &\leq 2\|S_\rho f\|_\rho^2+4M^2\lambda_1 BT\|f\|_{\mathcal{H}}^2+4\lambda_1 BT\|\boldsymbol{\epsilon}\|_n^2\leq C(1+T\|\boldsymbol{\epsilon}\|_n^2)\label{EqSmallEventBound1},
\end{align}
where we applied $\|S_n f\|_n^2=(1/n)\sum_{i=1}^n\langle f,k_{X_i}\rangle_{\mathcal{H}}^2\leq M^2\|f\|_{\mathcal{H}}^2$ in the second inequality. Using $T\leq c_1n/(\log n)$ with $c_1$ small enough, we get $\mathbb{P}(\Omega_T^c) \leq \P(A(n/T,3M^2T)^c) + \mathbb{P}(\mathcal{E}_T^c)\leq 2 C_1Te^{-c_2n/T}\leq 2C_2n^{-C_3}$ with $C_3>4$. Using the Cauchy-Schwarz inequality and \eqref{SubGN} it follows that
\begin{align}\label{EqSmallEventBound2}
    \mathbb{E}\1_{\Omega_T^c}\|S_\rho(f-\hat f^{(\SDP)})\|_\rho^2\leq Cn^{-1}
\end{align}
and the claim follows.

\subsubsection{Proof of Theorem \ref{CorExtThmSDPBoundedKEmpT}}
\label{proof.Theorem.oracle.random.T.Random}
We prove the result in the case $s\leq 1/2$, the other case follows similarly. From previous Section~\ref{proof.Theorem.oracle.random.T.Deterministic}, let us consider the event $\Omega_{T_1}=\mathcal{E}_{T_1}\cap \mathcal{A}(n/T_1,3M^2T_1)$, where $T_1=c_1 n/\log n$ and $c_1$ is sufficiently small such that $\mathbb{P}(\Omega_{T_1})\leq n^{-4}$ (such a choice is possible by Lemma~\ref{LemConcIneqBoundedK} and Lemma \ref{LemConcIneqTraces}). 

We first show that with $T = \min(T_1,\hat T)$, we have on the event $\Omega_{T_1}$
\begin{align}
\|r_{\tildetstar\wedge T}(K_n)\tilde{\mathbf{f}}\|_n^2 + \frac{\sigma^2}{n}\tildengeffdim(\tildetstar) & \leq C\Big(\min_{t>0}\Big\{t^{-2r}+\frac{\effdim_n(t)}{n}\Big\}+\frac{\log n}{n}\Big)\label{EqEqtstarEmpTMain}
\end{align}
By the definition of $\tildetstar$ (Eq.~\eqref{Deftildetstar}), Eq.~\eqref{Eqtstarbound2} and Lemma \ref{LemChangeBias}, we have on the event $\Omega_{T_1}$
\begin{align}
\|r_{\tildetstar\wedge T}(K_n)\tilde{\mathbf{f}}\|_n^2+\frac{\sigma^2}{n} \tildengeffdim(\tildetstar)   &\leq C\min_{0<  t\leq T}\Big\{t^{-2r}+\frac{\effdim_n(t)}{n}\Big\}.\label{EqTstarEmpT}
\end{align}
On the one hand, if $\hat T> T_1$, then $T=T_1$ and $T_1\mathcal{N}_n(T_1)<n$ and thus (since $2r\geq 1$)
\begin{align*}
\min_{0<  t\leq T}\Big\{t^{-2r}+\frac{\effdim_n(t)}{n}\Big\} \leq \frac{1}{T_1} + \frac{\effdim_n(T_1)}{n}<\frac{2}{T_1} \leq \frac{2}{c_1} \frac{\log n}{n} \cdot
\end{align*}
On the other hand, if $\hat T \leq  T_1$, then $T=\hat T$ and $t_n$ defined by $t_n^{2r}\mathcal{N}_n(t_n)=n$ satisfies either $1\leq  t_n\leq \hat T$ or $0<t_n<1$. In the former case the right-hand side of \eqref{EqTstarEmpT} is bounded by $2C\min_{t>0}\{t^{-2r}+n^{-1}\effdim_n(t)\}$, where the constraint that $t\leq T$ has been removed, while in the latter case the bound \eqref{EqEqtstarEmpTMain} is trivial since $2\min_{t>0}\{t^{-2r}+n^{-1}\effdim_n(t)\}\geq t_n^{-2r}+n^{-1}\effdim_n(t_n)\geq 1$ in this case. This completes the proof of \eqref{EqEqtstarEmpTMain}. 

Similarly, by the definition of $T$, we have
\begin{align*}
    \frac{\sqrt{\mathcal{N}_n(T)}}{n} = \frac{1}{\sqrt{n}} \sqrt{ \frac{\mathcal{N}_n(T)}{n} }\leq C\Big(\sqrt{\frac{1}{n}\min_{t>0}\Big\{ t^{-1}+\frac{\mathcal{N}_n(t)}{n}\Big\}}+\frac{\log n}{n}\Big)
\end{align*}
We can now proceed as in Proposition \ref{prop.variance.term.random} and Proposition \ref{lem.bias.term.random} to obtain on the event $\Omega_{T_1}$
\begin{align*}
    &\mathbf{E}_\epsilon\|S_\rho(f-\hat{f}^{(\SDP)})\|_{\rho}^2\\
    &\leq C\Big(\min_{t\geq 1}\Big\{t^{-2r}+\frac{1}{n}\effdim_n(t)\Big\}+\sqrt{\frac{1}{n}\min_{t>0}\Big\{ t^{-1}+\frac{\mathcal{N}_n(t)}{n}\Big\}}+\frac{\log n}{n}\Big).
\end{align*}
Here we used that $T$ does only depend on the design and is thus fixed conditional on the design. Hence, taking expectation and using Lemma \ref{LemConcEffDim} and Remark \ref{RemConcEffDim}, we conclude
\begin{align*}
    &\mathbb{E}\1_{\Omega_{T_1}}\|S_\rho(f-\hat{f}^{(\SDP)})\|_{\rho}^2  \\
    &\leq C\Big(\min_{t>0}\Big\{t^{-2r}+\frac{\effdim(t)}{n}\Big\}+\sqrt{\frac{1}{n}\min_{t>}\Big\{ t^{-1}+\frac{\mathcal{N}(t)}{n}\Big\}}+\frac{\log n}{n}\Big).
\end{align*} 

The claim follows from the final arguments in the proof of Theorem~\ref{CorExtThmSDPBoundedK}, showing that
\begin{align*}
    &\mathbb{E}\|S_\rho(f-\hat{f}^{(\SDP)})\|_{\rho}^2\leq \mathbb{E}\croch{\1_{\Omega_{T_1}}\|S_\rho(f-\hat{f}^{(\SDP)})\|_{\rho}^2 } + Cn^{-1}.
\end{align*}.

\subsection{Proofs of oracle inequalities (outer case)}
\label{sec.proof.DP.outer.case.random.design}
\subsubsection{Proof of Theorem \ref{ThmDPHardProblems}}
For simplicity, we prove Theorem \ref{ThmDPHardProblems} only in the case of Tikhonov regularization. Throughout the proof, we set $T=cn/(\log n)$ with $c$ sufficiently small such that 
\begin{align}\label{EqEventPolSmall}
	\mathbb{P}(\mathcal{E}_{T}^c)\leq n^{-C},\qquad C>4.
\end{align}
Such a choice is possible by Lemma \ref{LemConcIneqBoundedK}.

\begin{lemma}\label{LemBoundBiasTR} Suppose that \eqref{Assume.SC} holds with $0<r\leq 1/2$. For $t\geq 1$, let $f^{(t)}=(\Sigma + t^{-1})^{-1}S_\rho^* f\in\mathcal{H}$. Then we have
	\begin{itemize}
		\item[(i)] $\|f-S_\rho f^{(t)}\|_{\rho}^2\leq t^{-2r}R^2$,
		\item[(ii)] $\|f^{(t)}\|_{\mathcal{H}}^2\leq t^{-2r+1}R^2$.
	\end{itemize}
\end{lemma}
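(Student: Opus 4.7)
The plan is to work entirely through functional calculus, exploiting the two operator identities $S_\rho S_\rho^* = L_\rho$ and $S_\rho^*S_\rho = \Sigma$, which link the $L^2(\rho)$ and $\mathcal{H}$ spectral pictures. The key auxiliary fact is that for any bounded Borel function $\varphi$,
\[
    S_\rho\, \varphi(\Sigma)\, S_\rho^* = L_\rho\, \varphi(L_\rho)/\Sigma\text{-version}\ \text{in the sense that}\ S_\rho\, \Sigma^k\, S_\rho^* = L_\rho^{k+1}\ \text{for}\ k\geq 0,
\]
and more generally $S_\rho\, (\Sigma+t^{-1})^{-1}S_\rho^* = L_\rho(L_\rho+t^{-1})^{-1}$. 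This follows from applying the SVD of $S_\rho$ or, equivalently, by first verifying the identity for polynomials in $\Sigma$ and extending via the standard functional calculus.

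For part (i), I would use this identity to rewrite
\[
    f - S_\rho f^{(t)} = \bigl(I - L_\rho(L_\rho+t^{-1})^{-1}\bigr) f = t^{-1}(L_\rho + t^{-1})^{-1} f.
\]
Substituting the source condition \eqref{Assume.SC}, i.e.\ $f = L_\rho^r g$ with $\|g\|_\rho \leq R$, gives
\[
    \|f - S_\rho f^{(t)}\|_\rho \leq \Bigl\|t^{-1}(L_\rho + t^{-1})^{-1} L_\rho^r\Bigr\|_{\operatorname{op}} \|g\|_\rho \leq \Bigl(\sup_{\lambda\in[0,M^2]} \frac{\lambda^r}{t\lambda+1}\Bigr) R.
\]
An elementary one-variable optimisation shows $\sup_{\lambda\geq 0}\lambda^r/(t\lambda+1)\leq t^{-r}$ for $0\leq r\leq 1$ (maximum attained at $\lambda^* = r/(t(1-r))$, with value $(1-r)^{1-r}r^r t^{-r}\leq t^{-r}$), yielding claim (i).

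For part (ii), I would write
\[
    \|f^{(t)}\|_{\mathcal{H}}^2 = \bigl\langle (\Sigma+t^{-1})^{-2}S_\rho^* f, S_\rho^* f\bigr\rangle_{\mathcal{H}} = \bigl\langle S_\rho(\Sigma+t^{-1})^{-2} S_\rho^* f, f\bigr\rangle_\rho = \bigl\langle L_\rho (L_\rho+t^{-1})^{-2} f,\, f\bigr\rangle_\rho,
\]
and again substitute $f = L_\rho^r g$ to get $\|f^{(t)}\|_{\mathcal{H}}^2 = \langle L_\rho^{2r+1}(L_\rho+t^{-1})^{-2} g, g\rangle_\rho$. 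Functional calculus then bounds this by $R^2 \sup_{\lambda\geq 0} \lambda^{2r+1}/(\lambda+t^{-1})^2$. A direct computation (maximum at $\lambda^* = (2r+1)t^{-1}/(1-2r)$ for $r<1/2$, with limiting value $1$ at $r=1/2$) gives the supremum $\leq t^{1-2r}$ throughout $0\leq r\leq 1/2$, establishing (ii). The only mild subtlety — and the step most likely to need a pointer to the literature — is the operator-calculus identity linking $S_\rho\varphi(\Sigma)S_\rho^*$ with $L_\rho\varphi(L_\rho)$, but since we only need it for the resolvent $\varphi(\lambda)=(\lambda+t^{-1})^{-1}$ and for powers, it is entirely routine.
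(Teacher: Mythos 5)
Your argument is correct. The paper does not actually prove this lemma in-house: its ``proof'' is a two-line pointer, deriving (i) from Theorem 4 of \cite{MR2186447} (applied with $\lambda=t^{-1}$) and (ii) ``analogously'' from Proposition 3 of \cite{C06}. What you have written is, in effect, the standard proof of those cited results: the intertwining relation $(\Sigma+t^{-1})^{-1}S_\rho^*=S_\rho^*(L_\rho+t^{-1})^{-1}$ (which follows from $\Sigma S_\rho^*=S_\rho^* L_\rho$), the resulting identities $f-S_\rho f^{(t)}=t^{-1}(L_\rho+t^{-1})^{-1}f$ and $\|f^{(t)}\|_{\mathcal H}^2=\langle L_\rho^{2r+1}(L_\rho+t^{-1})^{-2}g,g\rangle_\rho$, and the two elementary suprema $\sup_{\lambda\ge 0}\lambda^r/(t\lambda+1)\le t^{-r}$ for $r\in[0,1]$ and $\sup_{\lambda\ge 0}\lambda^{2r+1}/(\lambda+t^{-1})^2\le t^{1-2r}$ for $r\in[0,1/2]$ (the latter because $s^s(2-s)^{2-s}\le 4$ for $s=2r+1\in[1,2]$). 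Both optimizations check out, and restricting the supremum to the actual spectrum of $L_\rho$ only helps. So the net effect is that you buy a self-contained proof at the cost of a page of routine functional calculus, whereas the paper buys brevity at the cost of two external references; mathematically the routes coincide. The only blemish is cosmetic: your first display, ``$S_\rho\varphi(\Sigma)S_\rho^*=L_\rho\varphi(L_\rho)/\Sigma\text{-version}$'', is garbled as written and should simply state $S_\rho\varphi(\Sigma)S_\rho^*=L_\rho\varphi(L_\rho)$ for continuous $\varphi$ with $\varphi(0)$ interpreted suitably, verified on polynomials and extended by approximation (or, as you note, just verified directly for the resolvent, which is all that is needed).
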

\begin{proof}[Sketch of proof of Lemma~\ref{LemBoundBiasTR}]
	Part (i) follows from Theorem 4 in \cite{MR2186447} applied with $\lambda=t^{-1}$. Part (ii) can be proved analogously; see e.g. Proposition 3 in \cite{C06}.
\end{proof}

\begin{lemma}\label{prop.variance.term.random.outer}
	Under the assumptions of Theorem~\ref{ThmDPHardProblems}, we have on $\mathcal{E}_T$,
	\begin{align*}
		\mathbf{E}_\epsilon\|S_\rho g_{\DP}(\Sigma_n) S_n^*\boldsymbol{\epsilon} \|_\rho^2 \leq C\Big(\min_{0< t\leq c\frac{n}{\log n}}\Big\{\|r_t(K_n)\mathbf{f}\|_n^2+\frac{\mathcal{N}_n(t)}{n}\Big\}+\frac{1} {\sqrt{n}}\Big).
	\end{align*}
\end{lemma}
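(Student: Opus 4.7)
The plan is to reduce the $L^2(\rho)$-quantity to an empirical-norm quadratic form in $\boldsymbol{\epsilon}$, then invoke the fixed-design deviation bound of Proposition~\ref{ConcIneqVar} with $L_n=I_n$, and finally translate the bound into the oracle form via the monotonicity properties of bias and proxy variance. Concretely, apply Lemma~\ref{LemChangeNorm1.main} with $\delta=1/2$ to $h=g_{\DP}(\Sigma_n)S_n^*\boldsymbol{\epsilon}\in\mathcal{H}$ on the event $\mathcal{E}_T$ to obtain
\begin{align*}
\|S_\rho g_{\DP}(\Sigma_n)S_n^*\boldsymbol{\epsilon}\|_\rho^2 \leq 2\|S_nh\|_n^2 + T^{-1}\|h\|_{\mathcal{H}}^2.
\end{align*}
Using the SVD \eqref{EqSVDSamplingOp} the right-hand side equals $2\|K_ng_{\DP}(K_n)\boldsymbol{\epsilon}\|_n^2 + T^{-1}\langle g_{\DP}^2(K_n)K_n\boldsymbol{\epsilon},\boldsymbol{\epsilon}\rangle_n$.

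Next I would collapse both terms into the single quantity $\|K_n^{1/2}g_{\DP}^{1/2}(K_n)\boldsymbol{\epsilon}\|_n^2$. By \eqref{BdF} we have $\lambda^2g_t^2(\lambda)\leq \lambda g_t(\lambda)$, giving $\|K_ng_{\DP}(K_n)\boldsymbol{\epsilon}\|_n^2\leq \|K_n^{1/2}g_{\DP}^{1/2}(K_n)\boldsymbol{\epsilon}\|_n^2$; since $\DP\leq T$, \eqref{LFU} yields $g_{\DP}^2(\lambda)\lambda\leq BT\,g_{\DP}(\lambda)\lambda$, so $T^{-1}\langle g_{\DP}^2(K_n)K_n\boldsymbol{\epsilon},\boldsymbol{\epsilon}\rangle_n\leq B\|K_n^{1/2}g_{\DP}^{1/2}(K_n)\boldsymbol{\epsilon}\|_n^2$. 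Hence on $\mathcal{E}_T$,
\begin{align*}
\|S_\rho g_{\DP}(\Sigma_n)S_n^*\boldsymbol{\epsilon}\|_\rho^2 \leq (2+B)\|K_n^{1/2}g_{\DP}^{1/2}(K_n)\boldsymbol{\epsilon}\|_n^2.
\end{align*}
This is exactly the quadratic form controlled by Proposition~\ref{ConcIneqVar} applied with $L_n=I_n$. Integrating the sub-Gaussian/sub-exponential tail $\mathbf{P}_\epsilon(\cdot>\sigma^2n^{-1}\mathcal{N}_n^g(\tstar)+2\sigma^2 y/n)\leq 3e^{-c(y\wedge y^2/n)}$ over $y>0$ (with change of variable $z=2\sigma^2 y/n$ and splitting the integral into the sub-Gaussian zone $y\leq n$ and the sub-exponential tail) yields $\mathbf{E}_\epsilon\|K_n^{1/2}g_{\DP}^{1/2}(K_n)\boldsymbol{\epsilon}\|_n^2\leq C(\sigma^2\mathcal{N}_n^g(\tstar)/n + \sigma^2/\sqrt{n})$.

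The last step is the oracle comparison. Since $b_t^2=\|r_t(K_n)\mathbf{f}\|_n^2$ is continuous non-increasing and $v_t=\sigma^2\mathcal{N}_n^g(t)/n$ continuous non-decreasing with $b_{\tstar}^2=v_{\tstar}$, a short case analysis ($t\leq\tstar$ vs.~$t\geq\tstar$, and $\tstar\leq T$ vs.~$\tstar>T$) gives $v_{\tstar\wedge T}\leq \min_{0<t\leq T}(b_t^2+v_t)$; for the case $\tstar>T$ I would instead start from the trivial bound $\|K_n^{1/2}g_{\DP}^{1/2}(K_n)\boldsymbol{\epsilon}\|_n^2\leq \|K_n^{1/2}g_{T}^{1/2}(K_n)\boldsymbol{\epsilon}\|_n^2$ whose conditional expectation is $v_T=v_{\tstar\wedge T}$. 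Finally, Lemma~\ref{LemEffDimGenDim} (applicable because Tikhonov satisfies \eqref{LFL} with $b=1/2$) turns the $g$-effective dimension into the standard one, so the full minimum takes the form $\min_{0<t\leq T}\{\|r_t(K_n)\mathbf{f}\|_n^2+\sigma^2\mathcal{N}_n(t)/n\}$. Absorbing $\sigma^2$ into $C$ (as permitted in Section~\ref{SecRD}) and using $T=cn/\log n$ concludes the argument. I expect no genuinely hard step — the only delicate point is handling the possibility $\tstar>T$ in the outer regime, which is resolved by the emergency-stop trivial bound rather than by Proposition~\ref{ConcIneqVar} itself.
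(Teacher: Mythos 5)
Your proposal is correct and follows essentially the same route as the paper: the change-of-norm bound on $\mathcal{E}_T$ reducing $\|S_\rho g_{\DP}(\Sigma_n)S_n^*\boldsymbol{\epsilon}\|_\rho^2$ to $(2+B)\|K_n^{1/2}g_{\DP}^{1/2}(K_n)\boldsymbol{\epsilon}\|_n^2$ is exactly the paper's Eq.~\eqref{eq.variance.term.random} (with $\SDP$ replaced by $\DP$), and the subsequent application of Proposition~\ref{ConcIneqVar} with $L_n=I_n$, integration of the tail, and the comparison $v_{\tstar\wedge T}\leq\min_{0<t\leq T}\{b_t^2+v_t\}$ reproduce the paper's use of \eqref{EqThmCondDesignProxy3} and \eqref{Eqtstarbound1}. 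The extra care you take with the case $\tstar>T$ is already built into the proof of Proposition~\ref{ConcIneqVar} via \eqref{EqSimpleCase}, so no separate argument is needed there.
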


\begin{proof}[Proof of Lemma~\ref{prop.variance.term.random.outer}]
	By \eqref{eq.variance.term.random} with $\SDP$ replaced by $\DP$, we have on the event $\mathcal{E}_T$,
	\begin{align*}
		\|S_\rho g_{\DP}(\Sigma_n)S_n^*\boldsymbol{\epsilon}\|_\rho^2\leq  (2+B)\|K_n^{1/2}g^{1/2}_{\DP}(K_n)\boldsymbol{\epsilon}\|_n^2.
	\end{align*}
	Applying \eqref{EqThmCondDesignProxy3}, we get on the event $\mathcal{E}_T$ and for every $u>0$,
	\begin{align*}
		& \mathbf{P}_\epsilon\Big(\|S_\rho g_{\DP}(\Sigma_n)S_n^*\boldsymbol{\epsilon}\|_\rho^2> C\Big(\frac{\mathcal{N}_n(\tstar)}{n}+\frac{\sqrt{u}}{\sqrt{n}}+\frac{u}{n}\Big)\Big) \\
		& \leq \mathbf{P}_\epsilon\Big((2+B)\|K_n^{1/2}g^{1/2}_{\DP}(K_n)\boldsymbol{\epsilon}\|_n^2> C\Big(\frac{\mathcal{N}_n(\tstar)}{n}+\frac{\sqrt{u}}{\sqrt{n}}+\frac{u}{n}\Big)\Big)  \leq 3e^{-u}
	\end{align*}
	with $C$ sufficiently large. Integrating this inequality and inserting \eqref{Eqtstarbound1}, the claim follows.
\end{proof}

\begin{lemma}\label{LemChangeNormOC}
	Under the assumptions of Theorem \ref{ThmDPHardProblems}, we have 
	\[
	\mathbb{E}\|f-S_\rho\hat f^{(\DP)}\|_\rho^2\leq C\Big(\mathbb{E}\1_{\mathcal{E}_{T}}\min_{0< t\leq c\frac{n}{\log n}}\Big\{\|r_t(K_n)\mathbf{f}\|_n^2+\frac{\mathcal{N}_n(t)}{n}\Big\}+\frac{1} {\sqrt{n}}+\Big(\frac{\log n}{n}\Big)^{2r}\Big).
	\]
\end{lemma}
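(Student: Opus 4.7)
The plan is to split $\mathbb{E}\|f-S_\rho\hat f^{(\DP)}\|_\rho^2=\mathbb{E}\1_{\mathcal{E}_T}\|f-S_\rho\hat f^{(\DP)}\|_\rho^2+\mathbb{E}\1_{\mathcal{E}_T^c}\|f-S_\rho\hat f^{(\DP)}\|_\rho^2$ and handle the two terms separately. The contribution from $\mathcal{E}_T^c$ is controlled by the crude bound $\|\hat f^{(\DP)}\|_{\mathcal{H}}^2\leq\|\hat f^{(T)}\|_{\mathcal{H}}^2\leq BT\|\mathbf{Y}\|_n^2$ (as in \eqref{EqSmallEventBound1}), Cauchy--Schwarz, and $\mathbb{P}(\mathcal{E}_T^c)\leq n^{-C}$ with $C>4$ from \eqref{EqEventPolSmall}; this contributes a term of order $n^{-1}$ that is absorbed in the statement. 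The rest of the argument is therefore devoted to $\mathbb{E}\1_{\mathcal{E}_T}\|f-S_\rho\hat f^{(\DP)}\|_\rho^2$.

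On $\mathcal{E}_T$ I would decompose $\hat f^{(\DP)}=\tilde h+g_{\DP}(\Sigma_n)S_n^*\boldsymbol{\epsilon}$ with the noise-free piece $\tilde h:=g_{\DP}(\Sigma_n)S_n^*\mathbf{f}$ and apply a triangle inequality. The noise contribution $\|S_\rho g_{\DP}(\Sigma_n)S_n^*\boldsymbol{\epsilon}\|_\rho^2$ is handled directly by Lemma~\ref{prop.variance.term.random.outer} and already produces the target oracle term and the $n^{-1/2}$ remainder. For the bias contribution I would introduce the auxiliary $f^{(T)}=(\Sigma+T^{-1})^{-1}S_\rho^*f\in\mathcal{H}$ from Lemma~\ref{LemBoundBiasTR} and write $\|f-S_\rho\tilde h\|_\rho^2\leq 2\|f-S_\rho f^{(T)}\|_\rho^2+2\|S_\rho(f^{(T)}-\tilde h)\|_\rho^2$, where Lemma~\ref{LemBoundBiasTR}(i) bounds the first summand by $2T^{-2r}R^2\leq C(\log n/n)^{2r}$. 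Since $f^{(T)}-\tilde h\in\mathcal{H}$, Lemma~\ref{LemChangeNorm1.main} on $\mathcal{E}_T$ then yields
\[\|S_\rho(f^{(T)}-\tilde h)\|_\rho^2\leq 2\|\mathbf{f}^{(T)}-K_ng_{\DP}(K_n)\mathbf{f}\|_n^2+T^{-1}\|f^{(T)}-\tilde h\|_{\mathcal{H}}^2.\]
The empirical-norm piece splits as $\|\mathbf{f}^{(T)}-K_ng_{\DP}(K_n)\mathbf{f}\|_n^2\leq 2\|\mathbf{f}^{(T)}-\mathbf{f}\|_n^2+2\|r_{\DP}(K_n)\mathbf{f}\|_n^2$; the first summand has $\mathbb{E}[\cdot]=\|S_\rho f^{(T)}-f\|_\rho^2\leq T^{-2r}R^2$, and for the second I would use $\|r_{\DP}(K_n)\mathbf{f}\|_n^2\leq 2\|\mathbf{f}-\hat{\mathbf{f}}^{(\DP)}\|_n^2+2\|K_ng_{\DP}(K_n)\boldsymbol{\epsilon}\|_n^2$, then apply Proposition~\ref{ThmCondDesignProxy} to the fixed-design residual and \eqref{EqThmCondDesignProxy3} to the variance part (invoking $\|K_ng_{\DP}(K_n)\boldsymbol{\epsilon}\|_n^2\leq\|K_n^{1/2}g_{\DP}^{1/2}(K_n)\boldsymbol{\epsilon}\|_n^2$ from \eqref{BdF}), and finally use the upper-bound direction of Lemma~\ref{LemEffDimGenDim} (which needs only \eqref{BdF} and \eqref{LFU}, so \eqref{LFL} is not required) to replace $\mathcal{N}_n^g$ by $\mathcal{N}_n$. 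This delivers the target $\mathbb{E}\1_{\mathcal{E}_T}\min_t\{\|r_t(K_n)\mathbf{f}\|_n^2+\mathcal{N}_n(t)/n\}+Cn^{-1/2}$.

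The main obstacle is the Hilbert-space term $T^{-1}\|f^{(T)}-\tilde h\|_{\mathcal{H}}^2$, because the naive bound $\|\tilde h\|_{\mathcal{H}}^2\leq B\DP\|\mathbf{Y}\|_n^2\leq BT\|\mathbf{Y}\|_n^2$ divided by $T$ only yields a constant, far from the required rate. The key idea is to split $\tilde h=g_{\DP}(\Sigma_n)\Sigma_n f^{(T)}+g_{\DP}(\Sigma_n)S_n^*(\mathbf{f}-\mathbf{f}^{(T)})$, using $\Sigma_n f^{(T)}=S_n^*\mathbf{f}^{(T)}$ and linearity in $\mathbf{f}=\mathbf{f}^{(T)}+(\mathbf{f}-\mathbf{f}^{(T)})$: the first summand satisfies $\|g_{\DP}(\Sigma_n)\Sigma_n f^{(T)}\|_{\mathcal{H}}\leq\|f^{(T)}\|_{\mathcal{H}}$ since $\|\Sigma_ng_{\DP}(\Sigma_n)\|_{\operatorname{op}}\leq 1$ by \eqref{BdF}, while for the second $\|g_{\DP}(\Sigma_n)S_n^*v\|_{\mathcal{H}}^2=\langle K_ng_{\DP}^2(K_n)v,v\rangle_n\leq B\DP\|v\|_n^2\leq BT\|v\|_n^2$ thanks to $\hat\lambda g_{\DP}^2(\hat\lambda)\leq g_{\DP}(\hat\lambda)\leq B\DP$ from \eqref{BdF} and \eqref{LFU}. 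Hence $T^{-1}\|\tilde h\|_{\mathcal{H}}^2\leq 2T^{-1}\|f^{(T)}\|_{\mathcal{H}}^2+2B\|\mathbf{f}-\mathbf{f}^{(T)}\|_n^2$, and Lemma~\ref{LemBoundBiasTR}(ii) together with the expectation bound on $\|\mathbf{f}-\mathbf{f}^{(T)}\|_n^2$ gives $\mathbb{E}[T^{-1}\|f^{(T)}-\tilde h\|_{\mathcal{H}}^2]\leq CT^{-2r}R^2\leq C(\log n/n)^{2r}$. Collecting the three controlled contributions together with the negligible $\mathcal{E}_T^c$ piece yields the stated bound.
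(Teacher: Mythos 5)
Your proposal is correct and follows essentially the same route as the paper's proof: the same split over $\mathcal{E}_T$ and its complement, the same three-term decomposition into approximation bias $f-S_\rho f^{(T)}$, the deterministic discrepancy $S_\rho f^{(T)}-S_\rho g_{\DP}(\Sigma_n)S_n^*\mathbf{f}$ handled by the change-of-norm Lemma~\ref{LemChangeNorm1.main}, and the noise term handled by Lemma~\ref{prop.variance.term.random.outer}, with your treatment of the $\mathcal{H}$-norm term being exactly the paper's estimate \eqref{EqHNEstim}. The only (harmless) deviation is that you control $\mathbb{E}\1_{\mathcal{E}_T}\|r_{\DP}(K_n)\mathbf{f}\|_n^2$ by combining Proposition~\ref{ThmCondDesignProxy} with the variance bound \eqref{EqThmCondDesignProxy3}, whereas the paper integrates the bias deviation inequality \eqref{EqThmCondDesignProxy2} directly; both yield the same bound.
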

\begin{proof}[Proof of Lemma~\ref{LemChangeNormOC}]
	We have
	\begin{align*}
		\mathbb{E}\|f-S_\rho\hat f^{(\DP)}\|_\rho^2&
		\leq   \mathbb{E}\1_{\mathcal{E}_{T}}\|f-S_\rho\hat f^{(\DP)}\|_\rho^2+2\mathbb{E}\1_{\mathcal{E}_{T}^c}\|f-S_\rho\hat f^{(\DP)}\|_\rho^2   \\
		&\leq \mathbb{E}\1_{\mathcal{E}_{T}}\|f-S_\rho\hat f^{(\DP)}\|_\rho^2 + Cn^{-1}, 
	\end{align*}
	where the second inequality follows by the same line of arguments as at the end of the proof of Theorem~\ref{CorExtThmSDPBoundedK}  (cf. \eqref{EqSmallEventBound1} and \eqref{EqSmallEventBound2}), using that $f$ is bounded this time which implies $\|g_{\DP}^{1/2}(K_n)K_n^{1/2}\mathbf{f} \|_n^2\leq \norm{f}_{\infty}^2$.
	
	Let us now introduce, for $t_1>0$ to be chosen later, 
	\begin{align*}
		f-S_\rho\hat f^{(\DP)}=f-S_\rho f^{(t_1)}+S_\rho f^{(t_1)}-S_\rho g_{\DP}(\Sigma_n)S_n^*\mathbf{f}-S_\rho g_{\DP}(\Sigma_n)S_n^*\boldsymbol{\epsilon},
	\end{align*}
	where $f^{(t_1)}=(\Sigma+t_1^{-1})^{-1}S_\rho^* f$.
	It results that
	\begin{align*}
		& \frac{1}{3}\mathbb{E}\1_{\mathcal{E}_{T}}\|f-S_\rho\hat f^{(\DP)}\|_\rho^2\\
		&\leq \|f-S_\rho f^{(t_1)}\|_\rho^2+\mathbb{E}\1_{\mathcal{E}_{T}}\|S_\rho g_{\DP}(\Sigma_n)S_n^*\boldsymbol{\epsilon}\|_\rho^2+\mathbb{E}\1_{\mathcal{E}_{T}}\|S_\rho f^{(t_1)}-S_\rho g_{\DP}(\Sigma_n)S_n^*\mathbf{f}\|_\rho^2\\
		&=:I_1+I_2+I_3.
	\end{align*}
	Form Lemma~\ref{LemBoundBiasTR}(i), we get $I_1\leq R^2 t_1^{-2r}$, and Lemma~\ref{prop.variance.term.random.outer} provides
	\begin{align*}
		& I_2 =\mathbb{E}\1_{\mathcal{E}_{T}}\mathbf{E}_\epsilon\|S_\rho g_{\DP}(\Sigma_n)S_n^*\boldsymbol{\epsilon}\|_\rho^2\\
		&\leq C\Big(\mathbb{E}\1_{\mathcal{E}_{T}}\min_{0< t\leq c\frac{n}{\log n}}\Big\{\|r_t(K_n)\mathbf{f}\|_n^2+\frac{\mathcal{N}_n(t)}{n}\Big\}+\frac{1} {\sqrt{n}}\Big).
	\end{align*}
	The remainder of this proof consists in considering the term $I_3$. 
	
	By the change of norm argument of Lemma~\ref{LemChangeNorm1.main} applied to functions belonging to  $\H$, on the event $\mathcal{E}_{T}$, we have
	\begin{align}
		&\|S_\rho f^{(t_1)}-S_\rho g_{\DP}(\Sigma_n)S_n^*\mathbf{f}\|_\rho^2 \nonumber\\
		&\leq \| \mathbf{f}^{(t_1)}-g_{\DP}(K_n)K_n\mathbf{f}\|_n^2+T^{-1}\| f^{(t_1)}-g_{\DP}(\Sigma_n)S_n^*\mathbf{f}\|_{\mathcal{H}}^2.\label{ineq.rho.norm.outer}
	\end{align}
	
	\textbf{Empirical norm in \eqref{ineq.rho.norm.outer}:} Integrating yields 
	\begin{align*}
		\mathbb{E}\1_{\mathcal{E}_{T}}\| \mathbf{f}^{(t_1)}-g_{\DP}(K_n)K_n\mathbf{f}\|_n^2
		&\leq 2\mathbb{E}\| \mathbf{f}-\mathbf{f}^{(t_1)}\|_n^2+2\mathbb{E}\1_{\mathcal{E}_{T}}\| r_{\DP}(K_n)\mathbf{f}\|_n^2\\
		& = 2\| f-S_\rho f^{(t_1)}\|_\rho^2+2\mathbb{E}\1_{\mathcal{E}_{T}}\| r_{\DP}(K_n)\mathbf{f}\|_n^2.
	\end{align*}
	The first term in the r.h.s. is addressed by Lemma~\ref{LemBoundBiasTR}(i), leading to the upper bound $2R^2 t_1^{-2r}$. 
	For the second one, integrating~\eqref{EqThmCondDesignProxy2} with $T=cn/\log n$ and inserting~\eqref{Eqtstarbound1}, we get 
	\begin{align*}
		\mathbb{E}\1_{\mathcal{E}_{T}}\| r_{\DP}(K_n)\mathbf{f}\|_n^2\leq C\Big(\mathbb{E}\1_{\mathcal{E}_{T}}\min_{0< t\leq c\frac{n}{\log n}}\Big\{\|r_t(K_n)\mathbf{f}\|_n^2+\frac{\mathcal{N}_n(t)}{n}\Big\}+\frac{1} {\sqrt{n}} \Big).
	\end{align*}
	
	\textbf{Hilbert norm in \eqref{ineq.rho.norm.outer}:} We have
	\begin{align}\label{EqHNEstim}
		\| f^{(t_1)}-g_{\DP}(\Sigma_n)S_n^*\mathbf{f}\|_{\mathcal{H}}^2&=\| f^{(t_1)}-g_{\DP}(\Sigma_n)\Sigma_nf^{(t_1)} + g_{\DP}(\Sigma_n)S_n^*(\mathbf{f}^{(t_1)}-\mathbf{f})\|_{\mathcal{H}}^2\nonumber\\
		&\leq 2\|r_{\DP}(\Sigma_n) f^{(t_1)}\|_{\mathcal{H}}^2 + 2\| g_{\DP}(\Sigma_n)S_n^*(\mathbf{f}^{(t_1)}-\mathbf{f})\|_{\mathcal{H}}^2\nonumber\\
		&\leq 2R^2 t_1^{1-2r} + 2BT\|\mathbf{f}^{(t_1)}-\mathbf{f}\|_n^2,
	\end{align}
	where we applied \eqref{BdF} and Lemma \ref{LemBoundBiasTR}(ii) to the first term and \eqref{BdF}, \eqref{LFU} and the inequality $\DP\leq T$ to the second term. 
	
	Collecting these bounds and using $T= c n/(\log n)$ and, we get
	\begin{align*}
		I_3\leq C\Big(\mathbb{E}\1_{\mathcal{E}_{T}}\min_{0< t\leq c\frac{n}{\log n}}\Big\{\|r_t(K_n)\mathbf{f}\|_n^2+\frac{\mathcal{N}_n(t)}{n}\Big\}+\frac{1} {\sqrt{n}} + t_1^{-2r}+\frac{\log n}{n} t_1^{1-2r}\Big).
	\end{align*}
	The claim now follows from these bounds for $I_1-I_3$ by setting $t_1 = cn/(\log n)$.
\end{proof}

\begin{lemma}\label{LemBoundBiasOC}
	For $t>0$ let $g_t(\lambda)=(\lambda+t^{-1})^{-1}$, and let $T=cn/(\log n)$. Suppose that \eqref{BdK} holds. Then we have
	\[
	\forall 0< t\leq T,\qquad
	\mathbb{E}\1_{\mathcal{E}_T}\|r_t(K_n)\mathbf{f}\|_n^2\leq C\Big(t^{-2r}+\frac{\mathcal{N}(t)}{n}\Big).
	\]
	Moreover, we have
	\begin{align*}
		\forall 0< t\leq T,\qquad
		\mathbb{E}\1_{\mathcal{E}_{T}}\mathcal{N}_n(t)\leq C_2(\mathcal{N}(t)+1).
	\end{align*}
\end{lemma}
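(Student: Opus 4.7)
For the first claim, the plan is to introduce, for each $0<t\leq T$, the Hilbertian approximant $f^{(t)}=(\Sigma+t^{-1})^{-1}S_\rho^* f \in \mathcal{H}$ from Lemma~\ref{LemBoundBiasTR} and to decompose
\begin{align*}
\mathbf{f} = S_n f^{(t)} + \bigl(\mathbf{f}-S_n f^{(t)}\bigr),
\end{align*}
so that $\|r_t(K_n)\mathbf{f}\|_n^2 \leq 2\|r_t(K_n)S_n f^{(t)}\|_n^2 + 2\|r_t(K_n)(\mathbf{f}-S_n f^{(t)})\|_n^2$. For the first piece I would use the SVD identity \eqref{EqSVDSamplingOp} to rewrite $\|r_t(K_n)S_n h\|_n^2 = \|\Sigma_n^{1/2}r_t(\Sigma_n) h\|_{\mathcal{H}}^2$ for any $h\in\mathcal{H}$, and then observe that with Tikhonov regularization the elementary inequality $\sqrt{\lambda}/(1+\lambda t)\leq 1/(2\sqrt{t})$ gives $\|\Sigma_n^{1/2}r_t(\Sigma_n)\|_{\operatorname{op}}^2\leq 1/(4t)$. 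Combined with Lemma~\ref{LemBoundBiasTR}(ii), $\|f^{(t)}\|_{\mathcal{H}}^2\leq R^2 t^{1-2r}$, this yields $\|r_t(K_n)S_n f^{(t)}\|_n^2 \leq (R^2/4)\, t^{-2r}$ deterministically. For the second piece I would invoke $\|r_t(K_n)\|_{\operatorname{op}}\leq 1$ from \eqref{BdF} and take expectations, using that $f^{(t)}$ is deterministic to obtain
\begin{align*}
\mathbb{E}\|\mathbf{f}-S_n f^{(t)}\|_n^2 = \|f-S_\rho f^{(t)}\|_{\rho}^2 \leq R^2 t^{-2r}
\end{align*}
by Lemma~\ref{LemBoundBiasTR}(i). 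Adding both contributions (and discarding the indicator $\1_{\mathcal{E}_T}$, which only improves the bound) would produce $\mathbb{E}\1_{\mathcal{E}_T}\|r_t(K_n)\mathbf{f}\|_n^2 \leq C t^{-2r}$, which is already stronger than the stated $C(t^{-2r}+\mathcal{N}(t)/n)$.

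For the second claim, in the regime $1\leq t\leq T$ I would apply Lemma~\ref{LemConcEffDim} directly, which gives $\mathbb{E}\1_{\mathcal{E}_T}\mathcal{N}_n(t)\leq C\mathcal{N}(t)+n\exp(-n/t)$. With the choice $T=cn/(\log n)$ and $c>0$ taken small enough (say $c\leq 1/2$), the residual term satisfies $n\exp(-n/t)\leq n\cdot n^{-1/c}\leq 1$ for $n\geq 2$. In the complementary regime $0<t<1$, I would rely on the deterministic bound $\mathcal{N}_n(t)=\sum_j \hat\lambda_j/(\hat\lambda_j+t^{-1})\leq t\sum_j \hat\lambda_j = t\operatorname{tr}(\Sigma_n)\leq M^2$ coming from \eqref{BdK}. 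Merging the two regimes yields $\mathbb{E}\1_{\mathcal{E}_T}\mathcal{N}_n(t)\leq C_2(\mathcal{N}(t)+1)$ for every $0<t\leq T$.

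I do not expect any substantial obstacle here: the first claim reduces to standard Tikhonov bias bookkeeping once one passes to the Hilbertian proxy $f^{(t)}$ (this detour is forced on us because, in the outer case $r<1/2$, the regression function $f$ need not lie in $\mathcal{H}$, so no source-condition argument can be applied to $\mathbf{f}$ itself), while the second claim is essentially a direct consequence of Lemma~\ref{LemConcEffDim} together with the logarithmic calibration of $T$. The only mild delicacy is that Lemma~\ref{LemConcEffDim} is stated for $t\geq 1$ (see Remark~\ref{RemConcEffDim}), which is why the trivial bound $\mathcal{N}_n(t)\leq M^2 t$ has to be invoked separately for $0<t<1$.
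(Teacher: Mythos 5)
Your proof is correct, and for the first claim it takes a genuinely different and in fact more economical route than the paper. The paper also passes to the population Tikhonov approximant $f^{(t)}=(\Sigma+t^{-1})^{-1}S_\rho^*f$, but it then compares the two \emph{estimators}, writing $r_t(K_n)\mathbf{f}=(\mathbf{f}-S_nf^{(t)})+S_n\bigl(f^{(t)}-(\Sigma_n+t^{-1})^{-1}S_n^*\mathbf{f}\bigr)$ and controlling the second term through the change-of-norm inequality of Lemma~\ref{LemChangeNorm1.main} on $\mathcal{E}_T$, the resolvent identity relating $(\Sigma_n+t^{-1})^{-1}$ to $(\Sigma+t^{-1})^{-1}$, and second-moment bounds on $(\Sigma+t^{-1})^{-1/2}(S_\rho^*f-S_n^*\mathbf{f})$ and $(\Sigma+t^{-1})^{-1/2}(\Sigma_n-\Sigma)f^{(t)}$; this is where the $\mathcal{N}(t)/n$ term and the hypothesis that $f$ is bounded enter. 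You instead apply $r_t(K_n)$ directly to the decomposition $\mathbf{f}=S_nf^{(t)}+(\mathbf{f}-S_nf^{(t)})$, use the intertwining $r_t(K_n)S_n=S_nr_t(\Sigma_n)$ together with the elementary bound $\|\Sigma_n^{1/2}r_t(\Sigma_n)\|_{\operatorname{op}}^2\leq 1/(4t)$ for Tikhonov, and contract the remainder with $\|r_t(K_n)\|_{\operatorname{op}}\leq 1$; combined with Lemma~\ref{LemBoundBiasTR} this gives the sharper bound $Ct^{-2r}$, needs neither the event $\mathcal{E}_T$ nor $\|f\|_\infty<\infty$, and is entirely consistent with (indeed stronger than) the stated inequality, since the superfluous $\mathcal{N}(t)/n$ reappears anyway in the variance part of Theorem~\ref{ThmDPHardProblems}. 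Your treatment of the second claim — Lemma~\ref{LemConcEffDim} for $t\geq 1$ with the calibration $ne^{-n/T}\leq 1$, plus the trivial bound $\mathcal{N}_n(t)\leq M^2t$ of Remark~\ref{RemConcEffDim} for $t<1$ — is exactly the paper's argument. One presentational caveat: like the paper's own proof, your argument implicitly invokes the source condition \eqref{Assume.SC} with $0<r\leq 1/2$ through Lemma~\ref{LemBoundBiasTR}, even though the lemma's hypothesis list mentions only \eqref{BdK}; it would be worth stating this explicitly.
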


\begin{proof}[Proof of Lemma~\ref{LemBoundBiasOC}]
	The second claim directly follows from Lemma \ref{LemConcEffDim} in combination with Remark \ref{RemConcEffDim}.
	
	For the first claim, set $f^{(t)}=(\Sigma+t^{-1})^{-1}S_\rho^* f$. By Lemma \ref{LemBoundBiasTR}, we have 
	\begin{align*}
		&\mathbb{E}\1_{\mathcal{E}_{T}}\|\mathbf{f}-S_n(\Sigma_n+t^{-1})^{-1}S_n^*\mathbf{f}\|_n^2\\
		&\leq 2\|f-S_\rho f^{(t)}\|_\rho^2+2\mathbb{E}\1_{\mathcal{E}_{T}}\|S_nf^{(t)}-S_n(\Sigma_n+t^{-1})^{-1}S_n^*\mathbf{f}\|_n^2\\
		&\leq 2R^2t^{-2r}+2\mathbb{E}\1_{\mathcal{E}_{T}}\|S_nf^{(t)}-S_n(\Sigma_n+t^{-1})^{-1}S_n^*\mathbf{f}\|_n^2.
	\end{align*}
	It remains to analyze the last term. Using Lemma~\ref{LemChangeNorm1.main} (change of norm), we have on $\mathcal{E}_{T}$,
	\begin{align*}
		&\|S_nf^{(t)}-S_n(\Sigma_n+t^{-1})^{-1}S_n^*\mathbf{f}\|_n^2\\
		&\leq 2\|S_\rho f^{(t)}-S_\rho(\Sigma_n+t^{-1})^{-1}S_n^*\mathbf{f}\|_\rho^2
		+C\frac{\log n}{n}\|f^{(t)}-(\Sigma_n+t^{-1})^{-1}S_n^*\mathbf{f}\|_{\mathcal{H}}^2.
	\end{align*}
	By \eqref{EqHNEstim} (where $\DP$ is replaced by $t$), the $\mathcal{H}$-norm is bounded by $C(t^{1-2r}+t\|\mathbf{f}-\mathbf{f}^{(t)}\|_n^2)$ and thus on $\mathcal{E}_{T}$,
	\begin{align*}
		&\|S_n f^{(t)}-S_n(\Sigma_n+t^{-1})^{-1}S_n^*\mathbf{f}\|_n^2\\
		&\leq 2\|S_\rho f^{(t)}-S_\rho(\Sigma_n+t^{-1})^{-1}S_n^*\mathbf{f}\|_\rho^2+C(t^{-2r}+\|\mathbf{f}-\mathbf{f}^{(t)}\|_n^2),
	\end{align*}
	where we also used that $t\leq T = cn/(\log n)$. 
	Since $\mathbb{E}\|\mathbf{f}-\mathbf{f}^{(t)}\|_n^2= \|f-S_\rho f^{(t)}\|_\rho^2\leq R^2t^{-2r}$, as can be seen from Lemma \ref{LemBoundBiasTR}(i), it remains to bound the term
	\begin{align*}
		&2\|S_\rho f^{(t)}-S_\rho(\Sigma_n+t^{-1})^{-1}S_n^*\mathbf{f}\|_\rho^2\\
		& \leq2\|(\Sigma+t^{-1})^{1/2}((\Sigma+t^{-1})^{-1}S_\rho^* f-(\Sigma_n+t^{-1})^{-1}S_n^*\mathbf{f})\|_{\mathcal{H}}^2,
	\end{align*}
	where we used $\|S_\rho h\|_\rho^2=\|\Sigma^{1/2} h\|_{\mathcal{H}}^2\leq \|(\Sigma+t^{-1})^{1/2} h\|_{\mathcal{H}}^2$, $h\in\mathcal{H}$, in the inequality. Inserting 
	\begin{align*}
		&(\Sigma+t^{-1})^{-1}S_\rho^* f-(\Sigma_n+t^{-1})^{-1}S_n^*\mathbf{f}\\
		&=(\Sigma_n+t^{-1})^{-1}(S_\rho^* f-S_n^*\mathbf{f})-(\Sigma_n+t^{-1})^{-1}(\Sigma_n-\Sigma)(\Sigma+t^{-1})^{-1}S_\rho^* f
	\end{align*}
	and 
	\begin{align*}
		&(\Sigma_n+t^{-1})^{-1}=(\Sigma+t^{-1})^{-1/2}(I+A_{t})^{-1}(\Sigma+t^{-1})^{-1/2}
	\end{align*}
	with $A_t$ from \eqref{EqDefAT}, we get
	\begin{align*}
		&\|(\Sigma+t^{-1})^{1/2}((\Sigma+t^{-1})^{-1}S_\rho^* f-(\Sigma_n+t^{-1})^{-1}S_n^*\mathbf{f})\|_{\mathcal{H}}^2\\
		&\leq 2\|(I+A_{t})^{-1}(\Sigma+t^{-1})^{-1/2}(S_\rho^* f-S_n^*\mathbf{f})\|_{\mathcal{H}}^2\\
		&+2\|(I+A_{t})^{-1}(\Sigma+t^{-1})^{-1/2}(\Sigma_n-\Sigma)f^{(t)}\|_{\mathcal{H}}^2.
	\end{align*}
	In the proof of Lemma \ref{LemConcEffDim}, we have shown that on the event $\mathcal{E}_{T}$ we have $\|(I+A_{t})^{-1}\|_{\operatorname{op}}\leq 2$. Hence, on $\mathcal{E}_{T}$,
	\begin{align*}
		&\|(\Sigma+t^{-1})^{1/2}((\Sigma+t^{-1})^{-1}S_\rho^* f-(\Sigma_n+t^{-1})^{-1}S_n^*\mathbf{f})\|_{\mathcal{H}}^2\\
		&\leq 4\|(\Sigma+t^{-1})^{-1/2}(S_\rho^* f-S_n^*\mathbf{f})\|_{\mathcal{H}}^2+4\|(\Sigma+t^{-1})^{-1/2}(\Sigma_n-\Sigma)f^{(t)}\|_{\mathcal{H}}^2.
	\end{align*}
	We conclude that
	\begin{align*}
		\mathbb{E}\1_{\mathcal{E}_{T}}\|r_t(K_n)\mathbf{f}\|_n^2&\leq  
		8\mathbb{E}\|(\Sigma+t^{-1})^{-1/2}(S_\rho^* f-S_n^*\mathbf{f})\|_{\mathcal{H}}^2\\
		&+8\mathbb{E}\|(\Sigma+t^{-1})^{-1/2}(\Sigma_n-\Sigma)f^{(t)}\|_{\mathcal{H}}^2+Ct^{-2r}.
	\end{align*}
	By construction $S_n^*\mathbf{f}-S_\rho^*f$ is a sum of independent, zero-mean random variables. To see the second claim, use that for every $h\in\mathcal{H}$, we have $\mathbb{E}f(X)\langle k_X,h \rangle_{\mathcal{H}}=\langle f, S_\rho h\rangle_\rho=\langle S_\rho^*f, h\rangle_{\mathcal{H}}$, and thus $\mathbb{E}f(X)k_X=S_\rho^*f$
	Now, using the fact that $f$ is bounded, we have
	\begin{align*}
		&\mathbb{E}\|(\Sigma+t^{-1})^{-1/2}(S_\rho^* f-S_n^*\mathbf{f})\|_{\mathcal{H}}^2\leq\frac{1}{n}\mathbb{E}\|(\Sigma+t^{-1})^{-1/2}k_Xf(X)\|_{\mathcal{H}}^2\\
		&\leq\frac{1}{n}\|f\|_\infty^2\mathbb{E}\|(\Sigma+t^{-1})^{-1/2}k_X\|_{\mathcal{H}}^2=\|f\|_\infty^2\frac{\mathcal{N}(t)}{n}.
	\end{align*}
	Similarly, we have
	\begin{align*}
		&\mathbb{E}\|(\Sigma+t^{-1})^{-1/2}(\Sigma_n-\Sigma)f^{(t)}\|_{\mathcal{H}}^2\\
		&\leq  \frac{1}{n}\mathbb{E}\|(\Sigma+t^{-1})^{-1/2}k_X\langle k_X ,f^{(t)}\rangle_{\mathcal{H}}\|_{\mathcal{H}}^2\\
		&\leq \frac{2}{n}\mathbb{E}\|(\Sigma+t^{-1})^{-1/2}k_X\|_{\mathcal{H}}^2((f(X))^2+(f^{(t)}(X)-f(X))^2).
	\end{align*} 
	Using that that $f$ is bounded, the fact that $\|(\Sigma+t^{-1})^{-1/2}k_X\|_{\mathcal{H}}^2\leq M^2t$ and Lemm \ref{LemBoundBiasTR}(i), we get
	\begin{align*}
		&\mathbb{E}\|(\Sigma+t^{-1})^{-1/2}(\Sigma_n-\Sigma)f^{(t)}\|_{\mathcal{H}}^2\\
		&\leq  \frac{2\|f\|_\infty}{n}\mathbb{E}\|(\Sigma+t^{-1})^{-1/2}k_X\|_{\mathcal{H}}^2+M^2t\|f-S_\rho f^{(t)}\|_\rho^2\\
		&\leq 2\|f\|_\infty\frac{\mathcal{N}(t)}{n}+R^2M^2\frac{t^{-2r+1}}{n}\leq  C\Big(\frac{\mathcal{N}(t)}{n}+t^{-2r}\Big),
	\end{align*} 
	where the last inequality follows from $t\leq c_1n/(\log n)$. This completes the proof.
\end{proof}

\begin{proof}[End of proof of Theorem  \ref{ThmDPHardProblems}]
	The claim follows from inserting Lemma \ref{LemBoundBiasOC} into Lemma \ref{LemChangeNormOC}.
\end{proof}
\subsubsection{Sketch of proof of Theorem \ref{ThmDPHardProblemsExt}} 
	The proof of Theorem \ref{ThmDPHardProblemsExt} follows from the arguments of the proof of Theorem \ref{ThmDPHardProblems}. The improvement is based on the fact that if additionally $\|\Sigma^{\mu/2-1/2}k_X\|_{\mathcal{H}}\leq C_\mu M$ holds for some $\mu\in[0,1)$, then one can improve the concentration and deviation bounds in Lemma \ref{LemConcIneqBoundedK} and Lemma \ref{LemConcEffDim} accordingly. First, Lemma \ref{LemConcIneqBoundedK} can be improved to $\mathbb{P}(\mathcal{E}_T^c)\leq C_1T^\mu\exp(-c_1n/T^\mu)$, since now $\|(\Sigma+t^{-1})^{-1/2}k_X\|_{\mathcal{H}}^2$ can be bounded by $C_\mu M^2t^\mu$. Similarly Lemma \ref{LemConcEffDim} can be improved to $\mathbb{E}\1_{\mathcal{E}_T}\mathcal{N}_n(t)\leq C\mathcal{N}(t)+2ne^{-n/t^\mu}$. In particular, setting $T=c(n/(\log n))^{1/\mu}$ with $c$ sufficiently small, we get  
	$\mathbb{P}(\mathcal{E}_{T}^c)\leq n^{-4}$ and $\mathbb{E}\1_{\mathcal{E}_{T}}\mathcal{N}_n(t)\leq C_2(\mathcal{N}(t)+1)$. We can now follow the same line of arguments from above to obtain Theorem \ref{ThmDPHardProblemsExt}. Only at the end of proof of Lemma \ref{LemBoundBiasOC}, we have to apply $\|(\Sigma+t^{-1})^{-1/2}k_X\|_{\mathcal{H}}^2\leq C_\mu M^2t^\mu$ once more.


\subsection*{Acknowledgement}
The authors thank Markus Rei\ss{} for helpful discussions and his great support for making this work possible.

\appendix

\section{Some useful operator bounds}
Let $A,B$ be two positive, compact operators $A$ and $B$ on $\mathcal{H}$. Then we have
\begin{align}\label{EqPowerOperatorNorm1}
    \|A^s-B^s\|_{\operatorname{op}}\leq \|A-B\|_{\operatorname{op}}^s,\quad 0\leq s \leq 1,
\end{align}
and
\begin{align}\label{EqPowerOperatorNorm2}
    \|A^s-B^s\|_{\operatorname{op}}\leq C_s(\|A\|_{\operatorname{op}}+\|A-B\|_{\operatorname{op}})^{s-1}\|A-B\|_{\operatorname{op}},\quad s>1.
\end{align}
Moreover, we have
\begin{align}\label{EqPowerOperatorNorm3}
    \|A^sB^s\|_{\operatorname{op}}\leq \|AB\|_{\operatorname{op}}^s,\quad 0\leq s \leq 1.
\end{align}
For a proof of the first and the third claim see Theorem X.1.1 and Theorem IX.2.1 in \cite{MR1477662}, for a proof of the second claim see e.g. \cite{MR3833647}.

\section{Effective dimension and eigenvalue bounds}\label{appendix.effective.dim}
The effective dimension $\mathcal{N}(t)$ of a positive self-adjoint trace-class operator $\Sigma$ is a continuous and non-decreasing function in $t\geq 0$. Moreover, under \eqref{BdK}, we have $\operatorname{tr}(\Sigma)=\mathbb{E}\|k_X\|_{\mathcal{H}}^2\leq M^2$, leading to $\mathcal{N}(t)\leq M^2t$ for all $t\geq 0$. Under additional assumption on the decay of the eigenvalues, this bound can be further improved.
\begin{lemma}\label{EqEffDim} (i) Suppose that for some $\alpha>1$ and $\ED>0$, we have $\lambda_j\leq \ED j^{-\alpha}$ for all $j\geq 1$. Then there is a constant $C>0$ depending only on $\alpha$ and $L$ such that $\mathcal{N}(t)\leq Ct^{1/\alpha}$ for all $t\geq L^{-1}$.

(ii) Suppose that for some $\alpha\in(0,1]$ and $\ED >0$, we have $\lambda_j\leq  e^{-\ED j^{\alpha}}$ for all $j\geq 1$. Then there is a constant $C>0$ depending only on $\alpha$ and $L$ such that $\mathcal{N}(t)\leq C(\log t)^{1/\alpha}$ for all $t\geq e^L$.
\end{lemma}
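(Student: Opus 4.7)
\section*{Proof plan for Lemma \ref{EqEffDim}}

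The starting point for both parts is the spectral representation
\[
\mathcal{N}(t) \;=\; \sum_{j\geq 1} \frac{\lambda_j t}{\lambda_j t + 1} \;\leq\; \sum_{j\geq 1}\bigl(1\wedge \lambda_j t\bigr),
\]
combined with the standard ``split at the threshold $\lambda_j t \approx 1$'' strategy: choose an integer cut-off $k=k(t)$ beyond which $\lambda_j t \leq 1$, bound each term $\leq 1$ for $j\leq k$, and bound the tail by $t\sum_{j>k}\lambda_j$. Thus
\[
\mathcal{N}(t) \;\leq\; k \;+\; t \sum_{j>k}\lambda_j ,
\]
and the proof reduces to (a) choosing $k$ to be the smallest integer with $\lambda_j t \leq 1$ for $j>k$, and (b) estimating the residual series via the decay assumption.

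\textbf{Part (i).} Under $\lambda_j \leq L j^{-\alpha}$ with $\alpha>1$, set $k = \lfloor (Lt)^{1/\alpha}\rfloor$, which is $\geq 1$ because $t\geq L^{-1}$. Using $\sum_{j>k}j^{-\alpha} \leq \int_k^\infty x^{-\alpha}\,dx = k^{1-\alpha}/(\alpha-1)$, I get
\[
t\sum_{j>k}\lambda_j \;\leq\; \frac{Lt}{\alpha-1}\,k^{1-\alpha} \;\leq\; \frac{1}{\alpha-1}\,(Lt)^{1/\alpha},
\]
after substituting the definition of $k$. Adding $k \leq (Lt)^{1/\alpha}$ gives the bound $\mathcal{N}(t)\leq C t^{1/\alpha}$ with $C = L^{1/\alpha}(1+(\alpha-1)^{-1})$.

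\textbf{Part (ii).} Under $\lambda_j \leq e^{-L j^{\alpha}}$ with $\alpha\in(0,1]$, set $k = \lceil (\log t / L)^{1/\alpha}\rceil$, which is $\geq 1$ because $t\geq e^L$. For $j>k$, $\lambda_j t \leq e^{-Lk^\alpha}\, t \leq 1$ by construction. Since $x\mapsto e^{-Lx^\alpha}$ is decreasing, $\sum_{j>k} e^{-Lj^\alpha} \leq \int_k^\infty e^{-Lx^\alpha}dx$. I would then change variables $u = L x^\alpha$ to obtain
\[
\int_k^\infty e^{-Lx^\alpha}\,dx \;=\; \frac{L^{-1/\alpha}}{\alpha}\int_{Lk^\alpha}^\infty u^{1/\alpha-1} e^{-u}\,du,
\]
and use the elementary tail bound for the incomplete Gamma function: for $\beta\geq 0$ and $A\geq 0$, the substitution $u = A+v$ together with $(A+v)^\beta \leq 2^\beta(A^\beta + v^\beta)$ yields
\[
\int_A^\infty u^\beta e^{-u}\,du \;\leq\; 2^\beta e^{-A}\bigl(A^\beta + \Gamma(\beta+1)\bigr).
\]
Applying this with $\beta = 1/\alpha - 1 \geq 0$ and $A = Lk^\alpha \geq \log t$, the exponential factor $e^{-A}\leq 1/t$ cancels the prefactor $t$, leaving
$t\sum_{j>k}\lambda_j \leq C\bigl((\log t)^{1/\alpha - 1}+1\bigr)$. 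Since $k \leq 1 + (\log t/L)^{1/\alpha}$ dominates this tail contribution for $t\geq e^L$, I conclude $\mathcal{N}(t) \leq C (\log t)^{1/\alpha}$.

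\textbf{Main obstacle.} Part (i) is essentially bookkeeping once the split index $k$ is chosen. The only genuine computational step is Part (ii), where I must extract the right polynomial-in-$\log t$ factor from the exponential tail. This requires the incomplete Gamma estimate above (or an equivalent Laplace-type argument) and care about the regime $\alpha\in(0,1]$, which forces $1/\alpha-1\geq 0$ so that $u^{1/\alpha-1}$ is a growing rather than decaying weight in the change-of-variable integral. Apart from that subtlety, everything is elementary.
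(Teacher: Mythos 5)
Your proposal is correct and follows essentially the same route as the paper: split the sum $\mathcal{N}(t)\leq k+t\sum_{j>k}\lambda_j$ at the threshold index where $\lambda_j t\approx 1$, bound the head by $k\lesssim t^{1/\alpha}$ (resp. $(\log t)^{1/\alpha}$), and control the tail via the decay assumption. The only difference is that the paper outsources part (i) to \cite{caponnetto2007optimal} and the exponential tail sum to an inequality in \cite{MR4073556}, whereas you derive both directly (integral comparison for (i), incomplete-Gamma tail bound for (ii)), which is sound up to immaterial constants from the floor/ceiling choices of $k$.
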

\begin{proof}
Part (i) is proved in Proposition 3 in \cite{caponnetto2007optimal}, see also Lemma 5.1 in \cite{MR3833647}. In order to get part (ii), we use that $\lambda/(\lambda+1/t)$ is increasing in $\lambda$, such that
\begin{align*}
\mathcal{N}(t)&\leq \sum_{j\geq 1}\frac{\ED e^{-\ED j^{\alpha}}}{\ED e^{-\ED j^{\alpha}}+1/t}.
\end{align*}
Defining $k\geq 1$ by $e^{-\ED (k+1)^{\alpha}}< 1/t\leq e^{-\ED k^{\alpha}}$ (using that $te^{-L}\geq 1$), we have 
\begin{align}
&\mathcal{N}(t)\leq \sum_{j\leq k}\frac{\ED e^{-\ED j^{\alpha}}}{\ED e^{-\ED j^{\alpha}}+1/t}+\sum_{j> k}\frac{\ED e^{-\ED j^{\alpha}}}{\ED e^{-\ED j^{\alpha}}+1/t}\nonumber\\
&\leq k+ t\sum_{j>k}e^{-\ED j^{\alpha}}\leq k+ Ct(k+1)^{1-\alpha}e^{-\ED (k+1)^{\alpha}}\leq k+C(k+1)^{1-\alpha},\label{EqEffDim1}
\end{align}
where we applied Equation (5.1) in \cite{MR4073556} in the third inequality. Now $1/t\leq e^{-\ED k^{\alpha}}$ implies $k\leq (L^{-1}\log t)^{1/\alpha}$ and inserting this into \eqref{EqEffDim1} gives the claim.
\end{proof}

\begin{lemma}\label{LemEVConc}
If $\|(\Sigma+T^{-1})^{-1/2}(\Sigma_n-\Sigma)(\Sigma+T^{-1})^{-1/2}\|_\infty\leq 1/2$ holds, then 
\[
\forall j\geq 1,\quad \lambda_j/2-1/(2T)\leq \hat\lambda_j\leq 3\lambda_j/2+1/(2T).
\]
\end{lemma}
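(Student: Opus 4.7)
The plan is to turn the hypothesis on the ``rescaled'' deviation operator into two-sided operator inequalities for $\Sigma_n$ itself, and then apply Weyl's monotonicity principle for eigenvalues of self-adjoint operators.

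First, set $A_T = (\Sigma+T^{-1})^{-1/2}(\Sigma_n-\Sigma)(\Sigma+T^{-1})^{-1/2}$. Since $A_T$ is self-adjoint, the assumption $\|A_T\|_{\operatorname{op}}\leq 1/2$ is equivalent to the operator inequality $-\tfrac{1}{2}I \preceq A_T \preceq \tfrac{1}{2}I$. Conjugating by the self-adjoint positive definite operator $(\Sigma+T^{-1})^{1/2}$ preserves the ordering, so
\begin{align*}
-\tfrac{1}{2}(\Sigma+T^{-1}) \preceq \Sigma_n - \Sigma \preceq \tfrac{1}{2}(\Sigma+T^{-1}).
\end{align*}
Adding $\Sigma$ on all sides and collecting terms yields
\begin{align*}
\tfrac{1}{2}\Sigma - \tfrac{1}{2T}I \preceq \Sigma_n \preceq \tfrac{3}{2}\Sigma + \tfrac{1}{2T}I.
\end{align*}

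Second, by the Courant--Fischer min-max characterization, $A \preceq B$ (as self-adjoint compact operators) implies $\lambda_j(A) \leq \lambda_j(B)$ for every $j \geq 1$. Applied to the lower bound one gets $\tfrac{1}{2}\lambda_j - \tfrac{1}{2T} \leq \hat\lambda_j$ (using that eigenvalues of $\alpha \Sigma + \beta I$ are $\alpha \lambda_j + \beta$), and applied to the upper bound one gets $\hat\lambda_j \leq \tfrac{3}{2}\lambda_j + \tfrac{1}{2T}$. Combining the two inequalities gives the claim. No step is really an obstacle here; the only subtlety is checking that conjugation by the positive square root $(\Sigma+T^{-1})^{1/2}$ preserves the operator inequality, which is standard since $\langle (\Sigma+T^{-1})^{1/2} X (\Sigma+T^{-1})^{1/2} h, h\rangle = \langle X (\Sigma+T^{-1})^{1/2} h, (\Sigma+T^{-1})^{1/2} h\rangle$ for self-adjoint $X$.
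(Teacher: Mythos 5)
Your proposal is correct and follows essentially the same route as the paper: the paper phrases the key step as a two-sided quadratic-form inequality $\tfrac12\langle h,\Sigma h\rangle_{\H}-1/(2T)\leq \langle h,\Sigma_n h\rangle_{\H}\leq \tfrac32\langle h,\Sigma h\rangle_{\H}+1/(2T)$ for unit vectors $h$, which is exactly your Loewner-order sandwich after conjugation by $(\Sigma+T^{-1})^{1/2}$, and both arguments conclude via the min-max characterization of eigenvalues.
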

\begin{proof}[Proof of Lemma~\ref{LemEVConc}]
We have 
\[
\|(\Sigma+T^{-1})^{-1/2}(\Sigma_n-\Sigma)(\Sigma+T^{-1})^{-1/2}\|_\infty\leq 1/2
\]
if and only if 
\[
 -(1/2)(\langle h,\Sigma h\rangle_{\mathcal{H}}+T^{-1})\leq \langle h,(\Sigma_n-\Sigma)h\rangle_{\mathcal{H}}\leq (1/2)(\langle h,\Sigma h\rangle_{\mathcal{H}}+T^{-1})
\]
for every $h\in\mathcal{H}$ such that $\|h\|_{\mathcal{H}}=1$. Rearranging the terms this is equivalent to
\[
(1/2)\langle h,\Sigma h\rangle_{\mathcal{H}}-1/(2T)\leq \langle h,\Sigma_n h\rangle_{\mathcal{H}}\leq (3/2)\langle h,\Sigma h\rangle_{\mathcal{H}}+1/(2T)
\]
for every $h\in\mathcal{H}$ such that $\|h\|_{\mathcal{H}}=1$. The claim now follows from the minimax characterization of eigenvalues. 
\end{proof}

\section{Concentration inequalities}
The following lemma is taken from \citep{MR3629418}. It is an extension of \citep{T15} from self-adjoint
matrices to self-adjoint Hilbert-Schmidt operators.
\begin{lemma}[From Lemma~5 in \cite{MR3629418}]\label{LemConcIneqTropp}
Let $\xi_1,\dots,\xi_n$ be a sequence of independently and identically distributed self-adjoint
Hilbert-Schmidt operators on a separable Hilbert space. Suppose that $\mathbb{E}\xi_1=0$ and $\|\xi_1\|_{\operatorname{op}}\leq R$ almost surely for some constant $R>0$. Moreover, suppose that there are constants $V,D > 0$ satisfying $\|\mathbb{E}\xi_1^2\|_{\operatorname{op}}\leq V$ and $\operatorname{tr}(\mathbb{E}\xi_1^2)\leq VD$. Then, for all $u \geq V^{1/2}n^{-1/2} + (3n)^{-1}R$,
\begin{align*}
    \mathbb{P}\Big(\Big\|\frac{1}{n}\sum_{i=1}^n\xi_i\Big\|_{\operatorname{op}}\geq u\Big)\leq 4D\exp\Big(-\frac{nu^2}{2V+(2/3)uR}\Big)
\end{align*}
\end{lemma}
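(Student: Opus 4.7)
The plan is to adapt Tropp's intrinsic-dimension matrix Bernstein inequality to the Hilbert-Schmidt operator setting. Write $S_n=\sum_{i=1}^{n}\xi_i$, so the target bound is equivalent to $\mathbb{P}(\|S_n\|_{\operatorname{op}}\geq nu)\leq 4D\exp(-nu^{2}/(2V+(2/3)uR))$. The defining feature of the inequality is that the prefactor is the intrinsic dimension $D=\operatorname{tr}(\mathbb{E}\xi_1^2)/\|\mathbb{E}\xi_1^2\|_{\operatorname{op}}$, not the ambient Hilbert-space dimension. Consequently the proof strategy must avoid any bound where the dimension enters through a union over an eigenbasis.

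The main steps would be as follows. First, I would reduce the operator norm to a spectral radius bound by splitting $\mathbb{P}(\|S_n\|_{\operatorname{op}}\ge nu)\le\mathbb{P}(\lambda_{\max}(S_n)\ge nu)+\mathbb{P}(\lambda_{\max}(-S_n)\ge nu)$ and handling each tail symmetrically. For the upper tail I would introduce the Bernstein test function $\phi(x)=e^{\theta x}-1-\theta x$ (which is non-negative with $\phi(x)/x^{2}$ non-decreasing) and use $\phi(\lambda_{\max}(A))\le\operatorname{tr}\phi(A)$ for any self-adjoint trace-class $A$, giving the Markov-type bound $\mathbb{P}(\lambda_{\max}(S_n)\ge nu)\le \phi(nu)^{-1}\,\mathbb{E}\operatorname{tr}\phi(S_n)$. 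Second, I would apply Tropp's master inequality, obtained from Lieb's concavity theorem applied iteratively: $\mathbb{E}\operatorname{tr}\exp(\theta S_n)\le \operatorname{tr}\exp\bigl(\sum_{i=1}^{n}\log\mathbb{E}e^{\theta\xi_i}\bigr)$. Third, the boundedness hypothesis $\|\xi_i\|_{\operatorname{op}}\le R$ yields the operator cumulant bound $\log\mathbb{E}e^{\theta\xi_1}\preceq g(\theta)\,\mathbb{E}\xi_1^{2}$ with $g(\theta)=(\theta^{2}/2)/(1-\theta R/3)$, valid for $0<\theta<3/R$; substituting gives $\mathbb{E}\operatorname{tr}\exp(\theta S_n)\le \operatorname{tr}\exp(ng(\theta)\mathbb{E}\xi_1^{2})$. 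Fourth, the intrinsic dimension $D$ is injected via the elementary trace inequality $\operatorname{tr}(e^{A}-I)\le (\operatorname{tr} A/\|A\|_{\operatorname{op}})(e^{\|A\|_{\operatorname{op}}}-1)$ valid for $A\succeq 0$, applied to $A=ng(\theta)\mathbb{E}\xi_1^{2}$ (trace $\le ng(\theta)VD$, operator norm $\le ng(\theta)V$). Fifth, optimizing over $\theta$ with the standard Bernstein choice $\theta=nu/(nV+Ru/3)$ recovers the stated exponent; the condition $u\ge V^{1/2}n^{-1/2}+R/(3n)$ is precisely what is needed to ensure $\theta R\le 1$ and to absorb the ratio $(e^{\theta nu}-1)/\phi(nu)$ appearing in the Markov step into the factor $4$.

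The one genuinely infinite-dimensional difficulty, and in my view the main obstacle, is that Lieb's concavity theorem and Tropp's master inequality are classically proved for $d\times d$ self-adjoint matrices. I would handle this by finite-rank truncation: let $P_N$ denote the spectral projector onto the top $N$ eigenspaces of $\mathbb{E}\xi_1^{2}$, and apply the matrix-valued version of the argument to $P_N\xi_i P_N$ on the range of $P_N$. The constants $R$ and $V$ are uniformly controlled under truncation, while $\operatorname{tr}(P_N\mathbb{E}\xi_1^{2}P_N)\uparrow\operatorname{tr}(\mathbb{E}\xi_1^{2})\le VD<\infty$ by the Hilbert-Schmidt hypothesis, so dominated convergence and spectral continuity pass the inequality to the limit $N\to\infty$ without altering the intrinsic-dimension prefactor. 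The delicate point throughout is preserving $D$ rather than any ambient-dimension bound; this is exactly what the trace exponential machinery, combined with the $\operatorname{tr}(e^{A}-I)$ inequality in step four, is designed to achieve.
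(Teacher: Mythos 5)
The paper does not prove this lemma at all: it is imported verbatim (``From Lemma~5 in \cite{MR3629418}''), i.e.\ the cited simplification of Minsker's Hilbert--Schmidt extension of Tropp's intrinsic-dimension matrix Bernstein inequality. Your sketch essentially reconstructs that literature proof, and the architecture is sound: Markov with the Bernstein test function and $\varphi(\lambda_{\max}(A))\le\operatorname{tr}\varphi(A)$, the Lieb/Tropp master inequality, the cumulant bound $\log\mathbb{E}e^{\theta\xi_1}\preceq g(\theta)\mathbb{E}\xi_1^2$, the trace inequality $\operatorname{tr}(e^A-I)\le \operatorname{tr}(A)\,\frac{e^{\|A\|_{\operatorname{op}}}-1}{\|A\|_{\operatorname{op}}}$ combined with $\operatorname{tr}(A)\le ng(\theta)VD$, $\|A\|_{\operatorname{op}}\le ng(\theta)V$ and monotonicity of $(e^x-1)/x$ to produce the prefactor $D$, and finally the finite-rank truncation $P_N\xi_iP_N$ (legitimate, since $\xi_1$ vanishes a.s.\ on the kernel of $\mathbb{E}\xi_1^2$, so the spectral projectors of $\mathbb{E}\xi_1^2$ capture everything and the bound passes to the limit).

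Two points deserve correction. First, the constant: with your symmetrization $\mathbb{P}(\|S_n\|_{\operatorname{op}}\ge nu)\le\mathbb{P}(\lambda_{\max}(S_n)\ge nu)+\mathbb{P}(\lambda_{\max}(-S_n)\ge nu)$, each one-sided tail already costs up to $4D\exp(\cdot)$ at the boundary of the admissible range: with the Bernstein choice of $\theta$ one only gets $\theta\, nu\ge 1$ there, and the Markov ratio $e^{\theta nu}/(e^{\theta nu}-\theta nu-1)=\bigl(1-(1+\theta nu)e^{-\theta nu}\bigr)^{-1}$ is then only bounded by $4$, not by $2$. So your route as written yields $8D$, not the stated $4D$; to recover $4D$ one must treat both tails in a single trace bound (e.g.\ with the even test function $2(\cosh(\theta x)-1)$, as in Minsker's argument) or otherwise sharpen the bookkeeping --- harmless for this paper's applications, but your claim that ``the factor $4$'' falls out of the two-tail split is not accurate. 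Second, the role of the threshold $u\ge V^{1/2}n^{-1/2}+(3n)^{-1}R$ is not to ensure $\theta R\le 1$: the optimizing $\theta=nu/(nV+Rnu/3)$ (note the missing $n$ in your formula) automatically satisfies $\theta R<3$, which is all the cumulant bound requires; the threshold is used precisely to lower-bound $\theta\, nu$ so that the Markov ratio is absorbed into a numerical constant.
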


\bibliography{ESRbiblio}

\end{document}